\newcounter{item}[section]
\newcounter{kirshr}
\newcounter{kirsha}
\newcounter{kirshb}
\newenvironment{enumroman}{\setcounter{kirshr}{1}
\begin{list}{(\roman{kirshr})}{\usecounter{kirshr}} }{\end{list}}
\newenvironment{enumarab}{\setcounter{kirshb}{1}
\begin{list}{(\arabic{kirshb})}{\usecounter{kirshb}} }{\end{list}}
\newtheorem{theorem}{Theorem}[section]
\newtheorem{lemma}[theorem]{Lemma}
\newtheorem{corollary}[theorem]{Corollary}
\theoremstyle{definition}
\newtheorem{example}[theorem]{Example}
\newtheorem{definition}[theorem]{Definition}
\newtheorem{proposition}[theorem]{Proposition}
\def\R{\mathbb{R}}
\def\Q{\mathbb{Q}}
\def\A{{\mathfrak{A}}}
\def\At{{\sf At}}
\def\A{{\mathfrak{A}}}
\def\At{{\sf At}}
\def\B{{\mathfrak{B}}}
\def\C{{\mathfrak{C}}}
\def\Ca{{\mathfrak Ca}}
\def\CA{{\sf CA}}
\def\Cm{{\sf Cm}}
\def\de{Dedekind-MacNeille}
\def\ef{Ehren\-feucht--Fra\"\i ss\'e}
\def\F{{\mathfrak{F}}}
\def\Fm{{\mathfrak{Fm}}}
\def\Fr{{\mathfrak{Fr}}}
\def\g{{\sf g}}
\def\Id{{\sf Id}}
\def\K{{\sf K}}
\def\Lf{{\sf Lf}}
\def\Mo{{\sf M}}
\def\N{{\mathbb{N}}}
\def\nodes{{\sf nodes}}
\def\Nr{{\mathfrak{Nr}}}
\def\Nrr{{\mathfrak{Nr}}}
\def\pa{$\forall$}
\def\PA{{\bf PA}}
\def\pe{$\exists$}
\def\PEA{\sf PEA}
\def\QEA{{\sf QEA}}
\def\r{{\sf r}}
\def\R{\mathfrak{R}}
\def\Ra{{\sf Ra}}
\def\RCA{{\sf RCA}}
\def\Rd{{\mathfrak{Rd}}}
\def\rng{{\sf rng}}
\def\s{{\sf s}}
\def\Sc{{\sf Sc}}
\def\Sg{{\mathfrak Sg}}
\def\Tm{{\mathfrak{Tm}}}
\def\tp{{\sf tp}}
\def\VT{{\sf VT}}
\def\w{{\sf w}}
\def\ws{winning strategy}
\def\y{{\sf y}}
\def\Z{{\mathbb{Z}}}
\def\D{{\mathfrak{D}}}
\def\T{{\bf T}}
\def\T{{\bf T}}
\def\Ra{{\sf Ra}}
\def\set#1{ \{#1\}}
\def\Mo{{\sf Mo}}
\def\restr #1{{\restriction_{#1}}}
\def\ws{winning strategy}
 \def\CA{{\sf CA}}
\def\B{{\sf B}}
\def\G{{\sf G}}
\def\w{{\sf w}}
\def\y{{\sf y}}
\def\g{{\sf g}}
\def\r{{\sf r}}
\def\K{{\sf K}}
\def\pa{$\forall$}
\def\pe{$\exists$}
\def\ef{Ehren\-feucht--Fra\"\i ss\'e}
\def\tp{{\sf tp}}
\def\M{{\mathfrak{M}}}
\def\Ca{{\mathfrak{Ca}}}
\def\M{{\mathfrak{M}}}
\def\A{{\mathfrak{A}}}
\def\B{{\mathfrak{B}}}
\def\C{{\mathfrak{C}}}
\def\D{{\mathfrak{D}}}
\def\Ig{{\mathfrak{Ig}}}
\def\M{{\mathfrak{M}}}
\def\dom{{\sf dom}}
\def\rng{{\sf rng}}
\def\Rd{{\sf{Rd}}}
\def\At{{\sf At}}
\def\Ra{{\mathfrak{Ra}}}
\def\Tm{{\mathfrak{Tm}}}
\def\Cm{{\mathfrak{Cm}}}
\def\E{{\mathfrak{E}}}
\def\Bl{{\mathfrak{Bl}}}
\def\VT{{\sf VT}}
\def\ls { L\"owenheim--Skolem}
\def\Gp{{\sf Gp}}
\def\ef{Ehren\-feucht--Fra\"\i ss\'e}
\def\Id{{\sf Id}}
\def\F{{\mathfrak{F}}}
\def\Sc{{\mathfrak{Sc}}}
\def\QEA{{\sf QEA}}
\def\RCA{{\sf RCA}}
\def\QEA{{\bf PEA}}
\def\PA{{\bf PA}}
\def\R{{\sf R}}
\def\L{{\sf L}}
\def\Df{{\sf Df}}
\def\Uf{{\sf Uf}}
\def\Rd{{\mathfrak{Rd}}}
\def\PEA{{\sf QEA}}
\def\s{{\sf s}}
\def\Sc{{\sf Sc}}
\def\nodes{{\sf nodes}}
\def\c{{\sf c}}
\def\T{{\sf T}}
\def\QEA{{\sf PEA}}
\def\cyl#1{{\sf c}_{#1}}
\def\cyl#1{{\sf c}_{#1}}
\def\diag#1#2{{\sf d}_{#1#2}}
\def\V{{\sf V}}
\def\de{Dedekind-MacNeille}
\def\PA{{\sf PA}}
\def\QA{{\sf QA}}
\def\Z{{\mathbb{Z}}}
\def\Lf{{\sf Lf}}
\def\Cs{{\sf Cs}}
\def\Ra{{\sf Ra}}
\def\Mo{{\sf M}}
\def\QEA{{\sf QEA}}
\def\RA{{\sf RA}}
\def\G{{\sf G}}
\def\T{{\cal T}}
\def\Nrr{\mathfrak{{Nr}}}
\def\w{{\sf w}}
\def\g{{\sf g}}
\def\y{{\sf y}}
\def\r{{\sf r}}
\def\L{\mathfrak{L}}
\def\G{{\cal G}}
\def\PEA{{\sf PEA}}
\def\R{\mathfrak{R}}
\def\Nr{{\sf Nr}}
\def\Bb{{\sf Bb}}
\def\Bb{\\mathfrak{Bb}}
\def\Nrr{\mathfrak{{Nr}}}
\def\w{{\sf w}}
\def\g{{\sf g}}
\def\y{{\sf y}}
\def\r{{\sf r}}
\def\L{\mathfrak{L}}
\def\G{{\cal G}}
\def\PEA{{\sf PEA}}
\def\R{\mathfrak{R}}
\def\Nr{{\sf Nr}}
\def\Q{{\mathbb{Q}}}
\def\Bb{{\mathfrak Bb}}
\def\notVT{{\sf notVT}}
\def\M{{\sf M}}
\def\Mo{{\sf M}}
\def\Nrr{{\mathfrak{Nr}}}
\title{A universal approach to Omitting types for various multimodal and quantifier logics} 
\author{Tarek Sayed Ahmed}
\date{}
\begin{document}
\maketitle
\begin{abstract}
We intend to investigate the metalogical property of 'omitting types' for a wide variety of quantifier logics (that can also be seen as multimodal logics 
upon identifying existential quantifiers with modalities syntactically and semantically) 
exhibiting the essence of its  abstract algebraic facet, namely, atom-canonicity, the last reflecting a well known persistence propery in modal logic.
In the spirit of 'universal logic ', with this algebraic abstraction at hand, the omitting types theorem $\sf OTT$ will be studied for various reducts extensions and 
variants (possibly allowing formulas of infinite length) of first order logic. 
In the course of our investigations, both negative and positive results will be presented. For example, we show that for any countable theory $L_n$ 
theory $T$ that has quantifier elimination $< 2^{\omega}$ many non-principal complete types can be omitted.
Furthermore, the maximality (completeness) condition, if eliminated, leads to an independent statement from $\sf ZFC$ implied by Martin's axiom.  
$\sf OTT$s are approached  for other algebraizable (in the classical Blok-Pigozzi sense) reformulations or/ and versions of $L_{\omega \omega}$; 
they are shown to hold for some and fail for others. 
\end{abstract}
\section{Introduction}

{\bf Persistence properties and omitting types:} The technical notion of a modal logic corresponds to the one of a variety of {\it Boolean algebras with operators} ($\sf BAO$s) 
which provides {\it algebraic semantics} for modal logic.
We assume familiarity with the very basics of the well developed duality theory between $\sf BAO$s
and
multimodal logic; the class of all $\sf BAO$s correspond to the minimal normal multimodal logic; this
correspondence is established by forming quotient Tarski-Lindenbaum algebras.
The starting point of this duality is that algebraic terms correspond to modal formulas. By that identification we get:
$\F\models \phi \Longleftrightarrow  \Cm\F\models \phi=1,$
where $\F$ is a relational structure (Kripke frame)
$(F, R_i)_{i\in I}$, $I$ a non-empty indexing set, and $\Cm\F$ (its complex algebra) is an algebra having signature
$(f_i:i\in I)$ where each $f_i$ is a unary modality, in other words
an operator. Prominent examples of $\sf BAO$s are relation, cylindric and polyadic algebras.  
Relation algebras ($\sf RA$) correspond to so--called arrow logic, while cylindric algebras of dimension $n$ ($\CA_n$) and the relativized versions of the representable $\CA_n$s,  
correspond to $L_n$
and its guarded and clique-guarded fragments \cite{HHbook} dealt with below The topic of this paper is typical of what  happens in algebraic logic and that is: Using algebraic machinery to obtain fine results in first order logic via so called 'bridge theorems'.  
The algebraic machinery we use, is in essence a disguised form of Andreka's splitting \cite{Andreka} wrapped up in, as indicated in the title, a blow up and blur construction. The last is an indicative term invented by Andre\'ka and N\'emeti in \cite{ANT} and has several reincarnations in the literatue though not under this name. One purpose of this paper is to gather such instances under the title (of this very paper): Blow up and blur constructions in algebraic logic. 
The metalogical result our investigations have impact on is the celebrated Henkin-Orey omitting types theorem.  
Let $\L$ be an extension or reduct or variant of first order logic,  like first logic itself, $L_n$ as defined in the abstract with $2<n<\omega$,  
$L_{\omega_1, \omega}$, $L_{\omega}$ as defined in \cite[\S 4.3]{HMT2}, $\ldots$,  etc. 
An omitting types theorem for $\L$, briefly an $\sf OTT$, is typically of the form  `A countable family of non--isolated types in a countable $\L$ theory 
$T$ can be omitted in a countable model of $T$.  From this it directly follows that if a  type is realizable in every model of a countable theory $T$, 
then there should be a formula consistent with $T$ that isolates this
type. A type is simply a set of formulas $\Gamma$ say.  The type $\Gamma$ is realizable in a model 
if there is an assignment that satisfies (uniformly) all formulas in $\Gamma$. 
Finally, $\phi$ isolates $\Gamma$ means that $T\vdash \phi\to \psi$ 
for all $\psi\in \Gamma$. What Orey and Henkin proved is that  the $\sf OTT$ 
holds for $L_{\omega, \omega}$ when such types are {\it finitary}, meaning that they all consist of $n$-variable 
formulas for some $n<\omega$.
$\sf OTT$ has an algebraic facet exhibited in the property of {\it atom-canonicity}; 
which in turn reflects an important {\it persistence} property in modal logic.
Algebraically,  so--called {\it persistence properties} refer to closure of a variety
$\V$ under passage from a given algebra $\A\in \V$ to some `larger' algebra $\A^*$.
Canonicity,  which is the most prominent persistence property in modal logic, the `large algebra' $\A^*$ is the canonical embedding algebra (or perfect) extension of $\A$, a complex algebra based on the 
{\it ultrafilter frame} of $\A$ whose underlying set is the set of all Boolean ultrafilters of $\A$. 
A completely additive variety $\sf V$ is {\it atom-canonical} if whenever $\A\in \sf V$ is atomic, then the complex algebra of its atom structure, in symbols $\Cm\At\A$, is also in $\sf V$. More concisely, $\sf V$ is such if  
$\Cm\At\sf V\subseteq \sf V$.
Atom-canonicity is concerned with closure under forming \de\ completions (sometimes occuring in the literature under the name of {\it the minimal completions}) 
of atomic algebras in the variety $\sf V$, because for an atomic $\A\in \sf V$, $\Cm\At\A$ is 
its \de\ completion. 
Though $\RCA_n$ is canonical, it is not atom-canonical for $2<n<\omega$ \cite{Hodkinson}. From non-atom-canonicity of $\RCA_n$, it follows from \cite{Venema} that $\sf RCA_n$ 
cannot be axiomatized by Sahlqvist equations. 
We shall see that (non-) atom-canonicity of subvarieties of $\RCA_n$ is closely related to (the failure) of some version of the $\sf OTT$ 
in modal fragments of $L_n$. While the classical Orey-Henkin $\sf OTT$ holds for $L_{\omega, \omega}$, it is known \cite{ANT} that the $\sf OTT$ fails for $L_n$ in the following  (strong) sense. 
For every $2<n\leq l<\omega$, there is a countable and complete $L_n$ atomic theory $T$, and a single
type, namely, the type consisting of co-atoms of $T$,  that is realizable in every model of $T$, but cannot be isolated by a formula 
$\phi$ using $l$ variables. Such $\phi$ will be referred to henceafter as a {\it witness}.
Here we prove stronger negative $\sf OTT$s for $L_n$ when types are required to be omitted  with respect to 
certain (much wider) generalized semantics, called {\it $m$-flat} and {\it $m$--square} with $2<n<m<\omega$. 
These locally relativized representations were invented by Hirsch and 
Hodkinson in the context of relation algebras \cite[Chapter 13]{HHbook} and were adapted to $\CA$s in \cite[\S5]{mlq} which is the  form we stick to  in this paper. Considering such 
{\it clique-guarded} semantics 
swiftly leads us to rich territory.

{\bf Complete representations and $\sf VT$:}  An important semantical notion in the theory of both relation and cylindric algebras closely related to a restricted version of $\sf OTT$ which we refer to  as Vaught's Theorem $\sf VT$,
is that of {\it complete representability}. 
 One of the old classical  basic theorems in young  model theory,  $\sf VT$ says that countable atomic theories have counbtable atomic (equivalently in this context prime) models. A prime model for a theory $T$ is a 'small model', in the sense that  it elementay embeds into any other model of $T$. Vaught proved his $\sf VT$ by a farily straightforward application of the Orey-Henkin $\sf OTT$ for $L_{\omega, \omega}$. 
These connections will be explicity revealed as the paper unfolds. 
A {\it (relativized) complete representation} of $\A\in \CA_n$ on $V\subseteq {}^nU$, is a (relativized) representation of $\A$ on $V$ via $f$, that preserves  arbitrary sums carrying
them to set--theoretic unions, that is the representation $f:\A\to \wp(V)$ is required to satisfy   
$f(\sum S)=\bigcup_{s\in S}f(s)$ for all $S\subseteq \A$ such that $\sum S$ exists. In this case, we 
say that $\A$ is {\it completely represented on $V$ via $f$}, or simply completely represented on $V$. 
If $\A\in \CA_n$, then a relativized representation of $\A$ on $V$ via $f$ is not necessarily a {\it complete} relativized  representation of $\A$ on $V$ via (the same) $f$. 
It is known that if $\A\in \CA_n$ has
a relativized complete representation on $V$ $\iff$
the Boolean reduct of $\A$ is atomic (below every non-zero element of $\A$ there is an atom, that is, 
a minimal non-zero element), and that $f$ is {\it atomic} in the sense that
$\bigcup_{x\in \At\A}f(x)=V$ \cite{HH}.  
It is also known that there are countable and atomic $\RCA_n$s,
that have no complete representations on generalized cartesian spaces of dimension $n$. So atomicity is necessary but not sufficient for complete 
representability of $\CA_n$s on generalized cartesian spaces of the same dimension $n$.
In fact, the class of completely representable $\CA_\alpha$s on cartesian spaces of dimension $\alpha$, briefly ${\sf CRCA}_{\alpha}$ 
when $\alpha>2$ (where generalized cartesian spaces for infinite dimensions is defined like the finite dimensional case), 
is not even elementary, cf.\cite{HH} and  \cite[Corollary 3.7.1]{HHbook2}.

The subtle semantical phenomena of (relativized) complete representability is closely
related to the algebraic notion of  atom--canonicity of (certain supervarieties of) $\RCA_n$ 
(like $\bold S\Nr_n\CA_m$ for $2<n<m<\omega$),
and to the metalogical property of  omitting types in $n$--variable fragments of first order logic
\cite[Theorems 3.1.1-2, p.211, Theorems 3.2.8, 9,10]{Sayed},
when non--principal types are omitted with respect to (relativizing) Tarskian `square' semantics. 
The typical question is: given a $\A\in \CA_n$ and a family $(X_i: i\in I)$ of subsets of $\A$ ($I$ a non--empty set),  such that $\prod X_i$ exists in $\A$ for each $i\in I$
is there a representation $f:\A\to \wp(V)$, on some $V\subseteq {}^nU$ ($U$ a non--empty set), 
that carries  this set of meets to set theoretic intersections, in the sense that  $f(\prod X_i)=\bigcap_{i\in I}f(x)$ for all $i\in I$? 
When the algebra $\A$ is countable, $|I|\leq \omega$ and $\prod X_i=0$ for all $\in I$, this is an algebraic version of an  omitting types theorem; 
the representation $f$ {\it omits the given set of meets (or non-principal types) on $V$.}  
When it is {\it only one meet} consisting of co-atoms, in an atomic algebra, such a representation $f$ will 
be a {\it complete representation on $V$}, and 
this is equivalent to that $f(\prod X)=\bigcap_{x\in X}f(x)$ for all $X\subseteq \A$ 
for which $\prod X$ exists in $\A$ \cite{HH}. The last condition, when we require that $V$ is a generalized cartesian space of dimension $n$, is an algebraic version 
of Vaught's theorem for first order logic.  In this case the complete representation $f$ provides an `atomic model' $V$ of the  
`atomic theory' $\A$ that omits all 
non--principal types, where a non--principal type is a set $X\subseteq \A$, such that $\prod^{\A} X=0$. 
It is well known that for first order logic Vaught's theorem is a consequence of the classical Orey--Henkin omitting types theorem
by omitting the non-principal type consisting of co-atoms in $\Nrr_n\A$ for all $n\in \omega$ 
where $\A$ is (isomorphic to) $\Fm_T$ and $T$ is a set of sentences  that is the {\it countable} atomic theory at hand; here $\Nrr_n\A$ is the neat $n$-reduct of $\A$, cf. \cite[Definition 2.6.28]{HMT2}. In this particular case 
$\sf \Nrr_n\A$ is iomorphic to the quotient (relative to $T$)  algebra of formulas that are equivalent modulo $T$ to formulas containing at most $n$ free- variables.

We show that the $\sf OTT$ for $L_n$ takes on an entire different subtle route; it fails dramatically. 
we shall violate $\sf VT$ for so-called $m$-clique guarded fragments of $L_n$ whose semantics are determined by $m$-square 
and $m$-flat models which are local `$m$  approximations' of ordinary models the last being $\omega$-square. 
Roughly an $m$ square-model $\sf M$ for an $L_n$  theory or an $m$-square representation of a $\CA_n$ (with base) $\sf M$ - upon identifying models of an $L_n$  theory $T$ say  with representations  of its Tarski-Lindenbaum algebra $\Fm_T$ - is a representation  $\sf M$ 
for which when one approaches $\Mo$  with  a 'movable window', then there will become a point determined by $m$ where one mistakes this locally relativized approximate representation 
with an ordinary one.  The discrepancy reappears as soon as one zooms out; the zooming out is inversely proportional to the`genuine  squareness' of the model. 
A semantical additional condition on commutativity of cylindrifications on $m$ squares gives $m$-flat models which are thus better approximations to ordinay models  on a local  level. 
\footnote{Worthy ot note is that for $n<m$, the variety consisting of algbras having $m$-flat models 
coincide with $\bold S\Nr_n\CA_m$ and that for $m\geq n+3$ the last 
is not finitely axiomatizable over that having $m$-square ones.
 Furthermore, $\RCA_n$ is not finitely axiomatizable over the last two varieties and for any positive $k\geq 1$,  $\bold S\Nr_n\CA_{n+k+1}$ is not finitelt axiomatizable over $\bold S\Nr_n\CA_{n+k}$.}
The proof methods resorted to so-called {\it blow up and blur} constructions to prove {\it non}-atom canonicity of several varieties of relation and cylindric algebras. 
We recall that a class $\sf K$ of Boolean algebras with operators ($\sf BAO$s) is {\it atom--canonical} if whenever $\A\in \sf K$ is atomic and completey additive, then its \de\ completion, 
namely, the complex algebra of its atom structure, namely,  $\Cm\At\A$ is also in $\sf K$.

{\bf Blow up and blur constructions and splitting atoms:} This subtle construction may be applied to any two classes  $\bold L\subseteq \bold K$ of completely additive $\sf BAO$s.
One takes  an atomic $\A\notin \bold K$ (usually but not always finite), blows it up, by splitting one or more of its atoms each to infinitely many subatoms,
obtaining an (infinite) {\it countable} atomic ${\Bb}(\A)\in \bold L$,  such that $\A$ {\it is blurred} in ${\Bb}(\A)$ meaning that  $\A$ {\it does not} embed in 
${\Bb}(\A)$, but $\A$ embeds in the \de\ completion of ${\Bb}(\A)$,
namely, $\Cm\At{\Bb}(\A)$. 
Then any class $\bold M$ say, between $\bold L$ and $\bold K$ that is closed under forming subalgebras  
will not be atom--canonical, for ${\Bb}(\A)\in \bold L(\subseteq \bold M)$, but $\Cm\At{\Bb}(\A)\notin \bold K(\supseteq \bold M)$ because $\A\notin \bold M$ and $\bold S\bold M=\bold M$. 
We say, in this case, that {\it $\bold L$ is not atom--canonical with respect to $\bold K$}.
This method is applied to $\bold K=\bold S\Ra\CA_l$, $l\geq 5$ and $\bold L=\sf RRA$ in \cite[\S 17.7]{HHbook}  and 
to $\bold K=\sf RRA$ and $\bold L={\sf RRA}\cap \Ra\CA_k$ for all $k\geq 3$ in \cite{ANT},
and will applied below to $\bold K=\bold S\Nr_n\CA_{n+k}$, $k\geq 3$
and $\bold L=\RCA_n$, where  $\Ra$ denote the operator of forming 
relation algebra reducts (applied to classes) of $\CA$s, respectively, \cite[Definition 5.2.7]{HMT2}.
The idea of splitting one or more atoms in an  algebra to get a (bigger) superalgebra tailored to a certain purpose   
seems to originate with Henkin \cite[p.378, footnote 1]{HMT2}.
Let $2<n<\omega$. Monk proved his seminal result that 
$\RCA_n$ 
is not finitely axiomatizable by constructing finite non--representable 
algebras (referred to together with variations thereof in the literature as Monk--like algebras), whose ultraproduct is representable.
The idea involved in the construction of a non--representable finite Monk (--like) $\A\in \CA_n$s is not so hard.
Such $\A$ is finite, hence atomic, more precisely its Boolean reduct is  atomic.
The algebra $\A$ is obtained by splitting some atoms in a finite $\CA_n$ each into one or more subatoms.
The new atoms are given colours, and
cylindrifications and diagonals are re-defined by stating that monochromatic triangles
are inconsistent. If the atoms resulting after splitting are `enough', that is, a Monk's algebra has many more atoms than colours,
it follows by using a fairly standard form of 
Ramsey's Theorem that any representation of $\A$ will contain a monochromatic triangle, 
so $\A$, 
by definition, cannot be representable. 
Maddux modified Monk's algebras using $\CA_n$s based on atomic 
relation algebras generated by a single element. This improvement is highly rewarding for it transfers incompleteness theorems 
for $n$--variable first order logic from languages having
countably many $n$--ary relation symbols to incompleteness theorems for languages in a signature having only one binary relatuon symbol.
Furthermore, the modification is far from being trivial for one generation 
makes the automorphism group of the algebra rigid. The results of Monk and Maddux will reproved below.
In the cylindric paradigm, And\'reka modified such splitting methods {\it re-inventing} (Andr\'eka's) splitting. 
In this new setting,   Andr\'eka proved a plethora of 
{\it relative non--finite axiomatizability results}.
In Andr\'eka's splitting one typically splits an atom in a given representable (set) algebra to finitely many subatoms $m$ say, obtaining a bigger non--representable superalgebra
$\A_m$ whose subalgebras generated by $<m$ elements are representable, and various proper reducts of $\A_m$ remain representable. Iterating such a splitting argument for $m$, with $m$ getting
abitrarily large,  excludes universal finite variable axiomatizations of the variety of representable algebras at hand, over such strict subreducts. 
Here, besides dealing with a variant of  Andr\'eka' splitting,  
we present other constructions such as the so--called blow up and blur constructions which are splitting arguments at heart involving splitting (atoms) in 
finite Monk--like algebras and  rainbow algebras. 

As another application to splitting methods for both cylindric and relation algebras, together with  a rainbow construction, we also prove the following new results. 
Let ${\sf CRCA}_n$ denote the class of completely representable $\CA_n$s, ${\sf CRRA}$ 
denote the class of completely representable $\RA$s, $\bold S_c$ denote the operation of forming complete subalgebras, and
$\bold S_d$ denote the operation of forming dense subalgebras.
We show that any class $\bold K$ such that $\Nr_n\CA_{\omega}\cap {\sf CRCA}_n\subseteq \bold K\subseteq \bold S_c\Nr_n\CA_{n+3}$,  $\bold K$ is not elementary,
and that for any class $\bold L$ such that $\bold S_d\Ra\CA_{\omega}\cap {\sf CRRA}\subseteq \bold L\subseteq \bold S_c\Ra\CA_{5}$,  
$\bold L$ is not elementary
Furthermore, all  constructions used to violate in this paper resorted to  a variation on the following single theme typical of Andr\'eka's splitting:
{\it Split some (possibly all) atoms in an  algebra (that need not be finite nor even atomic)  
each into one or more subatoms forming a bigger superalgebra that constitutes the starting point 
for a construction serving  the purpose at hand. For cylindric--like algebras of dimension $\alpha$, $\alpha$ an ordinal, if the atom $x\in \A$ is split to the 
subatoms $X=(x_i: i\in I)$ in $\B\supseteq \A$, then $X$ is a partition of $x$ in the sense that $x_l\cdot x_m=0$ for $l\neq m\in I$, 
and $\sum_{i\in I}^{\B}x_i=x$. Furthermore, for each $i\in I$, $x_i$ is {\it cylindrically equivalent} to $x$, in the sense 
that for all $j<\alpha$, $\c_j^{\B}x_i=\c_j^{\B}x$.}

This roughly means that cylindrifiers, the most prominent citizens in such algebras, cannot distinguish between an atom and its 
splitted subatoms. So splitting, leading from $\A$ to $\B$ 
does not ruin the algebraic structure dramatically, at least as far as cylindrifiers are concerned. But at the same time, we show that 
the subtle technique of splitting, undetected by cylindrifiers, can lead from a representable $\A$ 
to a non--representable 
$\B$ (Andr\'eka's splitting) and conversely from a non--representable $\A$ to a representable $\B$ (blow up and blur constructions). 
A third case encountered below is that splitting does not alter the representability status of $\A$ at all; 
$\A$ is representable $\iff \B$ is 
representable.

{\bf Omitting types for finite variable fragments:} We obtain negative results of the form:
{\it There exists a countable, complete and  atomic $L_n$ first order theory $T$ in a signature $L$ such that  the type $\Gamma$ consisting of co-atoms in the  cylindric Tarski-Lindenbaum quotient algebra $\Fm_T$ is realizable in every {\it $m$--square} model, 
but $\Gamma$ cannot be isolated using $\leq l$ variables, where $n\leq l<m\leq  \omega$.} A co-atom of $\Fm_T$ is the negation of an atom in $\Fm_T$, 
that is to say, is an element of the form $\Psi/\equiv_T$, where $\Psi/\equiv_T=(\neg \phi/{\equiv_T})=\sim (\phi/{\equiv_T})$ and $\phi/{\equiv_T}$ is an atom in $\Fm_T$ (for $L$-fomulas, $\phi$ and $\psi$).
Here the quotient algebra $\Fm_T$is formed relative to the congruence relation of semantical equivalence modulo $T$.
An  $m$-square model of $T$ is an $m$-square representation of $\Fm_T$.
The last statement  denoted by $\Psi(l, m)$, short for Vaught's Theorem ({\it $\sf VT$) fails at (the parameters) $l$ and $m$.}
Let ${\sf VT}(l, m)$ stand for {\it {\sf VT} holds at $l$ and $m$}, so that by definition $\Psi(l, m)\iff \neg {\sf VT}(l, m)$. We also include $l=\omega$ in the equation by 
defining ${\sf VT}(\omega, \omega)$ as {\sf VT} holds for $L_{\omega, \omega}$: Atomic countable first order theories have atomic countable models. 
We provide strong evidence that $\sf VT$ {\it fails everywhere} in the sense that for the permitted values $n\leq l, m\leq \omega$, namely, 
for $n\leq l<m\leq \omega$  and $l=m=\omega$, 
$\VT(l, m)\iff l=m=\omega.$  
 For example, from the non--atom canonicity of $\RCA_n$ with respect to the variety of $\CA_n$s having $n+3$--square representations ($\supseteq \bold S\Nr_n\CA_{n+3}$), 
we proved $\Psi(n, n+k)$ for $k\geq 3$ and from 
the non--atom canonicity of $\Nr_n\CA_{n+k}\cap \RCA_n$ with respect  to $\RCA_n$
for all $k\in \omega$, 
we proved $\Psi(l, \omega)$ for all finite $l\geq n$. 
Both results are obtained by blowing up and blurring finite algebras; a rainbow $\CA_n$ in the former case, and a finite 
$\sf RA$ (whose number of atoms depend on $k$) in the second case.
In this case, we said (and proved) that   $\sf VT$ fails {\it almost} everywhere.  
The non atom--canonicity of $\Nr_n\CA_{m-1}\cap \RCA_n$  with respect to the variety of $\CA_n$s having $m$--square representations ($\supseteq \bold S\Nr_n\CA_m$)
for all $2<n<m<\omega$, implies that $\Psi(l, m)$ holds for all $2<n\leq l<m\leq \omega$, in which case 
we can say that {\it $\sf VT$ fails everywhere.}
The latter was reduced to (finding then) blowing up  and blurring a finite relation algebra $\R$ 
that is `strongly blurred up to $m-1$' (a notion defined below)   
but $\R$ and no $m$-dimensional relational basis, for each $2<n<m<\omega$.  
Figuratively speaking, $\sf VT$ holds only at the limit 
when $l\to \infty$ and $m\to  \infty$. So we can express the situation (using elementary Calculas terminology) as follows: 
For $2<n\leq l<m<\omega$, ${\sf VT}(l, m)$ is false, but  as $l$ and $m$ gets larger, ${\sf VT}(l, m)$ gets closer to $\sf VT$, in symbols,
${\sf lim}_{l,m\to \infty}{\sf VT}(l, m)={\sf VT}({\sf lim}_{\l\to \infty}l, {\sf lim}_{m\to \infty}m)={\sf VT}(\omega, \omega)$.
The double limit $lim_{l,m\to \infty}{\sf VT}(l,m)={\sf VT}(\omega, \omega)$ 
can be given a precise algebraic expression using ultraproducts. The limit on the left was reincarnated as a sequence of non-representable  
$\CA_n$s, generated by a single $2$ dimensional element,  whose ultraproduct 
converges to an algebra satisfying the so-called Lyndon conditions, which is the reincarnation of the right hand side of the equation, namely, $\sf VT(\omega, \omega)$.
This reproves the classical results of Monk  and Maddux refined by Biro
on non-finite axiomatizability of $\sf RRA$ and $\RCA_n$ and their finite first order definable expansions \cite{maddux, Biro}. 
A first order definable expansion of an ${\sf RCA}_n$ is an expansion where the semantics of 
the finite extra operations in set algebras (corresponding to models) are stimulated using the semantics of first order formulas (in such models). 
Our construction also  proved that ${\sf LCA}_n$ is not finitely axiomatizable too, reproving a result of Hirsch and Hodkinson \cite{HHbook}. 
The class ${\sf LCA}_n={\bf El}{\sf CRCA}_n$ is the class of agebras that can be axiomatized by a (necessariy infinite) 
set of first order sentences known as the Lyndon conditions $\rho_m$ ($m\in \omega$), where $\rho_m$ codes a \ws\ for \pe\ in 
a deterministic two player $m$-rounded game between \pe\ and \pa\, played on so-called atomic networks of an an atomic $\CA_n$  cf. \cite{HHbook2}. 
Variations on such games were also 
used to characterize the semantical notions of $m$-flatness and $m$-squareness.
We use throughout the paper fairly standard or/and self-explanatory notation
following mainly  the notation of \cite{1,HMT2}.

{\bf On the notation:} We use throughout the paper fairly standard or/and self-explanatory notation
following mainly  the notation of \cite{1,HMT2}. Less usual notation will be introduced at its first occurence in the text.
In the meantime the following  would be helpful. 
We write $\subseteq$ for inclusion, and $\subsetneq$ for
proper inclusion.
Throughout the paper we make the following convention. We denote infinite ordinals by $\alpha, \beta\ldots$ 
and finite ordinals by $n, m\ldots$. Ordinals which are arbitrary meaning that they could be finite or infinite 
will be denoted by  $\alpha,\beta\ldots$. 
Also 
algebras will be denoted by Gothic letters, and when we write $\A$ for an algebra, then we shall be tacitly assuming that 
$A$ denotes its universe, that is 
$\A=\langle A, f_i^{\A}\rangle_{i\in I}$ where $I$ is a non--empty set and $f_i$ $(i\in I)$ are the operations in the signature of $\A$ 
interpreted via $f_i^{\A}$ in $\A$. For better readability, we omit the superscript $\A$ and we write simply 
$\A=\langle A, f_i\rangle_{i\in I}$.
However, in some occasions we will identify (notationally)
an algebra with its universe.
For operators on classes of algebras, $\bold S$ stands for the operation of forming subalgebras, $\bold P$
stands for that of forming products, $\bf H$ for forming homomorphic images, ${\bf Up}$ for forming ultraproducts.
If $I$ is a non--empty set and $U$ is an ultrafilter over $\wp(I)$ and if $\A_i$ is some structure (for $i\in I$) we write either
$\Pi_{i\in I}\A_i/U$ or $\Pi_{i/U}\A_i$ for the ultraproduct of the $\A_i$s over $U$. 
If $\A$ is an algebra and $X\subseteq \A$, we write $\Sg^{\A}X$, or simply $\Sg X$ if $\A$ is clear from context, for the subalgebra of $\A$ generated by $X$.
If $\A\in \V$ where $\V$ is a class of $\sf BAO$s, then ${\sf Uf}\A$ denotes its {\it ultrafilter atom structure} whose underlying set is the set of Boolean ultrafilters of $\A$. 
The {\it canonical extension} of $\A$ say,  denoted by $\A^+$, is the complete algebra $\Cm{\sf Uf}\A$. It is known that $\A$ embeds into $\A^+$ v
ia $a\mapsto \{F\in {\sf Uf}\A: a\in F\}$.
We write $\prod^{\A}X (\sum ^{\A}X )$ for the infimum (supremum) of $X$ in $\A$, if it exists. Often, however,  we omit the superscript $\A$ if it is clear from context.

\section{The algebras and some basic concepts}
For a set $V$, ${\cal B}(V)$ denotes the Boolean set algebra $\langle \wp(V), \cup, \cap, \sim, \emptyset, V\rangle$.
Let $U$ be a set and $\alpha$ an ordinal; $\alpha$ will be the dimension of the algebra.
For $s,t\in {}^{\alpha}U$ write $s\equiv_i t$ if $s(j)=t(j)$ for all $j\neq i$.
For $X\subseteq {}^{\alpha}U$ and $i,j<\alpha,$ let
$${\sf C}_iX=\{s\in {}^{\alpha}U: (\exists t\in X) (t\equiv_i s)\}$$
and
$${\sf D}_{ij}=\{s\in {}^{\alpha}U: s_i=s_j\}.$$
$\langle{\cal B}(^{\alpha}U), {\sf C}_i, {\sf D}_{ij}\rangle_{i,j<\alpha}$ is called {\it the full cylindric set algebra of dimension $\alpha$}
with unit (or greatest element) $^{\alpha}U$. Any subalgebra of the latter is called a {\it set algebra of dimension $\alpha$}.
Examples of subalgebras of such set algebras arise naturally from models of first order theories.
Indeed, if $\Mo$ is a first order structure in a first
order signature $L$ with $\alpha$ many variables, then one manufactures a cylindric set algebra based on $\Mo$ as follows, cf. \cite[\S4.3]{HMT2}.
Let
$$\phi^{\Mo}=\{ s\in {}^{\alpha}{\Mo}: \Mo\models \phi[s]\},$$
(here $\Mo\models \phi[s]$ means that $s$ satisfies $\phi$ in $\Mo$), then the set
$\{\phi^{\Mo}: \phi \in Fm^L\}$ is a cylindric set algebra of dimension $\alpha$, where $Fm^L$ denotes the set of first order formulas taken in 
the signature $L$.  To see why, we have:
\begin{align*}
\phi^{\Mo}\cap \psi^{\Mo}&=(\phi\land \psi)^{\Mo},\\
^{\alpha}{\Mo}\sim \phi^{\Mo}&=(\neg \phi)^{\Mo},\\
{\sf C}_i(\phi^{\Mo})&=(\exists v_i\phi)^{\Mo},\\
{\sf D}_{ij}&=(x_i=x_j)^{\Mo}.
\end{align*}
Following \cite{HMT2}, $\Cs_{\alpha}$ denotes the class of all subalgebras of full set algebras of dimension $\alpha$.
The (equationally defined) $\CA_{\alpha}$ class is obtained from cylindric set algebras by a process of abstraction and is defined by a {\it finite} schema
of equations given in \cite[Definition 1.1.1]{HMT2} that holds of course in the more concrete set algebras. 
\begin{definition} Let $\alpha$ be an ordinal. By \textit{a cylindric algebra of dimension} $\alpha$, briefly a
$\CA_{\alpha}$, we mean an
algebra
$$ {\A} = \langle A, +, \cdot,-, 0 , 1 , {\sf c}_i, {\sf d}_{ij}\rangle_{\kappa, \lambda < \alpha}$$ where $\langle A, +, \cdot, -, 0, 1\rangle$
is a Boolean algebra such that $0, 1$, and ${\sf d}_{i j}$ are
distinguished elements of $A$ (for all $j,i < \alpha$),
$-$ and ${\sf c}_i$ are unary operations on $A$ (for all
$i < \alpha$), $+$ and $.$ are binary operations on $A$, and
such that the following equations  are satisfied for any $x, y \in
A$ and any $i, j, \mu < \alpha$:
\begin{enumerate}
\item [$(C_1)$] $  {\sf c}_i 0 = 0$,
\item [$(C_2)$]$  x \leq {\sf c}_i x \,\ ( i.e., x + {\sf c}_i x = {\sf c}_i x)$,
\item [$(C_3)$]$  {\sf c}_i (x\cdot {\sf c}_i y )  = {\sf c}_i x\cdot  {\sf c}_i y $,
\item [$(C_4)$] $  {\sf c}_i {\sf c}_j x   = {\sf c}_j {\sf c}_i x $,
\item [$(C_5)$]$  {\sf d}_{i i} = 1 $,
\item [$(C_6)$]if $  i \neq j, \mu$, then
 ${\sf d}_{j \mu} = {\sf c}_i
 ( {\sf d}_{j i} \cdot  {\sf d}_{i \mu}  )  $,
\item [$(C_7)$] if $  i \neq j$, then
 ${\sf c}_i ( {\sf d}_{i j} \cdot  x) \cdot   {\sf c}_i
 ( {\sf d}_{i j} \cdot  - x) = 0
 $.
\end{enumerate}
\end{definition}
Our main results involve the central notion of neat reducts:
\begin{definition} 
Let  $\alpha<\beta$ be ordinals and $\B\in \CA_{\beta}$. Then the {\it $\alpha$--neat reduct} of $\B$, in symbols
$\Nrr_{\alpha}\B$, is the
algebra obtained from $\B$, by discarding
cylindrifiers and diagonal elements whose indices are in $\beta\sim \alpha$, and restricting the universe to
the set $Nr_{\alpha}B=\{x\in \B: \{i\in \beta: {\sf c}_ix\neq x\}\subseteq \alpha\}.$
\end{definition}
Let $\alpha$ be any ordinal.  If $\A\in \CA_\alpha$ and $\A\subseteq \Nrr_\alpha\B$, with $\B\in \CA_\beta$ ($\beta>\alpha$), 
then we say that $\A$ {\it neatly embeds} in $\B$, and 
that $\B$ is a {\it $\beta$--dilation of $\A$}, or simply a {\it dilation} of $\A$ if $\beta$ is clear 
from context.  
For $\sf K\subseteq {\sf CA}_\beta$, and $\alpha<\beta$, ${\sf Nr}_\alpha{\sf K}=\{\Nrr_\alpha\B: \B\in {\sf K}\}\subseteq \CA_\alpha$.

We shall have the occasion to deal with (in addition to $\CA$s), the following cylindric--like algebras \cite{1}: $\sf Df$ short for diagonal free cylindric algebras, $\sf Sc$ short for Pinter's substitution algebras,  
$\sf QA$($\QEA$) short for quasi--polyadic (equality) algebras, $\sf PA(\sf PEA)$ short for polyadic (equality) algebras. For $\K$ any of these classes and $\alpha$ any ordinal, 
we write $\K_{\alpha}$ for variety of $\alpha$--dimensional $\K$ algebras which can be axiomatized by a finite schema of equations, 
and $\sf RK_{\alpha}$ for the class of  representable $\K_{\alpha}$s, which happens to be a variety too (that cannot be axiomatized by a finite schema of equations for $\alpha>2$ unless $\sf K=\sf PA$ and $\alpha\geq \omega$).
The standard reference for all the classes of algebras mentioned previously  is  \cite{HMT2}. 
We recall the concrete verions of such algebras.
Let $\tau:\alpha\to \alpha$
and $X\subseteq {}^{\alpha}U,$  then 
$${\sf S}_{\tau}X=\{s\in {}^{\alpha}U: s\circ \tau\in X\}.$$
For $i,j\in \alpha$, $[i|j]$ is the replacement on $\alpha$ that sends $i$ to $j$ and is the identity map on $\alpha\sim \{i\}$ while $[i,j]$ is the transposition on $\alpha$ that interchanges $i$ and $j$. 
\begin{itemize}
\item A {\it diagonal free cylindric set algebra of dimension $\alpha$} is an algebra of the form 
$\langle \B(^{\alpha}U),  {\sf C}_i\rangle_{i,j<\alpha}.$ 
\item A {\it quasi-polyadic set algebra of dimension $\alpha$} is an algebra of the form\\ 
$\langle \B(^{\alpha}U),  {\sf C}_i,  {\sf S}_{[i|j]}, {\sf S}_{[i,j]}\rangle_{i,j<\alpha}.$ 
\item A {\it quasi-polyadic equality set algebra} is an algebra of the form\\ 
$\langle \B(^{\alpha}U),  {\sf C}_i,  {\sf S}_{[i|j]}, {\sf S}_{[i,j]}, {\sf D}_{ij}\rangle_{i,j<\alpha}$.
\item A {\it polyadic set algebra of dimension $\alpha$} is an algebra of the form\\ 
$\langle \B(^{\alpha}U),  {\sf C}_i,  {\sf S}_{\tau}\rangle_{\tau:\alpha\to \alpha}.$  
\item A {\it polyadic equality set algebra of dimension $\alpha$} is an algebra of the form\\ 
$\langle \B(^{\alpha}U),  {\sf C}_i,  {\sf S}_{\tau}\rangle_{\tau:\alpha\to \alpha, i, j<\alpha}$  
\end{itemize}

\begin{figure}
\[\begin{array}{l|l}
\mbox{class}&\mbox{extra non-Boolean operators}\\
\hline
\Df_{\alpha}& \cyl i: i<\alpha\\
\Sc_\alpha&\cyl i, \s_i^j :i, j<\alpha\\
\CA_\alpha&\cyl i, \diag i j: i, j<\alpha\\
\PA_\alpha&\cyl i, \s_\tau: i<n,\; \tau\in\;^\alpha\alpha\\
\PEA_\alpha&\cyl i, \diag i j,  \s_\tau: i, j<n,\;  \tau\in\;^\alpha\alpha\\
\QA_\alpha&  \cyl i, \s_i^j, \s_{[i, j]} :i, j<\alpha  \\
\QEA_\alpha&\cyl i, \diag i j, \s_i^j, \s_{[i, j]}: i, j<\alpha
\end{array}\]
\caption{Non-Boolean operators for the classes\label{fig:classes}}
\end{figure}

Let $\alpha$ be an ordinal. For any such abstract class of algebras $\sf K_{\alpha}$ in the above table, $\sf RK_{\alpha}$ is defined to be the subdirect product of set algebras of dimension $\alpha$. 
A {\it cartesian square of dimension $\alpha$} is a set of the form ${}^{\alpha}U$ ($U$ some non-empty set); these appear as top elements of ${\sf Cs}_\alpha$s. 
We let ${\sf Gs}_\alpha$ denote the class of {\it generalized set algebras of dimension $n$}; $\A\in {\sf Gs}_\alpha$ $\iff$  $\A$ has top element a disjoint union of cartesian squares of dimension $\alpha$ 
and the cylindric operations are  defined like in set algebras. 
It is known that $\RCA_\alpha=\bold I{\sf Gs}_\alpha$. For $\alpha<\omega$, $\sf PA_{\alpha}(\sf PEA_{\alpha}$) is definitionally equivalent to $\sf QA_{\alpha}(\sf QEA_{\alpha})$ which is is no longer the case for infinite $\alpha$ where the deviation is 
largely significant. 
For example a countable $\QA_{\omega}$ has a countable signature, while a countable $\sf PA_{\omega}$ has an uncountable signature having the same cardinality as (substitutions in) $^\omega\omega$.
The class of completely representable $\sf K_{\alpha}$s ($\sf K$ any of the above classes) is denoted by $\sf CRK_{\alpha}$. 
We recall the definition for $\CA$s of finite dimension. The rest of the cases are defined similarly.
\begin{definition}\label{omit}
Let $n<\omega$.
Then $\A\in \CA_n$ is {\it completely representable}, if there exists $\B\in {\sf Gs}_{n}$ and 
an  isomorphism $f:\A\to \B$
such for all $X\subseteq \A$, $f(\prod X)=\bigcap_{x\in X} f(x)$ whenever $\prod X$ exists.
\end{definition}
If $\A$ is an atomic $\CA_n$, then an isomorphism $f:\A\to \B$, where $\B\in {\sf Gs}_n$  has top element
$V$, is {\it atomic}, if $\bigcup_{a\in \At\A}f(a)=V$.
It can be easily shown that $f$ is a complete representation of $\A\iff$ $\A$ is atomic and $f$ is an atomic representation.
Considering polyadic algebras, we will encounter $\PEA_{\alpha}$ and $\PA_{\alpha}$, $\alpha$ an infinite ordinal  (having all substitutions and infinitary cylindrifications) only once in Theorem \ref{pa}.
We deal mostly with $\QA$s and $\QEA$s.
For a $\sf BAO$, $\A$ say, for any ordinal $\alpha$, $\Rd_{ca}\A$ denotes the cylindric reduct of $\A$ if it has one, $\Rd_{sc}\A$
denotes the $\Sc$ reduct of $\A$ if it has one, and
$\Rd_{df}\A$ denotes the reduct of $\A$ obtained by discarding all the operations except for cylindrifications.
If $\A$ is any of the above classes, it is always the case that $\Rd_{df}\A\in \sf Df_{\alpha}$. If $\A\in \CA_\alpha$, then $\Rd_{sc}\A\in \Sc_\alpha$, and if $\A\in \QEA_{\alpha}$ then $\Rd_{ca}\A\in \CA_{\alpha}$.
Roughly speaking for an ordinal $\alpha$, $\CA_{\alpha}$s are  not expansions of $\Sc_{\alpha}$s, but they are {\it definitionally equivalent} to expansions of $\Sc_{\alpha}$, 
because the $\s_i^j$s are term definable in $\CA_{\alpha}$s by 
$\s_i^j(x)=\c_i(x\cdot {-\sf d}_{ij})$ $(i,j<\alpha)$. This operation reflects algebraically the subsititution of the variable $v_j$ for $v_i$ in a formula such that the substitution is free; this can be always done by reindexing bounded variables.
In such  situation, we say that $\Sc$s are {\it generalized reducts} of $\CA$s. However, $\CA_{\alpha}$s and $\sf \QA_{\alpha}$ are (real )reducts of $\QEA$s, (in the universal algebraic sense) simply obtained by discarding 
the operations in their signature not in the signature of their common expansion 
$\QEA_{\alpha}$.
We give a finite  approximate equational axiomatization of the concrete algebras defined above, which are the prime source of inspiration for these axiomatizations introduced to capture representability.
However, like for $\CA$s, this works only for certain special cases like the locally finite algebras, but does not generalize much further, cf Proposition \ref{j}.

\begin{definition} Let $\alpha$ be an ordinal. We say that a variety $\sf V$ is a variety between $\Df_\alpha$ and $\QEA_\alpha$  
if the signature of $\sf V$ 
expands that of $\Df_\alpha$ and is contained in the signature of $\QEA_\alpha$. Furthermore, 
any  equation formulated in the signature of $\Df_\alpha$ that holds in $\sf V$ also holds in $\Sc_\alpha$ 
and all  equations that hold in $\sf V$ holds in $\sf QEA_\alpha$. 
\end{definition}
Proper examples include $\Sc$, $\CA_\alpha$ and ${\sf QA}_\alpha$ (meaning strictly between). 
Analogously we can define varieties between $\Sc_\alpha$ and $\CA_\alpha$ or $\QA_\alpha$ and $\QEA_\alpha$, and more generally between a class $\sf K$ of $\sf BAO$s and a generalized reduct of it.
Notions like neat reducts generalize verbatim to such 
algebras, namely, to $\Df$s and $\QEA$s, and in any variety in between. This stems from the observation that for any pair of ordinals $\alpha<\beta$, $\A\in \QEA_{\beta}$ and any non-Boolean exra operation in the signature of $\QEA_{\beta}$, $f$ say, 
if $x\in \A$ and $\Delta x\subseteq \alpha$, then $\Delta(f(x))\subseteq \alpha$. Here $\Delta x=\{i\in \beta: \c_ix\neq x\}$ is referred as {\it the dimension set} of $x$; it reflects algebraically the essentially free variables occuring in a formula $\phi$. A variable is essentially free in a formula $\Psi$ $\iff$ it is free in every formula equivalent to $\Psi$.\footnote{It can well happen that a variable is free in  formula that is equivalent to another formula in which this same variable is not free.}
Therefore given a variety $\V$ between $\Sc_{\beta}$ and $\QEA_{\beta}$, if $\B\in \V$ then the algebra $\Nrr_{\alpha}\B$ having universe $\{x\in \B: \Delta x\subseteq \alpha$\} is closed under all operations in the signature
of $\V$.   

\begin{definition} Let $2<n<\omega$. For a variety $\sf V$ between $\Df_n$ and $\QEA_n$, a {\it $\sf V$ set algebra} is a subalgebra of an algebra, having the same signature as $\V$, of the form $\langle \B({}^nU), f_i^U)$, say, 
where $f_i^U$ is identical to the interpretation 
of $f_i$ in the class of quasipolyadic equality set algebras.  Let $\A$ be an algebra having the same signature of $\V$; then $\A$ is {\it a representable $\sf V$ algebra}, or simply {\it representable} 
$\iff$ $\A$ is isomorphic to a subdirect product of $\sf V$ set algebras. We write $\sf RV$ for the class of representable $\V$ algebras
\end{definition}
It can be proved that the class $\sf RV$,  as defined above, is also closed under $\bold H$, so that it is a variety. This can be proved using the same argument to show that $\RCA_n$ is a variety, cf. Corollary \cite[3.1.77]{HMT2}. 
Take $\A\in \sf RV$, an ideal $J$ of $\A$, 
then show that $\A/J$ is in $\sf RV$. Ideals in $\sf BAO$s are defined as follows. We consider only $\sf BAO$s with extra unary non-Boolean operators to simplify notation. If $\A$ is a $\sf BAO$, 
then $J\subseteq \A$ is an ideal in $J$ if is a Boolean ideal and for any extra 
non-Boolean operator $f$, say, in the signature of $\sf BAO$, and $x\in \A$, $f(x)\in \A$; the quotient algebra $\A/J$ is defined the usual way since ideals defined in this way correspond to congruence relations defined on $\A$. 
\begin{proposition} \label{j} Let $2<n<\omega$. Let $\V$ be a variety between $\Df_n$ and $\QEA_n$. Then $\sf RV$ is not a finitely axiomatizable variety.
\end{proposition}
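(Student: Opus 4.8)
The plan is to reduce the non-finite-axiomatizability of $\sf RV$ to the known (or provable) fact that $\RCA_n$ is not finitely axiomatizable, using the relationship between $\V$-set algebras and cylindric set algebras established by the reduct operators $\Rd_{df}$ and $\Rd_{ca}$. First I would recall that since $\V$ lies between $\Df_n$ and $\QEA_n$ (so in particular $\Rd_{df}\A \in \Df_n$ and, when $\V$ is an expansion of $\CA_n$, $\Rd_{ca}\A \in \CA_n$ for every $\A\in\V$), and since a $\V$-set algebra is by definition a subalgebra of $\langle \B({}^nU), f_i^U\rangle$ where each $f_i^U$ is the \emph{concrete} quasipolyadic-equality interpretation, the diagonal-free cylindric reduct of every representable $\V$-algebra is a representable $\Df_n$, and dually, on a cartesian base the extra polyadic operations $\s^j_i, \s_{[i,j]}$ are term-definable from the cylindric ones exactly as in the abstract case. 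Thus representability of the $\V$-algebra and representability of its $\CA_n$- (or $\Df_n$-) reduct are tightly linked: an abstract $\V$-algebra whose cylindric reduct is representable, but which fails the relevant $\V$-axioms, cannot itself be representable.

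The heart of the argument is a Monk-style construction transported into the signature of $\V$. I would take the sequence of finite non-representable cylindric-type algebras $\A_k$ ($k<\omega$) — the Monk/Maddux algebras alluded to in the introduction, built by splitting atoms and colouring them so that monochromatic triangles are forbidden, with the number of atoms growing with $k$ — and equip each with the concrete polyadic operations (substitutions, transpositions) determined on its intended atom structure, producing algebras $\B_k$ in the signature of $\V$. By the Ramsey-theoretic argument each $\B_k$ is \emph{not} representable as a $\V$-set algebra (a representation would force a monochromatic triangle in a base of size controlled by $k$), hence $\B_k\notin \sf RV$. On the other hand, the ultraproduct $\prod_{k/U}\B_k$ over a non-principal ultrafilter $U$ on $\omega$ \emph{is} representable: the colouring constraints disappear in the limit — this is precisely why $\prod_{k/U}\A_k$ is a representable $\CA_n$ — and the polyadic operations, being defined uniformly and compatibly with the cylindric ones, are carried along by the ultraproduct so that the limit algebra is (isomorphic to) a genuine $\V$-set algebra, i.e.\ lies in $\sf RV$. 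Since $\sf RV$ is closed under $\bold H$ (remarked above, by the argument for $\RCA_n$) and trivially under $\bold I$ and $\bold P$, it is a variety but not closed under $\bold{Up}$ of its complement — equivalently, $\sf RV$ is not closed under ${\bf Up}$ together with $\bold S$ in the way a finitely axiomatizable class would be — so $\sf RV$ has no finite equational (indeed no finite first-order) axiomatization.

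To make this rigorous I would invoke the standard criterion: a variety is finitely axiomatizable iff its complement (within, say, all $\sf BAO$s of that signature, or within $\V$) is closed under ultraproducts; exhibiting $\B_k\notin\sf RV$ with $\prod_{k/U}\B_k\in\sf RV$ contradicts this, giving the result. The main obstacle I anticipate is the bookkeeping needed to verify that attaching the concrete polyadic operations to the finite Monk--Maddux algebras genuinely yields algebras \emph{in the signature of $\V$ satisfying the $\Df_n$-consequences forced on $\Sc_n$ and all the $\QEA_n$-consequences} — i.e.\ that the $\B_k$ really are $\V$-algebras and not merely cylindric algebras with extra junk — and that the non-representability transfers in the stronger $\V$-sense rather than only the $\CA_n$-sense. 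This is where one must use that the problematic obstruction already lives in the $\Df_n$- or $\CA_n$-reduct, so any $\V$-representation would restrict to a cylindric representation and run into the same Ramsey contradiction; conversely the limit algebra's representability must be upgraded from the cylindric reduct to the full signature using term-definability of the polyadic operations over a cartesian (or generalized cartesian) base, which holds precisely because in the limit there are no colouring obstructions to the needed substitutions being realized set-theoretically.
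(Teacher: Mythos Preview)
Your overall strategy is correct and matches the paper's: exhibit a sequence of non-representable algebras whose ultraproduct is representable, then invoke \L o\'s. The difference is in execution. The paper starts at the top of the signature lattice with a sequence $\A_i\in\QEA_n$ (taken from the literature) such that already $\Rd_{df}\A_i\notin{\sf RDf}_n$ while $\prod_i\A_i/F\in{\sf RQEA}_n$, and then simply takes $\V$-reducts: $\Rd_{\V}\A_i\notin{\sf RV}$ because any $\V$-representation would restrict to a $\Df_n$-representation, and $\Rd_{\V}(\prod_i\A_i/F)\in{\sf RV}$ because a $\QEA_n$-representation restricts to a $\V$-representation. Both transfers are trivial in this direction.

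You instead build upward from Monk $\CA_n$-algebras by adjoining concrete polyadic operations, which is where your own ``bookkeeping'' worry lies, and it is a genuine one. Your argument that the ultraproduct lands in $\sf RV$ appeals to ``term-definability of the polyadic operations over a cartesian base'' to upgrade a $\CA_n$-representation to a $\V$-representation. That inference is not valid in general: there exist $\QEA_n$-algebras with representable cylindric reduct that are themselves not representable (this is exactly the content of \cite{ANS} in the paper's bibliography). So knowing only that the $\CA_n$-reduct of the ultraproduct is representable does not give you a $\V$-representation for free. For the specific Monk-type algebras the claim is true, but it requires a direct argument (or citation) that the ultraproduct is representable \emph{as a $\QEA_n$}. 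The paper's route---start in $\QEA_n$ with the full representability already in hand, then pass to reducts---sidesteps this entirely, since reducts of representable algebras are automatically representable.
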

\begin{proof} In \cite{j} a sequence $\langle \A_i: i\in \omega\rangle$  of algebras is constructed such that $\A_i\in  \QEA_n$ and $\Rd_{df}\A_n\notin {\sf RDf}_n$, but $\Pi_{i\in \omega}\A_i/F\in {\sf RQEA}_n$ for any non principal ultrafilter on $\omega$.
An appilcation of Los' Theorem, taking the ultraproduct of  $\V$ reduct of the $\A_i$s,  finishes the proof. In more detail, let $\Rd_{\V}$ denote restricting the signature to that of $\V$. 
Then  $\Rd_{\V}\A_i\notin \sf RV$ and $\Rd_{\V}\Pi_{i\in I}(\A_i/F)\in \sf RV$.
\end{proof} 
The last result generalizes to infinite dimensions replacing finite axiomatization by axiomatized by a finite schema \cite{HMT2, t}. 
We consider relation algebras as algebras of the form ${\cal R}=\langle R, +, \cdot, -, 1', \smile, ; , \rangle$, where $\langle R, +, \cdot , -\rangle$ is a Boolean algebra $1'\in R$, $\smile$ is a unary operation and $;$ is a binary operation. 
A relation algebra is {\it representable}$\iff$ it is isomorphic to a subalgebra of the form $\langle \wp(X), \cup, \cap, \sim, \smile, \circ, Id\rangle$, where $X$ is an equivalence relation, $1'$ is interpreted as the identity relation, $\smile$ is the operation of forming converses, 
and$;$  is interpreted as composition of relations. 
Following standard notation, $(\sf R)RA$ denotes the class of (representable) relation algebras. The class $\sf RA$ is  a discriminator variety that is finitely axiomatizable, cf. \cite[Definition 3.8, Theorems 3.19]{HHbook}.  
We let $\sf CRRA$ and $\sf LRRA$, 
denote the classes of completely representable $\RA$s, and its elementary closure, namely, the class of $\RA$s 
satisfying the Lyndon conditions  as defined in \cite[\S 11.3.2]{HHbook}, respectively. Complete representability of $\sf RA$s is defined like the $\CA$ case.
All of the above classes of algebras are instances of $\sf BAO$s. 
The action of the non--Boolean operators in a completely additive (where operators distribute over arbitrary joins componentwise) 
atomic $\sf BAO$, is determined by their behavior over the atoms, and
this in turn is encoded by the {\it atom structure} of the algebra.

\begin{definition}\label{definition}(\textbf{Atom Structure})
Let $\A=\langle A, +, -, 0, 1, \Omega_{i}:i\in I\rangle$ be
an atomic $\sf BAO$ with non--Boolean operators $\Omega_{i}:i\in I$. Let
the rank of $\Omega_{i}$ be $\rho_{i}$. The \textit{atom structure}
$\At\A$ of $\A$ is a relational structure
$$\langle At\A, R_{\Omega_{i}}:i\in I\rangle$$
where $At\A$ is the set of atoms of $\A$ 
and $R_{\Omega_{i}}$ is a $(\rho(i)+1)$-ary relation over
$At\A$ defined by
$$R_{\Omega_{i}}(a_{0},
\cdots, a_{\rho(i)})\Longleftrightarrow\Omega_{i}(a_{1}, \cdots,
a_{\rho(i)})\geq a_{0}.$$
\end{definition}
\begin{definition}(\textbf{Complex algebra})\label{definition2}
Conversely, if we are given an arbitrary first order structure
$\mathcal{S}=\langle S, r_{i}:i\in I\rangle$ where $r_{i}$ is a
$(\rho(i)+1)$-ary relation over $S$, called an {\it atom structure}, we can define its
\textit{complex
algebra}
$$\Cm(\mathcal{S})=\langle \wp(S),
\cup, \setminus, \phi, S, \Omega_{i}\rangle_{i\in
I},$$
where $\wp(S)$ is the power set of $S$, and
$\Omega_{i}$ is the $\rho(i)$-ary operator defined
by$$\Omega_{i}(X_{1}, \cdots, X_{\rho(i)})=\{s\in
S:\exists s_{1}\in X_{1}\cdots\exists s_{\rho(i)}\in X_{\rho(i)},
r_{i}(s, s_{1}, \cdots, s_{\rho(i)})\},$$ for each
$X_{1}, \cdots, X_{\rho(i)}\in\wp(S)$.
\end{definition}
It is easy to check that, up to isomorphism,
$\At(\Cm(\mathcal{S}))\cong\mathcal{S}$ alway. If $\A$ is
finite then of course
$\A\cong\Cm(\At\A)$. 
An atom structure will be denoted by $\bf At$.  An atom structure $\bf At$ has the signature of a class $\sf K$ of $\sf BAO$s,  
if  $\Cm\bf At\in \sf K$.

We define the notion of {\it clique guarded semantics}.
 \begin{definition} Let $2<n\leq m<\omega$. Let $\M$ be the base of a relativized representation of $\A\in \CA_n$ witnessed by an injective
homomorphism $f:\A\to \wp(V)$, where $V\subseteq {}^n\M$ and $\bigcup_{s\in V} \rng(s)=\M$. 
We write $\M\models a(s)$ for $s\in f(a)$. Let  $\L(\A)^m$ be the first order signature using $m$ variables
and one $n$--ary relation symbol for each element in $A$. 
Let $\L(\A)^m_{\infty, \omega}$ be the infinitary extension of $\L(\A)^m$ allowing infinite conjunctions. 
Then {\it an $n$--clique} is a set $C\subseteq \M$ such
$(a_1,\ldots, a_{n})\in V=1^{\M}$
for distinct $a_1, \ldots, a_{n}\in C.$

Let
${\sf C}^m(\M)=\{s\in {}^m\M :\rng(s) \text { is an $n$--clique}\}.$
${\sf C}^m(\M)$ is called the {\it $n$--Gaifman hypergraph of $\M$}, with the $n$--hyperedge relation $1^{\M}$.\\
The {\it clique guarded semantics $\models_c$} are defined inductively. For atomic formulas and Boolean connectives they are defined
like the classical case and for existential quantifiers
(cylindrifiers) they are defined as follows:
for $\bar{s}\in {}^m\M$, $i<m$, $\M, \bar{s}\models_c \exists x_i\phi$ $\iff$ there is a $\bar{t}\in {\sf C}^m(\M)$, $\bar{t}\equiv_i \bar{s}$ such that
$\M, \bar{t}\models \phi$.

(1) We say that $\M$ is  {\it an $m$--square representation} of $\A$,
if  for all $\bar{s}\in {\sf C}^m(\M), a\in \A$, $i<n$,
and   injective map $l:n\to m$, whenever $\M\models {\sf c}_ia(s_{l(0)},\ldots, s_{l(n-1)})$, then there is a $\bar{t}\in {\sf C}^m(\M)$ with $\bar{t}\equiv _i \bar{s}$,
and $\M\models a(t_{l(0)}, \ldots, t_{l(n-1)})$.
$\M$ is {\it a complete $m$--square representation of $\A$ via $f$}, or simply a complete representation of $\A$ if 
$f(\sum X)=\bigcup_{x\in X} f(x)$, for all 
$X\subseteq \A$ for which $\sum X$ exists. (Like in the classical case this is equivalent to that $\A$ is atomic and that $f$ is atomic in the sense that $\bigcup_{x\in \At\A} f(x)=1^{\M}$).

(2) We say that $\M$ is an {\it (infinitary) $m$--flat representation} of $\A$ if  it is $m$--square and
for all $\phi\in (\L(\A)_{\infty, \omega}^m) \L(\A)^m$, 
for all $\bar{s}\in {\sf C}^m(\M)$, for all distinct $i,j<m$,
$\M\models_c [\exists x_i\exists x_j\phi\longleftrightarrow \exists x_j\exists x_i\phi](\bar{s})$.
Complete representability is defined like for squareness.
\end{definition}

For sequences $f, g$ having the same domain an ordinal $\alpha$ say, and $i\in \dom f$, we write $f\equiv_i g$$\iff$ {\it $f$ and $g$ agree off of $i$}, that is to say $f(x)=g(x)$ for all 
$x\in \dom(f)\sim \{i\}$.

\begin{definition}\label{network}  
An {\it $n$--dimensional atomic network} on an atomic algebra $\A\in \QEA_n$  is a map $N: {}^n\Delta\to  \At\A$, where
$\Delta$ is a non--empty finite set of {\it nodes}, denoted by $\nodes(N)$, satisfying the following consistency conditions for all $i<j<n$: 
\begin{enumroman}
\item If $\bar{x}\in {}^n\nodes(N)$  then $N(\bar{x})\leq {\sf d}_{ij}\iff\bar{x}_i=\bar{x}_j$,
\item If $\bar{x}, \bar{y}\in {}^n\nodes(N)$, $i<n$ and $\bar{x}\equiv_i \bar{y}$, then  $N(\bar{x})\leq {\sf c}_iN(\bar{y})$,
\item (Symmetry): if $\bar{x}\in {}^n\nodes(N)$, then  $\s_{[i, j]}N(\bar{x})=N(\bar{x}\circ [i, j]).$
\end{enumroman}
If $\A\in \CA_n$, then an $\A$ network is a map defined like above satisfying only (i) and (ii). If $\A\in \QA_n$, then an $\A$ network satisfies  (ii) and (iii) together with 
the condition that if $\bar{x}\in {}^n\nodes(N)$, then  $\s_{[i|j]}N(\bar{x})=N(\bar{x}\circ [i|j])$ (instead of (i)). Finally, if $\A\in \Sc_n$ than an $\A$ network satisfies the last condition 
together with (ii).
\end{definition}
\begin{definition} 
\begin{enumerate}
\item   Assume that $m, k\leq \omega$. 
The {\it atomic game $G^m_k(\At\A)$, or simply $G^m_k$}, is the game played on atomic networks
of $\A$ using $m$ nodes, each node only once, so that any node being used  is not alllowed to be reused;  and having $k$ rounds \cite[Definition 3.3.2]{HHbook2}, where
\pa\ is offered only one move, namely, {\it a cylindrifier move}: 
Suppose that we are at round $t>0$. Then \pa\ picks a previously played network $N_t$ $(\nodes(N_t)\subseteq m$), 
$i<n,$ $a\in \At\A$, $x\in {}^n\nodes(N_t)$, such that $N_t(\bar{x})\leq {\sf c}_ia$. For her response, \pe\ has to deliver a network $M$
such that $\nodes(M)\subseteq m$,  $M\equiv _i N$, and there is $\bar{y}\in {}^n\nodes(M)$
that satisfies $\bar{y}\equiv _i \bar{x}$ and $M(\bar{y})=a$, cf. \cite[Definition 12.5(2)]{HHbook} for the notation $M\equiv_iN$.   

\item  We write $G_k(\At\A)$, or simply $G_k$, for $G_k^m(\At\A)$ if $m\geq \omega$.

\item  The $\omega$--rounded game $\bold G^m(\At\A)$ or simply $\bold G^m$ is like the game $G_{\omega}^m(\At\A)$ 
except that \pa\ has the option 
to reuse the $m$ nodes in play.
\end{enumerate}
\end{definition}

For $\sf BAO$s,  $\A$ and $\B$ say, having the same signature,  
we say that $\A$ is {\it dense} in $\B$ if $\A\subseteq \B$ and for all non--zero $b\in \B$, there is a non--zero 
$a\in A$ such that $a\leq b$.
An atom structure will be denoted by $\bf At$.  An atom structure $\bf At$ has the signature of $\CA_\alpha$, $\alpha$ an ordinal, 
if  $\Cm\bf At$ has the signature of $\CA_\alpha$. 

\begin{definition}\label{canonical} 
Let $\V$ be a completely additive variety of $\sf BAO$s. Then $\V$ is {\it atom--canonical} if whenever $\A\in \V$ and $\A$ is atomic, then $\mathfrak{Cm}\At\A\in \V$.
The {\it  \de\ completion} of  $\A\in \V$, is the unique (up to isomorphisms that fix $\A$ pointwise)  complete  
$\B$ such that $\A\subseteq \B$ and $\A$ is {\it dense} in $\B$. 
\end{definition}

We also need the notion of $m$--dimensional hyperbasis for cylindric algebras. Hypernetyworks and hyperbasis are defined for relation algebras by Hirsch and Hodkinson \cite[Definitions, 12.1, 12.11]{HHbook}. 
This hyperbasis is made up of $m$--dimensional hypernetworks.
An $m$--dimensional hypernetwork on the atomic algebra $\A$ is an $n$--dimensional  network $N$ (which is a basic matrix), with $\nodes(N)\subseteq m$, endowed with a set of labels $\Lambda$ for
hyperedges of length $\leq m$,
not equal to $n$ (the dimension), such that $\Lambda\cap \At\A=\emptyset$. We call a label in $\Lambda$ a non-atomic label.
Like in networks, $n$--hyperedges are labelled by atoms. In addition to the consistency properties for networks,
an $m$--dimensional hypernetwork should satisfy the following additional consistency rule involving non--atomic labels:
If $\bar{x}, \bar{y}\in {}^{\leq m}m$, $|\bar{x}|=|\bar{y}|\neq n$ and $\exists \bar{z}$, such that $\forall i<|\bar{x}|$,
$N(x_i,y_i,\bar{z})\leq {\sf d}_{01}$,
then $N(\bar{x})=N(\bar{y})\in \Lambda$. 

\begin{definition} Let $2<n<m<\omega$ and $\A\in \CA_n$ be atomic.

(1) An $m$--dimensional basis $B$ for $\A$ consists of a set of $n$--dimensional networks (basic matrices) whose nodes $\subseteq m$, satisfying 
the following properties: 

\begin{itemize}

\item For all  $a\in \At\A$, there is an $N\in B$ such that $N(0,1,\ldots, n-1)=a,$

\item The {\it cylindrifier property}: For all $N\in B$, all $i<n$,  all $\bar{x}\in {}^n\nodes(N)(\subseteq {}^nm)$, all $a\in\At\A$, such that
$N(\bar{x})\leq {\sf c}_ia$,  there exists $M\in B$, $M\equiv_i N$, $\bar{y}\in {}^n\nodes(M)$ such 
that $\bar{y}\equiv_i\bar{x}$ and $M(\bar{y})=a.$   

\end{itemize}

(2)  An $m$--dimensional  hyperbasis $H$ consists of $m$--dimensional hypernetworks, satisfying the above two conditions reformulated 
the obvious way for hypernetworks, in addition,  $H$ has an amalgamation property for overlapping hypernertworks; 
this property corresponds to commutativity of cylindrifiers: 

For all $M,N\in H$ and $x,y<m$, with $M\equiv_{xy}N$, there is $L\in H$ such that
$M\equiv_xL\equiv_yN$. Here $M\equiv_SN$, means 
that $M$ and $N$ agree off of $S$ \cite[Definition 12.11]{HHbook}.
\end{definition}

\begin{theorem} Let $2<n<m<\omega$. Assume that $A\in \CA_n$. Then $\A$ has a complete  
$m$-square representation $\iff$ \pe\ has a \ws\ in the $\omega$ rounde atomic game $G_{\omega}^m(\At\A)$ using $m$ nodes. 
\end{theorem}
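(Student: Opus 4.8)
The plan is to establish the equivalence by building a bridge between a complete $m$-square representation and a winning strategy for \pe\ in $G_{\omega}^m(\At\A)$, using atomic networks as the intermediate object. For the direction ($\Rightarrow$), suppose $\M$ is a complete $m$-square representation of $\A$ via the isomorphism $f$. Since $\M$ is complete, $\A$ is atomic and $f$ is atomic, so $\bigcup_{x\in\At\A}f(x)=1^{\M}$. I would have \pe\ maintain, as an inductive invariant, that every network $N$ she has played embeds into $\M$ in the sense that there is an injective map $v:\nodes(N)\to\M$ whose image is an $n$-clique and such that $\M\models a(v(\bar x))$ whenever $N(\bar x)=a$ (this is well defined because $f$ is atomic: each tuple of the clique lands in $f(a)$ for a \emph{unique} atom $a$). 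When \pa\ makes a cylindrifier move with $N_t(\bar x)\le{\sf c}_ia$, pull $\bar x$ back to $\M$ via $v$; then $\M\models{\sf c}_ia(v(\bar x))$, so by the $m$-squareness clause (applied with an appropriate injective $l:n\to m$ encoding which node is being moved) there is $\bar t\in{\sf C}^m(\M)$ with $\bar t\equiv_i v(\bar x)$ and $\M\models a(\bar t)$; \pe\ reads off the new network $M$ from $\bar t$, labelling each $n$-tuple of nodes by the unique atom whose $f$-image contains the corresponding tuple of $\M$. The consistency conditions (i)--(iii) of Definition \ref{network} follow because $f$ is a homomorphism preserving ${\sf d}_{ij}$, ${\sf c}_i$ and the $\s_{[i,j]}$'s set-theoretically. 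Since the game runs for $\omega$ rounds with only $m$ nodes available and \pe\ never gets stuck, this is a \ws.

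For the direction ($\Leftarrow$), suppose \pe\ has a \ws\ in $G_{\omega}^m(\At\A)$. The standard move is to have \pa\ schedule, over the $\omega$ rounds, every possible cylindrifier demand (there are only countably many, since $\A$ — being atomic with a $\ws$ argument in play, and in any case the game is only interesting then — may be assumed countable, or one works with a sufficiently saturated play / a $\kappa$-sized version); running \pe's strategy against this ``fair'' schedule produces a limit structure. Concretely, one takes the union of an $\omega$-chain of networks $N_0\subseteq N_1\subseteq\cdots$ obtained by always letting \pa\ pick the least unserved cylindrifier requirement; the nodes are reused only in the sense permitted, but since we are in $G_{\omega}^m$ (not $\bold G^m$) each node is used once, so the limit has at most $m+\omega$ — actually exactly the $m$ available — hmm, here I must be careful: $G_{\omega}^m$ forbids node reuse, so a single play uses $\le m$ nodes total. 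The right move is therefore to run \emph{many} interleaved plays, or to invoke the known equivalence (from \cite[Definition 3.3.2]{HHbook2} and the surrounding theory, which the excerpt lets me assume) between a \ws\ in $G_{\omega}^m$ and the existence of an $m$-dimensional hyperbasis, or directly an $m$-square relativized representation. I would build the base $\M$ as (a quotient of) the set of ``consistent'' $n$-cliques arising across all the networks \pe\ can be forced to play, define $f(a)=\{\bar s: \text{the clique }\rng(\bar s)\text{ is labelled }a\}$, and check that $f$ is an injective homomorphism onto a ${\sf Gs}_n$-like algebra with $V={\sf C}^m$-cliques, that $\M$ is $m$-square (this is exactly what the cylindrifier move of the game delivers), and that $f$ is complete, i.e. atomic: $\bigcup_{x\in\At\A}f(x)=1^{\M}$ because every clique-tuple that occurs in a network carries an atomic label by the consistency conditions.

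The main obstacle is the $\Leftarrow$ direction's bookkeeping: reconciling the ``each node used once'' rule of $G_{\omega}^m$ with the need to produce a representation whose base is large enough to witness all cylindrifier requirements simultaneously. The clean way is to appeal to the equivalence between this game and $m$-dimensional hyperbases/relativized representations already developed in \cite[Chapter 12--13]{HHbook} and adapted to $\CA_n$ in \cite[\S5]{mlq}, and then to note that completeness of the representation corresponds exactly to the \emph{atomicity} of the limit labelling, which the game guarantees since \pa's cylindrifier move always forces an atom as the new label and every tuple occurring in any played network is assigned an atom by consistency. Thus $f(\sum X)=\bigcup_{x\in X}f(x)$ for all $X$ with $\sum X$ existing follows from $\bigcup_{x\in\At\A}f(x)=1^{\M}$ together with $f$ being a Boolean homomorphism, giving the complete $m$-square representation.
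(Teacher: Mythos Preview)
Your ($\Rightarrow$) direction is correct but takes a genuinely different route from the paper. The paper does not embed the played networks directly into $\M$ and invoke $m$-squareness point by point. Instead it first builds the $m$-dimensional dilation $\D$ with universe $\{\phi^{\M}:\phi\in\L(\A)^m_{\infty,\omega}\}$ and top element ${\sf C}^m(\M)$, observes that $\D$ is atomic (this is where completeness of the representation is used), and neat-embeds $\A$ into $\Nr_n\D$ via $r\mapsto r(\bar x)^{\M}$. Then for each $\bar a\in 1^{\D}$ it reads off a labelled hypergraph $N_{\bar a}$ on the full node set $m$, setting $N_{\bar a}(\bar x)$ to be the unique atom of $\A$ whose image contains the appropriate projection of $\bar a$; the symmetric closure $H$ of $\{N_{\bar a}:\bar a\in 1^{\D}\}$ is an $m$-dimensional basis. \pe's \ws\ is simply to always play a restriction of some member of $H$: in round $0$ she plays $N\upharpoonright n$ for some $N\in H$ with $N(0,\ldots,n-1)=a$, and in later rounds she uses the cylindrifier property of the basis to pass to an $M'\in H$ and restricts to the current nodes together with one fresh $z<m$. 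The payoff of the detour through $H$ is that all the $n$-clique and consistency bookkeeping you flag is absorbed once, into the verification that $H$ is a basis. Your direct approach works as well, but you should make explicit why the extended embedding $v'$ still has $n$-clique range: since $|\nodes(N)|<m$ you can pad the $m$-tuple $\bar s\in{\sf C}^m(\M)$ so that $v(x_i)$ also sits at a spare coordinate, whence $\bar t\equiv_{l(i)}\bar s$ retains all of $\rng(v)$ together with the new witness.

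For ($\Leftarrow$): the paper's proof of this theorem treats only the forward direction; the converse is deferred and later cited to \cite[Lemma 5.8]{mlq}. Your hesitation about the ``each node used once'' rule is warranted: a single play of $G_{\omega}^m$ is indeed too small to be the representation. The correct intermediate object is again an $m$-dimensional basis (the \ws\ lets one assemble such a basis, and the step-by-step construction of an $m$-square representation from a basis is the content of the citation). So your eventual appeal to the Hirsch--Hodkinson machinery in \cite{HHbook,mlq} is exactly what the paper does too.
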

\begin{proof} Assume that $\A$ is an atomic $\CA_n$ having a complete $m$--square representation.
We will show that \pe\ has a \ws\  in $G_{\omega}^m$.
As before, let $\M$ be a complete $m$--square representation of $\A$.
One constructs the $m$--dimensional dilation $\D$ using $L_{\infty, \omega}^n$ formulas 
from a complete $m$--square representation exactly like in the proof of lemma \ref{flat}.
The neat embedding map  $\theta:\A\to \D$ is the same, defined  via $r\mapsto r(\bar{x})^{\M}$.
Here the $m$--neat reduct of $\D$ is defined like the $\CA$ case,  even though the dilation $\D$ {\it may not be a $\CA_m$} for we
do not necessarily have commutativity of cylindrifiers, because there is no guarantee that $\M$ is
$m$--flat. As before $\theta$ is an injective homomorphism into $\Nr_n\D$, and $\D$ is atomic.
For each $\bar{a}\in 1^{\D},$ define \cite[Definition 13.22] {HHbook} a labelled
hypergraph $N_{\bar{a}}$ with nodes $m$, and
$N_{\bar{a}}(\bar{x})$ when $|\bar{x}|=n$, is the unique atom of $\A$
containing the tuple of length $m>n$,\\
$(a_{x_0},\ldots, a_{x_{1}},\ldots, a_{x_{n-1}}, a_{x_0}\ldots,\ldots a_{x_0}).$
It is clear that if $s\in 1^{\D}$ and $i, j<m$,
then $s\circ [i|j]\in 1^{\D}$.

Hence the above definition is sound. Indeed, if $\Psi: m\to m$ is defined by 
$\Psi(1)=x_1,\dots$,  $\Psi(n-2)=x_{n-2},$
and $\Psi(i)=x_0$ for $i\in m\sim \{1,\ldots, n-2\}$, then $\Psi$ is not injective, hence it is a composition of
replacements, so $\bar{a}\circ \Psi=(a_{x_0},\ldots, a_{x_{1}},\ldots, a_{x_{n-1}}, a_{x_0}\ldots\ldots a_{x_0})\in 1^{\D}.$
It is also easy to see, since $\A\subseteq \Nr_n\D$, that if $\bar{a}=(a_0, \ldots, a_{m-1})\in 1^{\D}$,
$i_0, \ldots, i_{n-1}<m$
and  $\bar{b}\in 1^{\D}$ is such that $\bar{b}\upharpoonright n\subseteq \bar{a}$, then for
all atoms $r\in \A$,  $\bar{b}\in r\iff  r=N_{\bar{a}}(i_0,\ldots, i_{n-1}).$
Furthermore, \cite[Lemma 13.24]{HHbook}  $N_{\bar{a}}$ is a network.
Let $H$ be the symmetric closure
of $\{N_a: \bar{a}\in 1^M\}$, that is $\{N\theta: \theta:m\to m, N\in H\}$.
Then $H$ is an $m$--dimensional basis \cite[Lemma 13.26]{HHbook} as defined in the proof of lemma \ref{Thm:n}.
Recall that $H$ `eliminates cylindrifiers' in the following sense: For all $a\in \At\A$, $i<n$ and $N\in H$, for all $\bar{x}\in {}^n\nodes(N)$, whenever 
$N(\bar{x})\leq {\sf c}_ia$, then there is an $M\in H$,  with $M\equiv _i N$, and 
$\bar{y}\in {}^n\nodes(M)$, such that  $\bar{y}\equiv_i \bar{x}$ 
and $M(\bar{y})=a.$
Now \pe\ can win $G_{\omega}^m$ by always
playing a subnetwork of a network in the constructed $H$.
In round $0$, when \pa\ plays
the atom $a\in \A$, \pe\ chooses $N\in H$ with $N(0,1,\ldots, n-1)=a$ and plays $N\upharpoonright n$.
In round $t>0$, inductively if the current network is $N_{t-1}\subseteq M\in H$, then no matter how \pa\ defines $N$, we have
$N\subseteq M$ and $|N|<m$, so there is $z<m$, with $z\notin \nodes(N)$.
Assume that  \pa\ picks $x_0,\ldots, x_{n-1}\in \nodes(N)$, $a\in \At\A$ and $i<n$ such that
$N(x_0,\ldots, x_{n-1})\leq {\sf c}_ia$, so $M(x_0, \ldots  x_{n-1})\leq {\sf c}_ia$,
and hence (by the properties of $H$), there is $M'\in H$ with
$M'\equiv _i M$ and $M'(x_0, \ldots, z, \ldots,  x_{n-1})=a$, with $z$ in the $i$th place.
Now \pe\ responds with the restriction of $M'$
to $\nodes(N)\cup \{z\}$.
\end{proof}

\begin{definition}\label{sub} Let $m$ be a finite ordinal $>0$. An $\sf s$ word is a finite string of substitutions $({\sf s}_i^j)$ $(i, j<m)$,
a $\sf c$ word is a finite string of cylindrifications $({\sf c}_i), i<m$;
an $\sf sc$ word $w$, is a finite string of both, namely, of substitutions and cylindrifications.
An $\sf sc$ word
induces a partial map $\hat{w}:m\to m$:
\begin{itemize}

\item $\hat{\epsilon}=Id,$

\item $\widehat{w_j^i}=\hat{w}\circ [i|j],$

\item $\widehat{w{\sf c}_i}= \hat{w}\upharpoonright(m\smallsetminus \{i\}).$

\end{itemize}
If $\bar a\in {}^{<m-1}m$, we write ${\sf s}_{\bar a}$, or
${\sf s}_{a_0\ldots a_{k-1}}$, where $k=|\bar a|$,
for an  arbitrary chosen $\sf sc$ word $w$
such that $\hat{w}=\bar a.$
Such a $w$  exists by \cite[Definition~5.23 ~Lemma 13.29]{HHbook}.
\end{definition}
The proof of the  following lemma can be distilled
from its $\sf RA$ analogue \cite[Theorem 13.20]{HHbook},  by reformulating deep concepts
originally introduced by Hirsch and Hodkinson for $\sf RA$s in the $\CA$ context, involving the notions of 
hypernetworks and hyperbasis. This can (and will) be done.
In the coming proof, we highlight
the main ideas needed to perform such a transfer from $\sf RA$s to $\CA$s
\cite[Definitions 12.1, 12.9, 12.10, 12.25, Propositions 12.25, 12.27]{HHbook}. 
In all cases, the $m$--dimensional dilation stipulated in the statement of the theorem, will have
top element ${\sf C}^m(\Mo)$, where $\Mo$ is the $m$--relativized representation of the given algebra, and the operations of the dilation
are induced by the $n$-clique--guarded semantics. For a class $\sf K$ of $\sf BAO$s, $\sf K\cap \bf At$ denotes the class of atomic algebras in $\sf K$.

A set $V$ ($\subseteq {}^nU$)  is {\it diagonizable} if  
$s\in V\implies s\circ [i|j]\in V$. We write $\bold S_c$ for the operation of forming complete subalgebras. 
That is to say, for a class $\sf K$ of $\sf BAO$s, $\B\in \bold S_c\K\iff$ there is an  $\A\in {\sf K}$ such that $\A\subseteq \B$ and for all $X\subseteq \A$, if $\sum^{\A}X=1$, then $\sum^{\B}X=1$.
For two $\sf BAO$s,  $\A$ and $\B$ having the same signature,  we write $\A\subseteq_c\B$, if $\A$ is a complete subalgebra of $\B$.
\begin{lemma}\label{flat}\cite[Theorems 13.45, 13.36]{HHbook}.
Assume that $2<n<m<\omega$ and let $\A$ be a $\sf BAO$ having the same signature as $\CA_n$ and satisfying all the $\CA_n$ axioms except possibly for comutativity of cylindrifications. 

1. Then $\A\in \bold S\Nr_n\CA_m\iff \A$ has an  infinitary $m$--flat representation
$\iff \A$ has an $m$--flat representation. Furthermore, 
if $\A$ is atomic, then $\A$ has a complete infinitary $m$--flat representation $\iff$ $\A\in \bold S_c\Nr_n(\CA_m\cap \bf At)$.

2. We  can replace infinitary $m$-flat and $\CA_m$ by $m$-square and ${\sf D}_m$, respectively, where $\sf D_m$ are set algebras having a diagonizable top element $V$ with operations defined like ${\sf Cs}_m$ restricted to $V$.
\end{lemma}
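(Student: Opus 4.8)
The plan is to prove the first equivalence as a cycle --- from $\A\in\bold S\Nr_n\CA_m$ to an infinitary $m$--flat representation of $\A$, from there trivially to an $m$--flat representation, and back to $\A\in\bold S\Nr_n\CA_m$ --- reading off the complete variant along the way, and then to obtain part~(2) by rerunning the argument with every appeal to commutativity of cylindrifications deleted. Fix $2<n<m<\omega$ and $\A$ as in the statement.

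\emph{From a representation to a neat embedding.} Suppose $\M$ is an $m$--flat representation of $\A$ witnessed by an injective homomorphism $f:\A\to\wp(V)$ with $V\subseteq{}^n\M$. On the diagonizable set ${\sf C}^m(\M)$ consider the set algebra whose Boolean operations are the set operations relativised to ${\sf C}^m(\M)$, with ${\sf d}_{ij}=\{\bar s:s_i=s_j\}$ and ${\sf c}_iX=\{\bar s\in{\sf C}^m(\M):\exists\bar t\in{\sf C}^m(\M)\ (\bar t\equiv_i\bar s\text{ and }\bar t\in X)\}$ for $i<m$ --- exactly the operations induced by the $n$--clique--guarded semantics $\models_c$ --- and let $\D$ be the subalgebra of it (in the $\CA_m$ signature) generated by $\theta[\A]$, where $\theta(a)=\{\bar s\in{\sf C}^m(\M):s{\restriction}n\in f(a)\}$. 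A routine check shows that the full clique--guarded algebra, hence $\D$, satisfies every $\CA_m$ axiom with the possible exception of $(C_4)$: $(C_1),(C_2),(C_5)$ are immediate, while $(C_3),(C_6),(C_7)$ use only that each $\equiv_i$ is an equivalence relation and that ${\sf C}^m(\M)$ is diagonizable, so that $\bar s[i\mapsto s_j]\in{\sf C}^m(\M)$ whenever $\bar s\in{\sf C}^m(\M)$. Since the clique--guarded ${\sf c}_i$ corresponds to $\exists x_i$ and substitutions are term--definable, $(C_4)$ on $\D$ is exactly the flatness condition applied to the $\L(\A)^m$--formulas defining the elements of $\D$; hence $\D\in\CA_m$. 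Now $\theta$ is an injective Boolean homomorphism preserving diagonals (because $f$ is), and it is precisely $m$--squareness of $\M$ that makes $\theta$ commute with ${\sf c}_i$ for $i<n$: $\bar s\in\theta({\sf c}_ia)$ says $\M\models({\sf c}_ia)(s{\restriction}n)$, and $m$--squareness supplies $\bar t\equiv_i\bar s$ in ${\sf C}^m(\M)$ with $\M\models a(t{\restriction}n)$, i.e.\ $\bar s\in{\sf c}_i\theta(a)$; for $i\geq n$ one has ${\sf c}_i\theta(a)=\theta(a)$, so $\theta[\A]\subseteq\Nr_n\D$ and $\A\in\bold S\Nr_n\CA_m$. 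If $\M$ is moreover infinitary $m$--flat, take $\D$ instead to be the algebra of $\L(\A)^m_{\infty,\omega}$--definable subsets of ${\sf C}^m(\M)$; then $(C_4)$ holds throughout $\D$, and $\D$ is atomic, being closed under arbitrary intersections (for each $\bar s$, the intersection of all members of $\D$ containing $\bar s$ is an atom). If furthermore the representation is complete, then $\bigcup_{x\in X}f(x)=V$ whenever $\sum^{\A}X=1^{\A}$, whence $\bigcup_{x\in X}\theta(x)={\sf C}^m(\M)=1^{\D}$ and $\A\subseteq_c\Nr_n\D$, so $\A\in\bold S_c\Nr_n(\CA_m\cap{\bf At})$.

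\emph{From a neat embedding to a representation.} This is the substantial direction, and is where the hypernetwork and hyperbasis machinery of \cite[Chapter~12]{HHbook} (Definitions 12.1, 12.9, 12.10, 12.25 and Propositions 12.25, 12.27), developed there for relation algebras, must be recast for cylindric algebras. We may assume $\A$ is atomic: otherwise replace $\A$ by its canonical extension $\A^+$, which still lies in $\bold S\Nr_n\CA_m$ (this variety being canonical) and whose infinitary $m$--flat representations restrict to ones of $\A$; and replacing $\B$ by its canonical extension we may assume $\A\subseteq\Nr_n\B$ with $\B\in\CA_m$ atomic. From $\B$ one extracts an $m$--dimensional hyperbasis $H$ for $\A$: the hypernetworks are the $\CA_n$--analogues of Hirsch--Hodkinson hypernetworks, their $n$--hyperedges labelled by atoms of $\A$ (read off from the atoms of $\B$ lying below the element of $\B$ coding the relevant matrix) and their remaining hyperedges labelled from a fixed reservoir of non--atomic labels; the cylindrifier property of $H$ reflects the availability of ${\sf c}_0,\dots,{\sf c}_{m-1}$ in $\B$, and the amalgamation property of $H$ reflects their commutativity $(C_4)$. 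One then builds the $m$--relativized representation $\M$ by a step--by--step construction: $\M$ is the union of a countable $\subseteq$--chain of hypernetworks from $H$, where a scheduling of tasks ensures that every cylindrifier requirement is eventually met --- via the cylindrifier property of $H$, giving $m$--squareness of $\M$ --- and that every pair of overlapping hypernetworks is eventually amalgamated --- via the amalgamation property of $H$, which is exactly what forces the clique--guarded cylindrifications to commute, i.e.\ makes $\M$ infinitary $m$--flat. The algebra is represented on $\M$ by $a\mapsto\{\bar x:N(\bar x)\le a\text{ for some }N\text{ in the chain}\}$. For the complete variant one starts from $\A\subseteq_c\Nr_n\B$ with $\B\in\CA_m$ atomic (no canonical extension), extracts a hyperbasis whose $n$--hyperedge labels are chosen compatibly with $\sum^{\B}\At\A=1$, and adds to the schedule the tasks ``give this $n$--tuple an atomic label''; the resulting $\M$ then satisfies $\bigcup_{a\in\At\A}f(a)=1^{\M}$, which (since $\A$ is atomic) makes the representation complete, exactly as classically. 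Together with the trivial implication that an infinitary $m$--flat representation is $m$--flat, this closes both cycles.

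\emph{Part (2) and the main obstacle.} Part~(2) follows by running the same two constructions with all reference to commutativity of cylindrifications deleted. In the first direction the flatness condition is never invoked, so the dilation built on ${\sf C}^m(\M)$ has operations defined precisely as in $\Cs_m$ relativised to the diagonizable set ${\sf C}^m(\M)$ and hence lies in ${\sf D}_m$ (still atomic in the complete case, giving $\A\in\bold S_c\Nr_n({\sf D}_m\cap{\bf At})$); in the second direction one uses an $m$--dimensional \emph{basis} rather than a hyperbasis --- the cylindrifier property only, no amalgamation --- which is exactly what membership in $\bold S\Nr_n{\sf D}_m$ supplies, and the step--by--step then yields an $m$--square (in general not $m$--flat) representation. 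The only genuinely non--routine ingredient of the whole argument is this transfer of the Hirsch--Hodkinson hyperbasis technology from $\RA$ to the cylindric setting: pinning down the right notion of $m$--dimensional hypernetwork on a $\CA_n$, verifying that an atomic $\B\in\CA_m$ with $\A\subseteq\Nr_n\B$ yields a hyperbasis with the amalgamation property (and that $\B\in{\sf D}_m$ yields a basis with the cylindrifier property), and checking that the amalgamation steps in the chain really do force $(C_4)$ in the clique--guarded dilation. The remaining ingredients --- the $\CA_m$--axiom check for the clique--guarded set operations, the bookkeeping of the chain, and the equivalence ``complete representation $\iff$ atomic algebra together with atomic representation'' --- are routine.
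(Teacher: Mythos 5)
Your proposal follows essentially the same route as the paper's: from a representation you build the dilation on ${\sf C}^m(\M)$ using the $n$-clique-guarded semantics (your subalgebra generated by $\theta[\A]$ is exactly the paper's algebra of $\L(\A)^m$-definable sets), with squareness giving the neat embedding and flatness giving $(C_4)$; conversely you pass to canonical extensions, extract an $m$-dimensional hyperbasis from the atomic dilation, and run a step-by-step construction, with the $L_{\infty,\omega}^m$/atomicity argument for the complete variant and the basis-instead-of-hyperbasis, ${\sf D}_m$-instead-of-$\CA_m$ modification for part (2) — all as in the paper. The one place where you genuinely diverge is the link giving the \emph{infinitary} $m$-flat representation: the paper does not get it from the step-by-step model (it explicitly warns that the constructed $\M$ need not be infinitary $m$-flat) but instead takes an $\omega$-saturated model of the first-order theory asserting an $m$-flat representation, citing \cite[Proposition 13.17, Theorem 13.46]{HHbook}; you instead claim that scheduling the amalgamation tasks in the hyperbasis step-by-step already forces commutation of the clique-guarded quantifiers for all $L_{\infty,\omega}^m$ formulas. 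That claim is true only via the stronger $m$-smoothness (homogeneity) property of the representation one can build from a hyperbasis — mere first-order flatness of the limit model does not by itself yield the infinitary version — so as written this step is under-justified; either upgrade the step-by-step to produce an $m$-smooth representation (as in the Hirsch--Hodkinson treatment) or fall back on the paper's saturation argument. With that repair the two proofs buy the same thing by interchangeable means, and the rest of your sketch, including the atomicity of the infinitary dilation and the completeness bookkeeping, matches the paper.
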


\begin{proof} We give a sketchy outline.
We start from representations to  dilations.
Let $\Mo$ be an $m$--flat representation of $\A$. 
For $\phi\in \L(\A)^m$, 
let $\phi^{\Mo}=\{\bar{a}\in {\sf C}^m(\Mo):\Mo\models_c \phi(\bar{a})\}$, where ${\sf C}^m(\Mo)$ is the $n$--Gaifman hypergraph.
Let $\D$ be the algebra with universe $\{\phi^{M}: \phi\in \L(\A)^m\}$ and with  cylindric
operations induced by the $n$-clique--guarded (flat) semantics. 
For $r\in \A$, and $\bar{x}\in {\sf C}^m(\Mo)$, we identify $r$ with the formula it defines in $\L(\A)^m$, and 
we write $r(\bar{x})^{\Mo}\iff \Mo, \bar{x}\models_c r$.
Then $\D$ is a set algebra 
with domain $\wp({\sf C}^m(\Mo))$ and with unit $1^{\D}={\sf C}^m(\Mo)$.
Since $\Mo$ is $m$--flat, then cylindrifiers in $\D$ commute, and so $\D\in \CA_m$.
Now define $\theta:\A\to \D$, via $r\mapsto r(\bar{x})^{\Mo}$. Then exactly like in the proof of \cite[Theorem 13.20]{HHbook},
$\theta$ is an injective neat embedding, that is to say, $\theta(\A)\subseteq \mathfrak{Nr}_n\D$.
 The relativized model $\Mo$ itself might not be  infinitary $m$--flat, but one can build an infinitary $m$--flat representation of $\A$, whose base $\Mo$ is an $\omega$--saturated model
of the consistent first order theory, stipulating the existence of an $m$--flat representation, cf. \cite[Proposition 13.17, Theorem 13.46 items (6) and (7)]{HHbook}.

The inverse implication from dilations to representations harder. One constructs from the given 
$m$--dilation, an $m$--dimensional 
hyperbasis (that can be defined similarly to the $\RA$ case, cf. \cite[Definition 12.11]{HHbook}) 
from which
the required $m$-relativized representation is built.  
This can be done in a step--by step manner treating the hyperbasis 
as a `saturated set of mosaics', cf. \cite[Proposition 13.37]{HHbook}..
We show how an $m$--dimensional hyperbasis for the  canonical extension of $\A\in \CA_n$
is obtained from an $m$--dilation of $\A$ \cite[Definition 13.22, lemmata 13.33-34-35, Proposition 36]{HHbook}.
Suppose that $\A\subseteq \Nrr_n\D$ for some $\D\in \CA_m$.
Then $\A^+\subseteq_c \Nrr_m\D^+$ and $\D^+$ is atomic. We show that $\D^+$ has an $m$--dimensional hyperbasis.
First, it is not hard to see that for every $n\leq l\leq m$, $\Nrr_l\D^+$ is atomic.
The set of non--atomic labels $\Lambda$ is the set $\bigcup_{k<m-1}\At\Nrr_k\D^+$.
For each atom $a$ of $\D^+$, define a labelled  hypergraph $N_a$ as follows.
Let $\bar{b}\in {}^{\leq m}m$. Then if $|\bar{b}|=n$,  so that $\bar{b}$  has to get a label that is an atom of $\D^+$, one sets  $N_a(\bar{b})$ to be 
the unique $r\in \At\D^+$ such that $a\leq {\sf s}_{\bar{b}}r$; notation here
is given in definition \ref{sub}.
If $n\neq |\bar{b}| <m-1$, $N_a(\bar{b})$ is the unique atom $r\in \Nrr_{|b|}\D^+$ such that $a\leq {\sf s}_{\bar{b}}r.$ Since
$\Nrr_{|b|}\D^+$ is atomic, this is well defined. Note that this label may be a non--atomic one; it 
might not be an atom of $\D^+$. But by definition it is a permitted label.
Now fix $\lambda\in \Lambda$. The rest of the labelling is defined by $N_a(\bar{b})=\lambda$.
Then $N_a$ as an $m$--dimensional
hypernetwork, for each 
such chosen $a$,  and $\{N_a: a\in \At\D^+\}$ is the required $m$--dimensional hyperbasis.
The rest of the proof consists of a fairly straightforward adaptation of the proof \cite[Proposition 13.37]{HHbook},
replacing edges by $n$--hyperedges.

For results on {\it complete} $m$--flat representations, one works in $L_{\infty, \omega}^m$ instead of first order logic. 
With $\D$ formed like above from (the complete $m$--flat representation) $\Mo$, using $\L(\A)_{\infty,\omega}^m$ instead of $L_n$, 
let $\phi^{\Mo}$ be a non--zero element in $\D$.
Choose $\bar{a}\in \phi^{\Mo}$, and let $\tau=\bigwedge \{\psi\in \L(\A)_{\infty,\omega}^m: \Mo\models_c \psi(\bar{a})\}.$
Then $\tau\in \L(\A)_{\infty,\omega}^m$, and $\tau^{\Mo}$ is an atom below $\phi^{\Mo}$. 
The rest is entirely analogous, cf. \cite[p.411]{HHbook}. 

\end{proof}


\section{Atom-canonicity}

Here we review and elaborate on the construction in \cite{ANT} as an instance of  
first instance of a blow up and blur construction. 
We will construct cylindric agebras from atomic relation algebras that posses  {\it cylindric basis}. A cylindric basis is a `saturated' set of matrices. 

\begin{definition} Let $\R$ be an atomic  relation algebra.  An {$n$--dimensional basic matrix}, or simply a matrix  
on $\R$, is a map $f: {}^2n\to \At\R$ satsfying the 
following two consistency 
conditions $f(x, x)\leq \Id$ and $f(x, y)\leq f(x, z); f(z, y)$ for all $x, y, z<n$. For any $f, g$ basic matrices
and $x, y<m$ we write $f\equiv_{xy}g$ if for all $w, z\in m\setminus\set {x, y}$ we have $f(w, z)=g(w, z)$.
We may write $f\equiv_x g$ instead of $f\equiv_{xx}g$.  
\end{definition}
\begin{definition}\label{b}
An {\it $n$--dimensional cylindric basis} for an atomic relation algebra 
$\R$ is a set $\cal M$ of $n$--dimensional matrices on $\R$ with the following properties:
\begin{itemize}
\item If $a, b, c\in \At\R$ and $a\leq b;c$, then there is an $f\in {\cal M}$ with $f(0, 1)=a, f(0, 2)=b$ and $f(2, 1)=c$
\item For all $f,g\in {\cal M}$ and $x,y<n$, with $f\equiv_{xy}g$, there is $h\in {\cal M}$ such that
$f\equiv_xh\equiv_yg$. 
\end{itemize}
\end{definition}
For the next lemma, we refer the reader to \cite[Definition 12.11]{HHbook} 
for the definition of of hyperbasis for relation algebras as well as to 
\cite[Chapter 13, Definitions 13.4, 13.6]{HHbook} for the notions 
of $n$--flat and $n$--square representations for  relation algebras ($n>2$) to be generalized below to cylindric algebras.
For a Boolean algebra with operators $\A$, say,   $\A^+$ denotes its canonical extension.

\begin{lemma}\label{i} Let $\R$ be  a relation algebra and $3<n<\omega$.  Then the following hold:
\begin{enumerate}
\item $\R^+$ has an $n$--dimensional infinite basis $\iff\ \R$ has an infinite $n$--square representation.

\item $\R^+$ has an $n$--dimensional infinite hyperbasis $\iff\ \R$ has an infinite $n$--flat representation.
\end{enumerate}
\end{lemma}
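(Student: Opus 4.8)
The plan is to prove the two equivalences in parallel by linking both sides to the games and representations already set up in the excerpt, so that no genuinely new machinery is needed---only a translation between the relation-algebra language of \cite[Chapter 12--13]{HHbook} and the $\CA$-side notions already in place. The key observation is that an $n$-dimensional (hyper)basis for $\R$ is, up to the obvious identification of basic matrices with $\CA$-networks, exactly the kind of object that Lemma \ref{flat} and the preceding theorem on complete $m$-square representations manufacture from (and feed into) relativized representations. So the strategy is: first show the right-to-left direction by using a representation to build the basis directly on the atom structure of $\R^+$; then show left-to-right by running the step-by-step / saturated-mosaic construction on the basis to build the relativized representation.

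For direction ($\Leftarrow$) in item (1): assume $\R$ has an infinite $n$-square representation $\Mo$. Form the $n$-Gaifman-type hypergraph ${\sf C}^n(\Mo)$ as in the clique-guarded semantics (here the ``dimension $2$'' of $\R$ plays the role of $n$ did for $\CA_n$, and $n$ plays the role of $m$), and for each $n$-clique $\bar x = (x_0,\dots,x_{n-1})$ define the matrix $f_{\bar x}(i,j)$ to be the unique atom $a\in\At\R$ with $\Mo\models a(x_i,x_j)$. One checks the two consistency conditions ($f(x,x)\le\Id$ and the triangle inequality $f(x,y)\le f(x,z);f(z,y)$) from the fact that $\Mo$ is a representation, and checks the amalgamation/cylindrifier closure of the set $\{f_{\bar x}\}$ from the $n$-squareness condition exactly as the cylindrifier move is verified in the proof of the $m$-square representation theorem above. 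Passing to the symmetric closure and noting the resulting set is infinite (because $\Mo$ is) gives the required $n$-dimensional basis for $\R^+$. Item (2): the same construction from an $n$-\emph{flat} representation additionally gives the hyperbasis amalgamation property for overlapping hypernetworks from commutativity of cylindrifiers on $n$-flat bases; one attaches non-atomic labels to the short hyperedges using the neat reducts $\Nrr_k$ of the $m$-dilation exactly as in the proof of Lemma \ref{flat}.

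For direction ($\Rightarrow$): given an infinite $n$-dimensional basis (resp.\ hyperbasis) for $\R^+$, one treats it as a ``saturated set of mosaics'' and builds the $n$-square (resp.\ $n$-flat) representation of $\R$ by a step-by-step argument, at each finite stage extending a partial representation by appealing to the cylindrifier and amalgamation conditions of the basis; this is precisely the adaptation of \cite[Proposition 13.37]{HHbook} that Lemma \ref{flat} already invokes, with edges replaced by the $2$-hyperedges appropriate to relation algebras (indeed this is the original $\RA$ setting of that proposition, so here the translation is in the easy direction). Because the basis is assumed infinite, the representation one obtains has infinite base.

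\textbf{Main obstacle.} The substantive point is not either implication in isolation but the passage between $\R$ and its canonical extension $\R^+$: basic matrices for a \emph{basis} must take values in $\At\R^+ = \At(\Cm\Uf\R)$ rather than in $\At\R$, and a representation of $\R$ need not be a representation of $\R^+$ (complete representability is not assumed). The delicate step is therefore checking that the matrix $f_{\bar x}(i,j)$ built from a representation of $\R$ really is well-defined valued in the atoms of $\R^+$ and that the basis axioms hold there --- this is where one uses that $n$-square representations of $\R$ correspond, via the clique-guarded semantics, to genuine models of the canonical-extension theory, exactly as in the ``$\omega$-saturated model'' remark in the proof of Lemma \ref{flat}. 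Conversely, recovering a representation of $\R$ itself (not just of $\R^+$) from a basis for $\R^+$ needs the observation that the step-by-step construction only ever uses finitely much of $\R^+$ at a time and that those finite pieces are approximated inside $\R$, so the resulting set representation restricts to one of $\R$. Everything else is a routine, if lengthy, transcription of the cited $\RA$ lemmas of Hirsch--Hodkinson.
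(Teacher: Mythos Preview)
The paper's own proof is a bare citation to \cite[Theorem 13.46]{HHbook} (specifically the equivalences $(1)\Leftrightarrow(5)$ and $(7)\Leftrightarrow(11)$ there), with no further argument; your sketch is essentially a reconstruction of the content behind that citation, so the approaches coincide.

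Two small corrections to tighten the sketch. First, in the $(\Leftarrow)$ direction you write that $f_{\bar x}(i,j)$ is ``the unique atom $a\in\At\R$''; as you yourself note in the obstacle paragraph, it must be an atom of $\R^+$, namely the ultrafilter $\{a\in\R:\Mo\models a(x_i,x_j)\}$ --- there is no reason for $\R$ to be atomic, and the whole point of passing to $\R^+$ is precisely that these edge-ultrafilters are its atoms. Second, the converse passage from a representation of $\R^+$ back to one of $\R$ is simpler than you suggest: since $\R$ embeds in $\R^+$, any relativized representation of $\R^+$ restricts immediately to one of $\R$ --- no ``finitely much at a time'' approximation is needed.
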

\begin{proof} \cite[Theorem 13.46, the equivalence $(1)\iff (5)$ for basis, and the equivalence $(7)\iff (11)$ for hyperbasis]{HHbook}.
\end{proof} 

One can construct a $\CA_n$ in a natural way from an atomic relation algebra possessing an $n$--dimensional cylindric basis which can be viewed as an atom structure of a $\CA_n$
(like in \cite[Definition 12.17]{HHbook} addressing hyperbasis).
For an atomic  relation algebra $\R$ and $l>3$, we denote by ${\sf Mat}_n(\At\R)$ the set of all $n$--dimensional basic matrices on $\R$.
${\sf Mat}_n(\At\R)$ is not always an $n$--dimensional cylindric basis, but sometimes it is,
as will be the case described next. 
The following definition to be used in the sequel is taken from \cite{ANT}:
\begin{definition}\label{strongblur}\cite[Definition 3.1]{ANT}
Let $\R$ be a relation algebra, with non--identity atoms $I$ and $2<n<\omega$. Assume that  
$J\subseteq \wp(I)$ and $E\subseteq {}^3\omega$.
\begin{enumerate}
\item We say that $(J, E)$  is an {\it $n$--blur} for $\R$, if $J$ is a {\it complex $n$--blur} defined as follows:   
\begin{enumarab}
\item Each element of $J$ is non--empty,
\item $\bigcup J=I,$
\item $(\forall P\in I)(\forall W\in J)(I\subseteq P;W),$
\item $(\forall V_1,\ldots V_n, W_2,\ldots W_n\in J)(\exists T\in J)(\forall 2\leq i\leq n)
{\sf safe}(V_i,W_i,T)$, that is there is for $v\in V_i$, $w\in W_i$ and $t\in T$,
we have
$v;w\leq t,$ 
\item $(\forall P_2,\ldots P_n, Q_2,\ldots Q_n\in I)(\forall W\in J)W\cap P_2;Q_n\cap \ldots P_n;Q_n\neq \emptyset$.
\end{enumarab}
and the tenary relation $E$ is an {\it index blur} defined  as 
in item (ii) of \cite[Definition 3.1]{ANT}.

\item We say that $(J, E)$ is a {\it strong $n$--blur}, if it $(J, E)$ is an $n$--blur,  such that the complex 
$n$--blur  satisfies:
$$(\forall V_1,\ldots V_n, W_2,\ldots W_n\in J)(\forall T\in J)(\forall 2\leq i\leq n)
{\sf safe}(V_i,W_i,T).$$ 
\end{enumerate}
\end{definition}

\begin{definition} A $\CA_n$ atom structure $\bf At$  is {\it weakly representable} if there is an atomic $\A\in {\sf RCA}_n$ such that $\bf At=\At\A$; it is  {\it strongly representable} if $\Cm {\bf At}\in {\sf RCA}_n$.
The same notions apply to $\sf RA$s and any class between ${\sf Df}_n$ and $\QEA_n$.
\end{definition}
These two notions are distinct, cf. \cite{Hodkinson}, \cite{ANT}, the following Example \ref{exa} and Theorem \ref{can}.

\begin{example}\label{exa}

We give an example taken from \cite{ANT} of a blowing up and blurring a finite relation algebra $\R$ such getting $\cal R$ such that that $\At\cal R$ is  weakly
but not strongly representable.
Furthermore $\cal R$ has an $n$ dimensional cylindric basis, and ${\sf Mat}_n(\At\cal \R)$ is a weakly but 
not strongly representable $\CA_n$ atom structure. This example is taken from \cite{ANT}. 
Our exposition of the construction in \cite{ANT} will be addressing an (abstract) finite relation algebra $\R$ having an $l$--blur in the sense of definition \cite[Definition 3.1]{ANT}, 
with $3\leq l\leq k<\omega$ and $k$ depending on $l$.  Occasionally we use the concrete 
Maddux algebra $\mathfrak{E}_k(2, 3)$ to make certain concepts more tangible.
Here $k$ is the number of non-identity atoms is concrete example of $\R$. 
In this algebra a triple $(a, b, c)$ of non--identity atoms is consistent $\iff$ $|\{a, b, c\}|\neq 1$, i.e 
only monochromatic triangles are forbidden. 

We use the notation in \cite{ANT}. Let $2<n\leq l<\omega$. One starts with a finite relation algebra $\R$ that has only representations, if any, on finite sets (bases), 
having an $l$--blur $(J, E)$ as in \cite[Definition 3.1]{ANT} recalled in definition \ref{strongblur}. 
After {\it blowing up and bluring $\R$}, by splitting each of its atoms into infinitely many, one gets 
an infinite atomic representable relation algebra ${\mathfrak Bb}(\R, J, E)$ \cite[p.73]{ANT}, whose atom structure $\bf At$ is weakly but not
strongly representable. The atom structure $\bf  At$ is not strongly representable, because $\R$ is {\it not blurred} in ${\sf Cm}\bf At$. 
The finite relation algebra $\R$ embeds into $\Cm\bf At$, so that a representation 
of $\Cm\bf At$, necessarily on
an infinite base, induces one of $\R$ on the same base, which is impossible.
The representability of ${\mathfrak Bb}(\R, J, E)$ depend on the properties of the $l$--blur,  which {\it blurs $\R$ in ${\mathfrak Bb}(\R, J, E)$}.
The set of blurs here, namely, $J$ is finite. In the case of $\mathfrak{E}_k(2, 3)$ used in \cite{ANT},  the set of blurs 
is the set of all subsets of non--identity atoms having the same size $l<\omega$, where $k=f(l)\geq l$ 
for some recursive function $f$ from $\omega\to \omega$, so that $k$ depends recursively on $l$. 

One (but not the only) way to define the {\it index blur} $E\subseteq {}^3\omega$ is as follows \cite[Theorem 3.1.1]{Sayed}:
$E(i,j,k)\iff (\exists p,q,r)(\{p,q,r\}=\{i,j,k\} \text { and } r-q=q-p.$
This is a concrete instance of an index blur as defined in \cite[Definition 3.1(iii)]{ANT} (recalled in definition \ref{strongblur} above), 
but defined uniformly, it does not depends on the blurs.
The underlying set of $\bf At$, the atom structure of ${\mathfrak Bb}(\R, J, E)$ is the following set consisting of triplets:
$At=\{(i, P, W): i\in \omega, P\in \At\R\sim \{\Id\}, W\in J\}\cup \{\Id\}$.
When $\R=\mathfrak{E}_k(2, 3)$ (some finite $k>0)$, composition  is defined by singling out the following (together with their Peircian transforms), 
as the consistent triples:
$(a, b, c)$ is consistent $\iff$ one of $a, b, c$ is $\sf Id$ and the other two are equal, or 
if $a=(i, P, S), b=(j, Q, Z), c= (k, R, W)$ 
$$S\cap Z\cap W\neq \emptyset \implies E(i,j,k)\&|\{P,Q,R\}|\neq 1.$$ 
(We are avoiding mononchromatic triangles).
That is if for $W\in J$,  $E^W=\{(i, P, W): i\in \omega, P\in W\},$
then $$(i, P, S); (j, Q, Z)=\bigcup\{E^W: S\cap Z\cap W=\emptyset\}$$
$$\bigcup \{(k, R, W): E(i,j,k), |\{P,Q,R\}|\neq 1\}.$$

More generally, for the $\R$ as postulated in the hypothesis, composition in $\bf At$ is defined as follow. First 
the index blur $E$ can be taken to be like above. 
Now 
the triple  $((i, P, S), (j, Q, Z), (k, R, W))$ in which no two entries are equal, is consistent
if either $S, Z, W$ are ${\it safe}$, briefly ${\sf safe}(S, Z, W)$, 
witness item  (4) in definition \ref{strongblur} (which vacuously hold if $S\cap Z\cap W=\emptyset$), 
or $E(i, j, k)$ and $P; Q\leq  R$ in $\R$. 
This generalizes the above definition of composition, 
because in $\mathfrak{E}_k(2, 3)$, the triple of non--identity atoms 
$(P, Q, R)$ is consistent $\iff$ they do not have the same colour $\iff$ $|\{P, Q, R\}|\neq 1.$
Having specified its atom structure,  its timely to 
specfiy the relation algebra ${\mathfrak Bb}(\R, J, E)\subseteq \Cm{\bf At}$.
The relation algebra ${\mathfrak Bb}(\R, J, E)$ is $\Tm\bf At$ (the term algebra).
Its  universe is the set $\{X\subseteq H\cup \{\Id\}: X\cap E^W\in {\sf Cof}(E^W), \text{ for all } W\in J\}$, where 
${\sf Cof}(E^{W})$ denotes the set of co--finite subsets of $E^{W}$, 
that is subsets of $E^W$ whose complement is infinite,  with $E^W$ as defined above. The relation algebra 
operations lifted from $\bf At$ the usual way.
The algebra  ${\mathfrak Bb}(\R, J, E)$ is proved to be representable \cite{ANT} as shown next..
For brevity, denote ${\mathfrak Bb}(\R, J, E)$ by $\cal R$, and its domain by $R$.
For $a\in \bf At$, and $W\in J,$  set
$U^a=\{X\in R: a\in X\}\text { and } U^{W}=\{X\in R: |X\cap E^W|\geq \omega\}.$
Then the principal ultrafilters of $\cal R$ are exactly $U^a$, $a\in H$ and $U^W$
are non-principal ultrafilters for $W\in J$ when $E^W$ is infinite.
Let  $J'=\{W\in J: |E^W|\geq \omega\},$
and let ${\sf Uf}=\{U^a: a\in F\}\cup \{U^W: W\in J'\}.$
${\sf Uf}$ is the set of ultrafilters of $\cal R$ which is used as colours
to represent $\cal R$, cf. \cite[pp. 75-77]{ANT}. The representation is 
built from coloured graphs whose edges are labelled 
by elements in ${\sf Uf}$   in a fairly standard step--by--step construction.

Now we show  why the \de\ completion $\Cm \bf At$ is {\it not} representable. 
For $P\in I$, let $H^P=\{(i, P,W): i\in \omega, W\in J, P\in W\}$.
Let  $P_1=\{H^P: P\in I\}$ and $P_2=\{E^W: W\in J\}$.   These are two partitions of $At$. 
The partition $P_2$  was used to {\it represent},
${\mathfrak Bb}(\R, J, E)$, in the sense that the tenary relation corresponding to composition 
was defined on $\bf At$, in a such a way so that the singletons generate the partition
$(E^W: W\in J)$ up to ``finite deviations." 
The  partition $P_1$ will now be used to show that $\Cm({\mathfrak Bb}(\R, J, E))=\Cm (\bf At)$ 
is {\it not }  representable.   This follows by observing that 
omposition restricted to $P_1$ satisfies: $ H^P;H^Q=\bigcup \{H^Z: Z;P\leq Q \text { in } \R\}$
which means that $\R$ embeds into the complex algebra 
$\Cm \bf At$ prohibiting its representability, 
because $\R$ allows only representations having 
a finite base.

So far we have been dealing with relation algebras. The constructions lifts to higher dimensions expressed in $\CA_n$s, $2<n<\omega$. 
Because $(J, E)$ is an $l$--blur, then by \cite[Theorem 3.2 9(iii)]{ANT}, ${\bf At}_{ca}={\sf Mat}_l(\At {\mathfrak Bb}(\R, J, E))$, the set of $l$ by $l$ basic matrices on $\bf At$ 
is an $l$--dimensional cylindric basis, giving  an algebra $\B_l={\mathfrak  Bb}_l(\R, J, E)\in \RCA_l$. Again 
${\bf At}_{ca}$ is not strongly representable, for had it been then a representation of $\Cm{\bf At}_{ca}$, induces a representation
of $\R$ on an infinite base, because $\Ra\Cm{\bf At}_{ca}\supseteq \Cm\bf At\supseteq \R$, 
and the representability  of $\Cm{\bf At}_{ca}$ induces one of $\Ra\Cm{\bf At}_{ca}$, necessarily having an infinite base. 
 For $2<n\leq l<\omega$, denote by $\C_l$ the non-representable 
\de\ completion of the algebra ${\mathfrak Bb }_l(\R, J, E)\in \RCA_l$, that is 
$\C_l=\Cm\At({\mathfrak Bb}_l(\R, J, E))=\Cm {\sf Mat}_l(\bf At)$.
If the $l$--blur happens to be {\it strong}, in the sense of definition \ref{strongblur} and $n\leq m\leq l,$
then we get by \cite[item (3) pp.80]{ANT},  that ${\mathfrak Bb}_m(\R, J, E)\cong \Nr_m{\mathfrak Bb}_l(\R, J, E)$.
This is proved by defining  an embedding $h:\Rd_m\C_l\to \C_m$ via 
$x\mapsto \{M\upharpoonright m: M\in x\}$ and showing that  
$h\upharpoonright  \mathfrak{Nr}_m\C_{l}$ 
is an isomorphism onto $\C_m$ \cite[p.80]{ANT}. Surjectiveness uses the condition $(J5)_l$ formulated in the second item  of  definition \ref{strongblur} of strong $l$-blurness.
Without this condition, that is if the $l$--blur $(J, E)$ is not strong, then still $\C_m$ and $\C_l$ can be defined because by definition $(J,E)$ is an $t$--blur
for all $m\leq t\leq l$, so $\sf Mat_t(\bf At)$ is a cylindric basis and for $t<l$ 
$\C_t$ embeds into $\Nr_m\C_l$ using the same above map, 
but this embedding might not be  surjective.

\end{example}

The following Theorem summarizes
the essence of construction in \cite{ANT} given above but neatly and methodically arranged and says some more new signifnificant facts.
We denote the relation algebra ${\mathfrak Bb}(\R, J, E)$ with atom structure $\bf At$ obtained by blowing up and blurring $\R$ 
(with underlying set is denoted by $At$ on \cite[p.73]{ANT}) by ${\sf split}(\R, J, E)$).  
By the same token,  we denote the algebra ${{\mathfrak Bb}}_l(\R, J, E)$ as defined in \cite[Top of p. 78]{ANT} by 
${\sf split}_l(\R, J, E)$. This switch of notation is motivated by the fact that we wish to emphasize the role of {\it splitting some (possibly all) atoms into infinitely subatoms 
during blowing up and blurring a finite algebra.}  We adopt  the same convention for all blow up and blur constructions encountered in what follows.

\begin{theorem}\label{ANT} Let $2<n\leq l<m\leq \omega$.
\begin{enumerate}
\item  Let $\R$ be a finite relation algebra with an $l$--blur $(J, E)$ where $J$ is the $l$--complex blur and $E$ is the index blur.  

(a) Let $\bf At$ be the relation algebra atom structure obtained by blowing up and blurring $\R$ as specified above.
Then the set of $l$ by $l$--dimensional matrices 
${\bf At}_{ca}={\sf Mat}_l({\bf At})$ is an $l$--dimensional cylindric basis, that is a weakly representable atom structure \cite[Theorem 3.2]{ANT}. 
The algebra  ${{\sf split}}_l(\R, J, E)$ with 
atom structure ${\bf At}_{ra}$  is in $\RCA_l$. Furthermore, 
$\R$ embeds into $\Cm{\bf At}$ which embeds into $\Ra\Cm({\bf At}_{ca}).$ 

(b) If $(J, E)$ is a strong $m$--blur for $\R$,
then $(J, E)$ is a strong $l$--blur for $\R$.  Furthermore, ${{{{\sf split}}}}_l(\R, J, E)\cong {\mathfrak{Nr}}_l{{{\sf split}}}_m(\R, J, E)$ and 
for any $l\leq j\leq m$, ${{\sf split}}(\R, J, E)$ having atom structure $\bf At$, is isomorphic to $\Ra({{{\sf split}}}_j(\R, J, E))$.

\item For every $n<l$, 
there is an $\R$ having a strong $l$--blur $(J, E)$
but no infinite representations (representations on an infinite base). 
Hence the atom structures defined in (a) of the previous item  (denoted by $\bf At$ and ${\bf At}_{ca}$)
for this specific $\R$ are not strongly representable.  

\item    Let $m<\omega$. If $\R$ is  a finite relation algebra having  
a strong $l$--blur, and no $m$--dimensional hyperbasis, 
then $l<m$.

\item If $n=l<m<\omega$ and $\R$ is a finite relation algebra with an $n$ blur $(J, E)$ (not necessarily strong) 
and no infinite $m$--dimensional hyperbasis, then the algebras $\Cm\At({{\sf split}}(\R, J, E))$ and $\Cm\At({{\sf split}}_l(\R, J, E))$ are outside
$\bold S\Ra\CA_m$ and  $\bold S{\sf Nr}_n\CA_m$, respectively, 
and the latter two varieties are not atom--canonical. 
\end{enumerate}
\end{theorem}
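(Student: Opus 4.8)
\textbf{Proof plan for Theorem \ref{ANT}.}

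The plan is to prove the four items in order, treating items (1), (2), (3) as a careful repackaging of the construction in \cite{ANT} reviewed in Example \ref{exa}, and item (4) as the genuine new payload that combines them via the abstract blow-up-and-blur scheme described in the introduction. For item (1)(a), I would recall that since $(J,E)$ is an $l$--blur, \cite[Theorem 3.2]{ANT} already gives that ${\sf Mat}_l(\At{\sf split}(\R,J,E))$ is an $l$--dimensional cylindric basis; hence by the standard construction turning an $n$--dimensional cylindric basis into a $\CA_n$ atom structure, $\C_l:={\sf split}_l(\R,J,E)$ is a (weakly representable, so in particular) representable $\CA_l$, the representability being exactly the step-by-step coloured-graph argument of \cite[pp.~75--77]{ANT}. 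The chain $\R\hookrightarrow\Cm{\bf At}\hookrightarrow\Ra\Cm({\bf At}_{ca})$ is the embedding worked out in Example \ref{exa}: the first map sends $P\in I$ to $\bigcup\{H^P\}$-style elements using that composition restricted to the partition $P_1$ mimics composition in $\R$, and the second is the standard $\Ra$--of--a--cylindric--basis embedding. For item (1)(b), if $(J,E)$ is a strong $m$--blur it is a fortiori a strong $l$--blur for $l\le m$ since condition $(J5)_l$ is monotone downward in the arity; then the isomorphism ${\sf split}_l(\R,J,E)\cong\Nr_l{\sf split}_m(\R,J,E)$ is precisely the map $x\mapsto\{M\restr{l}:M\in x\}$ of \cite[p.~80]{ANT}, with surjectivity onto the neat reduct using $(J5)_l$; the statement about $\Ra$ for intermediate $j$ is the analogous relation-algebra-reduct computation.

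For item (2) I would invoke the existence, for each $n<l$, of a finite relation algebra $\R$ carrying a strong $l$--blur but representable only on finite bases --- this is the concrete Maddux-type algebra $\mathfrak E_k(2,3)$ with $k=f(l)$ recursive, whose only monochromatic-triangle-free representations live on finite sets; the non-strong-representability of $\bf At$ and ${\bf At}_{ca}$ for this $\R$ is then immediate from item (1)(a), since a representation of $\Cm{\bf At}$ (resp.\ $\Cm{\bf At}_{ca}$) would restrict, via the embeddings of (1)(a), to an infinite-base representation of $\R$. Item (3) is a short contrapositive: if $\R$ has a strong $l$--blur and $l\ge m$, then by (1)(b) it has a strong $m$--blur, hence ${\sf split}_m(\R,J,E)\in\RCA_m$ exists and, being representable, yields (via the standard basis/hyperbasis correspondence, e.g.\ Lemma \ref{i} for the relation-algebra version and its $\CA$ analogue) an $m$--dimensional hyperbasis for $\R^+$ and hence for $\R$; contradiction. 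So no $m$--dimensional hyperbasis forces $l<m$.

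Item (4) is where the real work lies. Fix $n=l<m<\omega$ and $\R$ a finite relation algebra with an $n$--blur $(J,E)$ and no infinite $m$--dimensional hyperbasis. By item (1)(a), $\A:={\sf split}_n(\R,J,E)\in\RCA_n$ is atomic, and its \de\ completion is $\C_n=\Cm\At\A=\Cm{\sf Mat}_n(\bf At)$; similarly $\Cm\At({\sf split}(\R,J,E))=\Cm{\bf At}$ on the relation-algebra side. The plan is to show $\C_n\notin\bold S\Nr_n\CA_m$ (and the $\Ra$--version $\Cm{\bf At}\notin\bold S\Ra\CA_m$). Suppose for contradiction $\C_n\in\bold S\Nr_n\CA_m$; then $\C_n\subseteq\Nr_n\D$ for some $\D\in\CA_m$, and since $\C_n$ is atomic and $\R$ embeds into $\Ra\C_n$ (via $\R\hookrightarrow\Cm{\bf At}\hookrightarrow\Ra\Cm{\bf At}_{ca}=\Ra\C_n$ from (1)(a)), one produces an $m$--dimensional hyperbasis for $\R^+$ out of $\D$ --- this is exactly the construction in the proof of Lemma \ref{flat}/Lemma \ref{i}, building the labelled hypernetworks $N_a$ indexed by atoms $a$ of $\D^+$ with $n$--hyperedges labelled by atoms of $\D^+$ and shorter hyperedges by elements of $\bigcup_{k<m-1}\At\Nrr_k\D^+$. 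Restricting this hyperbasis to the relation-algebra reduct gives an (infinite) $m$--dimensional hyperbasis for $\R$, contradicting the hypothesis. Hence $\C_n\notin\bold S\Nr_n\CA_m$ while $\A\in\RCA_n=\bold S\Nr_n\CA_\omega\subseteq\bold S\Nr_n\CA_m$ is atomic --- so $\bold S\Nr_n\CA_m$ is not atom-canonical; the same argument with $\Ra\CA_m$ in place of $\Nr_n\CA_m$ handles $\bold S\Ra\CA_m$. I expect the main obstacle to be the careful verification that the hypernetwork system $\{N_a:a\in\At\D^+\}$ extracted from the dilation $\D$ genuinely satisfies the amalgamation (commutativity-of-cylindrifiers) clause of an $m$--dimensional hyperbasis and that its relation-algebra reduct is an $m$--dimensional hyperbasis in the sense of \cite[Definition 12.11]{HHbook} --- i.e.\ faithfully transporting the Hirsch--Hodkinson $\RA$ machinery through the $\CA$ neat-reduct picture, which is the one place where more than bookkeeping is needed and where one must lean on the adaptation already sketched in the proof of Lemma \ref{flat}.
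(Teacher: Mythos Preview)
Your plan for items (1), (2), and (4) is essentially the paper's own argument. One remark on (4): you propose to extract an $m$--dimensional hyperbasis for $\R^+$ explicitly from the dilation $\D$ via the labelled hypernetworks $N_a$, and you flag the amalgamation clause as the main obstacle. The paper avoids this entirely. It simply observes that ``no infinite $m$--dimensional hyperbasis'' for the finite $\R$ means $\R\notin\bold S\Ra\CA_m$; since $\R\hookrightarrow\Cm{\bf At}\hookrightarrow\Ra\Cm({\bf At}_{ca})=\Ra\C_n$, any inclusion $\C_n\subseteq\Nr_n\D$ with $\D\in\CA_m$ would give $\R\subseteq\Ra\C_n\subseteq\Ra\D$, i.e.\ $\R\in\bold S\Ra\CA_m$, a contradiction. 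So the hypernetwork construction, and the verification you worry about, are not needed.

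Your argument for item (3) has a genuine gap, and the route differs from the paper's. You say: if $l\ge m$ then $(J,E)$ is a strong $m$--blur, so ${\sf split}_m(\R,J,E)\in\RCA_m$, and ``being representable, yields \ldots\ an $m$--dimensional hyperbasis for $\R^+$''. But representability of ${\sf split}_m(\R,J,E)$ says nothing about $\R$: the whole point of blowing up and blurring is that $\R$ does \emph{not} embed into ${\sf split}(\R,J,E)=\Ra{\sf split}_m(\R,J,E)$, only into its completion $\Cm{\bf At}$. So you have not exhibited $\R$ inside $\Ra$ of any $\CA_m$, and the hyperbasis correspondence does not apply. The paper instead works with the atomic $l$--dilation $\D={\sf split}_l(\R,J,E)$: from $\A\cong\Nr_n\D$ and atomicity of $\D$ it argues $\R\subseteq_c\Ra\D$, then invokes \cite[Theorem 13.45 $(6)\Leftrightarrow(9)$]{HHbook} to obtain a \emph{complete} $l$--flat representation of $\R$, hence a complete $m$--flat one (since $m\le l$), contradicting the hypothesis. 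Your route can be repaired --- e.g.\ use $\R\hookrightarrow\Ra\Cm{\sf Mat}_m({\bf At})$ with $\Cm{\sf Mat}_m({\bf At})\in\CA_m$ directly --- but as written the link between ${\sf split}_m$ and $\R$ is missing.
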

\begin{proof} \cite[Lemmata 3.2, 4.2, 4.3]{ANT}. We start by an outline of (a) of item (1).  Let $\R$ be as in the hypothesis. 
Let $3<n\leq l$. We blow up and blur $\R$. $\R$ is blown up by splitting all of the atoms each to infinitely many
defining an (infinite atoms) structure $\bf At$.
$\R$ is blurred by using a finite set of blurs (or colours) $J$. 
The term algebra denoted in \cite{ANT} by ${{{\sf split}}}(\R, J, E)$) over $\bf At$, 
 is representable using the finite number of blurs. Such blurs are basically non--principal ultrafilters; they are used as colours together 
with the principal ultrafilters (the atoms) to represent comletely the canonical extension of ${{\sf split}}(\R, J, E)$. 
Because $(J, E)$ is a complex set of $l$--blurs, this atom structure has an $l$--dimensional cylindric basis, 
namely, ${\bf At}_{ca}={\sf Mat}_l(\bf At)$. The resulting $l$--dimensional cylindric term algebra $\Tm{\sf Mat}_l(\bf At)$, 
and an algebra $\C$ having atom structure ${\bf At}_{ca}$ (denoted in \cite{ANT} by 
${\sf split}_l(\R, J, E)$) such that $\Tm{\sf Mat}_l({\bf At})\subseteq \C\ \subseteq \Cm{\sf Mat}_l(\bf At)$ 
is shown to be  representable.  
We prove (b) of item (1): Assume that the $m$--blur $(J, E)$ is strong, then by definition $(J, E)$ is a strong  $j$ blur for all $n\leq j\leq m$.
Furthermore,  by \cite[item (3) pp. 80]{ANT},  
${{\sf split}}(\R,J, E)=\Ra({{{\sf split}}}_j(\R, J, E))$ 
and ${{\sf split}}_j(\R, J, E)\cong \mathfrak{Nr}_j{{\sf split}}_m(\R, J, E)$. 

 (2):   Like in \cite[Lemma 5.1]{ANT},  one takes $l\geq 2n-1$, $k\geq (2n-1)l$, $k\in \omega$.
The Maddux integral relation algebra ${\mathfrak E}_k(2, 3)$ 
where $k$ is the number of non-identity atoms is the required $\R$. 

(3): Let $(J, E)$ be the strong $l$--blur of $\R$. Assume  for contradiction that $m\leq l$. Then we get by \cite[item (3), p.80]{ANT},  
that  $\A={{{\sf split}}}_n(\R, J, E)\cong \mathfrak{Nr}_n{{\sf split}}_l(\R, J, E)$.  But the cylindric $l$--dimensional algebra ${{{\sf split}}}_l(\R, J, E)$ is atomic,  having atom structure  
${\sf Mat}_l \At({{\sf split}}(\R, J, E))$, so $\A$ has an atomic $l$--dilation.
So $\A=\mathfrak{Nr}_n\D$ where $\D\in \CA_l$ is atomic.
But $\R\subseteq_c \mathfrak{Ra}\mathfrak{Nr}_n\D\subseteq_c \mathfrak{Ra}\D$. 
By \cite[Theorem 13.45 $(6)\iff (9)$]{HHbook},  $\R$ has a complete $l$--flat representation, 
thus it has a complete $m$--flat representation, because $m<l$ and $l\in \omega$. 
This is a contradiction. 

(4):   Let $\B={{{\sf split}}}_n(\R, J, E)$. Then, since $(J, E)$ is an $n$ blur, $\B\in \RCA_n$. But  
$\C=\Cm\At\B\notin \bold S{\sf Nr}_n\CA_{m}$, because $\R\notin \bold S\Ra\CA_m$, 
$\R$ embeds into ${{\sf split}}(\R, J, E)$ which, in turn, embeds into 
$\Ra\Cm\At\B$. Similarly, ${{\sf split}}(\R, J, E)\in \sf RRA$ and 
$\Cm(\At{{\sf split}}(\R, J, E))\notin \bold S\Ra\CA_m$.   
Hence the alledged varieties are not atom--canonical. 
\end{proof}

\section{Non-atom canonicity of any $\sf V$ between $\bold S\Nr_\alpha\CA_{\alpha+3}$ and $\RCA_\alpha$, for any ordinal $\alpha\geq 3$} 

In \cite{ANT} a single blow up and blur construction was used to prove non-atom--canonicity of $\sf RRA$ and ${\sf RCA}_n$ for $2<n<\omega$.
To obtain finer results, we use {\it two blow up and blur constructions} applied to rainbow algebras.
To put things into a unified perspective, we formulate a definition: 

\begin{definition}\label{blow} Let $\bold M$ be a variety 
of completely additive $\sf BAO$s.

(1) Let $\A\in \bold M$ be a finite algebra. We say that {\it $\D\in \bold M$ is obtained by blowing up and blurring $\A$} if $\D$ is atomic,  
$\A$ does not embed in $\D$, but $\A$ embeds into $\Cm\At\D$. 

(2) Assume that $\sf K\subseteq L\subseteq \bold M$, such that $\bold S\sf L=\sf L$.

(a) We say that {\it $\sf K$ is \underline{not} atom-canonical with respect to $\sf L$} if there exists an atomic $\D\in \sf K$ 
such that $\Cm\At\D\notin \sf L$.  In particular, $\sf K$ is not atom--canonical $\iff$ $\sf K$  not atom-canoincal with respect to itself.

(b)  We say that a finite algebra $\A\in \bold M$ {\it detects} that 
$\bold K$ is not atom--canonical with respect to $\sf L$, if $\A\notin \sf L$, 
and there is a(n atomic)  $\D\in \sf K$ 
obtained by blowing up and blurring $\A$.

\end{definition}

The most general exposition of $\CA$ rainbow constructions is given
in \cite[Section 6.2, Definition 3.6.9]{HHbook2} in the context of constructing atom structures from classes of models.
Our models are just coloured graphs \cite{HH}.
Let $\sf G$, $\sf R$ be two relational structures. Let $2<n<\omega$.
Then the colours used are:
\begin{itemize}

\item greens: $\g_i$ ($1\leq i\leq n-2)$, $\g_0^i$, $i\in \sf G$,

\item whites : $\w_i: i\leq n-2,$

\item reds:  $\r_{ij}$ $(i,j\in \sf R)$,

\item shades of yellow : $\y_S: S\text { a finite subset of } B$ or $S=B$.

\end{itemize}
A {\it coloured graph} is a graph such that each of its edges is labelled by the colours in the above first three items,
greens, whites or reds, and some $n-1$ hyperedges are also
labelled by the shades of yellow.
Certain coloured graphs will deserve special attention.
\begin{definition}
Let $i\in \sf G$, and let $M$ be a coloured graph  consisting of $n$ nodes
$x_0,\ldots,  x_{n-2}, z$. We call $M$ {\it an $i$ - cone} if $M(x_0, z)=\g_0^i$
and for every $1\leq j\leq n-2$, $M(x_j, z)=\g_j$,
and no other edge of $M$
is coloured green.
$(x_0,\ldots, x_{n-2})$
is called  the {\it base of the cone}, $z$ the {\it apex of the cone}
and $i$ the {\it tint of the cone}.
\end{definition}
The rainbow algebra depending on $\sf G$ and $\sf R$ from the class $\bold K$ consisting
of all coloured graphs $M$ such that:
\begin{enumerate}
\item $M$ is a complete graph
and  $M$ contains no triangles (called forbidden triples)
of the following types:
\begin{eqnarray}
&&\nonumber\\
(\g, \g^{'}, \g^{*}), (\g_i, \g_{i}, \w_i)
&&\mbox{any }1\leq i\leq  n-2,  \\
(\g^j_0, \g^k_0, \w_0)&&\mbox{ any } j, k\in \sf G,\\
\label{forb:match}(\r_{ij}, \r_{j'k'}, \r_{i^*k^*})&&\mbox{unless }i=i^*,\; j=j'\mbox{ and }k'=k^*
\end{eqnarray}
and no other triple of atoms is forbidden.

\item If $a_0,\ldots,   a_{n-2}\in M$ are distinct, and no edge $(a_i, a_j)$ $i<j<n$
is coloured green, then the sequence $(a_0, \ldots, a_{n-2})$
is coloured a unique shade of yellow.
No other $(n-1)$ tuples are coloured shades of yellow. Finally, if $D=\set{d_0,\ldots,  d_{n-2}, \delta}\subseteq M$ and
$M\upharpoonright D$ is an $i$ cone with apex $\delta$, inducing the order
$d_0,\ldots,  d_{n-2}$ on its base, and the tuple
$(d_0,\ldots, d_{n-2})$ is coloured by a unique shade
$\y_S$ then $i\in S.$
\end{enumerate}
Let $\sf G$ and $\sf R$ be relational structures as above. Take the set $\sf J$ consisting of all surjective maps $a:n\to \Delta$, where $\Delta\in \bold K$
and define an equivalence relation $\sim$  on this set  relating two such maps iff they essentially define the same graph \cite{HH};
the nodes are possibly different but the graph structure is the same.
Let $\At$ be the atom structure with underlying set $J\sim$. We denote the equivalence class of $a$ by $[a]$. Then define, for $i<j<n$,
the accessibility
relations corresponding to $ij$th--diagonal element, $i$th--cylindrifier, and substitution operator
corresponding to the transposition $[i,j]$, as follows:

(1) \ \  $[a]\in E_{ij} \text { iff } a(i)=a(j),$

(2) \ \ $[a]T_i[b] \text { iff }a\upharpoonright n\smallsetminus \{i\}=b\upharpoonright n\smallsetminus \{i\},$

 (3) \ \ $[a]S_{[i,j]}[b] \text { iff } a\circ [i,j]=b.$

This, as easily checked, defines a $\QEA_n$
atom structure. The complex $\QEA_n$ over this atom structure will be denoted by
$\A_{\sf G, \sf R}$. The dimension of $\A_{\sf G, \sf R}$, always finite and
$>2$, will be clear from context.
For rainbow atom structures, there is a one to one correspondence between atomic networks and coloured graphs \cite[Lemma 30]{HH}, 
so for $2<n<m\leq \omega$, we use the graph versions of the games $G^m_k$, $k\leq \omega$, played on rainbow atom 
structures of dimension $m$ \cite[pp.841--842]{HH}. 
We start by  translating the atomic $k$ rounded game game $G^m_k$  where the number of nodes are limited to $n$ 
to games on coloured graphs \cite[lemma 30]{HH}.

Let $\C$ be a rainbow algebra. Let $N$ be an atomic $\C$ network.
Let $x,y$ be two distinct nodes occurring in the
$n$ tuple $\bar z$. $N(\bar z)$ is an atom of $\C$
which defines an edge colour of
$x,y$. Using the fact that the dimension is at least $3$,
the edge colour depends only on $x$ and $y$
not on the other elements of
$\bar z$ or the positions of $x$ and $y$ in $\bar z$.
Similarly $N$ defines shades of white for certain $(n-1)$ tuples.  In this way $N$
translates
into a coloured graph.
This translation has an inverse. More precisely, letting $\sf CRG$ be the class of coloured graphs in a rainbow signature, we have:

Let $M\in \sf CRG$ be arbitrary. Define $N_{M}$
whose nodes are those of $M$
as follows. For each $a_0,\ldots, a_{n-1}\in M$, define
$N_{M}(a_0,\ldots,  a_{n-1})=[\alpha]$
where $\alpha: n\to M\upharpoonright \set{a_0,\ldots, a_{n-1}}$ is given by
$\alpha(i)=a_i$ for all $i<n$.
Then, as easily checked,  $N_{M}$ is an atomic $\C$ network.
Conversely, let $N$ be any non empty atomic $\C$ networkDefine a complete coloured graph $M_N$
whose nodes are the nodes of $N$ as follows:

\begin{itemize}
\item For all distinct $x,y\in M_N$ and edge colours $\eta$, $M_N(x,y)=\eta$
if and only if  for some $\bar z\in {}^nN$, $i,j<n$, and atom $[\alpha]$, we have
$N(\bar z)=[\alpha]$, $z_i=x$, $z_j=y$ and the edge $(\alpha(i), \alpha(j))$
is coloured $\eta$ in the graph $\alpha$.

\item For all $x_0,\ldots, x_{n-2}\in {}^{n-1}M_N$ and all yellows $\y_S$,
$M_N(x_0,\ldots,  x_{n-2})= \y_S$ $\iff$
for some $\bar z$ in $^nN$, $i_0,\ldots,  i_{n-2}<n$
and some atom $[\alpha]$, we have
$N(\bar z)=[\alpha]$, $z_{i_j}=x_j$ for each $j<n-1$ and the $n-1$ tuple
$\langle \alpha(i_0),\ldots, \alpha(i_{n-2})\rangle$ is coloured
$\y_S.$ Then $M_N$ is well defined and is in $\sf CRG$.
\end{itemize}

The following is then, though tedious and long,  easy to  check:
For any $M\in \sf CGR$, we have  $M_{N_{M}}=M$,
and for any $\C$ network
$N$, $N_{{M}_N}=N.$
This translation makes the following equivalent formulation of the
games $G^m_mk(\At\C)$ originally defined on networks.
The new  graph version of
the game \cite[p.27--29]{HH} builds a nested sequence $M_0\subseteq M_1\subseteq M_i\ldots, i<k$ $(k$ the number of rounds $\leq \omega$)
of coloured graphs such that $\nodes(M_i)\subseteq m$.\\
\pa\ picks a graph $M_0\in \sf CRG$ with $M_0\subseteq m$ and
$\exists$ makes no response
to this move. In a subsequent round, let the last graph built be $M_i$ $(i<k)$.
$\forall$ picks
\begin{itemize}
\item a graph $\Phi\in \G$ with $|\Phi|=n,$
\item a single node $t\in \Phi,$
\item a coloured graph embedding $\theta:\Phi\smallsetminus \{t\}\to M_i.$
Let $F=\phi\smallsetminus \{t\}$. Then \pe\ must respond by amalgamating
$M_i$ and $\Phi$ with the embedding $\theta$. In other words, she has to define a
graph $M_{i+1}\in \sf CRG$ and embeddings $\lambda:M_i\to M_{i+1}$
$\mu:\phi \to M_{i+1}$, such that $\lambda\circ \theta=\mu\upharpoonright F.$
\end{itemize}
Summarizing we have:
\begin{theorem}
Let $2<n<\omega$. Let $k, m\leq \omega$, and $\C$ be a rainbow $\CA_n$. Then \pe\ has a \ws\ in $G_k^m(\At\C)\iff$ \pe\ has a \ws\ in the above $k$--rounded graph game played 
on $\C$  where the size of graphs during the play is limited to $m$ nodes.
\end{theorem}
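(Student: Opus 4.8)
The approach is to read the result off the network--graph dictionary established just above: the mutually inverse translations $N\mapsto M_N$ and $M\mapsto N_M$ between atomic $\C$--networks and coloured graphs in $\sf CRG$, which satisfy $M_{N_M}=M$ and $N_{M_N}=N$. The plan is to show that this bijection carries one round of $G_k^m(\At\C)$ to one round of the graph game and back, preserving legality, and then to transport winning strategies across. First I would record the structural compatibilities of the dictionary: $\nodes(N)=\nodes(M_N)$, so the node bound ``$\subseteq m$'' is respected in both directions; $N\subseteq N'$ as networks iff $M_N$ is the subgraph of $M_{N'}$ induced on $\nodes(N)$, since the edge-- and yellow--colours of $M_N$ are computed from the atoms $N(\bar z)$ and, the dimension being $>2$, this reading does not depend on the ambient tuple; and $N$ is an atomic $\C$--network iff $M_N\in\sf CRG$, so that ``network consistency'' on one side is exactly ``being a legitimate coloured graph'' on the other.

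The decisive step is matching \pa's moves. Given a played network $N$ and a cylindrifier move $(N,i,a,\bar x)$ --- with $i<n$, $a\in\At\C$, $\bar x\in{}^n\nodes(N)$ and $N(\bar x)\le\c_i a$ --- write $a=[\alpha]$ with $\alpha\colon n\to\Delta$ surjective, $\Delta\in\bold K$. Then $\Delta$ is an $n$--node graph $\Phi$, the node $\alpha(i)$ is the designated new node $t\in\Phi$, and the hypothesis $N(\bar x)\le\c_i a$ says precisely that $j\mapsto\bar x_j$ $(j\ne i)$ is a coloured--graph embedding $\theta\colon\Phi\setminus\{t\}\to M_N$ --- i.e.\ exactly \pa's graph move $(\Phi,t,\theta)$; conversely every graph move arises this way, by coding $\Phi$ as an atom and $(\theta,t)$ as $(\bar x,i)$. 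Under this correspondence \pe's legal responses also match: a response network $N'$ with $N'\equiv_iN$, a node $z$, a tuple $\bar y\equiv_i\bar x$ with $\bar y_i=z$ and $N'(\bar y)=a$ corresponds, via $N'\mapsto M_{N'}$ together with the inclusion $M_N\hookrightarrow M_{N'}$ and the induced embedding $\Phi\to M_{N'}$, to a legal amalgamation $M_{i+1}$ in the graph game, and vice versa, using $M_{N_M}=M$, $N_{M_N}=N$ and the equivalence ``atomic network $\Leftrightarrow$ member of $\sf CRG$'' to see the two response sets are genuinely in bijection. The argument is uniform in $k,m\le\omega$.

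With this round-by-round dictionary the biconditional is immediate. If \pe\ has a \ws\ $\sigma$ in $G_k^m(\At\C)$, she plays the graph game keeping, after round $i$, a graph $M_i$ with $N_{M_i}$ a position in a $\sigma$--conforming play; to each graph move by \pa\ she reacts by translating it to the corresponding cylindrifier move on $N_{M_i}$, applying $\sigma$ to obtain a network $N'$, and returning $M_{N'}$ with the induced embeddings. By the previous paragraph this is a legal move and $N_{M_{i+1}}=N'$ continues the $\sigma$--play, so she survives all $k$ rounds; the converse is symmetric, using $N\mapsto M_N$ in the other direction. The only real work --- and the step I expect to be the main obstacle --- is exactly this bookkeeping: checking carefully that the ``new node'' $t$ of a graph move and the cylindrifier index $i<n$ encode the same degree of freedom for \pe, and that ``atomic network'' and ``member of $\sf CRG$'' correspond round by round and not merely at the end of a play.
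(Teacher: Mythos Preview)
Your proposal is correct and follows exactly the approach the paper takes. The paper does not give a separate proof of this theorem at all: it simply states the result as a summary (``Summarizing we have:'') of the preceding network--graph dictionary, having just asserted that $M_{N_M}=M$ and $N_{M_N}=N$ and then described the graph game as the translated version of $G^m_k$. Your write-up makes explicit the move-by-move correspondence (cylindrifier move $\leftrightarrow$ $(\Phi,t,\theta)$, and the matching of \pe's responses) that the paper leaves as an implicit ``tedious and long, easy to check'' verification, but the underlying idea is identical.
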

The typical \ws\ for \pa in the graph version of this game is bombarding \pe\ with cones having 
a common base and {\it green} tints untill she runs out of (suitable) {\it reds}, that is to say, reds whose indicies do not match \cite[4.3]{HH}.
\begin{definition}\label{sub} \begin{enumerate}
\item Let $m$ be a finite ordinal $>0$. An $\sf s$ word is a finite string of substitutions $({\sf s}_i^j)$ $(i, j<m)$,
a $\sf c$ word is a finite string of cylindrifications $({\sf c}_i), i<m$;
an $\sf sc$ word $w$, is a finite string of both, namely, of substitutions and cylindrifications.
An $\sf sc$ word
induces a partial map $\hat{w}:m\to m$:
\begin{itemize}

\item $\hat{\epsilon}=Id,$

\item $\widehat{w_j^i}=\hat{w}\circ [i|j],$

\item $\widehat{w{\sf c}_i}= \hat{w}\upharpoonright(m\smallsetminus \{i\}).$

\end{itemize}
If $\bar a\in {}^{<m-1}m$, we write ${\sf s}_{\bar a}$, or
${\sf s}_{a_0\ldots a_{k-1}}$, where $k=|\bar a|$,
for an  arbitrary chosen $\sf sc$ word $w$
such that $\hat{w}=\bar a.$
Such a $w$  exists by \cite[Definition~5.23 ~Lemma 13.29]{HHbook}.

\item Fix $2<n<m$. Assume that $\C\in\Sc_m$, $\A\subseteq_c{\mathfrak Nr}_n\C$ is an
atomic $\Sc_n$ and $N$ is an $\A$--network with $\nodes(N)\subseteq m$. Define
$N^+\in\C$ by
\[N^+ =
 \prod_{i_0,\ldots, i_{n-1}\in\nodes(N)}{\sf s}_{i_0, \ldots, i_{n-1}}{}N(i_0,\ldots, i_{n-1}).\]
\end{enumerate}
\end{definition}
\begin{lemma}\label{n}
Let $2<n<m$. If either:
\begin{itemize}
\item $\A\in \Sc_n$ and $\A\in S_c\Nr_n\Sc_m^{\sf ad}$ or, 
\item $\A\in \QA_n$ and $\A\subseteq_c \mathfrak{Nr}_n\C$, $\C\in \QA_m$ and $\s_0^1$ is completely additive in $\C$ or,
\item $\K$ is any class having signature between $\CA$ and $\QEA$, 
$\A\in \K_n$ and $\A\in \bold S_c\Nr_n\K_m$,
\end{itemize} 
then \pe\ has a \ws\ in $\bold G^m(\At\A).$ 
\end{lemma}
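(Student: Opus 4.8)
The plan is to show that the existence of a dilation $\C$ with the stated additivity properties lets $\exists$ survive the $\omega$-rounded game $\bold G^m$ by always keeping the current network $N$ \emph{reflected} in $\C$, meaning that $N^+\neq 0$ in $\C$ (with $N^+$ as in Definition \ref{sub}(2), formulated in the relevant signature). First I would set up the invariant: after each round, the network $N$ played satisfies $N^+\neq 0$, and moreover for every $\bar{x}\in{}^n\nodes(N)$ and every atom $a\in\At\A$ with $N(\bar{x})\le {\sf c}_i a$ we have that the ``cylindrified'' product obtained by deleting the $i$-th coordinate is still nonzero. The base case is round $0$: when $\pa$ plays an atom $a$, $\exists$ plays the network $N$ with $\nodes(N)=n$ and $N(0,\ldots,n-1)=a$; then $N^+=a\ne 0$ since $a$ is an atom of $\A\subseteq_c\Nr_n\C$.

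The heart of the argument is the inductive step for a cylindrifier move. Suppose the current network is $N$ with $N^+\ne 0$, and $\pa$ picks $\bar{x}\in{}^n\nodes(N)$, $i<n$, $a\in\At\A$ with $N(\bar{x})\le {\sf c}_i a$, together with a node $z$ (possibly fresh, possibly a reused node of $m$, since in $\bold G^m$ nodes may be reused). I would apply ${\sf c}_i$ (or the appropriate substitution-cylindrifier composite, using the ${\sf sc}$-word machinery of Definition \ref{sub}(1)) to $N^+$ inside $\C$, push the cylindrifier through the big product using the $\CA_m$/$\QA_m$/$\Sc_m^{\sf ad}$ axioms available — this is where the hypothesis that $\C$ lives in the $m$-dimensional class and that $\s_0^1$ (equivalently the transposition substitutions) is completely additive is used — and conclude that the product defining the \emph{would-be} extended network, with $z$ receiving value $a$ on the relevant tuple, is nonzero in $\C$. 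Complete additivity is exactly what is needed: cylindrifying $N^+$ produces a sum over atoms $a'$ of $\A$ of the corresponding extended products, and since this sum is nonzero and the operators distribute over it, \emph{some} summand is nonzero; but $N(\bar x)\le{\sf c}_i a$ forces the summand indexed by $a$ itself to be the nonzero one after intersecting with $N^+$. Then $\exists$ defines $M\equiv_i N$ by setting $M$ on tuples through $z$ according to the unique atoms of $\A$ lying below the relevant substituted elements (using that $\A\subseteq_c\Nr_n\C$ is atomic, so these witnesses exist and are atoms), and $M^+\ne 0$ by construction. One checks that $M$ is a genuine $\A$-network — the consistency conditions (i),(ii),(iii) of Definition \ref{network}, in the signature at hand — follow from the corresponding identities in $\C$ applied to $M^+$, exactly as in the analogous neat-reduct arguments.

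The main obstacle is the bookkeeping in the inductive step when $z$ is a \emph{reused} node: one must be careful that overwriting the old label on tuples through $z$ does not destroy $M^+\ne 0$, and this is precisely where one uses that the move is an $\equiv_i$-move (only coordinate $i$ changes) so that the ${\sf sc}$-word $\s_{\bar b}$ attached to each tuple can be factored to isolate the $i$-th coordinate; the complete additivity of the substitution $\s_0^1$ is what guarantees the relevant one-dimensional cylindrification-then-substitution still behaves additively, so the ``pick a nonzero summand'' step goes through. The three bulleted cases are handled uniformly: in each, the ambient $m$-algebra has enough of the cylindric axioms plus the needed complete additivity (the $\Sc_m^{\sf ad}$ superscript, the explicit $\s_0^1$-additivity hypothesis, or the fact that $\K_m$ between $\CA$ and $\QEA$ is completely additive) for the push-through-the-product computation to be valid. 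Finally, since the invariant is maintained for all $\omega$ rounds, $\exists$ never gets stuck, i.e.\ she has a \ws\ in $\bold G^m(\At\A)$, as required.
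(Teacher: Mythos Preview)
Your proposal is correct and is essentially the paper's own argument: maintain the invariant $N^+\neq 0$ in $\C$ throughout the play, and at a cylindrifier move use complete additivity of the substitutions in the $m$-dilation to extract a nonzero refinement realising the required atom. The paper organises this via three auxiliary facts (for every nonzero $x\in\C$ and every $n$-tuple from $m$ there is an atom $a$ with ${\sf s}_{\bar i}a\cdot x\neq 0$; every nonzero $x$ is met by $N^+$ for some network $N$ on any prescribed node set; and ${\sf c}_iN^+=N^+$ when $i\notin\nodes(N)$), then runs exactly your invariant argument.

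The one place where the paper supplies a step you only gesture at is the second bullet. You say the $\s_0^1$-additivity hypothesis is ``used at the specific step where the $i$-th coordinate changes''; what the paper actually does is show that \emph{every} $\s_i^j$ in the $\QA_m$ dilation is completely additive, by writing
\[
\s_i^j \;=\; \s_{[0,j]}\circ\s_{[i,0]}\circ\s_{[1,i]}\circ\s_0^1\circ\s_{[1,i]}\circ\s_{[0,j]},
\]
a composite of $\s_0^1$ with transposition-substitutions (which are self-conjugate, hence automatically completely additive). This reduces the $\QA$ case to the uniform argument, rather than isolating a single coordinate at each move. Your sketch would go through once this reduction is made explicit.
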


\begin{proof}
We assume that the ${\sf s}_i^j$s for  $i< j<m$ are completely additive in $\C$. (This condition is superfluous for any $\K$ between $\CA$ and $\QEA$.)
Then the following hold:
\begin{enumerate}
\item for all $x\in\C\setminus\set0$ and all $i_0, \ldots, i_{n-1} < m$, there is $a\in\At\A$, such that
${\sf s}_{i_0,\ldots, i_{n-1}}a\;.\; x\neq 0$,

\item for any $x\in\C\setminus\set0$ and any
finite set $I\subseteq m$, there is a network $N$ such that
$\nodes(N)=I$ and $x\cdot N^+\neq 0$. Furthermore, for any networks $M, N$ if
$M^+\cdot N^+\neq 0$, then
$M\restr {\nodes(M)\cap\nodes(N)}=N\restr {\nodes(M)\cap\nodes(N)},$

\item if $\theta$ is any partial, finite map $m\to m$
and if $\nodes(N)$ is a proper subset of $m$,
then $N^+\neq 0\rightarrow {(N\theta)^+}\neq 0$. If $i\not\in\nodes(N),$ then ${\sf c}_iN^+=N^+$.

\end{enumerate}
Using the above facts,  we are now ready to show that \pe\  has a \ws\ in $F^m$. She can always
play a network $N$ with $\nodes(N)\subseteq m,$ such that
$N^+\neq 0$.
In the initial round, let \pa\ play $a\in \At\A$.
\pe\ plays a network $N$ with $N(0, \ldots, n-1)=a$. Then $N^+=a\neq 0$.
Recall that here \pa\ is offered only one (cylindrifier) move.
At a later stage, suppose \pa\ plays the cylindrifier move, which we denote by
$(N, \langle f_0, \ldots, f_{n-2}\rangle, k, b, l).$
He picks a previously played network $N$,  $f_i\in \nodes(N), \;l<n,  k\notin \{f_i: i<n-2\}$,
such that $b\leq {\sf c}_l N(f_0,\ldots,  f_{i-1}, x, f_{i+1}, \ldots, f_{n-2})$ and $N^+\neq 0$.
Let $\bar a=\langle f_0\ldots f_{i-1}, k, f_{i+1}, \ldots f_{n-2}\rangle.$
Then by  second part of  (3)  we have that ${\sf c}_lN^+\cdot {\sf s}_{\bar a}b\neq 0$
and so  by first part of (2), there is a network  $M$ such that
$M^+\cdot{\sf c}_{l}N^+\cdot {\sf s}_{\bar a}b\neq 0$.
Hence $M(f_0,\dots, f_{i-1}, k, f_{i-2}, \ldots$ $, f_{n-2})=b$,
$\nodes(M)=\nodes(N)\cup\set k$, and $M^+\neq 0$, so this property is maintained.
Assume that $\Rd_{qa}\C\subseteq_c\Nr_n\D$ for some $\D\in \QA_{n+3}$ where only 
${\sf s}_0^1{}^{D}$ is completely additive.
Then every substitution operation corresponding to a replacement in $\D$, can be obtained from
a composition of finitely many substitution operations involving only one replacement 
${\sf s}_0^1$  and all the rest are substitution operations that correspond to transpositions.
To prove this, we can assume without loss that $i\neq 0, 1$. 
Computing we get: 
$${\sf s}_{[1,i]}{\sf s}_0^1x
={\sf s}_0^i{\sf s}_{[1, i]}x\text { so }{\sf s}_{[1,i]}{\sf s}_0^1{\sf s}_{[1, i]}x={\sf s}_0^i{\sf s}_{[1, i]}{\sf s}_{[1,i]}x={\sf s}_0^ix,$$
$${\sf s}_i^0x={\sf s}_{[0,i]}{\sf s}_0^1x
={\sf s}_{[i,0]}{\sf s}_{[1, i]}{\sf s}_0^1{\sf s}_{[i,1]}x  
\text { and }{\sf s}_{[0, j]}{\sf s}_i^0x 
={\sf s}_i^j {\sf s}_{[0,j]}x.$$
Continuing the computation:
\begin{align*}
{\sf s}_i^jx&={\sf s}_{[0,j]}{\sf s}_{[0,j]}{\sf s}_i^jx\\
&={\sf s}_{[0,j]}{\sf s}_i^0{\sf s}_{[0,j]}x\\
&={\sf s}_{[i,0]} {\sf s}_{[1,i]}  {\sf s}_0^1 {\sf s}_{[1,i]} {\sf s}_{[0,j]}x.
\end{align*}
We have shown that:
$${\sf s}_i^j={\sf s}_{[0, j]}\circ {\sf s}_{[i,0]}\circ {\sf s}_{[1,i]}\circ  {\sf s}_0^1\circ {\sf s}_{[1,i]}\circ {\sf s}_{[0,j]}.$$

All such substitution operations are completely additive,
the ones involving transpositions are in fact self--conjugate,  
hence we get that $\D$ is completely additive. 
By complete additivity for $\CA$s and $\QEA$s, we get the second and 
third required and we are done. 
\end{proof}

With these preliminaries out of the way we are now ready  to start digging deeper:
\begin{theorem}\label{can2} Let $2<n<\omega$.
Let $\K$ be any variety between $\Sc$ and $\QEA$. Let $m\geq n+3$. Then ${\sf RK}_n$ is not-atom canonical 
with respect to $\bold S{\sf Nr}_n\K_m$. In fact, there is a countable atomic simple $\A\in {\sf RQEA}_n$ 
such that $\Rd_{sc}\Cm\At\A$ 
does not have an $n+3$-square,{\it a fortiori} $n+3$- flat,  representation.
\end{theorem}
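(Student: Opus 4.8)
The plan is to prove this by a blow up and blur construction applied to a finite rainbow $\CA_n$, exactly in the spirit of Definition~\ref{blow}(2)(b): we exhibit a finite algebra $\A_0$ that \emph{detects} that ${\sf RK}_n$ is not atom-canonical with respect to $\bold S\Nr_n\K_m$ for $m=n+3$. Concretely, I would take $\A_0$ to be (the $\K$-reduct of) the rainbow algebra $\A_{\sf G,\sf R}$ with $\sf G$ and $\sf R$ chosen so that \pa\ has a \ws\ in the graph game with $n+3$ nodes (the typical strategy described after Theorem on the graph games: bombard \pe\ with cones of a common base and green tints until she runs out of reds with non-matching indices), witnessing $\A_0\notin\bold S\Nr_n\K_{n+3}$ via Lemma~\ref{n} in contrapositive form; the minimal choice is $|\sf G|=n+2$ greens and $|\sf R|=n+1$ reds, tuned so that $n+3$ nodes suffice for \pa\ but the algebra is still countable (in fact finite). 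Then I would blow up and blur $\A_0$: split each atom of $\A_0$ into infinitely many subatoms, re-colouring so that monochromatic configurations remain forbidden, obtaining an infinite atomic $\D$ in which $\A_0$ is blurred — $\A_0$ does not embed into $\D$, but $\A_0$ embeds into $\Cm\At\D$.

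The key steps, in order, are: (1) Define the rainbow parameters and verify, using the graph-game characterization (the Theorem equating \ws s in $G_k^m(\At\C)$ with \ws s in the $k$-rounded graph game on $\le m$ nodes), that \pa\ wins the relevant game on $\A_0$ with $n+3$ nodes; by Lemma~\ref{n} (applied to any $\K$ between $\CA$ and $\QEA$, and to the $\Sc$-reduct) this gives $\Rd_{sc}\A_0\notin\bold S_c\Nr_n\Sc_{n+3}$, hence by Lemma~\ref{flat}(2) no $(n+3)$-square representation. (2) Carry out the splitting: describe the atom structure of $\D$ (triples $(i,P,W)$ with $i\in\omega$, $P$ a non-identity atom of $\A_0$, $W$ ranging over a finite set of blurs), define cylindrifications, diagonals and substitutions on $\D$ so that $\D$ is a $\QEA_n$ and check the consistency conditions. (3) Prove $\D\in{\sf RQEA}_n$ by a step-by-step construction using the blurs as extra colours (non-principal ultrafilters), analogous to the representability argument for ${\sf split}(\R,J,E)$ recalled in Example~\ref{exa}; also check $\D$ is simple (the rainbow/blur algebras are generated by a single $2$-dimensional element and have a discriminator term, so simplicity is automatic). (4) Show $\A_0$ embeds into $\Cm\At\D$ by sending each atom of $\A_0$ to the (infinite) join of its subatoms — this join exists in the \emph{complex} algebra — and verifying it respects all operations; this uses that cylindrifiers cannot distinguish an atom from its split subatoms. (5) Conclude: since $\bold S\Nr_n\K_m$ is closed under $\bold S$ and $\A_0\notin\bold S\Nr_n\K_{n+3}$ while $\A_0$ embeds into $\Cm\At\D$, we get $\Cm\At\D\notin\bold S\Nr_n\K_{n+3}$; and since $\Rd_{sc}\Cm\At\D\supseteq\Rd_{sc}\A_0$ fails to have an $(n+3)$-square representation by Lemma~\ref{flat}(2), the ``in fact'' clause follows with $\A:=\D$.

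The main obstacle I anticipate is step (1) combined with the interaction between the rainbow signature and the reducts: one must choose $\sf G$, $\sf R$ so that the $(n+3)$-node game is won by \pa\ already at the level of the $\Sc_n$-reduct (so that complete additivity of ${\sf s}_0^1$ suffices to invoke Lemma~\ref{n}), \emph{and} simultaneously so that the blown-up $\D$ is genuinely representable — these pull in opposite directions (more greens/fewer reds helps \pa\ but can obstruct representability of the blur algebra), so the delicate part is the bookkeeping that makes $n+3$ the exact threshold. A secondary technical point is verifying that the blurring re-colouring still yields a legitimate $\QEA_n$ atom structure (the symmetry condition (iii) in Definition~\ref{network} for the substitution operators $\s_{[i,j]}$ must be respected by the triple-indexed atoms), and that the embedding of $\A_0$ into $\Cm\At\D$ is a $\QEA_n$-embedding and not merely a Boolean or $\CA_n$ one. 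Everything else — the step-by-step representability proof and the routine closure/reduct arguments — follows the template of Example~\ref{exa} and Theorem~\ref{ANT}.
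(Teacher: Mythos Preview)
Your overall architecture is right and matches the paper's: pick a finite rainbow algebra on which \pa\ wins the atomic game with $n+3$ nodes, blow it up to a representable atomic $\QEA_n$, embed the finite algebra into the complex algebra of the blown-up atom structure, and conclude via Lemma~\ref{n} and Lemma~\ref{flat}. Two points of divergence are worth flagging.

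\textbf{Parameters.} The paper takes $\A_{n+1,n}$, i.e.\ greens indexed by $n+1$ and reds by $n$ (as complete irreflexive graphs). With this choice \pa\ wins the \ef\ forth game ${\sf EF}_{n+1}^{n+1}(n+1,n)$ in $n+1$ rounds, and this lifts to a \ws\ in the graph game on $\At(\A_{n+1,n})$ using exactly $n+3$ nodes. Your choice $|\sf G|=n+2$, $|\sf R|=n+1$ is off by one and would only yield $n+4$ nodes, missing the sharp bound.

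\textbf{Blow-up mechanism.} You propose the ANT-style triple structure $(i,P,W)$ with a finite set of blurs, following Example~\ref{exa}/Theorem~\ref{ANT}. That construction is tailored to \emph{relation algebras} and then lifted to $\CA_n$ via cylindric bases. The paper does something different here: it works directly in the $\CA_n$ rainbow and splits only the \emph{red colours}, replacing each $\r_{kl}$ by $\omega$ many copies $\r_{kl}^t$ ($t\in\omega$). Representability of the term algebra is then obtained \`a la Hodkinson: one introduces an extra shade of red $\rho$ outside the rainbow signature, builds an $n$-homogeneous model $\Mo$ of the extended theory by a game in which \pe\ uses $\rho$ when forced, sets $W=\{\bar a\in{}^n\Mo:\text{no }\rho\text{-edge}\}$, and shows (via $n$-back-and-forth systems permuting red superscripts) that classical semantics and $W$-relativized semantics agree for $L_n$ rainbow formulas but \emph{not} for $L_{\infty,\omega}^n$. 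This last point is exactly what makes $\Tm{\bf At}$ representable while $\Cm{\bf At}$ is not. The embedding $\A_{n+1,n}\hookrightarrow\Cm{\bf At}$ sends each atom $M_a$ to the join $\sum_j M_a^{(j)}$ of its ``copies'' (atoms agreeing except possibly in red superscripts), and the paper verifies preservation of all $\QEA_n$ operations explicitly.

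So your plan would need nontrivial adaptation: the ANT blurs-as-ultrafilters representability argument does not transplant directly to a $\CA_n$ rainbow atom structure, and the paper's route through Hodkinson's shade-of-red construction is both cleaner here and what actually appears in the proof.
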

\begin{proof} 

The proof is divided into four parts:

1: {\bf Blowing up and blurring  $\A_{n+1, n}$ forming a weakly representable atom structure $\bf At$}:
Take the finite rainbow ${\sf QEA}_n$,  $\A_{n+1, n}$
where the reds $\sf R$ is the complete irreflexive graph $n$, and the greens
are  $\{\g_i:1\leq i<n-1\}
\cup \{\g_0^{i}: 1\leq i\leq n+1\}$, endowed with the quasi--polyadic equality operations.
We will show that for any variety ${\sf K}_n$ between ${\sf Sc}_n$ and $\QEA_n$, the $\K_n$ reduct of $\A_{n+1, n}$ detects that ${\sf RK}_n$ is not atom-canonical with respect to $\bold S\Nr_n\K_{n+3}$. 
Denote the finite atom structure of $\A_{n+1, n}$ by ${\bf At}_f$; 
so that ${\bf At}_f=\At(\A_{n+1, n})$.
One  then replaces the red colours 
of the finite rainbow algebra of $\A_{n+1, n}$ each by  infinitely many reds (getting their superscripts from $\omega$), obtaining this way a weakly representable atom structure $\bf At$.
The cylindric reduct of the resulting atom structure after `splitting the reds', namely, $\bf At$,  is 
like the weakly (but not strongly) representable 
atom structure of the atomic, countable and simple algebra $\A$ as defined in \cite[Definition 4.1]{Hodkinson}; the sole difference is that we have $n+1$ greens
and not $\omega$--many as is the case in \cite{Hodkinson}; also we count it the polyadic operations of subtitutions. 
We denote the resulting term $\QEA_n$,  $\Tm\bf At$ by ${{\mathfrak{Bb}}}(\A_{n+1, n}, \r, \omega)$ short hand for  blowing 
up and blurring $\A_{n+1,n}$ by splitting each {\it red graph (atom)} into $\omega$ many.  
It can be shown exactly like in \cite{Hodkinson} that \pe\ can win the rainbow $\omega$--rounded game
and build an $n$--homogeneous model $\Mo$ by using a shade of red $\rho$ {\it outside} the rainbow signature, when
she is forced a red;  \cite[Proposition 2.6, Lemma 2.7]{Hodkinson}. The $n$-homogeniuty  entails that any subgraph (substructure)
of $\Mo$ of size $\leq n$, is independent of its location in $\Mo$;
it is uniquely determined by its isomorphism type.
One proves like in {\it op.cit} 
that  ${{\mathfrak{Bb}}}(\A_{n+1, n}, \r, \omega)$ is representable as a set algebra having top element $^n\Mo$.

We give more details. In the present context, after the splitting `the finitely many red colours' replacing each such red colour $\r_{kl}$, $k<l<n$  by $\omega$ many 
$\r_{kl}^i$, $i\in \omega$, the rainbow signature for the resulting rainbow theory as defined in \cite[Definition 3.6.9]{HHbook} call this theory $T_{ra}$,
 consists of $\g_i: 1\leq i<n-1$, $\g_0^i: 1\leq i\leq n+1$,
$\w_i: i<n-1$,  $\r_{kl}^t: k<l< n$, $t\in \omega$,
binary relations, and $n-1$ ary relations $\y_S$, $S\subseteq_{\omega} n+k-2$ or $S=n+1$. 
The set algebra ${\mathfrak{Bb}}(\A_{n+1, n}, \r, \omega)$ of dimension $n$ has
base an $n$--homogeneous  model $\Mo$ of another theory $T$ whose signature expands that of 
$T_{ra}$ by an additional binary relation (a shade of red) $\rho$.  
In this new signature $T$ is obtained from $T_{ra}$ by 
some axioms  (consistency conditions) extending $T_{ra}$. Such axioms (consistency conditions) 
specify consistent triples involving $\rho$. We call the models of $T$ {\it extended} coloured graphs. 
In particular, $\Mo$ is an extended coloured graph.
To build $\Mo$, the class of coloured graphs is considered in
the signature $L\cup \{\rho\}$ like in uual rainbow constructions as given above with the two additional forbidden triples
$(\r, \rho, \rho)$ and $(\r, \r^*, \rho)$, where $\r, \r^*$ are any reds. 
This model $\Mo$ is constructed as a countable limit of finite models of $T$ 
using a game played between \pe\ and \pa.   Here, unlike the extended $L_{\omega_1, \omega}$ theory 
dealt with in \cite{Hodkinson},  $T$ is a {\it first order one}
because the number of greens used are finite.
In the rainbow game \cite{HH, HHbook} 
\pa\ challenges \pe\  with  {\it cones} having  green {\it tints $(\g_0^i)$}, 
and \pe\ wins if she can respond to such moves. This is the only way that \pa\ can force a win.  \pe\ 
has to respond by labelling {\it appexes} of two succesive cones, having the {\it same base} played by \pa.
By the rules of the game, she has to use a red label. She resorts to  $\rho$ whenever
she is forced a red while using the rainbow reds will lead to an inconsistent triangle of reds;  \cite[Proposition 2.6, Lemma 2.7]{Hodkinson}. 

2. {\bf Representing $\Tm\At\A$ (and its completion) as (generalized) set algebras:} From now on, forget about $\rho$; having done its task as a colour  to (weakly) represent $\bf At$, it will play no further role.
Having $\Mo$ at hand, one constructs  two atomic $n$--dimensional set algebras based on $\Mo$, sharing the same atom structure and having 
the same top element.  
The atoms of each will be the set of coloured graphs, seeing as how, quoting Hodkinson \cite{Hodkinson} such coloured graphs are `literally indivisible'. 
Now $L_n$ and $L_{\infty, \omega}^n$ are taken in the rainbow signature (without $\rho$). Continuing like in {\it op.cit}, deleting the one available red shade, set
$W = \{ \bar{a} \in {}^n\Mo : \Mo \models ( \bigwedge_{i < j <n} \neg \rho(x_i, x_j))(\bar{a}) \},$
and for $\phi\in L_{\infty, \omega}^n$, let
$\phi^W=\{s\in W: \Mo\models \phi[s]\}.$
Here $W$ is the set of all $n$--ary assignments in
$^n\Mo$, that have no edge labelled by $\rho$.
Let $\A$  be the relativized set algebra with domain
$\{\varphi^W : \varphi \,\ \textrm {a first-order} \;\ L_n-
\textrm{formula} \}$  and unit $W$, endowed with the
usual concrete quasipolyadic operations read off the connectives.
Classical semantics for {\it $L_n$ rainbow formulas} and their
semantics by relativizing to $W$ coincide \cite[Proposition 3.13]{Hodkinson} {\it but not with respect to 
$L_{\infty,\omega}^n$ rainbow formulas}.
This depends essentially on \cite[Lemma 3.10]{Hodkinson}, which is the heart and soul of the proof in \cite{Hodkinson}, and for what matters this proof.
The referred to lemma says that any permutation $\chi$ of $\omega\cup \{\rho\}$,
$\Theta^{\chi}$ as defined in  \cite[Definitions 3.9, 3.10]{Hodkinson} is an $n$ back--and--forth system
induced by any permutation of $\omega\cup \{\rho\}$. 
Let $\chi$ be such a permutation. The system $\Theta^{\chi}$ consists of isomorphisms between coloured graphs such that 
superscripts of reds are `re-shuffled along $\chi$', in such a way that rainbow red labels are permuted,  
$\rho$ is replaced by a red rainbow label, and all other colours are preserved.
One uses such $n$-back-and-forth systems mapping a 
tuple $\bar{b} \in {}^n \Mo \backslash W$ to a tuple
$\bar{c} \in W$ preserving any formula in $L_{ra}$ containing the non-red symbols that are
`moved' by the system, so if $\bar{b}\in {}^n\Mo$ refutes the $L_n$ rainbow formula  $\phi$, then there is a $\bar{c}$ in $W$ 
refuting $\phi$. 
Hence the set algebra $\A$ is isomorphic to a quasi-polyadic equality  set algebra of dimension $n$ 
having top element $^n\Mo$, so $\A$
is simple, in fact its $\Df$ reduct is simple.
Let $\E=\{\phi^W: \phi\in L_{\infty, \omega}^n\}$
\cite[Definition 4.1]{Hodkinson}
with the operations defined like on $\A$ the usual way. $\Cm\bf At$ is a complete $\QEA_n$ and, so like in \cite[Lemma 5.3]{Hodkinson}
we have an isomorphism from $\Cm\bf At$  to $\E$ defined
via $X\mapsto \bigcup X$.
Since $\At\A=\At\Tm(\At\A)=\bf At$  
and $\Tm\At\A\subseteq \A$, hence $\Tm\At\A$ is representable.
The atoms of $\A$, $\Tm\At\A$ and $\Cm\At\A=\Cm \bf At$ are the coloured graphs whose edges are {\it not labelled} by $\rho$.
These atoms are uniquely determined by the interpretion in $\Mo$ of so-called $\sf MCA$ formulas in the rainbow signature of $\bf At$  as in
\cite[Definition 4.3]{Hodkinson}.

3. {\bf Embedding $\A_{n+1, n}$ into $\Cm(\At({{\mathfrak{Bb}}}(\A_{n+1, n}, \r, \omega)))$:} Let ${\sf CRG}_f$ be the class of coloured graphs on 
${\bf At}_f$ and $\sf CRG$ be the class of coloured graph on $\bf At$. We 
can (and will) assume that  ${\sf CRG}_f\subseteq \sf CRG$.
Write $M_a$ for the atom that is the (equivalence class of the) surjection $a:n\to M$, $M\in \sf CGR$.
Here we identify $a$ with $[a]$; no harm will ensue.
We define the (equivalence) relation $\sim$ on $\bf At$ by
$M_b\sim N_a$, $(M, N\in {\sf CGR}):$
\begin{itemize}
\item $a(i)=a(j)\Longleftrightarrow b(i)=b(j),$

\item $M_a(a(i), a(j))=\r^l\iff N_b(b(i), b(j))=\r^k,  \text { for some $l,k$}\in \omega,$

\item $M_a(a(i), a(j))=N_b(b(i), b(j))$, if they are not red,

\item $M_a(a(k_0),\dots, a(k_{n-2}))=N_b(b(k_0),\ldots, b(k_{n-2}))$, whenever
defined.
\end{itemize}
We say that $M_a$ is a {\it copy of $N_b$} if $M_a\sim N_b$ (by symmetry $N_b$ is a copy of $M_a$.) 
Indeed, the relation `copy of' is an equivalence relation on $\bf At$.  An atom $M_a$ is called a {\it red atom}, if $M_a$ has at least one red edge. 
Any red atom has $\omega$ many copies, that are {\it cylindrically equivalent}, in the sense that, if $N_a\sim M_b$ with one (equivalently both) red,
with $a:n\to N$ and  $b:n\to M$, then we can assume that $\nodes(N) =\nodes(M)$ 
and that for all $i<n$, $a\upharpoonright n\sim\{i\}=b\upharpoonright n\sim \{i\}$.
Any red atom has $\omega$ many copies that are {\it cylindrically equivalent}, in the sense that, if $N_a\sim M_b$ with one (equivalently both) red,
with $a:n\to N$ and  $b:n\to M$, then we can assume that $\nodes(N) =\nodes(M)$ 
and that for all $i<n$, $a\upharpoonright n\sim\{i\}=b\upharpoonright n\sim \{i\}$.
In $\Cm\bf At$, we write $M_a$ for $\{M_a\}$ 
and we denote suprema taken in $\Cm\bf At$, possibly finite, by $\sum$.
Define the map $\Theta$ from $\A_{n+1, n}=\Cm{\bf At}_f$ to $\Cm\bf At$,
by specifing first its values on ${\bf At}_f$,
via $M_a\mapsto \sum_jM_a^{(j)}$ where $M_a^{(j)}$ is a copy of $M_a$. 
So each atom maps to the suprema of its  copies.  
This map is well-defined because $\Cm\bf At$ is complete. 
We check that $\Theta$ is an injective homomorphim. Injectivity follows from $M_a\leq \Theta(M_a)$, hence $\Theta(x)\neq 0$ 
for every atom $x\in \At(\A_{n+1, n})$.
We check preservation of all the $\QEA_n$ operations.  
The Boolean join is obvious.
\begin{itemize}
\item For complementation: It suffices to check preservation of  complementation `at atoms' of ${\bf At}_f$. 
So let $M_a\in {\bf At}_f$ with $a:n\to M$, $M\in {\sf CGR}_f\subseteq \sf CGR$. Then: 
$$\Theta(\sim M_a)=\Theta(\bigcup_{[b]\neq [a]} M_b)
=\bigcup_{[b]\neq [a]} \Theta(M_b)
=\bigcup_{[b]\neq [a]}\sum_j M_b^{(j)}$$
$$=\bigcup_{[b]\neq [a]}\sim \sum_j[\sim (M_a)^{(j)}]
=\bigcup_{[b]\neq [a]}\sim \sum_j[(\sim M_b)^j]
=\bigcup_{[b]\neq [a]}\bigwedge_j M_b^{(j)}$$
$$=\bigwedge_j\bigcup_{[b]\neq [a]}M_b^{(j)}
=\bigwedge_j(\sim M_a)^{j}
=\sim (\sum M_a^j)
=\sim \Theta(a)$$

\item Diagonal elements. Let $l<k<n$. Then:
\begin{align*}
M_x\leq \Theta({\sf d}_{lk}^{\Cm{\bf At}_f})&\iff\ M_x\leq \sum_j\bigcup_{a_l=a_k}M_a^{(j)}\\
&\iff M_x\leq \bigcup_{a_l=a_k}\sum_j M_a^{(j)}\\
&\iff  M_x=M_a^{(j)}  \text { for some $a: n\to M$ such that $a(l)=a(k)$}\\
&\iff M_x\in {\sf d}_{lk}^{\Cm\bf At}.
\end{align*}

\item Cylindrifiers. Let $i<n$. By additivity of cylindrifiers, we restrict our attention to atoms 
$M_a\in {\bf At}_f$ with $a:n\to M$, and $M\in {\sf CRG}_f\subseteq \sf CRG$. Then: 

$$\Theta({\sf c}_i^{\Cm{\bf At}_f}M_a)=f (\bigcup_{[c]\equiv_i[a]} M_c)
=\bigcup_{[c]\equiv_i [a]}\Theta(M_c)$$
$$=\bigcup_{[c]\equiv_i [a]}\sum_j M_c^{(j)}=\sum_j \bigcup_{[c]\equiv_i [a]}M_c^{(j)}
=\sum _j{\sf c}_i^{\Cm\bf At}M_a^{(j)}$$
$$={\sf c}_i^{\Cm\bf At}(\sum_j M_a^{(j)})
={\sf c}_i^{\Cm\bf At}\Theta(M_a).$$

\item Substitutions: Let $i, k<n$. By additivity of the ${\sf s}_{[i,k]}$s, we again restrict ourselves to atoms of the form $M_a$ as specified in the previous items.
Now computing we get:
$\Theta({\sf s}_{[i,k]}^{\Cm{\bf At}_f} M_a)= \Theta(M_{a\circ [i,k]})=  \sum_j^{\Cm\bf At}(M_{a\circ [i,k]}^{(j)})=\sum_j {\sf s}_{[i,k]}^{\Cm\bf At}{} M_a^{(j)}={\sf s}_{[i,k]}^{\Cm \bf At}{}(\sum_{j}M_a^{(j)})=
{\sf s}_{[i,k]}^{\Cm\bf At}{}\Theta(M_a).$
\end{itemize}

4.: {\bf \pa\ has  a  \ws\ in $G^{n+3}\At(\A_{n+1, n})$; and the required result:} It is straightforward to show that 
\pa\ has \ws\ first in the  \ef\ forth  private 
game played between \pe\ and \pa\ on the complete
irreflexive graphs $n+1$ and $n$ in 
$n+1$ rounds
${\sf EF}_{n+1}^{n+1}(n+1, n)$ \cite [Definition 16.2]{HHbook2}
since $n+1$ is `longer' than $n$. 
Here $r$ is the number of rounds and $p$ is the number of pairs of pebbles
on board. Using (any) $p>n$ many pairs of pebbles avalable on the board \pa\ can win this game in $n+1$ many rounds.
In each round $0,1\ldots n$, \pe\ places a new pebble  on  a new element of $n+1$.
The edge relation in $n$ is irreflexive so to avoid losing
\pe\ must respond by placing the other  pebble of the pair on an unused element of $n$.
After $n$ rounds there will be no such element, so she loses in the next round.
 \pa\  lifts his \ws\ from the private \ef\ forth game ${\sf EF}_{n+1}^{n+1}(n+1, n)$ to the graph game on ${\bf At}_f=\At(\A_{n+1,n})$ 
\cite[pp. 841]{HH} forcing a
win using $n+3$ nodes. 
He bombards \pe\ with cones
having  common
base and distinct green  tints until \pe\ is forced to play an inconsistent red triangle (where indicies of reds do not match).
Let $\Rd_{sc}$ denote the `$\Sc$ reduct'. 
For brevity let $\D=\Rd_{sc}\A_{n+1, n}(\in \Sc_n)$. Then by Lemma \ref{n}, since $\D$ is finite, then $\D$ does not have an $n+3$-square representation. 
But $\D$ embds into $\Rd_{sc}\Cm\bf At$ it follows that the last dos not have an $n+3$ square represenation either. 
\end{proof}


\begin{theorem} \label{can} Let $2<n<\omega$.
\begin{enumerate}

\item For any ordinal $\alpha\geq 3$ (possibly infinite), there exists $\A\in \RCA_\alpha$ such that $\Cm\At\A\notin \bold S\Nr_\alpha\CA_{\alpha+3},$

\item There exists $\A\in \Nr_n\CA_{l}\cap \RCA_n$ such that $\Cm\At\A\notin \RCA_n,$

\item There exists $\B\in{\sf  Cs}_n$, $\B\notin {\bf El}\Nr_n\CA_{n+1}$, but $\At\B\in \Nr_n\CA_{\omega}$ and $\Cm\At\B\in \Nr_n\CA_{\omega}$
 \end{enumerate}

\end{theorem}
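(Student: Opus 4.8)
The plan is to prove the three parts by three separate blow-up-and-blur arguments, each leaning on material already developed. For \emph{Part (1)} with \emph{finite} $\alpha\ge 3$ there is nothing new to do: apply Theorem \ref{can2} with $n:=\alpha$, $m:=\alpha+3$ and $\K:=\CA$, which produces a countable atomic $\A\in\RCA_\alpha$ with $\Cm\At\A\notin\bold S\Nr_\alpha\CA_{\alpha+3}$. For \emph{infinite} $\alpha$ I would rerun the $n=3$ instance of that construction in dimension $\alpha$: keep the $3$--homogeneous extended coloured graph $\Mo$ (with its shade of red $\rho$) and the finite rainbow algebra $\A_{4,3}$, and replace $3$--ary assignments by $\alpha$--ary ones throughout. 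Nothing there is sensitive to the dimension past $3$: the back--and--forth systems of \cite[Lemma~3.10]{Hodkinson} move only finitely many coordinates, and every rainbow formula has finitely many free variables, which is all that is needed both for those systems and for identifying the relativized set algebra with a genuine $\Cs_\alpha$. So one obtains an atomic $\A\in\RCA_\alpha$ into whose neat $3$--reduct $\mathfrak{Nr}_3\Cm\At\A$ the rainbow algebra $\A_{4,3}$ embeds. As $\Rd_{sc}\A_{4,3}$ is finite and, by Lemma \ref{n}, has no $6$--square representation, i.e.\ $\Rd_{sc}\A_{4,3}\notin\bold S\Nr_3\Sc_6$, and since $\Nr_3\CA_{\alpha+3}\subseteq\Nr_3\CA_6$ (as $\alpha\ge 3$ gives $\alpha+3\ge 6$), the assumption $\Cm\At\A\in\bold S\Nr_\alpha\CA_{\alpha+3}$ would force $\Rd_{sc}\A_{4,3}\in\bold S\Nr_3\Sc_6$, a contradiction.

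For \emph{Part (2)} I would use the relation--algebra blow up and blur of Theorem \ref{ANT}. Fix $l$ with $n<l<\omega$; by Theorem \ref{ANT}(2) there is a finite relation algebra $\R$ carrying a strong $l$--blur $(J,E)$ and having no representation on an infinite base. Put $\A:={\sf split}_n(\R,J,E)$. A strong $l$--blur is in particular an $n$--blur, so $\A\in\RCA_n$; being a strong $l$--blur, Theorem \ref{ANT}(1)(b) gives $\A\cong\mathfrak{Nr}_n{\sf split}_l(\R,J,E)$, hence $\A\in\Nr_n\CA_l$. Finally, by Theorem \ref{ANT}(1)(a), $\R$ embeds into $\mathfrak{Ra}\Cm\At\A$; a finite relation algebra with only finite--base representations lacks an $m$--dimensional hyperbasis for some $m<\omega$ (otherwise a compactness/saturation argument assembles an infinite representation), so $\R\notin\bold S\Ra\CA_m$, and since $\mathfrak{Ra}$ of an algebra in $\bold S\Nr_n\CA_m$ lies in $\bold S\Ra\CA_m$, we get $\Cm\At\A\notin\bold S\Nr_n\CA_m\supseteq\RCA_n$. (The argument works for every finite $l>n$; to reach $\A\in\Nr_n\CA_\omega$ one would supply a single $\R$ with a strong $l$--blur for all finite $l$ and pass to a directed colimit of the dilations $\{{\sf split}_l(\R,J,E)\}_l$, the obstruction being untouched since it depends only on $\R$.)

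\emph{Part (3)} is the substantive one, to be obtained from a finer rainbow construction. The target is an atomic $\B\in\Cs_n$ whose atom structure $\bf At$ is arranged so that, on one side, \pe\ has a \ws\ in the $\omega$--rounded game with unbounded node reuse; feeding this into the neat--embedding machinery of Section~2 (the ``dilation from a representation'' direction of Lemma \ref{flat}, run with $m=\omega$ and with $L^n_{\infty,\omega}$) yields $\Cm\bf At\in\Nr_n\CA_\omega$ and, by the same token, $\Tm\bf At\in\Nr_n\CA_\omega$ --- this is the content of ``$\At\B\in\Nr_n\CA_\omega$'' and ``$\Cm\At\B\in\Nr_n\CA_\omega$'' (and it puts $\B$ into $\RCA_n$, in fact into $\Cs_n$ by the usual back--and--forth upgrade). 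On the other side, $\B$ is pinned \emph{strictly} between $\Tm\bf At$ and $\Cm\bf At$ --- concretely, as the algebra of \emph{first--order} $L_n$--definable subsets of the relativized base, a proper subalgebra of the $L^n_{\infty,\omega}$--definable ones (as in \cite{Hodkinson}) --- in such a way that a finite configuration persists in $\B$ which no neat reduct of a $\CA_{n+1}$ can exhibit: there is a first--order $\CA_n$--sentence $\sigma$ (asserting the existence of finitely many elements whose joint quantifier--free type cannot be completed by the single extra cylindrification a $\CA_{n+1}$--dilation would provide) with $\B\models\sigma$ while every $\A'\in\Nr_n\CA_{n+1}$ satisfies $\neg\sigma$. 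Hence no algebra elementarily equivalent to $\B$ is a neat reduct of a $\CA_{n+1}$, i.e.\ $\B\notin{\bf El}\Nr_n\CA_{n+1}$.

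\textbf{Where the difficulty lies.} Parts (1) and (2) are essentially bookkeeping once the constructions of Sections~2--4 are in hand; the real obstacle is Part (3), whose three requirements pull against each other. The atom structure $\bf At$ must be rich enough that \pe\ wins the \emph{full} $\omega$--node game, so that \emph{both} the term algebra and the \de\ completion are neat reducts of a $\CA_\omega$; yet $\B$ must sit low enough inside $\Cm\bf At$ that, once the available nodes are cut down to $n+1$, a \emph{finite}, first--order--visible obstruction survives. Producing the sentence $\sigma$ that holds in $\B$ but fails in every neat reduct of a $\CA_{n+1}$, and verifying both halves, is the crux --- an \ef\ / Ramsey balancing act. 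By contrast, the $\alpha$--ary relativization needed for infinite $\alpha$ in Part (1) is routine.
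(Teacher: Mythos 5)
Your Part (2) is essentially the paper's own argument (blow up and blur the Maddux algebra with a strong $l$--blur, use $\mathfrak{Bb}_n(\R,J,E)\cong\Nrr_n\mathfrak{Bb}_l(\R,J,E)$, and kill representability of the completion via the embedding of $\R$ into $\Ra\Cm\At$), and your Part (1) for finite $\alpha$ is just Theorem \ref{can2}, as in the paper. But there are two genuine gaps. First, for infinite $\alpha$ your plan to ``rerun the $n=3$ rainbow construction with $\alpha$--ary assignments'' does not work as stated: the atoms of Hodkinson-style algebras are the sets defined by the MCA formulas, i.e.\ complete first--order descriptions of coloured graphs on finitely many nodes, and with $\alpha$--ary assignments a complete description of a tuple is no longer expressible by a single formula, so the algebra of first--order definable relativized sets is not atomic (and atomicity is indispensable here, since for an atomless $\A$ the algebra $\Cm\At\A$ is degenerate and lies in every variety). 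You would also need to prove, not just assert, that $\A_{4,3}$ embeds into $\Nrr_3\Cm\At\A$ of this new $\alpha$--dimensional object. The paper avoids all of this by a different route: it fixes, for each finite $k\geq 3$, the $k$--dimensional algebras $\A(k)$, $\B(k)=\Cm\At\A(k)$ from Theorem \ref{can2}, expands them to the signature of $\CA_\omega$, takes an ultraproduct over a non--principal ultrafilter on $\omega\setminus 3$, and uses $\Cm\At(\Pi\A_i/F)=\Pi\Cm\At\A_i/F$ together with a reduct map $\lambda:m+3\to\omega+3$ to pull a putative membership in $\bold S\Nr_\omega\CA_{\omega+3}$ back down to $\bold S\Nr_m\CA_{m+3}$ for finite $m$, a contradiction.

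Second, Part (3) is not proved in your proposal; what you offer is a specification of what a construction should achieve, and you yourself flag the crux (the sentence $\sigma$ and the algebra $\B$) as outstanding. Moreover the mechanism you suggest is too weak for the conclusion you need: a \ws\ for \pe\ in the $\omega$--rounded games, fed through Lemma \ref{flat}, yields membership in $\bold S\Nr_n\CA_m$ or $\bold S_c\Nr_n(\CA_m\cap\bf At)$, i.e.\ (complete) subalgebras of neat reducts, whereas the theorem demands $\Cm\At\B\in\Nr_n\CA_\omega$ and $\At\B$ coming from a genuine neat reduct --- exact neat--reduct membership is not delivered by these games. The paper's proof is instead a short explicit construction: take $\A\in\Cs_n$ with universe $\wp({}^n\Q)$, which is literally (isomorphic to) $\Nrr_n$ of the full $\omega$--dimensional set algebra on $\Q$; let $y=\{s\in{}^n\Q: s_0+1=\sum_{i>0}s_i\}$ and $\B=\Sg^{\A}\{y,y_s:s\in y\}$. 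By \cite{SL} all singletons lie in $\B$, so $\B$ is dense in $\A$, shares the atom structure of $\A$ (the singletons), whence $\Cm\At\B=\A\in\Nr_n\CA_\omega$, while \cite{SL} also gives $\B\notin{\bf El}\Nr_n\CA_{n+1}$. Without either this construction or a worked-out substitute, your Part (3) remains open, and with it the theorem.
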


\begin{proof}

Now we lift the last result to the transfnite. We consider for simplicity of notation only the infinite ordinal $\omega$.
For each finite $k\geq 3$, let $\A(k)$
be an atomic countable simple representable
$\CA_k$ such that  $\B(k)=\Cm\At\A(k)\notin \bold S\Nr_k\CA_{k+3}.$ We know that such algebras exist by the above.
{\it It can be that : (*)    $\B_m$ embeds into $\Rd_m\B_{t}$, whenever $3\leq m<t<\omega$.}
Throughout this part of the proof $F$ denote a non--principal ultrafilter on $\omega\setminus 3$.
For each finite $k\geq 3$, let $\A(k)$ and $\B(k)$ be the algebras construsted in the finite dimensional case (of dimension $k$). 
Let $\A_k$ be an (atomic) algebra having the signature of $\CA_{\omega}$
such that $\Rd_k\A_k=\A(k)$.
Analogously, let $\B_k$ be an algebra having the signature
of $\CA_{\omega}$ such that $\Rd_k\B_k=\B(k)$, and we require in addition that $\B_k=\Cm(\At\A_k)$.
We use a lifting argument using ultraproducts. Let $\B=\Pi_{i\in \omega\setminus 3}\B_i/F$.
It is easy to show that 
$\A=\Pi_{i\in \omega\setminus 3}\A_i/F\in \RCA_{\omega}$.
Furthermore, a direct computation gives:
$\Cm\At\A=\Cm(\At[\Pi_{i\in \omega\setminus 3 }\A_i/F])
=\Cm[\Pi_{i\in \omega\setminus 3}(\At\A_i)/F)]
=\Pi_{i\in \omega\setminus 3}(\Cm(\At\A_i)/F)
=\Pi_{i\in \omega\setminus 3}\B_i/F
=\B.$
By the same token, $\B\in \CA_{\omega}$. 
Assume for contradiction that $\B\in \bold S\Nr_{\omega}\CA_{\omega+3}$.
Then $\B\subseteq \mathfrak{Nr}_{\omega}\C$ for some $\C\in \CA_{\omega+3}$.
Let $3\leq m<\omega$ and  let $\lambda:m+3\rightarrow \omega+3$ be the function defined by $\lambda(i)=i$ for $i<m$
and $\lambda(m+i)=\omega+i$ for $i<3$.
Then we get (**): $\Rd^\lambda\C\in \CA_{m+3}$ and $\Rd_m\B\subseteq \mathfrak{Nr}_m\Rd^\lambda\C$. 
By assumption 
let $I_t: \B_m\to \Rd_m\B_t$ be an injective homomorphism for $3\leq m<t<\omega$.
Let $\iota( b)=(I_{t}b: t\geq m )/F$ for  $b\in \B_m$.
Then $\iota$ is an injective homomorphism that embeds $\B_m$ into
$\Rd_m\B$.  By (**)  we know that $\Rd_{m}\B\in {\bf S}\Nr_m\CA_{m+3}$, hence  $\B_m\in \bold S\Nr_{m}\CA_{m+3}$, too.
This is a contradiction, and we are done.

2.  We use the construction in \cite{ANT}. 
Let $\R$ be a relation algebra, with non--identity atoms $I$ and $2<n<\omega$. Assume that  
$J\subseteq \wp(I)$ and $E\subseteq {}^3\omega$.
$(J, E)$  is an {\it $n$--blur} for $\R$, if $J$ is a {\it complex $n$--blur}
and the tenary relation $E$ is an {\it index blur} defined  as 
in item (ii) of \cite[Definition 3.1]{ANT}.
We say that $(J, E)$ is a {\it strong $n$--blur}, if it $(J, E)$ is an $n$--blur,  such that the complex 
$n$--blur  satisfies:
$(\forall V_1,\ldots V_n, W_2,\ldots W_n\in J)(\forall T\in J)(\forall 2\leq i\leq n)
{\sf safe}(V_i,W_i,T)$ (with notation as in \cite{ANT}).  
Now Let $l\geq 2n-1$, $k\geq (2n-1)l$, $k\in \omega$. 
One takes the finite
integral relation algebra $\R_l={\mathfrak E}_k(2, 3)$ 
where $k$ is the number of non-identity atoms in
$\R_l$. Then $\R_l$ has a strong $l$--blur, $(J, E)$ and it can only be represented 
on a finite basis \cite{ANT}.  Then $\Bb_n(\R_l, J, E)=\Nr_n\Bl_l(\R_l, J, E)$ has no complete representation, so
$\Cm\At\Bb_n(\R_l, J, E)$ is not representable.

3.  Let $V={}^n\Q$ and let ${\A}\in {\sf Cs}_n$ has universe $\wp(V)$.
Then clearly $\A\in {\sf Nr}_{n}\CA_{\omega}$. To see why,  let 
$W={}^{\omega}\Q$ and let $\D\in {\sf Cs}_{\omega}$ have universe $\wp(W)$.
Then the map $\theta: \A\to \wp(\D)$ defined via $a\mapsto \{s\in W: (s\upharpoonright \alpha)\in a\}$, 
is an injective homomorphism from $\A$ into $\mathfrak{Rd}_{n}\D$ that is onto 
$\mathfrak{Nr}_{n}\D$.
Let $y$ denote the following $n$--ary relation:
$y=\{s\in V: s_0+1=\sum_{i>0} s_i\}.$ Let $y_s$ be the singleton containing $s$, i.e. $y_s=\{s\}$
and ${\B}=\Sg^{\A}\{y,y_s:s\in y\}.$ It is shown in \cite{SL} that $\{s\}\in \B$, for all $s\in V$. 
Now $\B$ and $\A$ having same top element $V$, share the same atom structure, namely, the singletons, so $\B\subseteq_ d \A$ 
and $\Cm\At\B=\A$. Furthermore, plainly $\A, \B\in {\sf CRCA}_n$; the identity maps establishes a complete representation for both, 
since $\bigcup_{s\in V}\{s\}=V$. 
Since $\B\subseteq_d \A$, then $\B\subseteq_c \A$, so
$\B\in \bold S_c\Nr_{n}\CA_{\omega}\cap \bf At$ 
because $\A\in  \Nr_{n}\CA_{\omega}$ is atomic.  
As proved in \cite{SL},  
$\B\notin {\bf  El}\Nr_{n}{\sf CA}_{n+1}(\supseteq \Nr_{n}\CA_{\omega}\cap \bf At)).$ 

\end{proof}

\begin{corollary}\label{strongly}There is a $\CA_n$ atom structure, namely $\bf At$ in Theorem \ref{can} that is weakly but not strongly representable
\end{corollary}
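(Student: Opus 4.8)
The plan is to read off the statement directly from the construction already carried out in the proofs of Theorems \ref{can2} and \ref{can}; the corollary is really just a repackaging of those results in the language of weak versus strong representability. First I would fix $\bf At$ to be the $\CA_n$ (in fact $\QEA_n$) atom structure obtained there by blowing up and blurring the finite rainbow algebra $\A_{n+1,n}$, i.e. the atom structure of $\Tm\bf At={\mathfrak{Bb}}(\A_{n+1,n},\r,\omega)$, whose underlying set is the set of coloured graphs on the split signature.

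The first half (weak representability) is immediate: in part 2 of the proof of Theorem \ref{can2} it was shown that $\Tm\At\A$ embeds into the quasi--polyadic equality set algebra $\A$ with top element $^n\Mo$ built from the $n$--homogeneous model $\Mo$, and that $\At\A=\At\Tm\At\A=\bf At$. Passing to cylindric reducts, $\Rd_{ca}\Tm\bf At$ is an atomic member of $\RCA_n$ with atom structure $\Rd_{ca}\bf At$, so by definition $\bf At$ is weakly representable. For the second half (failure of strong representability), I would invoke Theorem \ref{can}(1) in the case $\alpha=n$ (equivalently, the final paragraph of the proof of Theorem \ref{can2}): $\A_{n+1,n}$ embeds via the map $\Theta$ into $\Cm\bf At$, and since $\pa$ has a winning strategy in $G^{n+3}\At(\A_{n+1,n})$ — lifted from his winning strategy in $\mathsf{EF}_{n+1}^{n+1}(n+1,n)$ — Lemma \ref{n} together with Lemma \ref{flat} shows that $\Rd_{sc}\A_{n+1,n}$, and hence $\Rd_{sc}\Cm\bf At$, has no $n+3$--square representation. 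Therefore $\Cm\bf At=\Cm\At\A\notin\bold S\Nr_n\CA_{n+3}\supseteq\RCA_n$, so $\Cm\bf At$ is not representable and $\bf At$ is not strongly representable. Combining the two halves gives the corollary.

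There is no genuine obstacle internal to this corollary: all the substance — the blow--up--and--blur construction, the verification that $\Theta$ is an embedding of $\A_{n+1,n}$ into $\Cm\bf At$, the back--and--forth systems showing that $L_n$ semantics relativized to $W$ agrees with the classical semantics (so that $\Tm\bf At$ is representable), and $\pa$'s winning strategy forcing $\Rd_{sc}\Cm\bf At$ out of $\bold S_c\Nr_n\Sc_{n+3}$ — is already contained in Theorems \ref{can2} and \ref{can}. The only point worth stating carefully is that no $n+3$--square (a fortiori no genuine) representation of $\Cm\bf At$ can exist because such a representation would restrict to one of the finite subalgebra $\Rd_{sc}\A_{n+1,n}$, contradicting the game--theoretic argument; this is precisely the conclusion drawn at the end of the proof of Theorem \ref{can2}.
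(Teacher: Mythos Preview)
Your proposal is correct and follows exactly the paper's intended approach: the corollary is stated in the paper without an explicit proof, as it is an immediate repackaging of the content of Theorems \ref{can2} and \ref{can}, and you have correctly unpacked the two ingredients (representability of $\Tm\bf At$ from part 2 of Theorem \ref{can2}, and $\Cm\bf At\notin\bold S\Nr_n\CA_{n+3}\supseteq\RCA_n$ from part 4 and the embedding $\Theta$).
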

It is known that the claess of weakly reprsentable atom structures is elementary \cite{Venema}. In a moment we will show that the class of strongly represntable atom structure is not elementary;
reproving a result of Hirsch and Hodkinson \cite{HHbook2}.
\begin{corollary} There are infinitely many subvarieties of $\CA_n$ containing $\RCA_n$ that are not atom-canonical.
\end{corollary}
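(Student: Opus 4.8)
The plan is to exhibit the required infinite family as the descending chain of classes $\bold S\Nr_n\CA_m$ for $m=n+3,n+4,n+5,\dots$, and to verify three things: each is a subvariety of $\CA_n$ containing $\RCA_n$, they are pairwise distinct, and none of them is atom--canonical. The first point is standard Henkin--Monk--Tarski neat--reduct theory, used tacitly throughout this paper: for each $m>n$ the class $\bold S\Nr_n\CA_m$ is a variety (closed under $\bold S$ trivially, under $\bold P$ because $\CA_m$ is, and under homomorphic images), and it is visibly a subclass of $\CA_n$. Moreover, by Henkin's neat embedding theorem $\RCA_n=\bold S\Nr_n\CA_\omega=\bigcap_{k<\omega}\bold S\Nr_n\CA_{n+k}$, so $\RCA_n\subseteq \bold S\Nr_n\CA_m$ for every $m\geq n$; hence each member of the chain is a subvariety of $\CA_n$ containing $\RCA_n$.

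For distinctness I would invoke the classical fact that the inclusions $\cdots\subseteq\bold S\Nr_n\CA_{m+1}\subseteq\bold S\Nr_n\CA_m\subseteq\cdots\subseteq\CA_n$ are strict for every $m\geq n$; this is implicit already in the footnote of Section~1, where it is recorded that $\bold S\Nr_n\CA_{n+k+1}$ is not even finitely axiomatizable over $\bold S\Nr_n\CA_{n+k}$ for any $k\geq1$. Thus $\{\bold S\Nr_n\CA_m:m\geq n+3\}$ is an infinite family of pairwise distinct varieties.

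The third and only content--bearing point is already proved above. By Theorem~\ref{can}(1) with $\alpha=n$ (equivalently by Theorem~\ref{can2}, or by the blow up and blur construction behind Theorem~\ref{ANT}(4)), there is a countable atomic $\A\in\RCA_n$ with $\Cm\At\A\notin\bold S\Nr_n\CA_{n+3}$. Since a larger dimension gives a smaller class, $\bold S\Nr_n\CA_m\subseteq\bold S\Nr_n\CA_{n+3}$ for all $m\geq n+3$, so this single $\A$ satisfies $\A\in\RCA_n\subseteq\bold S\Nr_n\CA_m$ while $\Cm\At\A\notin\bold S\Nr_n\CA_m$, for every $m\geq n+3$. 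As each $\bold S\Nr_n\CA_m$ is closed under subalgebras, all of the infinitely many varieties $\bold S\Nr_n\CA_m$ ($m\geq n+3$) therefore fail to be atom--canonical, which is the assertion. Should one want them to lie \emph{strictly} between $\RCA_n$ and $\CA_n$, one adds that $\bold S\Nr_n\CA_m\neq\RCA_n$ (again by non--finite axiomatizability of $\RCA_n$ over it) and $\bold S\Nr_n\CA_m\neq\CA_n$ for $m>n$.

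There is no real obstacle here: everything substantive is contained in Theorems~\ref{ANT}, \ref{can2} and \ref{can}, and the two background facts used --- that $\bold S\Nr_n\CA_m$ is a variety and that the chain strictly decreases --- are classical and already relied upon elsewhere in the paper.
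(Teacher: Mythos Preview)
Your proposal is correct and follows essentially the same approach as the paper's own proof: exhibit the family $\bold S\Nr_n\CA_m$ for $m\geq n+3$, cite that these are varieties forming a strictly decreasing chain between $\RCA_n$ and $\CA_n$ (the paper cites \cite[Chapter 15]{HHbook} for this), and observe that the single witness $\A$ from Theorem~\ref{can} kills atom--canonicity for all of them at once since $\Cm\At\A\notin\bold S\Nr_n\CA_{n+3}\supseteq\bold S\Nr_n\CA_m$. The paper's proof is terser---it leaves the non--atom--canonicity step entirely implicit, relying on the section title and the immediately preceding Theorem~\ref{can}---but you have simply spelled out what the paper takes for granted.
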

\begin{proof} It is known that for any pair of ordinals $\alpha<\beta$, $\bold S\Nr_{\alpha}\CA_{\beta}$ is a variety, and that 
for $k\geq 1$ and $2<n<\omega$, $\RCA_n\subsetneq \bold S\Nr_n\CA_{n+k+1}\subsetneq \bold S\Nr_n\CA_{n+k}\subseteq \CA_n$,cf. \cite[Chapter 15]{HHbook}.
In other words, the sequence $\langle \bold S\Nr_n\CA_{n+k+1}: k\geq 1\rangle$ is strictly decreasing.
\end{proof}
\begin{corollary} There exists a countable  atomic $\A\in {\sf Cs}_n$ that is dense in an algebra $\B$ such that $\B\notin bold S\Nr_n\CA_{n+3}$.
\end{corollary}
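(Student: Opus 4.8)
The plan is to read this off the construction behind Theorem \ref{can2}, simply making the density clause explicit. Recall from the proof of that theorem the countable atomic simple $\QEA_n$ --- call it $\mathfrak{N}$, the relativized set algebra there denoted $\A$ --- whose atom structure is the blown--up--and--blurred rainbow structure $\bf At$; via the back--and--forth systems of \cite[Lemma 3.10]{Hodkinson} it is isomorphic to an honest quasi--polyadic equality set algebra with Cartesian top element $^n\Mo$, so that its cylindric reduct $\Rd_{ca}\mathfrak{N}$ lies in ${\sf Cs}_n$. Two facts established there are all I need: $\Tm\At\mathfrak{N}\subseteq\mathfrak{N}$ with $\Cm\At\mathfrak{N}=\Cm{\bf At}$, and $\Rd_{sc}\Cm{\bf At}$ admits no $(n+3)$--square representation.

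First I would set $\A=\Rd_{ca}\Tm\At\mathfrak{N}$ and $\B=\Rd_{ca}\Cm\At\mathfrak{N}=\Rd_{ca}\Cm{\bf At}$. Then $\A$ is a subalgebra of the cylindric set algebra $\Rd_{ca}\mathfrak{N}$, hence a countable atomic member of ${\sf Cs}_n$; and being the term algebra over $\bf At$ it is dense in $\Cm{\bf At}$, hence dense in the complete algebra $\B$. It remains only to check that $\B\notin\bold S\Nr_n\CA_{n+3}$. Were it in $\bold S\Nr_n\CA_{n+3}$, then Lemma \ref{flat}(1) would supply an $(n+3)$--flat --- a fortiori $(n+3)$--square --- representation of $\B$; restricting this representation to the $\Sc$--signature (legitimate since the $\s_i^j$ are term--definable from cylindrifiers and diagonals) would yield an $(n+3)$--square representation of $\Rd_{sc}\B=\Rd_{sc}\Cm{\bf At}$, contradicting Theorem \ref{can2}. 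Hence $\B\notin\bold S\Nr_n\CA_{n+3}$, and $\A$, dense in $\B$, is as required.

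The only step that calls for a word of justification --- though everything needed is already in place --- is the implication ``$\Rd_{sc}\Cm{\bf At}$ has no $(n+3)$--square representation $\Longrightarrow$ $\Rd_{ca}\Cm{\bf At}\notin\bold S\Nr_n\CA_{n+3}$'', which rests on Lemma \ref{flat} together with the trivial observations that $m$--flat representations are $m$--square and that an $m$--square representation of a $\CA_n$ restricts to one of its $\Sc$--reduct. If one prefers to avoid Lemma \ref{flat}, an alternative is to invoke Theorem \ref{can}(1) at $\alpha=n$: it produces an atomic simple $\C_0\in\RCA_n$, hence $\C_0\in{\sf Cs}_n$, with $\Cm\At\C_0\notin\bold S\Nr_n\CA_{n+3}$; since $\C_0$ is dense in its \de\ completion $\Cm\At\C_0$, that route meets no obstacle either.
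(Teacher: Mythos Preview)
Your proof is correct and follows the same route as the paper: take $\A=\Tm{\bf At}$ and $\B=\Cm{\bf At}$ for the rainbow atom structure ${\bf At}$ of Theorem~\ref{can2} (equivalently Theorem~\ref{can}(1) at $\alpha=n$), noting that $\A$ sits inside the ${\sf Cs}_n$ produced there and is dense in its \de\ completion $\B$. The paper's own proof is a one--liner pointing at exactly this construction; your additional justification of $\B\notin\bold S\Nr_n\CA_{n+3}$ via Lemma~\ref{flat} and the $\Sc$--reduct is sound and makes explicit what the paper leaves implicit.
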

\begin{proof} Take $\A$ to be the algebra $\Tm\bf At$ and $\B$ its \de\ completion, namely, $\Cm\bf At$, where $\bf At$ is (weakly but not strongly representable) 
atom structure constructed in the proof of Theorem \ref{can}.
\end{proof}
Theorem \ref{can} prompts the following definition which suggests that not all algebras are representable in the 'same degree': Some algebras are `more representable' than others.
\begin{definition} Let $2<n<m\leq \omega$. We say that $\A\in \RCA_n$ {\it is reprsentable up to $m$} $\iff$ its \de\ completion is in $\bold S\Nr_n\CA_m$.
\end{definition} 
Let $\RCA_n^m$ be the class of algebras representable up to $m$. 
In \cite{t} finite algebras are constructed in $\bold S\Nr_n\CA_{n+k}\sim \bold S\Nr_n\CA_{n+k+1}$ for all $k\geq 1$. Since for a finite algebra $\A$, $\Cm\At\A=\A$, then 
for $2<n<m<l\leq \omega$, $\RCA_n^l\subsetneq \RCA_n^m$.

Using the previous algebraic result on non atom canonicity established in Theorem \ref{can}, we adress algebraically 
a version of the 
omitting types theorems  in the framework of 
the {\it clique guarded} $n$--variable fragments
of first order logic.  
\begin{lemma} Let $2<n<m<\omega$, and $\A\in \CA_n$ be an atomic algebra. Then $\A$ has a complete  $m$-square representation 
$\iff$ \pe\ has a \ws\ in $G_{\omega}^m(\At\A)$.
\end{lemma}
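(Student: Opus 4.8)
The statement is the "square" analogue of the "flat" result already packaged in Lemma~\ref{flat} and its companion theorem on $\omega$--rounded games for $m$--square representations. The plan is to run the same two-directional argument, but with $m$--flat replaced by $m$--square and, correspondingly, with the commutativity-of-cylindrifiers condition dropped: the $m$--dimensional dilation $\D$ built from the representation will satisfy the $\CA_n$ axioms except possibly $(C_4)$, and one works with $m$--dimensional \emph{bases} (not \emph{hyperbases}) throughout. Concretely, I would invoke Lemma~\ref{flat}(2) together with the Theorem preceding Definition~\ref{sub}: that Theorem already proves, for atomic $\A\in\CA_n$ with $2<n<m<\omega$, that $\A$ has a complete $m$--square representation $\iff$ \pe\ has a \ws\ in $G_\omega^m(\At\A)$. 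So at the crudest level the present Lemma is an immediate restatement, and a one-line proof citing that Theorem would suffice. To make the proof self-contained I would instead sketch both directions.

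For the forward direction ($\Rightarrow$): assume $\M$ is a complete $m$--square representation of $\A$. Following the proof of the earlier Theorem verbatim, build the $m$--dimensional dilation $\D$ with top element $\mathsf{C}^m(\M)$ and operations induced by the $n$--clique-guarded semantics, using $L_{\infty,\omega}^m$ formulas; the neat embedding $\theta:\A\to\Nr_n\D$ given by $r\mapsto r(\bar x)^{\M}$ is injective and $\D$ is atomic (but need not be a $\CA_m$, as commutativity may fail). For each $\bar a\in 1^{\D}$ define the labelled hypergraph $N_{\bar a}$ exactly as in that proof; the symmetric closure $H=\{N_{\bar a}\theta:\theta:m\to m,\ \bar a\in 1^{\M}\}$ is an $m$--dimensional basis that "eliminates cylindrifiers," and \pe\ wins $G_\omega^m$ by always playing a subnetwork of some member of $H$: in round $0$ she picks $N\in H$ with $N(0,\dots,n-1)=$ the atom \pa\ played; in a later round, since $|N|<m$ there is a spare node $z$, and the cylindrifier property of $H$ supplies the witness $M'\in H$, to which she restricts. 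This is the argument already written out in the Theorem before Definition~\ref{sub}, so here I would just point to it.

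For the reverse direction ($\Leftarrow$): assume \pe\ has a \ws\ $\sigma$ in $G_\omega^m(\At\A)$. Play out $\sigma$ against a "saturated" schedule of \pa\ moves (enumerating all cylindrifier demands, each requested cofinally often) to produce a limit family of networks which, after symmetrising, forms an $m$--dimensional basis $B$ for $\A$ (the amalgamation needed for a \emph{basis} is vacuous — it is only for \emph{hyperbasis} that one needs the extra overlap condition corresponding to commutativity — so squareness, not flatness, is what we get). From $B$ one constructs an $m$--square representation in the step-by-step manner of Lemma~\ref{flat}(2), treating the basis as a saturated set of mosaics; completeness of the representation (i.e. that it is atomic) comes from the fact that round $0$ of the game ranges over all atoms, so every atom of $\A$ appears as the label $N(0,\dots,n-1)$ of some network in $B$, hence is "hit" by the representation. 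This is precisely the content of Lemma~\ref{flat}(2) with infinitary $m$--flat/$\CA_m$ replaced by $m$--square/$\sf D_m$, combined with the standard translation between \ws's in $G_\omega^m$ and $m$--dimensional bases.

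The only genuine content beyond bookkeeping is the \emph{reverse} direction, and within it the main obstacle is the step-by-step construction of the actual base set $\M$ from the abstract basis $B$: one must verify that the resulting relativized representation is indeed $m$--square (the clique-guarded cylindrifier condition in Definition~(1) of the clique-guarded semantics) without having commutativity available. That verification is exactly the square-case specialisation of the argument in the proof of Lemma~\ref{flat} — replacing $n$--hyperedges and hypernetworks by ordinary $n$--edges and networks — so I would carry it out by citing \cite[Theorem 13.46, the equivalence for bases]{HHbook} and Lemma~\ref{flat}(2), noting that all the hyperbasis machinery collapses to basis machinery precisely because the $\sf D_m$ top element is only required to be diagonizable, not to validate $(C_4)$. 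Everything else (soundness of the $N_{\bar a}$ definition, the spare-node argument, the identification $\At(\Cm\bf At)\cong\bf At$) is routine and already appears in the excerpt.
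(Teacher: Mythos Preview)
Your proposal is correct, and in fact your ``crudest level'' suggestion---a one-line citation---is precisely what the paper does: its entire proof is the single citation \cite[Lemma 5.8]{mlq}. Your observation that the forward direction was already written out in the Theorem preceding Definition~\ref{sub} is accurate (that Theorem states the full $\iff$ but proves only $\Rightarrow$), and your sketch of the reverse direction via playing out the strategy to obtain an $m$--dimensional basis and then applying the step-by-step construction of Lemma~\ref{flat}(2) is the standard route that the cited external lemma follows; so you have supplied more detail than the paper itself, with no gaps.
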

\begin{proof}\cite[Lemma 5.8]{mlq}.
\end{proof}
\begin{corollary} There exists $\A\in \Cs_n$ such that $\Cm\At\A$ does not have an $n+3$-square representation.
\end{corollary}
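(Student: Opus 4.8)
The plan is to extract this corollary directly from Theorem~\ref{can2}, the only extra work being to check that the atomic algebra produced in its proof has a cylindric reduct lying in $\Cs_n$ (and not merely in the broader class ${\sf Gs}_n$). Recall from the proof of Theorem~\ref{can2} the weakly representable $\QEA_n$ atom structure $\bf At$ obtained by blowing up and blurring $\A_{n+1,n}$, together with its term algebra $\Tm\bf At$; there one shows that $\Tm\bf At$ embeds into a quasi--polyadic equality set algebra whose top element is the cartesian square $^n\Mo$. Hence I would take $\A:=\Rd_{ca}\Tm\bf At$: it is, up to isomorphism, a subalgebra of a cylindric set algebra with unit $^n\Mo$, so $\A\in\Cs_n$, and it is atomic with $\Cm\At\A=\Rd_{ca}\Cm\bf At$ (the $\CA_n$ obtained as the cylindric reduct of the complete algebra $\Cm\bf At$, in which $\Tm\bf At$ is dense).

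Next I would invoke Theorem~\ref{can2}, which asserts that $\Rd_{sc}\Cm\bf At$ has no $(n+3)$--square representation. Because the $\Sc$--reduct of a $\QEA_n$ coincides with the $\Sc$--reduct of its cylindric reduct, this gives $\Rd_{sc}\Cm\At\A=\Rd_{sc}\Cm\bf At$, so $\Rd_{sc}\Cm\At\A$ has no $(n+3)$--square representation. To finish I would argue that $\Cm\At\A$ itself can then have no $(n+3)$--square representation: if $f\colon\Cm\At\A\to\wp(V)$ with $V\subseteq{}^n\M$ were such a representation in the clique--guarded sense, then $f$ restricted to the $\Sc$--signature is still an injective homomorphism onto a subalgebra of $\wp(V)$, the base $\M$ and the set $V$ are unchanged, and the defining clause of $m$--squareness mentions only the cylindrifiers ${\sf c}_i$, all of which belong to the $\Sc$--signature; hence $f$ would witness an $(n+3)$--square representation of $\Rd_{sc}\Cm\At\A$, a contradiction. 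Thus $\A\in\Cs_n$ is an atomic algebra whose $\Cm\At\A$ has no $(n+3)$--square representation.

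I do not expect any genuine obstacle: this is essentially a bookkeeping consequence of Theorem~\ref{can2}. The two points that need a moment's attention are (i) confirming that the cylindric reduct of the term algebra lands in $\Cs_n$ --- which is precisely why the proof of Theorem~\ref{can2} is careful to present the ambient set algebra with a full cartesian square $^n\Mo$ as unit rather than a disjoint union of such squares --- and (ii) the elementary observation that an $m$--square (equivalently clique--guarded) representation of a $\CA_n$ restricts to one of its $\Sc$--reduct, since $m$--squareness is phrased purely in terms of cylindrifications and an injective $\CA_n$--homomorphism is a fortiori an injective $\Sc_n$--homomorphism onto the reduct. One could alternatively try to derive the statement from Theorem~\ref{can}(1) with $\alpha=n$, but that only yields $\Cm\At\A\notin\bold S\Nr_n\CA_{n+3}$, i.e.\ the absence of an $(n+3)$--\emph{flat} representation, which by Lemma~\ref{flat} is strictly weaker than what is wanted here; so routing through Theorem~\ref{can2} is the natural choice.
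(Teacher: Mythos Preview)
Your proposal is correct and essentially matches the paper's argument: both take $\A$ to be the cylindric reduct of $\Tm\bf At$ from the proof of Theorem~\ref{can2}, use that this lands in $\Cs_n$ because the ambient set algebra has unit $^n\Mo$, and then deduce that $\Cm\At\A=\Rd_{ca}\Cm\bf At$ has no $(n+3)$--square representation. The only difference is in routing: you cite the stated conclusion of Theorem~\ref{can2} (that $\Rd_{sc}\Cm\bf At$ has no $(n+3)$--square representation) and pull it back to the $\CA$ reduct via your observation~(ii), whereas the paper goes back inside the proof of Theorem~\ref{can2}, notes that \pa's winning strategy in part~4 never actually reuses nodes, so \pa\ already wins the weaker game $G_\omega^{n+3}(\At\A_{n+1,n})$, and then applies the preceding game--characterization lemma directly. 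Your route is arguably cleaner since it treats Theorem~\ref{can2} as a black box; the paper's route makes explicit the game--theoretic point that $\bold G^m$ was overkill.
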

\begin{proof} This follows from the previous Lemma, together with the proof of (c) in Theorem \ref{can} by observing that \pa\ has a \ws\ in $G_{\omega}^{n+3}\CA_{n+1, n}$ 
(in finitely many rounds of course) without the need to reuse nodes. The game $\bold G^m$ is stronger than what is really needed.
\end{proof}
\begin{lemma}\label{cr} if $\A\in \CA_n$ has a complete $m$--flat representation, then $\A$ is atomic and $\Cm\At\A$ has an $m$-flat representation.
An entirely analogous result holds by replacing $m$-flat by $m$-square.
\end{lemma}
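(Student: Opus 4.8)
The plan is to promote the given complete $m$--flat representation of $\A$ to a representation of $\Cm\At\A$ over the \emph{same} base, and then to read off its $m$--flatness from that of the original. So let $\M$ be a complete (infinitary) $m$--flat representation of $\A$ witnessed by an injective homomorphism $f\colon\A\to\wp(V)$ with $V\subseteq{}^n\M$. By the remark following the definition of complete representations, the completeness of $f$ forces $\A$ to be atomic and forces $f$ to be atomic, i.e.\ $\bigcup_{x\in\At\A}f(x)=V=1^{\M}$; moreover, since $\A$ is atomic and $f$ preserves all existing sums, $f(a)=\bigcup\{f(x):x\in\At\A,\ x\le a\}$ for every $a\in\A$. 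Recall also that $\At(\Cm\At\A)\cong\At\A$ canonically, that $\A$ embeds densely into $\Cm\At\A$, and that the cylindric operations of $\Cm\At\A$ act on a singleton $\{x\}$ (with $x\in\At\A$) by the accessibility relations read off $\At\A$ — in particular $\c_i^{\Cm\At\A}S=\{y\in\At\A:\exists x\in S\ (y\le\c_i^{\A}x)\}$ for $S\subseteq\At\A$.

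First I would set $g\colon\Cm\At\A\to\wp(V)$, $g(S)=\bigcup_{x\in S}f(x)$, and check that $g$ is an injective $\CA_n$--homomorphism agreeing with $f$ on $\A$ (via the density embedding, since $g(\{x\in\At\A:x\le a\})=\bigcup\{f(x):x\le a\}=f(a)$). Injectivity is immediate, as $f(x)\neq\emptyset$ for each atom $x$. Preservation of the Boolean operations uses only that $\{f(x):x\in\At\A\}$ is a partition of $V$ (giving at once $g(\At\A\setminus S)=V\setminus g(S)$ and $g(S\cap T)=g(S)\cap g(T)$), while preservation of joins is trivial. For the diagonals, $g(\d_{ij}^{\Cm\At\A})=\bigcup\{f(x):x\le\d_{ij}^{\A}\}=f(\d_{ij}^{\A})=\d_{ij}^{\wp(V)}$ by atomicity of $\A$ and the fact that $f$ is a representation. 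For cylindrifiers, using that the relativized operation on $\wp(V)$ is completely additive and that $f$ is a complete homomorphism,
\[
g(\c_i^{\Cm\At\A}S)=\bigcup\{f(y):y\in\At\A,\ \exists x\in S\ (y\le\c_i^{\A}x)\}=\bigcup_{x\in S}f(\c_i^{\A}x)=\bigcup_{x\in S}\c_i f(x)=\c_i\,g(S).
\]
Since $\Cm\At\A$ is complete and suprema in it are just unions of sets of atoms, $g$ in fact preserves all sums, so $g$ is even a \emph{complete} representation of $\Cm\At\A$ on $V\subseteq{}^n\M$.

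Next I would check that $g$ is $m$--flat. The key observation is that the clique--guarded semantics $\models_c$ depends only on the base $\M$, on the unit $V=1^{\M}$ (hence on ${\sf C}^m(\M)$), and on the interpretation of atomic formulas; and because $g(S)=\bigcup_{x\in S}f(x)$, each relation symbol $R_S$ of $\L(\Cm\At\A)$ is interpreted over $\M$ by the $\L(\A)_{\infty,\omega}$--formula $\bigvee_{x\in S}R_x$. Hence the $m$--square clause for $g$ reduces to that for $f$: if a clique $\bar s\in{\sf C}^m(\M)$ and an injective $l\colon n\to m$ witness $\M\models(\c_i S)(s_{l(0)},\dots,s_{l(n-1)})$, then by the displayed identity $(s_{l(0)},\dots,s_{l(n-1)})\in f(\c_i^{\A}x)$ for some $x\in S$, so $\M\models(\c_i x)(s_{l(0)},\dots,s_{l(n-1)})$ with $x\in\At\A\subseteq\A$; $m$--squareness of $f$ yields $\bar t\equiv_i\bar s$ in ${\sf C}^m(\M)$ with $(t_{l(0)},\dots,t_{l(n-1)})\in f(x)\subseteq g(S)$, as required. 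The commutativity clause for $m$--flatness transfers the same way: for $\phi$ in the language of $\Cm\At\A$, replacing each atomic subformula $R_S(\bar y)$ by $\bigvee_{x\in S}R_x(\bar y)$ produces $\phi^{\ast}\in\L(\A)_{\infty,\omega}^m$ with $\M,\bar s\models_c\phi\iff\M,\bar s\models_c\phi^{\ast}$ for all $\bar s$ (the translation only touches atomic subformulas, and $\models_c$ treats connectives and quantifiers uniformly), whence $\M\models_c[\exists x_i\exists x_j\phi\leftrightarrow\exists x_j\exists x_i\phi]$ follows from the corresponding instance for $\phi^{\ast}$, which holds because $\M$ is an (infinitary) $m$--flat representation of $\A$. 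The $m$--square statement of the lemma is the very same argument with the commutativity clause and every reference to $\L(\A)_{\infty,\omega}^m$ simply deleted.

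The only delicate point, and the one to be stated with care, is precisely this last transfer: since $\L(\Cm\At\A)$ has far more relation symbols than $\L(\A)$, the translation $\phi\mapsto\phi^{\ast}$ genuinely lands in the \emph{infinitary} language, so either ``$m$--flat'' must be read throughout in its infinitary form, or one must invoke Lemma \ref{flat} to pass between the finitary and infinitary versions; for the $m$--square half there is no commutativity clause and hence no such issue. Everything else is routine checking against the definitions.
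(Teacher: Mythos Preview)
Your approach is essentially identical to the paper's: define $g:\Cm\At\A\to\wp(V)$ by $g(S)=\bigcup_{x\in S}f(x)$ (the paper writes this as $g(a)=\bigcup_{x\in a\downarrow}f(x)$, which is the same thing) and observe that $g$ is a complete $m$--flat representation on the same base $\M$. The paper's proof is a bare three-line sketch that simply asserts $g$ works; you have supplied the verifications it omits---injectivity, preservation of the Boolean and cylindric operations, and the transfer of the $m$--square and $m$--flat clauses---and your observation that the translation $\phi\mapsto\phi^{\ast}$ forces one into $\L(\A)^m_{\infty,\omega}$ (hence one needs either the infinitary reading of $m$--flatness or an appeal to Lemma~\ref{flat}) is a genuine point the paper does not make explicit.
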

\begin{proof} Atomicity is like the classical case \cite{HH}. Now let $f:\A\to \wp(V)$ be a complete $m$--flat 
representation $\A$ with $V\subseteq {}^n\M$ where $\M$ is the base of the representation, so that 
$\M=\bigcup_{s\in V}\rng(s)$. 
For $a\in \Cm\At\A$,  let $a\downarrow=\{x\in \At\A: x\leq a\}$. Define $g:\Cm\At\A\to \wp(V)$ by
$g(a)= \bigcup_{x\in \downarrow a}f(x)$. Then $g$ is a complete $m$-flat representation of $\Cm\At\A$ with 
base $\M$.
\end{proof}
For an $L_n$ theory $T$,  $\Fm_T$, denotes the Tarski--Lindenbaum quotient ${\sf RCA}_n$ corresponding to $T$
where the quoitent modulo $T$ is defined semantically.
Given an $L_n$ theory $T$ and $m>n$, by an {\it $m$--flat model of $T$}, 
we understand an $m$-- flat representation of $\Fm_T$ when $m<\omega$,
and an ordinary representation of $\Fm_T$ if $m$ is infinite.
An {\it atomic $L_n$} theory $T$ is one for which $\Fm_T$ is atomic.
A {\it co-atom of $T$} is a formula $\phi$ such that $(\neg \phi)_T$ 
is an atom in $\Fm_T$. 
\begin{corollary}\label{s} There is a countable, atomic and complete $L_n$ theory $T$ such that the non--principal type 
consisting of co--atoms cannot be omitted in an $n+3$-square, {\it a fortiori} $n+3$-flat 
model.
\end{corollary}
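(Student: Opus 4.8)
The plan is to read off a suitable theory $T$ from the algebra constructed in Theorem~\ref{can2}, to check that its co--atom type is non--principal, and then to convert an $n+3$--square model of $T$ omitting this type into a complete $n+3$--square representation of $\Fm_T$, which contradicts Lemma~\ref{cr} together with the non--square--representability already proved. First I would fix the algebra: by Theorem~\ref{can2} --- more precisely by the corollary immediately preceding the present one --- there is a countable atomic $\A\in\Cs_n$ whose \de\ completion $\Cm\At\A$ has no $n+3$--square representation, and inspecting part~2 of the proof of Theorem~\ref{can2} one may take $\A$ to be the (quasi--polyadic equality, hence cylindric) set algebra with top element ${}^n\Mo$ and universe $\{\phi^{\Mo}:\phi\in L_n\}$, where $\Mo$ is the countable $n$--homogeneous rainbow structure built there (the auxiliary shade of red $\rho$ having been discarded). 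Then the assignment $\phi\mapsto\phi^{\Mo}$ witnesses $\Rd_{ca}\A\cong\Fm_T$ for the $L_n$ theory $T=\{\sigma:\ \sigma\text{ an }L_n\text{--sentence and }\Mo\models\sigma\}$, which is countable, complete (because $\A$, hence $\Fm_T$, is simple) and atomic (because $\Fm_T$ is atomic).

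Let $\Gamma$ be the type of co--atoms of $T$, i.e.\ $\Gamma=\{-a:\ a\in\At\Fm_T\}$ under the identification $\Fm_T\cong\Rd_{ca}\A$. A nonzero lower bound of $\Gamma$ would be a nonzero element of $\Fm_T$ disjoint from every atom, which is impossible in an atomic algebra; hence $\prod^{\Fm_T}\Gamma=0$ and no $L_n$--formula consistent with $T$ isolates $\Gamma$, so $\Gamma$ is a non--principal type of $T$. Now suppose for contradiction that $\Gamma$ is omitted in some $n+3$--square model $\M$ of $T$; by definition $\M$ is an $n+3$--square representation $f\colon\Fm_T\to\wp(V)$. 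That $\Gamma$ is omitted means precisely that no assignment satisfies every co--atom, i.e.\ $\bigcup_{a\in\At\Fm_T}f(a)=1^{\M}$, i.e.\ $f$ is atomic; and since $\Fm_T$ is atomic, the remark accompanying the definition of $m$--square representation upgrades this to: $f$ is a \emph{complete} $n+3$--square representation of $\Fm_T$. By Lemma~\ref{cr} (with $m=n+3$) it follows that $\Cm\At\Fm_T\cong\Cm\At\A$ has an $n+3$--square representation, contradicting the choice of $\A$. Hence $\Gamma$ is omitted in no $n+3$--square model of $T$; since every $n+3$--flat representation is \emph{a fortiori} $n+3$--square, $\Gamma$ is omitted in no $n+3$--flat model of $T$ either.

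Most of this is bookkeeping once Theorem~\ref{can2} and Lemma~\ref{cr} are available; the one delicate point is the bridge just used, namely the verification --- through the clique--guarded semantics --- that ``$\Gamma$ is omitted in an $n+3$--square model of $T$'' is the same assertion as ``$\Fm_T$ has a complete $n+3$--square representation''. This is the algebraic--metalogical dictionary already implicit in Lemma~\ref{cr} and in the definition of $m$--square representation, applied in the direction opposite to the classical deduction of Vaught's theorem from the Orey--Henkin omitting types theorem.
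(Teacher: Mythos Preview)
Your proof is correct and follows essentially the same route as the paper's: take the countable simple atomic $\A\in\Cs_n$ whose \de\ completion lacks an $n+3$--square representation, identify $\A$ with $\Fm_T$ for a suitable countable complete atomic $L_n$ theory $T$, and argue that omitting the co--atom type in an $n+3$--square model would yield a complete $n+3$--square representation of $\A$, hence (via Lemma~\ref{cr}) an $n+3$--square representation of $\Cm\At\A$, a contradiction. The only difference is cosmetic: the paper obtains $T$ by a citation to \cite[\S 4.3]{HMT2}, whereas you construct $T$ explicitly as the $L_n$--theory of the rainbow model $\Mo$ from the proof of Theorem~\ref{can2}; and you spell out the verification that $\Gamma$ is non--principal and the ``omitting $\Gamma$ $\Leftrightarrow$ atomic (complete) representation'' dictionary more carefully than the paper does.
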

\begin{proof} Let $\A\in {\sf Cs}_n$ be countable (and simple)  such that its \de\ completion does not have an $n+3$-square representation. 
This $\A$ exists by Theorem \ref{can}.  
By \cite[\S 4.3]{HMT2}, we can (and will) assume that $\A= \Fm_T$ for a countable, atomic theory $L_n$ theory $T$.  
Let $\Gamma$ be the $n$--type consisting of co--atoms of $T$. Then $\Gamma$ is a non principal type that cannot be omitted in any
$n+3$--square model, for if $\M$ is an $n+3$--square model omitting $\Gamma$, then $\M$ would be the base of a complete $n+3$-square  
representation of $\A$, giving, by Lemma \ref{cr},
representation  of $\Cm\At\A$, which is impossible.
\end{proof}

\begin{corollary}\label{embed} There exists an atomic ${\cal T}\in \sf RRA$ and an atomic 
$\A\in \RCA_n$ such that their \de\ completions do not  embed into their canonical 
extensions.
\end{corollary}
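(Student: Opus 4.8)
The plan is to play the \emph{canonicity} of the two varieties against the failure of \emph{atom--canonicity} already established above. I would rely on two standing facts: for an atomic $\sf BAO$ $\A$, the \de\ completion of $\A$ is (isomorphic to) $\Cm\At\A$; and the canonical extension $\A^+=\Cm\Uf\A$ always contains $\A$ as a subalgebra. Both $\RCA_n$ and $\sf RRA$ are canonical varieties: being varieties they satisfy $\bold S\,\RCA_n=\RCA_n$ and $\bold S\,{\sf RRA}={\sf RRA}$, and each is closed under forming canonical (perfect) extensions, which is recalled for $\RCA_n$ in the Introduction and is Monk's classical theorem for $\sf RRA$. (Neither variety is Sahlqvist, so Sahlqvist's theorem is unavailable, but bare canonicity is all that is needed.)

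For the cylindric witness, I would take the atomic $\A\in \Nr_n\CA_l\cap\RCA_n$ supplied by Theorem \ref{can}(2), so that $\Cm\At\A\notin \RCA_n$. Arguing by contradiction, suppose the \de\ completion $\Cm\At\A$ embeds into the canonical extension $\A^+$. Since $\A\in\RCA_n$ and $\RCA_n$ is canonical, $\A^+\in\RCA_n$; since $\RCA_n=\bold S\,\RCA_n$ and $\RCA_n$ is closed under isomorphic copies, every algebra embeddable into $\A^+$ lies in $\RCA_n$, so $\Cm\At\A\in\RCA_n$ --- contradicting the choice of $\A$. Hence $\Cm\At\A$ does not embed into $\A^+$.

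For the relation--algebra witness, I would fix a finite relation algebra $\R$ with a strong $l$--blur $(J,E)$ having no representation on an infinite base, as provided by Theorem \ref{ANT}(2), and set ${\cal T}={\sf split}(\R,J,E)$. By Theorem \ref{ANT}(1)(a) together with Example \ref{exa}, ${\cal T}\in{\sf RRA}$ is atomic, while $\R$ embeds into $\Cm\At{\cal T}=\Cm\bf At$; since any representation of $\Cm\At{\cal T}$ is necessarily on an infinite base and induces one of $\R$ on the same base, we get $\Cm\At{\cal T}\notin{\sf RRA}$. Then the same two--line argument as above --- canonicity gives ${\cal T}^+\in{\sf RRA}$, and closure under $\bold S$ forbids an embedding of $\Cm\At{\cal T}$ into ${\cal T}^+$ --- finishes this case.

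I do not anticipate a genuine obstacle here; the only point I would be careful about is to note that the argument rules out \emph{any} algebra embedding into the canonical extension, not merely one fixing the base algebra pointwise, because an arbitrary such embedding already exhibits $\Cm\At\A$ (resp.\ $\Cm\At{\cal T}$) as an isomorphic copy of a subalgebra of a representable algebra, which by closure under $\bold S$ is enough to force the contradiction. A secondary bookkeeping point is simply to cite the right places for canonicity of $\sf RRA$ and for the non--representability of the two completions, both of which are already recorded in the excerpt.
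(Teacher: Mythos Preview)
Your argument is correct and is essentially identical to the paper's: canonicity of the variety forces $\A^+$ (resp.\ ${\cal T}^+$) to stay representable, while non--atom--canonicity puts $\Cm\At\A$ (resp.\ $\Cm\At{\cal T}$) outside, so closure under $\bold S$ blocks any embedding. The paper is terser---it only writes out the $\CA$ case and declares the $\sf RA$ case ``entirely analogous''---whereas you spell out explicit witnesses for both and add the remark that the obstruction applies to arbitrary embeddings, not just those fixing $\A$ pointwise; but there is no difference in substance.
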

\begin{proof} 
We prove the $\CA$ case only. The $\sf RA$ case is entirely analagous. Since $\RCA_n$ is canonical \cite{HMT2} and $\A\in \RCA_n$, then its canonical extension $\A^+\in \RCA_n$. But  
$\Cm \At\A\notin \RCA_n$, so it does not embed into $\A^+$, because $\RCA_n$ is a variety, {\it a fortiori} closed under $\bold S$.
\end{proof}
Sahlqvist formulas are a certain kind of modal formula with remarkable properties. 
The Sahlqvist correspondence theorem states that every Sahlqvist formula corresponds to a first order definable class of Kripke frames.
Sahlqvist's definition characterizes a decidable set of modal formulas with first-order correspondents. Since it is undecidable, by Chagrova's theorem, 
whether an arbitrary modal formula has a first-order correspondent \cite[Theorem 3.56]{modal}, there are formulas with first-order frame conditions 
that are not Sahlqvist.  But this is not the end of the story, for it might be the case that  every modal formula with a first order correspondant is {\it equivalent} to 
a Sahlqvist one, which is 
not the case \cite[Example 3.57]{modal}. The reader is referred to \cite{modal} and \cite[2.7]{HHbook} for more on aspects of duality for $\sf BAO$s and in particular for 
Sahlqvist axiomatizability in general. 
By the dualiity theory between $\sf BAO$s and multimodal logic, Sahlqvist formulas  transform recursively to Sahlqvist equations in 
the corresponding modal algebras,  cf. \cite[Section 2.7.6]{HHbook}. A variety $\V$ of $\sf BAO$s is Sahlqvist if it can be axiomatized by Sahlqvist equations.

\begin{theorem} For any $2<n<m\leq \omega$ the variety $\bold S\Nr_n\CA_m$ is not Sahlqvist. 
Conversely, for any pair of infinite ordinals $\alpha<\beta$, the varieties $\bold S\Nr_{\alpha}\PA_{\beta}$ and $\bold S\Nr_{\alpha}\PEA_{\beta}$ are Sahlqvist, and is closed under 
\de\ completions.
\end{theorem}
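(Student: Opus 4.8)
The statement has two halves, and I would attack them separately. For the negative half, that $\bold S\Nr_n\CA_m$ is not Sahlqvist for $2<n<m\leq\omega$, the plan is to invoke the standard modal-logic fact (stated in \cite{Venema}, and used in the introduction of this paper for $\RCA_n$) that any Sahlqvist variety of completely additive $\sf BAO$s is \emph{atom-canonical}: Sahlqvist equations are preserved under passage from an atomic algebra to the complex algebra of its atom structure, because they are preserved under canonical extensions and the \de\ completion embeds into the canonical extension for such equations. Hence it suffices to exhibit, for each such pair $(n,m)$, an atomic $\A\in\bold S\Nr_n\CA_m$ with $\Cm\At\A\notin\bold S\Nr_n\CA_m$. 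For $m\geq n+3$ this is exactly Theorem \ref{can2} (equivalently Theorem \ref{can}(1)): the blow-up-and-blur algebra ${\mathfrak{Bb}}(\A_{n+1,n},\r,\omega)$ is representable, hence in $\bold S\Nr_n\CA_m$ for every $m$, yet its \de\ completion fails to have an $(n+3)$-square, a fortiori $m$-square for $m\le n+3$... here one must be slightly careful about the direction of the inclusion. Concretely: for $n<m<n+3$ the relevant witness is instead the algebra of Theorem \ref{can}(3) combined with Corollary \ref{s}, or one simply notes that for $m\in\{n+1,n+2\}$ one can still run the construction of Theorem \ref{can2} with parameter $m$ in place of $n+3$ — the rainbow $\A_{n+1,n}$ and Lemma \ref{n} give that the $\Sc$-reduct of the completion has no $m$-square representation, which by Lemma \ref{flat} means it is outside $\bold S\Nr_n\Sc_m$ and hence outside $\bold S\Nr_n\CA_m$. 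So the one bookkeeping point to nail down is that for \emph{every} $m$ with $n<m\leq\omega$ the blow-up-and-blur machinery of \S3 produces an atomic representable $\A$ whose completion escapes $\bold S\Nr_n\CA_m$; modulo that, non-atom-canonicity and hence non-Sahlqvistness is immediate.

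For the positive half I would argue as follows. Fix infinite ordinals $\alpha<\beta$. The key structural fact about polyadic (equality) algebras, in contrast to cylindric ones, is that the operator $\s_\tau$ for \emph{arbitrary} finite transformations $\tau:\beta\to\beta$ is available, and in particular for infinite $\beta$ the class $\RPA_\beta$ ($\RPEA_\beta$) \emph{is} finitely schema axiomatizable and, more to the point, the neat-reduct operator behaves well: $\Nr_\alpha\PA_\beta$ is already a variety and $\bold S\Nr_\alpha\PA_\beta=\RPA_\alpha$ (and likewise with equality), by the classical representation theory of polyadic algebras with infinitely many dimensions (this is the deviation between the polyadic and quasi-polyadic paradigms flagged right after Proposition \ref{j} and used again in Theorem \ref{pa}). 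So the first step is to identify $\bold S\Nr_\alpha\PA_\beta$ with $\RPA_\alpha$ (resp. $\bold S\Nr_\alpha\PEA_\beta$ with $\RPEA_\alpha$). The second step is to quote the known Sahlqvist axiomatization of $\RPA_\alpha$ and $\RPEA_\alpha$ for infinite $\alpha$: the representability of a polyadic algebra of infinite dimension is captured by the finitely-many polyadic axioms together with the commutativity/substitution schema, all of which are in Sahlqvist form (they are either equations between terms built by positive occurrences of the diamond-like operators $\c_i,\s_\tau$, or are of the restricted antecedent shape Sahlqvist permits). The third step is closure under \de\ completions: since the variety is Sahlqvist it is canonical, and for a canonical variety defined by equations that are moreover preserved by \de\ completions — which Sahlqvist equations are, being in particular ``conjugated'' and completely additive on each coordinate — we get $\Cm\At\A\in\V$ whenever $\A\in\V$ is atomic, i.e. atom-canonicity, which for these completely additive varieties is the same as closure under \de\ completions of atomic members. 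A cleaner route to the very last clause is to cite that $\RPA_\alpha$ and $\RPEA_\alpha$ are closed under \de\ completions directly (this is in the literature on polyadic algebras, e.g. via the fact that $\wp({}^\alpha U)$-type set algebras are complete and the representation can be chosen to respect arbitrary sums when $\alpha$ is infinite because there are ``enough'' substitutions to witness atoms).

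The main obstacle I anticipate is not the negative half (which is essentially bookkeeping over the results already proved) but making the positive half airtight for \emph{all} infinite $\alpha<\beta$ rather than just $\alpha=\omega$: one must be sure that the Sahlqvist axiomatization of $\RPA_\alpha$/$\RPEA_\alpha$ is uniform in $\alpha$ and genuinely in Sahlqvist syntactic form (the subtlety being the infinitely many substitution operators $\s_\tau$ and the commutativity schema $\s_\tau\s_\sigma=\s_{\sigma\circ\tau}$, which one must check fits the Sahlqvist template, e.g. by writing each such schema instance as an inequality with a positive right-hand side and an antecedent that is a box-formula). I would therefore spend the bulk of the write-up on step two of the positive half, citing the precise polyadic-algebra references for the Sahlqvist axiomatization and for $\bold S\Nr_\alpha\PA_\beta=\RPA_\alpha$, and treat the negative half in a single paragraph pointing back to Theorem \ref{can2}/Theorem \ref{can} and Lemma \ref{n}, with the remark that for $m\in\{n+1,n+2\}$ one reruns the same rainbow blow-up with the parameter adjusted.
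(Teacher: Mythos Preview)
Your treatment of the negative half for $m\geq n+3$ is fine and matches what the paper relies on implicitly (the paper's proof block actually addresses only the polyadic half; the cylindric half is taken as a corollary of the non-atom-canonicity results already established, together with \cite{Venema}). However, your patches for $m\in\{n+1,n+2\}$ do not work. You cannot ``rerun Theorem~\ref{can2} with $m$ in place of $n+3$'': the \ws\ for \pa\ in $\bold G^{n+3}$ genuinely needs the $n+3$ nodes coming from the $n+1$ green tints, and Lemma~\ref{n} gives nothing for fewer nodes. Nor does Theorem~\ref{can}(3) help --- it shows the \emph{opposite} direction, namely that $\Cm\At\B\in\Nr_n\CA_\omega$ while $\B$ itself fails to be in ${\bf El}\Nr_n\CA_{n+1}$, so it cannot witness non-atom-canonicity of $\bold S\Nr_n\CA_{n+1}$. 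In fact $\bold S\Nr_n\CA_{n+1}$ is known to be finitely Sahlqvist-axiomatizable (Andr\'eka), so the theorem as literally stated is too strong at $m=n+1$; the intended range is $m\geq n+3$, where your argument is correct.

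On the positive half your route is workable but needlessly indirect, and this is exactly the ``main obstacle'' you anticipate. The paper avoids it entirely. The key identity is not $\bold S\Nr_\alpha\PA_\beta=\RPA_\alpha$ but rather $\bold S\Nr_\alpha\PA_\beta=\Nr_\alpha\PA_\beta=\PA_\alpha$ (the \emph{abstract} class), cf.\ the remark before \cite[Theorem 5.4.17]{HMT2}. Once you have that, you are done in one line: the defining axioms of $\PA_\alpha$ in \cite[Definition 5.4.1]{HMT2} are \emph{positive} equations, and positive equations are automatically Sahlqvist; then \cite{Venema} gives closure under completions. The same works verbatim for $\PEA$. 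So all your worry about checking that the representability schema $\s_\tau\s_\sigma=\s_{\sigma\circ\tau}$ fits the Sahlqvist template, and about uniformity in $\alpha$, evaporates: you never need to touch representability or $\RPA_\alpha$ at all, because for infinite $\alpha$ the neat-reduct class already collapses to the abstractly axiomatized $\PA_\alpha$.
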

\begin{proof} Let $\alpha<\beta$ be infinite ordinals. Then $\bold S\Nr_{\alpha}\PA_{\beta}=\Nr_{\alpha}\PA_{\beta}=\PA_{\alpha}$, cf. the remark before \cite[Theorem 5.4.17]{HMT2}. 
The last is axiomatized by positive equations \cite[Definition 5.4.1]{HMT2} which are Sahlqvist. Applying \cite{Venema} we are done. 
The $\PEA$ case is entirely analogous using the axiomatization in the aforementioned definition.
\end{proof}

Let $2<n<\omega$. We approach the modal version of $L_n$ without equality, namely, ${\bf S5}^n$. The corresponding class of modal algebras
is the variety ${\sf RDf}_n$ of {\it diagonal free $\RCA_n$s} \cite{HMT2}. Let $\Rd_{df}$ denote 'diagonal free reduct'. 
 \begin{lemma}\label{dfb} Let $2<n<\omega$. If $\A\in \CA_n$ is such that $\Rd_{df}\A\in {\sf RDf}_n$,
and $\A$ is generated by $\{x\in \A: \Delta x\neq n\}$ (with other $\CA$ operations) using infinite intersections, then
$\A\in \RCA_n$.
\end{lemma}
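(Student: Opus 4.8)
The plan is to show that $\A$ is a subdirect product of cylindric set algebras of dimension $n$; since $\RCA_n=\bold S\bold P\Cs_n$ it is enough, for each nonzero $a\in\A$, to exhibit a $\CA_n$--homomorphism $h$ from $\A$ into some member of $\Cs_n$ with $h(a)\neq 0$. Fix such an $a$. By hypothesis $\Rd_{df}\A\in{\sf RDf}_n$, so there is a $\Df_n$--homomorphism $g\colon\A\to\langle\wp({}^nU),{\sf C}_i\rangle_{i<n}$ with $g(a)\neq 0$; the whole point of the argument is to repair $g$ into a genuine cylindric representation using the generation hypothesis.

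The first observation is that, because $n>2$, the diagonal constants are represented by \emph{binary} relations. For $i<j<n$ we have $\Delta\d_{ij}=\{i,j\}\subsetneq n$, so ${\sf c}_k\d_{ij}=\d_{ij}$ for some $k\notin\{i,j\}$, and since $g$ preserves ${\sf c}_k$ the set $g(\d_{ij})$ is invariant under every ${\sf C}_k$ with $k\notin\{i,j\}$; hence $g(\d_{ij})=\{s\in{}^nU:(s_i,s_j)\in E_{ij}\}$ for a uniquely determined $E_{ij}\subseteq U\times U$. Pushing the cylindric axioms through $g$, the family $(E_{ij})_{i\neq j}$ behaves like an ``atlas'': from $\d_{ij}=\d_{ji}$ one gets $E_{ji}=E_{ij}^{-1}$; from $(C_6)$ (again using $n>2$ to have a third index available) one gets $E_{ik}=E_{ij}\circ E_{jk}$ for pairwise distinct $i,j,k$; from ${\sf c}_i\d_{ij}={\sf c}_j\d_{ij}=1$ one gets that each $E_{ij}$ is total and surjective in both coordinates; and from $(C_7)$ one \emph{wants} each $E_{ij}$ to be functional, hence a bijection of $U$.

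Functionality of the $E_{ij}$ is the only property not delivered automatically by a diagonal--free representation — indeed it fails for a general $\CA_n$ whose $\Df$--reduct is representable — and this is exactly where, and why, the generation hypothesis is needed. The idea is to relativise $g$ to $W=\{s\in{}^nU:\forall i<j<n,\ (s_i,s_j)\in E_{ij}\Leftrightarrow s_i=s_j\}$, so that the diagonals become faithfully represented on $W$, and then to check that the relativised map is still a $\Df$--homomorphism of $\A$ with $g(a)\cap W\neq\emptyset$. This verification is the main obstacle; it succeeds precisely because every element of $\A$ is obtained from the low--dimensional elements $\{x:\Delta x\neq n\}$ by the cylindric operations and infinite meets: a generator $x$ with ${\sf c}_kx=x$ has $g(x)$ cylindrical in coordinate $k$, so it is ``$W$--saturated'' in that coordinate; infinite meets commute with relativisation to $W$; and, using the atlas laws, ``$W$--goodness'' (the property that ${\sf C}_i$ is computed correctly after relativising, and that nonzero elements survive) propagates through the finitary cylindric operations — so it holds on the whole of $\A$ by the generation hypothesis. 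After this reduction we may and do assume the $E_{ij}$ are total bijections of $U$.

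Finally, form $U'=\bigl(\bigsqcup_{i<n}U_i\bigr)/{\equiv}$, where $U_i=U\times\{i\}$ and $(u,i)\equiv(v,j)$ iff either $i=j$ and $u=v$, or $i\neq j$ and $(u,v)\in E_{ij}$; the atlas laws (symmetry $E_{ji}=E_{ij}^{-1}$, composition, and injectivity) make $\equiv$ an equivalence relation, and because the $E_{ij}$ are bijections each map $h_i\colon U\to U'$, $u\mapsto[(u,i)]$, is a bijection. Let $\bar h\colon{}^nU\to{}^nU'$ be the induced (coordinatewise) bijection and set $h(x)=\bar h[g(x)]$, so $h\colon\A\to\langle\wp({}^nU'),{\sf C}_i,{\sf D}_{ij}\rangle$. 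Since $\bar h$ is a bijection acting coordinatewise, $h$ is an injective Boolean homomorphism preserving every ${\sf C}_i$, with $h(a)\neq 0$; and $t\in h(\d_{ij})$ iff $(h_i^{-1}(t_i),h_j^{-1}(t_j))\in E_{ij}$ iff $[(h_i^{-1}(t_i),i)]=[(h_j^{-1}(t_j),j)]$ iff $t_i=t_j$, so $h(\d_{ij})={\sf D}_{ij}$ is the genuine diagonal. Thus $h$ is a $\CA_n$--representation of $\A$ separating $a$, and letting $a$ range over $\A\setminus\{0\}$ yields $\A\in\RCA_n$.
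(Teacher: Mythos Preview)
Your strategy is the paper's: starting from a $\Df_n$--representation $g$ on a base $U$, read off from the images $g(\d_{ij})$ a relation $R$ on $U$, use the generation hypothesis to show that every element of $\A$ has coordinatewise $R$--saturated image, and then pass to a quotient of $U$ so that the diagonals become the genuine ${\sf D}_{ij}$. The paper does this in one line by citing HMT2, Lemma 5.1.50 and Theorem 5.1.51; its set $E=\{X\in A:g(X)\text{ is coordinatewise }R\text{--saturated}\}$ is the analogue of your ``$W$--good'' elements.

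Two places in your write--up need tightening. First, you are running two distinct repairs together. Relativising to $W=\{s:(s_i,s_j)\in E_{ij}\Leftrightarrow s_i=s_j\}$ already yields $g(\d_{ij})\cap W={\sf D}_{ij}^W$, so if that relativisation really preserved the $\Df$--structure you would be done --- the quotient in your last paragraph would have nothing left to fix. Conversely, that quotient requires the $E_{ij}$ to be \emph{bijections} of $U$, and restricting to $W$ does not give you bijections; on $W$ the diagonals already coincide with identity. The sentence ``after this reduction we may and do assume the $E_{ij}$ are total bijections of $U$'' is therefore unsupported. Commit to one repair. The paper's choice is the cleaner one: by your own atlas laws the $E_{ij}$ are all equal to a single equivalence relation $R$ on $U$, and at the end one simply replaces $U$ by $U/R$.

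Second, and more importantly, the step you correctly flag as the main obstacle --- that the generators $\{x:\Delta x\neq n\}$ are ``good'' and that goodness propagates --- needs an argument, and your formulation makes it harder than necessary. That $\c_kx=x$ makes $g(x)$ cylindrical in coordinate $k$ is clear, but what you need is $R$--invariance of $g(x)$ in \emph{every} coordinate $i$, not just $k$. This is exactly where $(C_7)$ earns its keep: for $i\neq k$ and $s\in g(x)$ with $u\,R\,s_i$, first move coordinate $k$ to $s_i$ (legal since $\c_kx=x$), then apply $(C_7)$ for the pair $(i,k)$ to slide coordinate $i$ from $s_i$ to $u$ along $E_{ik}=R$, then move coordinate $k$ back. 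With ``$R$--invariant image'' as the good property, closure under Boolean operations, infinite meets, and each $\c_i$ is immediate, so the generation hypothesis gives $A\subseteq E$ --- this is precisely the content of HMT2 5.1.50 that the paper invokes. With your ``$W$--goodness'' (correct computation of ${\sf C}_i$ after relativisation) the closure under cylindrifiers is not at all obvious, which is why your propagation claim reads as an assertion rather than a proof.
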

\begin{proof}  Easily follows from \cite[Lemma 5.1.50, Theorem 5.1.51]{HMT2}.  Assume that $\A\in \CA_n$, $\Rd_{df}\A$ is a set algebra (of dimension $n$) 
with base $U$,  and $R\subseteq U\times U$ are as in the hypothesis of \cite[Theorem 5.1.49]{HMT2}. 
Let $E=\{x\in A:  (\forall x, y\in {}^nU)(\forall i <n)(x_iR y_i\implies (x\in X\iff y\in X))\}$.
Then $\{x\in \A: \Delta x\neq n\}\subseteq E$ and $E\in \CA_n$ 
is closed under infinite intersections. The required follows.
\end{proof}
\begin{theorem} \label{df} For $2<n<\omega$, ${\sf RDf}_n$ is not atom--canonical, hence not Sahlqvist. 
\end{theorem}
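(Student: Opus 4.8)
The plan is to reduce the statement for $\sf RDf_n$ to the already–established non–atom–canonicity of $\sf RCA_n$ (Theorem~\ref{can}, part~(1) with $\alpha=n$, or equivalently Theorem~\ref{can2}), by exhibiting an atomic diagonal–free algebra whose $\sf Df$–completion is non–representable, and then invoking Venema's theorem (as already used in the introduction: non–atom–canonicity of a completely additive variety precludes a Sahlqvist axiomatization). First I would recall that in the proof of Theorem~\ref{can} we produced a weakly but not strongly representable $\CA_n$ atom structure $\bf At$ (indeed a $\QEA_n$ atom structure), with $\Tm\bf At\in\RCA_n$ atomic and $\Cm\bf At\notin\bold S\Nr_n\CA_{n+3}$, a fortiori $\Cm\bf At\notin\RCA_n$. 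Passing to the diagonal–free reduct, $\Rd_{df}\Tm\bf At\in\sf RDf_n$ (representability is inherited by reducts), and this reduct is still atomic with the same atoms, since the Boolean structure is untouched and atoms are determined Boolean–theoretically. So it remains to check that the $\sf Df$–complex algebra $\Cm\At(\Rd_{df}\Tm\bf At)$ is \emph{not} in $\sf RDf_n$; this is the key point.

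The main obstacle is exactly that last step: a priori the $\sf Df$ atom structure $\At(\Rd_{df}\Tm\bf At)$ is the reduct of the full $\CA_n$ atom structure, and its complex algebra $\Cm\At(\Rd_{df}\Tm\bf At)$ is the $\sf Df$–reduct of $\Cm\bf At$ \emph{only if} the $\sf Df$ operations on $\Cm\bf At$ are completely additive, which they are (cylindrifiers in a $\CA$ are completely additive on atomic algebras), so in fact $\Cm\At(\Rd_{df}\Tm\bf At)=\Rd_{df}\Cm\bf At$. Thus I must argue that $\Rd_{df}\Cm\bf At$ is non–representable as a $\sf Df_n$. This does \emph{not} follow automatically from $\Cm\bf At\notin\RCA_n$, since diagonal–free reducts of non–representable $\CA$s can be representable. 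The way around this is to go back into the construction: the obstruction to representability of $\Cm\bf At$ in Theorem~\ref{can2} was that $\A_{n+1,n}$ (the finite rainbow algebra) embeds into $\Cm\bf At$ and \pa\ has a winning strategy in $G^{n+3}(\At(\A_{n+1,n}))$ driven purely by cone/colour moves, which translate into the \ef-forth game on the irreflexive graphs $n+1$ and $n$. That argument is already carried out at the level of the $\sf Sc$–reduct (the final paragraph of the proof of Theorem~\ref{can2} concludes $\Rd_{sc}\Cm\bf At$ has no $n+3$–square representation), and the green/red cone strategy uses only cylindrifiers and substitutions, not diagonals. Hence one should be able to push it one step further: show $\Rd_{df}\Cm\bf At$ has no $n+3$–square representation as a $\sf Df_n$, by re–running Lemma~\ref{n} and the $\sf EF$–game argument with the $\sf Df$ signature, using that the embedding $\Theta\colon\A_{n+1,n}\to\Cm\bf At$ restricts to a $\sf Df_n$–embedding of $\Rd_{df}\A_{n+1,n}$ (it preserves Boolean operations and cylindrifiers, which is all that matters now).

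Putting it together: take $\A=\Tm\bf At$ from Theorem~\ref{can}, form $\B=\Rd_{df}\A\in\sf RDf_n$, which is atomic; then $\Cm\At\B=\Rd_{df}\Cm\bf At$, and by the strategy transfer above this is not representable, so $\sf RDf_n$ is not atom--canonical. Since $\sf RDf_n$ is a canonical variety of completely additive $\sf BAO$s, Venema's theorem \cite{Venema} then yields that $\sf RDf_n$ cannot be axiomatized by Sahlqvist equations (a Sahlqvist axiomatization would force atom--canonicity). One could alternatively appeal to Lemma~\ref{dfb} to see that no ``cheap'' representability of the reduct can occur, but the cleanest route is the direct game/strategy argument sketched above; the only real content to write out is verifying that the cone-and-red winning strategy for \pa\ survives the deletion of diagonals, which it does since it never mentions them.
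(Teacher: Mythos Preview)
Your primary route has a real gap. You want to ``push one step further'' from $\Sc$ down to $\sf Df$ by re--running Lemma~\ref{n} in the diagonal--free signature, arguing that the cone--and--red strategy for \pa\ ``never mentions diagonals''. That is true of the \emph{strategy}, but it is not true of the \emph{bridge lemma}: the proof of Lemma~\ref{n} builds, for each network $N$, the element $N^+=\prod_{i_0,\ldots,i_{n-1}}{\sf s}_{i_0,\ldots,i_{n-1}}N(i_0,\ldots,i_{n-1})$ inside the $m$--dilation, and this definition lives on substitutions. The lemma is stated only for classes from $\Sc$ up to $\QEA$ precisely because one needs the ${\sf s}_i^j$'s in the dilation to locate the network inside the larger algebra. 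In $\sf Df$ there are no substitutions, and no analogue of $N^+$ is on offer; so the implication ``$\A\in\bold S_c\Nr_n{\sf Df}_m\Rightarrow$ \pe\ wins $\bold G^m(\At\A)$'' is simply not available. Consequently, knowing that \pa\ wins on $\Rd_{df}\A_{n+1,n}$ does not by itself exclude $\Rd_{df}\Cm{\bf At}$ from ${\sf RDf}_n$.

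The route you set aside as a mere ``alternative'' --- Lemma~\ref{dfb} --- is in fact the one the paper uses, and it is where the actual work lies. The point of Lemma~\ref{dfb} is a transfer principle: if a $\CA_n$ is generated (allowing infinite intersections) by elements whose dimension set is $\neq n$, then representability of its $\sf Df$--reduct forces representability of the full algebra. Contrapositively, to get $\Rd_{df}\Cm{\bf At}\notin{\sf RDf}_n$ from $\Cm{\bf At}\notin\RCA_n$ one must verify that $\Cm{\bf At}$ is generated in this sense. The paper does this by showing that every rainbow atom $[a]$ satisfies $[a]=\prod_{i<n}{\sf c}_i[a]$: one inclusion is trivial, and for the other, if $[b]\neq[a]$ then some edge colour or some $(n{-}1)$--hyperedge label disagrees, and picking $k<n$ outside the offending coordinates gives $[b]\notin{\sf c}_k[a]$. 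Since each ${\sf c}_i[a]$ has dimension set $\subseteq n\setminus\{i\}$, every atom lies in the closure of $\{x:\Delta x\neq n\}$ under finite meets, hence $\Cm{\bf At}$ is generated by that set under infinite joins. Lemma~\ref{dfb} then gives non--representability of the $\sf Df$--reduct, and the Sahlqvist conclusion follows from \cite{Venema} as you say. Your proposal contains all the right pieces \emph{except} this generation computation and the recognition that it, not the game, is what carries the argument across to $\sf Df$.
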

\begin{proof} It is enough to show that $\Cm\At\A$, where $\A$ is constructed in Theorem \ref{can}  
is generated by elements whose dimension sets have cardinality $<n$ using infinite unions, for in this case $\Rd_{df}A$ will be atomic, 
countable and representable, but  
having no complete representation.  
Indeed,  by Lemma \ref{dfb} and Theorem \ref{can}, $\Rd_{df}\Cm\At\A=\Cm\At\Rd_{df}\A$ will not be representable.
We show that for any rainbow atom $[a]$, $a:n\to \Gamma$, $\Gamma$ a coloured graph, that
$[a]=\prod_{i<n} {\sf c}_i[a]$. 
Clearly $\leq $ holds. Assume that $b:n\to \Delta$, $\Delta$ a coloured graph, and $[a]\neq [b]$. We show that $[b]\notin \prod_{i<n} {\sf c}_i[a]$  by which we 
will be done. Because $a$ is not equivalent to $b$, we have one of two possibilities;
either $(\exists i, j<n) (\Delta(b(i), b(j)\neq \Gamma(a(i), a(j))$ or
$(\exists i_1, \ldots, i_{n-1}<n)(\Delta(b_{i_1},\ldots, b_{i_{n-1}})\neq \Gamma(a_{i_1},\ldots, a_{i_{n-1}}))$. 
Assume the first possibility: Choose  $k\notin \{i, j\}$. This is possible because $n>2$.  Assume for contradiction that $[b]\in {\sf c}_k[a]$. 
Then $(\forall i, j\in n\setminus \{k\})(\Delta(b(i), b(j))=\Gamma(a(i) a(j)))$. By assumption  and the choice of
$k$,  $(\exists i, j\in n\setminus k)(\Delta(b(i), b(j))\neq \Gamma(a(i), a(j)))$, contradiction.
For the second possibility, one chooses $k\notin \{i_1, \ldots i_{n-1}\}$ and proceeds like the first case deriving an analogous contradiction.
\end{proof} 
${\bf K}^n$ is the logic of $n$-ary product frames, of the form $(W_i, R_i)_{i<n}$ where for each $i<n$, $R_i$ is any any relation on $W_i$.
On the other hand, 
${\bf S5}^n$ can be regarded as the logic of $n$--ary product frames of the form 
$(W_i, R_i)_{i<n}$ such that for each $i<n$, $R_i$ is an equivalence relation.

It is known that modal languages can come to grips with 
a strong fragment of second order logic. Modal 
formulas translate to second order formulas, {\it their correspondants} 
on frames.  Some of these formulas can be {\it genuinely second order}; 
they are not equivalent to first order formulas. An example is the {\it McKinsey formula}: 
$\Box \Diamond p\to \Diamond \Box p$. This can be proved by showing that its correspondant violates 
the downward L\"owenheim- Skolem Theorem.
The next proposition  bears on the last two issues. 
For a class $\bold L$ of frames, let $\L(\bold L)$ be the class of modal formulas valid in $\bold L$. 
It is difficult to find explicity (necessarily) infinite axiomatizations for ${\bf S5}^n$ as well as shown in the next Theorem. 

But first we recall some basic notions about graphs. 
A \textit{(directed) graph} is a set $G$ (of \textit{nodes} or
\textit{vertices}) endowed with a binary relation $E$, the edge
relation. A pair $(x, y)$ of elements of $G$ is said to be an
\textit{edge} if $xEy$ holds. A directed graph is said to be
{\it complete} if $(x, y)$ is an edge for all nodes $x, y$. A graph is
said to be {\it undirected} if $E$ is symmetric and irreflexive. An
undirected graph is {\it complete} if $(x, y)$ is an edge for all {\it distinct}
nodes $x, y$.  Finite ordinals were viewed as complete irreflexive graphs the obvious way, cf. Theorem \ref{can}.

A \textit{clique} in an undirected graph with set of
nodes $G$ is a set $C \subseteq G$ such that each pair of distinct
nodes of $C$ is an edge.

\begin{definition}\label{graph} Let $\G=(G, E)$ be an undirected graph ($G$ is the set of vertices and $E$ is an ireflexive symmetric 
binary relation on $E$), and
$C$ be a non-empty set of `colours'.
\begin{enumerate}

\item A subset $X\subseteq G$ is said to be an {\it independent set} if $(x,y)\in E$
for all $x,y\in X$.

\item A function $f:G\to C$ is called a {\it $C$ colouring} of  $\G$ if
$(v,w)\in E$ implies that $f(v)\neq f(w)$. 

\item The {\it chromatic number} of $\G$, denoted by $\chi(\G),$ is
the size of the smallest finite set $C$ such that there exists
a $C$ colouring of $\G$, if
such a $C$ exists, otherwise $\chi(\G)=\infty.$

\item A {\it cycle} in $\G$ is a finite sequence $\mu=(v_0,\ldots v_{k-1})$ (some $k\in \omega$) of
distinct nodes, such that $(v_0, v_1), \ldots (v_{k-2}, v_{k-1}), (v_{k-1}, v_0) \in E$. The {\it length} 
of such a cycle is $k.$

\item The {\it girth} of $\G$, denoted by $g(\G)$, is the
length of the shortest cycle
in $\G$ if $\G$ contains cycles, 
and $g(\G)=\infty$ othewise.
\end{enumerate}
\end{definition}

\begin{theorem}\label{complexity} Let $2<n<\omega$. 
There is no axiomatization of ${\bf S5}^n$ with formulas having 
first order correspondence. 
For any canonical logic $\L$ between $\bold K^n$ and ${\bf S5}^n$, 
it is undecidable to tell whether a finite frame is a frame for $\L$,  $\L$ cannot be finitely axiomatized in $k$th order logic (for any finite $k$),
and $\L$ cannot be axiomatized by canonical formulas, {\it a fortiori} Sahlqvist formulas.
\end{theorem}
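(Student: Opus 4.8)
The plan is to reduce the statement to its algebraic incarnation and to the graph--colouring machinery already in play, $n$--dimensionalising the Hirsch--Hodkinson--Kurucz analysis of logics between $\bold K\times\bold K\times\bold K$ and ${\bf S5}\times{\bf S5}\times{\bf S5}$ \cite{HHbook2}. The bridge is: ${\bf S5}^n=\mathsf{Log}({\sf RDf}_n)$, so a Kripke frame $\mathfrak F=(W,R_i)_{i<n}$ is a frame for ${\bf S5}^n$ iff $\Cm\mathfrak F\in{\sf RDf}_n$, while it is a frame for $\bold K^n$ iff $\Cm\mathfrak F$ lies in the larger variety algebraising $\bold K^n$; since $\bold K^n\subseteq\L\subseteq{\bf S5}^n$, any frame for ${\bf S5}^n$ is a frame for $\L$, any frame for $\L$ is a frame for $\bold K^n$, and canonicity of $\L$ guarantees $\L=\mathsf{Log}(\mathsf{Fr}(\L))$. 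I would then recall the Monk--style assignment $\Gamma\mapsto\mathfrak M(\Gamma)$ sending an undirected graph to an atomic ${\sf Df}_n$ --- equivalently to a multimodal frame $\mathfrak F(\Gamma)$ --- by colouring atoms and forbidding monochromatic triangles exactly as in the Monk--Maddux reproofs discussed above, with two key properties: (a) the assignment commutes with ultraproducts, $\prod_k\mathfrak F(\Gamma_k)/U\cong\mathfrak F(\prod_k\Gamma_k/U)$; and (b) $\Cm\mathfrak F(\Gamma)\in{\sf RDf}_n$ iff $\chi(\Gamma)=\infty$, cf.\ \cite{HHbook2}.

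For the first assertion I would show $\mathsf{Fr}({\bf S5}^n)$ is not closed under ultraproducts, hence not elementary, hence not definable by the first--order correspondents of any set of modal axioms: were ${\bf S5}^n$ axiomatised by formulas each possessing a first--order frame correspondent, the conjunction of those correspondents would present $\mathsf{Fr}({\bf S5}^n)$ as an elementary class, contradicting non--closure by Los's theorem. The witnesses are the Erd\H{o}s graphs $\Gamma_k$: finite, with $\chi(\Gamma_k)\ge k$ and girth $\ge k$. Being finite, $\chi(\Gamma_k)<\infty$, so by (b) $\mathfrak F(\Gamma_k)\not\models{\bf S5}^n$; but for a non--principal ultrafilter $U$ on $\omega$ the graph $\prod_k\Gamma_k/U$ has infinite chromatic number --- this is precisely where the divergence of the girths is used --- so by (a) and (b) the ultraproduct $\prod_k\mathfrak F(\Gamma_k)/U$ \emph{is} a frame for ${\bf S5}^n$. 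This simultaneously re--derives that ${\sf RDf}_n$ is not atom--canonical (Theorem \ref{df}) and the non--elementarity of strong representability.

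For the undecidability of `is this finite frame a frame for $\L$?', for an arbitrary canonical $\L$ with $\bold K^n\subseteq\L\subseteq{\bf S5}^n$, the plan is a many--one reduction from an undecidable problem --- concretely the $\Pi^0_1$--complete problem of whether a finite tile system tiles $\N\times\N$ (equivalently, from the halting problem). From a finite tile system $\mathcal I$ one builds effectively, through a graph $\Gamma_{\mathcal I}$ and a \emph{finite} version of the $\mathfrak F(\cdot)$ construction, a finite frame $\mathfrak F_{\mathcal I}$ realising a sharp dichotomy: if $\mathcal I$ tiles $\N\times\N$ then $\mathfrak F_{\mathcal I}\models{\bf S5}^n$ ($a$ $fortiori$ $\models\L$), and if it does not then $\mathfrak F_{\mathcal I}\not\models\bold K^n$ ($a$ $fortiori$ $\not\models\L$). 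Hence $\mathfrak F_{\mathcal I}$ is a frame for $\L$ iff $\mathcal I$ tiles, which is undecidable. Granting this, the last two clauses follow: if $\mathsf{Fr}(\L)$ were axiomatised by finitely many $k$th--order sentences in the frame language, then `$F$ is a frame for $\L$' would be decidable for finite $F$ --- model--checking a fixed higher--order sentence over a finite structure is a terminating computation --- contradicting the undecidability; and a Sahlqvist axiomatisation, more generally any axiomatisation by canonical (d--persistent) formulas, would present $\mathsf{Fr}(\L)$ as a frame class of sufficiently bounded complexity (elementary, in the Sahlqvist case, via first--order correspondents) to contradict either the undecidability or, for ${\bf S5}^n$ itself, the failure of closure under ultraproducts, so no such $\L$ is Sahlqvist or canonically axiomatisable.

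I expect the main obstacle to be the combinatorial core of the two constructions: the non--elementary lemma that an ultraproduct of finite graphs whose chromatic numbers and girths both tend to infinity has infinite chromatic number (this is what forces the jump across ${\sf RDf}_n$), together with the bookkeeping that compresses a tiling instance into a single finite frame realising the $\bold K^n$/${\bf S5}^n$ dichotomy. Both are delicate, but both are available in \cite{HHbook2} for three dimensions and transcribe to general $n>2$ using the very colourings and forbidden--triangle conditions already used above; the only additional point to check is that the diagonal--free signature (no equality, as ${\bf S5}^n$ has no diagonals) introduces no new difficulty, which it does not, since the relevant constructions never appeal to the diagonal elements.
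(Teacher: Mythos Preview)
Your argument for the first assertion is inverted, and the key chromatic--number claim is false. You take the finite Erd\H{o}s graphs $\Gamma_k$ themselves (with $\chi(\Gamma_k)\ge k$ and girth $\ge k$) and assert that their ultraproduct has infinite chromatic number. It does not: ``girth $\ge k$'' is first--order for each fixed $k$, so by \L o\'s the ultraproduct $\prod_k\Gamma_k/U$ has no cycles at all, hence is a forest with $\chi\le 2$. Chromatic number, by contrast, is not first--order, so there is no \L o\'s transfer of $\chi\ge k$. Thus your sequence has each $\mathfrak F(\Gamma_k)\notin\mathsf{Fr}({\bf S5}^n)$ and the ultraproduct \emph{also} $\notin\mathsf{Fr}({\bf S5}^n)$; nothing is shown about elementarity.

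The paper runs the construction in the opposite direction. One sets $\Gamma_\kappa=\bigsqcup_{l>\kappa}G_l$, an \emph{infinite} disjoint union of Erd\H{o}s graphs. Since $\chi$ of a disjoint union is the supremum of the parts, $\chi(\Gamma_\kappa)=\infty$, so by the correspondence (and Lemma~\ref{dfb}, needed to pass from the $\CA_n$ Monk algebra $\mathfrak M(\Gamma)$ to its ${\sf Df}_n$ reduct) each $\mathfrak F(\Gamma_\kappa)$ \emph{is} a frame for ${\bf S5}^n$. The ultraproduct graph still has unbounded girth, hence no cycles, hence $\chi\le 2$, so $\mathfrak F(\prod_\kappa\Gamma_\kappa/U)$ is \emph{not} a frame for ${\bf S5}^n$. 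This exhibits failure of closure of $\mathsf{Fr}({\bf S5}^n)$ under ultraproducts, whence non--elementarity, whence no axiomatisation by formulas with first--order correspondents.

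Your treatment of the remaining clauses is essentially along the lines of what the paper invokes: the undecidability for all canonical $\L$ between $\bold K^n$ and ${\bf S5}^n$ is exactly the Hirsch--Hodkinson--Kurucz tiling dichotomy you sketch (the paper simply cites \cite{k2}), and the consequences for higher--order non--finite--axiomatisability and non--canonical--axiomatisability then follow as you indicate, together with the observation that ${\bf S5}^n$ is finitely axiomatisable over any such $\L$, so the negative results transfer.
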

\begin{proof}  Let $\bold L$ be the class of square frames for ${\bf S5}^n$.
Then $\L(\bold L)={\bf S5}^n$ \cite[p.192]{k}. But the class of frames $\F$ valid in $\L(\bold L)$ coincides 
with the class of  {\it strongly representable ${\sf Df}_n$ atom structures} which  
is {\it not elementary} as proved in \cite{b}. This gives the first required result for ${\bf S5}^n$. With lemma \ref{dfb} at our disposal, 
a slightly different proof can be easily distilled from the construction adressing $\CA$s in \cite{HHbook2} or \cite{k2}. 
We adopt the construction in the 
former reference, using the Monk--like $\CA_n$s  
${\mathfrak M}(\Gamma)$, $\Gamma$ a graph, as defined in
\cite[Top of p.78]{HHbook2}. 
For a graph $\G$, let $\chi(\G)$ denote it chromatic number. 
Then it is proved in {\it op.cit} that 
for any graph $\Gamma$, ${\mathfrak M}(\Gamma)\in \RCA_n$ 
$\iff$ $\chi(\Gamma)=\infty$.
By lemma \ref{dfb},
$\Rd_{df}{\mathfrak M}(\Gamma)\in {\sf RDf}_n\iff \chi(\Gamma)=\infty$,
because $\mathfrak{M}(\Gamma)$ 
is generated by the set $\{x\in {\mathfrak M}(\Gamma): \Delta x\neq n\}$ using infinite unions.
Now we adopt the argument in \cite{HHbook2}. Using Erdos' probabalistic graphs \cite{Erdos}, 
for each finite
$\kappa$, there is a finite graph $G_{\kappa}$ with
$\chi(G_{\kappa})>\kappa$ and with no cycles of length $<\kappa$. 
Let $\Gamma_{\kappa}$ be the disjoint union of the $G_{l}$ for
$l>\kappa$.  Then $\chi(\Gamma_{\kappa})=\infty$, and so
$\Rd_{df}\mathfrak{M}(\Gamma_{\kappa})$ is representable.
Now let $\Gamma$ be a non-principal ultraproduct
$\Pi_{D}\Gamma_{\kappa}$ for the $\Gamma_{\kappa}$s. For $\kappa<\omega$, let $\sigma_{\kappa}$ be a
first-order sentence of the signature of the graphs stating that
there are no cycles of length less than $\kappa$. Then
$\Gamma_{l}\models\sigma_{\kappa}$ for all $l\geq\kappa$. By
Lo\'{s}'s Theorem, $\Gamma\models\sigma_{\kappa}$ for all
$\kappa$. So $\Gamma$ has no cycles, and hence by $\chi(\Gamma)\leq 2$.
Thus $\mathfrak{Rd}_{df}\mathfrak{M}(\Gamma)$
is not representable.  
(Observe that the 
the term algebra $\Tm\At(\mathfrak{M}(\Gamma))$
is representable (as a $\CA_n$), 
because the class of weakly representable atom structures is elementary \cite[Theorem 2.84]{HHbook}.)
Since Sahlqvist formulas have first order correspondants, then ${\bf S5}^n$ is not Sahlqvist.  
In \cite{k2}, it is proved that it is undecidable to tell whether a finite frame
is a frame for $\L$,  and this gives
the non--finite axiomatizability result required as indicated in {\it op.cit}, 
and obviously implies undecidability.
The rest follows by transferring the required results holding for ${\bf S5}^n$ \cite{b, k2}
to $\L$ since ${\bf S5}^n$ is finitely axiomatizable 
over $\L$, and any axiomatization of ${\sf RDf}_n$ must contain infinitely many 
non-canonical equations. 
\end{proof}
\section{Complete representations and $\sf OTT$}

Suppose that  $\A=\Nrr_n\D$ for some {\it atomic} $\D\in \CA_{\omega}$, does this imply that $\A$ is completely representable? 
We show that this might not be the case if $\D$ is atomless. 
\begin{theorem}\label{bsl} Let $\kappa$ be an infinite cardinal. Then there exists an atomless $\C\in \CA_{\omega}$ such that  for all 
$2<n<\omega$, $\Nrr_n\C$ is atomic, with $|\At(\mathfrak{Nr}_n\C)|=2^{\kappa}$, $\mathfrak{Nr}_n\C\in {\sf LCA}_n$, 
but $\mathfrak{Nr}_n\C$ is not completely representable. 
\end{theorem}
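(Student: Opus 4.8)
The plan is to build $\C$ as a two–sorted construction combining an atomless Boolean part with a rich atomic neat reduct, following the general strategy of \cite{SL} and the "neat reduct need not be completely representable" phenomenon. First I would take, for each $2<n<\omega$, an atomic $\CA_n$ that is a \emph{neat reduct} of a $\CA_\omega$ but has no complete representation; the natural candidate here is a rainbow- or Monk-style algebra $\A$ of the kind produced in Theorem \ref{can}(2), specifically $\A\cong\Nrr_n\D$ with $\D\in\CA_\omega$ atomic and $\Cm\At\A\notin\RCA_n$, so that $\A$ satisfies the Lyndon conditions (hence $\A\in\sf LCA_n$) yet \pe\ loses the $\omega$-rounded game $\bold G^\omega$ on $\At\A$, which by the characterisation via games forces non-complete-representability. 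The cardinality requirement $|\At(\Nrr_n\C)|=2^\kappa$ I would meet by taking a disjoint union (product) of $2^\kappa$ many pairwise isomorphic copies of the basic gadget, or equivalently by inflating the index set of colours/atoms of the relational structure underlying $\A$ to size $\kappa$ so that the resulting atom structure has $2^\kappa$ atoms; the game argument and the Lyndon conditions are preserved under this inflation because they are local (they only involve finitely many atoms per round).

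The key new point is \emph{atomlessness of the dilation} $\C$ together with atomicity and non-complete-representability of $\Nrr_n\C$ for all $n$ simultaneously. Here I would use the standard trick: replace the atomic $\D\in\CA_\omega$ above by a "relativised/splitting" dilation $\C$ in which every element of $\D\smallsetminus\{0\}$ of dimension $=\omega$ is split into infinitely many nonzero pieces, while elements of dimension $<\omega$ (the ones generating the neat reducts) are left untouched. Concretely, one forms $\C$ as a subalgebra of a large cylindric set algebra of dimension $\omega$ whose unit is $^\omega U$ for a suitable $U$: the $n$-dimensional parts are the sets definable by the rainbow $L_n$-formulas (exactly as in the construction recalled in Theorem \ref{can2}, giving an atomic $\Nrr_n\C$), but $\C$ itself also contains "genuinely $\omega$-dimensional" sets that can always be properly split, so $\C$ has no atoms. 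One must check (i) $\Nrr_n\C$ really equals the $n$-generated atomic part and has $2^\kappa$ atoms, (ii) $\Nrr_n\C\in\sf LCA_n$ — i.e. \pe\ wins every finite-rounded Lyndon game $G_k$ on $\At(\Nrr_n\C)$, which follows from the construction of the underlying $n$-homogeneous model exactly as in \cite{Hodkinson} and Theorem \ref{can2} — and (iii) \pe\ has \emph{no} winning strategy in the $\omega$-rounded game $\bold G^\omega$ (equivalently $G_\omega$), which is the same \pa-strategy "bombard with cones of distinct green tints until the reds run out" used in Theorem \ref{can2}, now carried out with infinitely many rounds; by the game characterisation of complete representability (\cite{HH}, and Theorem labelled just before Definition \ref{sub}) this shows $\Nrr_n\C$ is not completely representable.

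I would organise the write-up as: (1) define $\C$ as an $\omega$-dimensional set algebra with the two-layer description above; (2) identify $\Nrr_n\C$ with the $L_n$-definable layer and verify it is atomic with $2^\kappa$ atoms and that $\C$ is atomless; (3) quote the Lyndon-condition/\ws\ argument of \cite{Hodkinson}/Theorem \ref{can2} to get $\Nrr_n\C\in\sf LCA_n$; (4) run the \pa\ cone-bombardment strategy in the $\omega$-rounded atomic game to conclude no complete representation, via Lemma \ref{cr} and the game theorem.

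The main obstacle I expect is step (2)–(4) done \emph{uniformly in $n$}: the construction in \cite{Hodkinson} and Theorem \ref{can2} is for a fixed dimension $n$, so I need a single $\omega$-dimensional $\C$ whose $n$-neat reduct, for \emph{every} $n\ge 3$, is simultaneously atomic, Lyndon, and not completely representable. The natural fix is to start from a relation algebra $\R$ (or a rainbow-type relational structure) rich enough that its $\CA_n$-reducts are bad for all $n$ at once — e.g. an $\R$ with a strong $l$-blur for all $l$, or the transfinite lifting already performed in the proof of Theorem \ref{can}(1) via ultraproducts — and to take $\C$ inside the associated $\omega$-dimensional representable algebra, then \emph{destroy atomicity of $\C$ only}. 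Verifying that the splitting which makes $\C$ atomless does not accidentally add atoms to, or alter, any $\Nrr_n\C$ (so the three properties survive for every $n$) is the delicate bookkeeping; once that is in place, atomlessness of $\C$ is immediate and the rest is an appeal to results already available in the paper.
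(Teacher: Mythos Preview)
Your approach diverges substantially from the paper's, and has a real gap at the heart of it.

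The paper does \emph{not} use rainbow algebras or game-theoretic arguments at all. Instead it builds a relation algebra $\A$ directly: atoms are $\Id$, greens $\g_0^i$ ($i<2^\kappa$), and reds $\r_j$ ($1\le j<\kappa$), with forbidden triples the monochromatic red triangles $(\r_j,\r_j,\r_j)$ and the all-green triples. Non-complete-representability is proved by a one-paragraph Erd\H os--Rado argument: in any complete representation, fix an $\r_1$-edge $(x,y)$; for each $i<2^\kappa$ there is $z_i$ with $\g_0^i(x,z_i)\wedge\r_1(z_i,y)$; the set $\{z_i\}$ has size $2^\kappa$ and all its internal edges are labelled from $\r_+$ (only $\kappa$ colours), so Erd\H os--Rado forces a monochromatic red triangle, contradiction. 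The $\omega$-dilation $\C$ is then built as a subalgebra of the complex algebra $\Ca(S)$ of an explicit amalgamation class $S$ of $\omega$-networks; $\A=\mathfrak{Ra}(\C)$ drops out of the description of generators, and atomlessness of $\C$ is one line: any network $N\in X$ can be extended by a fresh node to $M$ with $\emptyset\subsetneq\widehat M\subsetneq\widehat N$.

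Your plan has two concrete problems. First, the specific rainbow construction you cite (Theorem \ref{can2}, the algebra $\A_{n+1,n}$) has \pa\ winning $G^{n+3}$ in \emph{finitely many} rounds---that is precisely why its completion leaves $\bold S\Nr_n\CA_{n+3}$---so \pe\ does \emph{not} win all finite Lyndon games and the algebra is not in ${\sf LCA}_n$. You cannot have both ``\pe\ wins every $G_k$'' and ``\pa\ wins by cone-bombardment'' unless the bombardment genuinely requires $\omega$ rounds, and the finite rainbow you invoke does not give that. Second, your ``inflate the colours to size $\kappa$'' step is exactly where the hard work lies: with uncountably many atoms the game characterisation of complete representability is no longer available in the clean form you need, and one is forced back to a Ramsey-type cardinality argument---which is what the paper's Erd\H os--Rado step supplies. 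Your atomlessness-by-splitting idea is also left at the level of a wish; the paper avoids it entirely by building $\C$ from networks, where extendability of networks gives atomlessness for free.
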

\begin{proof}
We use the following uncountable version of Ramsey's theorem due to
Erdos and Rado:
If $r\geq 2$ is finite, $k$  an infinite cardinal, then
$exp_r(k)^+\to (k^+)_k^{r+1}$
where $exp_0(k)=k$ and inductively $exp_{r+1}(k)=2^{exp_r(k)}$.
The above partition symbol describes the following statement. If $f$ is a coloring of the $r+1$
element subsets of a set of cardinality $exp_r(k)^+$
in $k$ many colors, then there is a homogeneous set of cardinality $k^+$
(a set, all whose $r+1$ element subsets get the same $f$-value).
Let $\kappa$ be the given cardinal. We use a variation on the construction in \cite{bsl} which is a simplified more basic version of a rainbow construction where only 
the two predominent  colours, namely, the reds and blues are available. 
The algebra $\C$ will be constructed from a relation algebra possesing an $\omega$-dimensional cylindric basis.
To define the relation algebra we specify its atoms and the forbidden triples of atoms. The atoms are $\Id, \; \g_0^i:i<2^{\kappa}$ and $\r_j:1\leq j<
\kappa$, all symmetric.  The forbidden triples of atoms are all
permutations of $({\sf Id}, x, y)$ for $x \neq y$, \/$(\r_j, \r_j, \r_j)$ for
$1\leq j<\kappa$ and $(\g_0^i, \g_0^{i'}, \g_0^{i^*})$ for $i, i',
i^*<2^{\kappa}.$ 
Write $\g_0$ for $\set{\g_0^i:i<2^{\kappa}}$ and $\r_+$ for
$\set{\r_j:1\leq j<\kappa}$. Call this atom
structure $\alpha$.  
Consider the term algebra $\A$ defined to be the subalgebra of the complex algebra of this atom structure generated by the atoms.
We claim that $\A$, as a relation algebra,  has no complete representation, hence any algebra sharing this 
atom structure is not completely representable, too. Indeed, it is easy to show that if $\A$ and $\B$ 
are atomic relation algebras sharing the same atom structure, so that $\At\A=\At\B$, then $\A$ is completely representable $\iff$ $\B$ is completely representable.

Assume for contradiction that $\A$ has a complete representation $\Mo$.  Let $x, y$ be points in the
representation with $\Mo \models \r_1(x, y)$.  For each $i< 2^{\kappa}$, there is a
point $z_i \in \Mo$ such that $\Mo \models \g_0^i(x, z_i) \wedge \r_1(z_i, y)$.
Let $Z = \set{z_i:i<2^{\kappa}}$.  Within $Z$, each edge is labelled by one of the $\kappa$ atoms in
$\r_+$.  The Erdos-Rado theorem forces the existence of three points
$z^1, z^2, z^3 \in Z$ such that $\Mo \models \r_j(z^1, z^2) \wedge \r_j(z^2, z^3)
\wedge \r_j(z^3, z_1)$, for some single $j<\kappa$.  This contradicts the
definition of composition in $\A$ (since we avoided monochromatic triangles).
Let $S$ be the set of all atomic $\A$-networks $N$ with nodes
$\omega$ such that $\{\r_i: 1\leq i<\kappa: \r_i \text{ is the label
of an edge in $N$}\}$ is finite.
Then it is straightforward to show $S$ is an amalgamation class, that is for all $M, N
\in S$ if $M \equiv_{ij} N$ then there is $L \in S$ with
$M \equiv_i L \equiv_j N$, witness \cite[Definition 12.8]{HHbook} for notation.
Now let $X$ be the set of finite $\A$-networks $N$ with nodes
$\subseteq\kappa$ such that:

\begin{enumerate}
\item each edge of $N$ is either (a) an atom of
$\A$ or (b) a cofinite subset of $\r_+=\set{\r_j:1\leq j<\kappa}$ or (c)
a cofinite subset of $\g_0=\set{\g_0^i:i<2^{\kappa}}$ and

\item  $N$ is `triangle-closed', i.e. for all $l, m, n \in \nodes(N)$ we
have $N(l, n) \leq N(l,m);N(m,n)$.  That means if an edge $(l,m)$ is
labelled by $\sf Id$ then $N(l,n)= N(m,n)$ and if $N(l,m), N(m,n) \leq
\g_0$ then $N(l,n)\cdot \g_0 = 0$ and if $N(l,m)=N(m,n) =
\r_j$ (some $1\leq j<\omega$) then $N(l,n)\cdot \r_j = 0$.
\end{enumerate}
For $N\in X$ let $\widehat{N}\in\Ca(S)$ be defined by
$$\set{L\in S: L(m,n)\leq
N(m,n) \mbox{ for } m,n\in \nodes(N)}.$$
For $i\in \omega$, let $N\restr{-i}$ be the subgraph of $N$ obtained by deleting the node $i$.
Then if $N\in X, \; i<\omega$ then $\widehat{\cyl i N} =
\widehat{N\restr{-i}}$.
The inclusion $\widehat{\cyl i N} \subseteq (\widehat{N\restr{-i})}$ is clear.
Conversely, let $L \in \widehat{(N\restr{-i})}$.  We seek $M \equiv_i L$ with
$M\in \widehat{N}$.  This will prove that $L \in \widehat{\cyl i N}$, as required.
Since $L\in S$ the set $T = \set{\r_i \notin L}$ is infinite.  Let $T$
be the disjoint union of two infinite sets $Y \cup Y'$, say.  To
define the $\omega$-network $M$ we must define the labels of all edges
involving the node $i$ (other labels are given by $M\equiv_i L$).  We
define these labels by enumerating the edges and labeling them one at
a time.  So let $j \neq i < \kappa$.  Suppose $j\in \nodes(N)$.  We
must choose $M(i,j) \leq N(i,j)$.  If $N(i,j)$ is an atom then of
course $M(i,j)=N(i,j)$.  Since $N$ is finite, this defines only
finitely many labels of $M$.  If $N(i,j)$ is a cofinite subset of
$\g_0$ then we let $M(i,j)$ be an arbitrary atom in $N(i,j)$.  And if
$N(i,j)$ is a cofinite subset of $\r_+$ then let $M(i,j)$ be an element
of $N(i,j)\cap Y$ which has not been used as the label of any edge of
$M$ which has already been chosen (possible, since at each stage only
finitely many have been chosen so far).  If $j\notin \nodes(N)$ then we
can let $M(i,j)= \r_k \in Y$ some $1\leq k < \kappa$ such that no edge of $M$
has already been labelled by $\r_k$.  It is not hard to check that each
triangle of $M$ is consistent (we have avoided all monochromatic
triangles) and clearly $M\in \widehat{N}$ and $M\equiv_i L$.  The labeling avoided all
but finitely many elements of $Y'$, so $M\in S$. So
$\widehat{(N\restr{-i})} \subseteq \widehat{\cyl i N}$.

Now let $\widehat{X} = \set{\widehat{N}:N\in X} \subseteq \Ca(S)$.
Then we claim that the subalgebra of $\Ca(S)$ generated by $\widehat{X}$ is simply obtained from
$\widehat{X}$ by closing under finite unions.
Clearly all these finite unions are generated by $\widehat{X}$.  We must show
that the set of finite unions of $\widehat{X}$ is closed under all cylindric
operations.  Closure under unions is given.  For $\widehat{N}\in X$ we have
$-\widehat{N} = \bigcup_{m,n\in \nodes(N)}\widehat{N_{mn}}$ where $N_{mn}$ is a network
with nodes $\set{m,n}$ and labeling $N_{mn}(m,n) = -N(m,n)$. $N_{mn}$
may not belong to $X$ but it is equivalent to a union of at most finitely many
members of $\widehat{X}$.  The diagonal $\diag ij \in\Ca(S)$ is equal to $\widehat{N}$
where $N$ is a network with nodes $\set{i,j}$ and labeling
$N(i,j)=\sf Id$.  Closure under cylindrification is given.
Let $\C$ be the subalgebra of $\Ca(S)$ generated by $\widehat{X}$.
Then $\A = \mathfrak{Ra}(\C)$.
To see why, each element of $\A$ is a union of a finite number of atoms,
possibly a co--finite subset of $\g_0$ and possibly a co--finite subset
of $\r_+$.  Clearly $\A\subseteq\mathfrak{Ra}(\C)$.  Conversely, each element
$z \in \mathfrak{Ra}(\C)$ is a finite union $\bigcup_{N\in F}\widehat{N}$, for some
finite subset $F$ of $X$, satisfying $\cyl i z = z$, for $i > 1$. Let $i_0,
\ldots, i_k$ be an enumeration of all the nodes, other than $0$ and
$1$, that occur as nodes of networks in $F$.  Then, $\cyl
{i_0} \ldots
\cyl {i_k}z = \bigcup_{N\in F} \cyl {i_0} \ldots
\cyl {i_k}\widehat{N} = \bigcup_{N\in F} \widehat{(N\restr{\set{0,1}})} \in \A$.  So $\mathfrak{Ra}(\C)
\subseteq \A$.
$\A$ is relation algebra reduct of $\C\in\CA_\omega$ but has no complete representation.
Let $n>2$. Let $\B=\Nrr_n \C$. Then
$\B\in {\sf Nr}_n\CA_{\omega}$, is atomic, but has no complete representation for plainly a complete representation of $\B$ induces one of $\A$. 
In fact, because $\B$  is generated by its two dimensional elements,
and its dimension is at least three, its
$\Df$ reduct is not completely representable.

It remains to show that the $\omega$--dilation $\C$ is atomless. 
For any $N\in X$, we can add an extra node 
extending
$N$ to $M$ such that $\emptyset\subsetneq M'\subsetneq N'$, so that $N'$ cannot be an atom in $\C$.

\end{proof}

However, in Theorem \ref{complete4} to be proved next,  we give a positive answer if we replace $\CA_{\omega}$ by $\PA_{\omega}$ and $\PEA_{\omega}$, i.e
when the $\omega$--dilations are {\it atomic polyadic algebras} with and without equality.
This allows us to give in the next theorem a huge class of completely representable
$\PA_n$ and $\PEA_n$s for any finite dimension $n$, possibly having uncountably many atoms.
We need to recall from \cite[definition~5.4.16]{HMT2},  the notion of neat reducts of polyadic algebras, because we shall be dealing with infinite dimensional such
algebras. In this case neat reducts are defined differently. We do it only for $\PA$s.  The $\PEA$ case is defined analogously counting in diagonal elements the obvious way.
 \begin{definition} Let $J\subseteq \beta$ and
$\A=\langle A,+,\cdot ,-, 0, 1,{\sf c}_{(\Gamma)}, {\sf s}_{\tau}\rangle_{\Gamma\subseteq \beta ,\tau\in {}^{\beta}\beta}$
be a $\PA_{\beta}$.
Let $Nr_J\B=\{a\in A: {\sf c}_{(\beta\sim J)}a=a\}$. Then
$${\bf Nr}_J\B=\langle Nr_{J}\B, +, \cdot, -, {\sf c}_{(\Gamma)}, {\sf s}'_{\tau}\rangle_{\Gamma\subseteq J, \tau\in {}^{\alpha}\alpha}$$
where ${\sf s}'_{\tau}={\sf s}_{\bar{\tau}}.$ Here $\bar{\tau}=\tau\cup Id_{\beta\sim \alpha}$.
The structure ${\bf Nr}_J\B$ is an algebra, called the {\it $J$--compression} of $\B$.
When $J=\alpha$, $\alpha$ an ordinal, then ${\bf Nr}_{\alpha}\B\in \PA_{\alpha}$ and it is
called the {\it strong neat $\alpha$ reduct} of $\B$, and its elements are called
$\alpha$--dimensional. For $\beta\geq \omega$, $\alpha<\beta\cap \omega$, $\bold K\in \{\PA, \PEA\}$ and $\bold L\subseteq \bold K$,
${\bf  Nr}_{\alpha}\bold L_{\beta}$ denotes the class $\{{\bf Nr}_{\alpha}\A: \A\in \bold L_{\beta}\}$
\end{definition}

If $n<\omega$, $\alpha\geq \omega$,
and $\B\in \PA_{\alpha}$, then ${\bf Nr}_n\B$ is contained in $\mathfrak{Nr}_n\Rd_{qa}\B$ Here $\Rd_{qa}$ denotes the quasi--polyadic reduct of $\B$, obtained by discarding infinitary
substitutions and  the definition of neat reducts is like the $\CA$ not involving infinitary cylindrfiers.
Indeed, if ${\sf c}_{(\beta\sim n)}x=x$, then for any $i\in \alpha\sim n$, in $\B$ we have,
${\sf c}_ix\leq {\sf c}_{(\alpha\sim n)}x=x\leq {\sf c}_ix$,
hence ${\sf c}_ix=x$.   However, the converse might not be true. If ${\sf c}_ix=x$ for all $i\in \alpha\sim n$, this does not imply that
${\sf c}_{(\alpha\sim n)}x=x;$ it can happen that ${\sf c}_{(\alpha\sim n)}x>x={\sf c}_ix$ (for all $i\in \alpha\sim n$).
Same observation carries over to to $\PEA$s.
For an ordinal $\alpha$, let $\PA_{\alpha}^{\sf atc}$ denote the class of completely additive atomic ${\sf PA}_{\alpha}$s.
\begin{theorem} \label{complete4} Let $n$ be a finite ordinal.
Then   $\bold S_c{\bf  Nr}_n\PA_{\omega}^{\sf atc}\subseteq \bold S_c{\Nr}_n(\Rd_{qa}\PA_{\omega}^{\sf atc})\subseteq {\sf CRPA}_n$.
The same result holds for $\PEA$s without the condition of complete additivity imposed on the dilation.
\end{theorem}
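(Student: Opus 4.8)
The plan is to show that any $\A \in \bold S_c\mathbf{Nr}_n\PA_\omega^{\sf atc}$ (respectively $\bold S_c\Nr_n(\Rd_{qa}\PA_\omega^{\sf atc})$, respectively the $\PEA$ variants) admits a \emph{complete} representation by assembling one from a \ws\ for \pe\ in a suitable atomic game played on $\At\A$. First I would reduce to the case $\A \subseteq_c \mathbf{Nr}_n\D$ with $\D$ atomic in $\PA_\omega$ (completely additive), since the strong neat reduct $\mathbf{Nr}_n\D$ sits inside the ordinary quasipolyadic neat reduct $\mathfrak{Nr}_n\Rd_{qa}\D$ as observed in the text just before the statement, so the first inclusion is immediate and it suffices to treat $\bold S_c\Nr_n(\Rd_{qa}\PA_\omega^{\sf atc})$. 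Because $\D$ is a polyadic algebra it carries \emph{all} finite substitutions, and complete additivity (automatic in the $\PEA$ case, which is why the hypothesis can be dropped there) guarantees that the $\s_i^j$'s and $\s_{[i,j]}$'s distribute over arbitrary joins; this is exactly the hypothesis feeding into Lemma \ref{n}. Applying that lemma (third bullet is not literally available since $\PA$ has no diagonals, but the second bullet, adapted — one needs $\s_0^1$ completely additive, supplied by the hypothesis — together with the polyadic availability of all transposition substitutions, gives the conclusion) yields that \pe\ has a \ws\ in $\bold G^m(\At\A)$ for every finite $m$, hence — letting $m \to \omega$ and using that $\A$ is countable-free here, i.e.\ no cardinality restriction is needed because the dilation is $\omega$-dimensional and fully polyadic — \pe\ has a \ws\ in $\bold G^\omega(\At\A)$, the $\omega$-dimensional $\omega$-rounded game with reuse of nodes.

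The second, and main, step is to pass from a \ws\ in $\bold G^\omega(\At\A)$ to an actual complete representation on a generalized (or Cartesian) space of dimension $n$. The hard part will be the step-by-step / saturation construction: one plays \pa\ `fairly' (scheduling, via a bookkeeping argument over $\omega$ rounds, every potential cylindrifier demand, every potential substitution demand, and — for $\PEA$ — every diagonal demand, on every finite tuple of nodes introduced so far) and takes the limit $N_\omega = \bigcup_t N_t$ of the networks \pe\ produces. The base of the representation is $\nodes(N_\omega)$ (an at-most-countable set if $\A$ is countable, but the argument works for arbitrary $|\At\A|$ by iterating through a long enough ordinal), and one defines $h: \A \to \wp({}^n\nodes(N_\omega))$ by $h(a) = \{\bar x \in {}^n\nodes(N_\omega) : N_\omega(\bar x) \leq a\}$ for $a \in \A$, extended to all of $\A$ by Boolean combination (using atomicity of $\A$). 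The consistency conditions on atomic networks (Definition \ref{network}, clauses (i)–(iii)) force $h$ to respect diagonals and the transposition substitutions; the cylindrifier moves of the game, exhausted by the scheduling, force $h(\c_i a) = \C_i h(a)$; and — crucially for \emph{completeness} — the fact that every $\bar x$ gets assigned an \emph{atom} $N_\omega(\bar x)$ of $\A$ means $\bigcup_{a \in \At\A} h(a) = {}^n\nodes(N_\omega)$, which is precisely atomicity of the representation and hence, by the standard equivalence recalled after Definition \ref{omit}, complete representability.

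The remaining point is that $h$ is an \emph{isomorphism onto a set algebra of the right type}, i.e.\ a $\PA_n$ (resp.\ $\PEA_n$) set algebra, not merely a $\CA_n$ one: here one uses that the network values live in $\At\A$ and that $\A$, being a complete subalgebra of the polyadic neat reduct $\Nr_n\Rd_{qa}\D$, inherits from $\D$ all the substitution operations $\s_\tau$ for $\tau \in {}^nn$ (finite transformations), with the symmetry clause (iii) of Definition \ref{network} ensuring $h(\s_\tau a) = \S_\tau h(a)$. One should also check injectivity of $h$: given a nonzero $a \in \A$, pick an atom $x \leq a$; since \pe's strategy in round $0$ can be forced to realize $x$ at the tuple $(0,1,\ldots,n-1)$ of some $N_0$, that tuple witnesses $h(a) \neq \emptyset$. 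For the $\PA$ case the only place complete additivity is genuinely used is in invoking Lemma \ref{n}; for $\PEA$ one notes, as the text does, that complete additivity is automatic, which is why the theorem states the $\PEA$ version without that hypothesis. I expect the bookkeeping/saturation argument and the careful verification that the limit network's associated map is a homomorphism for \emph{all} the polyadic operations (and not just the cylindric fragment) to be the technical crux; everything else is an assembly of results already in the paper (Lemma \ref{n}, Definitions \ref{network}, \ref{omit}) together with the classical step-by-step method as in \cite{HH,HHbook}.
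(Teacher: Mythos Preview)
Your game-based route has a genuine gap in the uncountable case, and the theorem carries no countability hypothesis. Lemma \ref{n} (with $m=\omega$, using the $\QA$ bullet) does give \pe\ a \ws\ in $\bold G^\omega(\At\A)$, but a \ws\ in $\bold G^\omega$ does \emph{not} in general yield a complete representation: the game uses only $\omega$ nodes, so the step-by-step limit $N_\omega$ has countable base, and if $|\At\A|>\omega$ your map $h$ cannot be atomic. Your remark that ``the argument works for arbitrary $|\At\A|$ by iterating through a long enough ordinal'' is exactly where the argument breaks: the number of nodes available is tied to the dimension of the dilation via Lemma \ref{n}, and the dilation here is only $\omega$-dimensional. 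Concretely, Theorem \ref{bsl} of the paper produces an atomic $\B\in\Nr_n\CA_\omega$ with $2^\kappa$ atoms; by Lemma \ref{n} (third bullet) \pe\ wins $\bold G^\omega(\At\B)$, yet $\B$ is not completely representable. So the implication ``\ws\ in $\bold G^\omega\Rightarrow$ complete representation'' that your plan needs is false. What distinguishes $\PA_\omega^{\sf atc}$ from $\CA_\omega$ is precisely the infinitary substitutions, and your plan, routing through $\Rd_{qa}$ and Lemma \ref{n}, discards them.

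The paper's proof exploits that extra structure in an essentially different way: it invokes the result of \cite{au} that every $\D\in\PA_\omega^{\sf atc}$ is itself completely representable, takes a complete representation $f:\D\to\C$ with top element $\bigcup_i{}^\omega U_i$, and then \emph{restricts} it to $n$ coordinates via $g(a)=\{s\in\bigcup_i{}^nU_i: s\cup Id\in f(a)\}$. Atomicity of $g$ is established by a short ultrafilter argument using $\A\subseteq_c\D$ (which in turn uses $\mathbf{Nr}_n\D\subseteq_c\D$, proved directly). For $\PEA$ the paper does not merely note automatic complete additivity; it dilates $\Rd_{pa}\D$ to a large $\PA_{\mathfrak n}$ (via \cite{DM}), constructs a perfect principal ultrafilter in the Stone space avoiding meager sets encoding the relevant suprema, and builds the representation on $\mathfrak n/\!\sim$ by hand. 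None of this is game-theoretic. If you want to salvage a game approach, you would first need to dilate the $\PA_\omega$ to a $\PA_\kappa$ with $\kappa\geq|\At\A|$ (possible by \cite{DM}, but this already uses the full polyadic structure you were trying to avoid) and then re-prove Lemma \ref{n} at that cardinality; at that point the paper's direct route is shorter.
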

\begin{proof}
We have already delat with the first inclusion. Let $\A\in \PA_n$ and $\A\subseteq_c {\bf Nr}_n\D$, where $\D\in \PA_{\omega}$ is atomic and completely additive.
From \cite{au}, we have $\D$ is completely representable.
We show that ${\bf Nr}_n\D\subseteq_c \D$ and $\mathfrak{Nr}_n\Rd_{qa}\D\subseteq_c \D.$ 
Assume that $S\subseteq {\bf Nr}_n\D$ and $\sum ^{{\bf Nr}_n\D}S=1$, and for contradiction, that there exists $d\in \D$ such that
$s\leq d< 1$ for all $s\in S$. Let  $J=\Delta d\sim n$ and take  $t=-{\sf c}_{(J)}(-d)\in {\bf Nr}_n\D\subseteq \mathfrak{Nr}_n\Rd_{qa}\D$.
Then
\begin{align*}
{\sf c}_{(\alpha\sim n)}t&={\sf c}_{(\alpha\sim n)}(-{\sf c}_{(J)} (-d))\\
&=  {\sf c}_{(\alpha\sim n)}-{\sf c}_{(J)} (-d)\\
&=  {\sf c}_{(\alpha\sim n)} -{\sf c}_{(\alpha\sim n)}{\sf c}_{(J)}( -d)\\
&= -{\sf c}_{(\alpha\sim n)}{\sf c}_{(J)}( -d)\\
&=-{\sf c}_{(J)}( -d)\\
&=t
\end{align*}
We have proved that $t\in {\bf Nr}_n\D\subseteq \mathfrak{Nr}_n\Rd_{qa}\D$.
We now show that $s\leq t<1$ for all $s\in S$, which contradicts $\sum^{{\bf  Nr}_n\D}S\geq \sum{}^{\mathfrak{Nr}_n\Rd_{qa}\D}S=1$.
If $s\in S$, we show that $s\leq t$.
By $s\leq d$, we have  $s\cdot -d=0$.
Hence $0={\sf c}_{(J)}(s\cdot -d)=s\cdot {\sf c}_{(J)}(-d)$, so
$s\leq -{\sf c}_{(J)}(-d)$, hence  $s\leq t$ as required. Assume for contradiction that $t=1$, then
$1=-{\sf c}_{(J)}(-d)$ and so $ {\sf c}_{(J)}(-d)=0$.
But $-d\leq {\sf c}_{(J)}(-d)$,  so $1\cdot -d\leq  {\sf c}_{(J)}(-d)=0.$
Hence $1\cdot -d =0$, and this contradicts that $d<1$. We have proved that $\sum^{\D}S=1$, so ${\bf Nr}_n\D\subseteq_c \mathfrak{Nr}_n\Rd_{qa}D\subseteq_c \D$ 
and thus ${\bf Nr}_n\D\subseteq_c \D.$
Now we only work with the operator ${\bf Nr}_n$. Let $f:\D\to \C$ be a complete representation of $\D$, where $\C$ has top element $1^{\C}$.  As mentioned earlier such an $f$ exists \cite{au}.
Here $\C$ is a generalized set algebra; the top element  $1^{\C}$ is of the form $\bigcup_{i\in I}{}^{\omega}U_i$,
where $I$ is a non--empty set,  for $i\neq j\in I$, $U_i\cap U_j=\emptyset$, and $\C$ is closed under the polyadic concrete operations including of course all
infinitary substitutions and cylindrifiers. Such operations are defined for $\tau\in {}^{\omega}\omega$ and $X\subseteq 1^{\C}$ as follows:
${\sf s}_{\tau}X=\{s\in 1^{\C}: s\circ {\tau}\in X\},$
${\sf c}_{(\Gamma)}X=\{s\in 1^{\C}: (\exists t\in X): t(j)=s(j), \forall j\notin \Gamma\}.$
We can assume without loss that $f$ is an isomorphism.
For brevity, let $\A={\sf Nr}_n\D (\in \PA_n)$. For $a\in \A$,   let $g(a)=\{s\in \bigcup_{i\in I}{}^{n}U_i: s\cup Id\in f(a)\}$. Then it is easy to show that $g$ is injective (because $f$ is).

We show that it preserves cylindrifiers. For a function $t$ and $i\in \dom t$, $t_i^u$ denotes the function that is like $t$ except that
$t(i)=u$. Let $x\in A$ and $i<n$.
Then $s\in {\sf c}_ig(x)\iff {\sf s}^i_u\in g(x)\iff {\sf s}_u^i\cup Id\in f(x)\iff (s\cup Id)^i_u\in f(x)\iff s\cup Id\in {\sf c}_if(x)=f({\sf c}_ix)\iff s\in g({\sf c}_ix).$
Like with cylindrifiers it is  straightforward to check preservation of the other polyadic operations.
Then $g$ is an injective homomorphism from $\A$
into a set algebra with top element $\bigcup_{i\in I}{}^{n}U_i$, say.
We show that $g$ is an atomic, hence complete representation. If not, then there exists $s\in {}^nU$, such that
$s\notin g(x)$ for all $x\in \At\A$.
But $f$ is a complete, hence an atomic representation, too, so there exists an atom $\beta$ of $\D$, such that $s\cup Id\in f(\beta)$.
Let $F=\{x\in \A: x\geq \beta\}\subseteq \A\subseteq \D$. Then $F$ is an ultrafilter in $\A$; it is clearly a filter and if $a\in \A$ and $a\ngeq \beta$,
then $-a\cdot \beta\neq 0$, so $\beta\leq -a$, because
$\beta$ is an atom, from which it follows that $-a\in F$. Also $F$ is non--principal, for if it is, then there would be an atom $\alpha\in \A$
 such that $\alpha\geq \beta$, so $s\in g(\alpha)$ which is impossible.
Therefore $\prod^{\A}F=0$ in $\A$, but clearly,
$\prod^{\D}F=\beta$ in $\D$.   This contradicts that $\A\subseteq_c \D$, and we are done.

Now we deal with $\PEA$s. Here representability for infinite dimensional algebras is more tricky because, unlike $\PA$s, not every $\PEA$ is representable.
The idea here is that when we truncate the dimension to be finite; to $n$, say,  
then the resulting algebra $\PEA_n$ (by the neat embedding theorem for $\QEA$s) becomes representable, 
since any $\PEA_{\omega}$ has a $\sf QEA_{\omega}$ reduct obtained by discarding infinitary substituitions.
Let $\A\subseteq_c{\bf Nr}_n\D$ where $\D\in \PEA_{\omega}$ is atomic.
We want to completely represent $\A$.  Let $a\in \A$ be non--zero. We will find  a complete
representation $f$ of $\A$, such that $f(a)\neq 0$.
We have $\A$ is atomic.  For brevity, let $X=\At\A$.
Let $\C=\Rd_{pa}\D\in \PA_{\omega}$, so $\C$ is obtained from $\D$ by discarding diagonal elements.
We use the argument in  \cite{au}, which freely uses the terminology in \cite{DM}, and which  we continue to use.
Let $\mathfrak{m}$ be the local degree of $\C$, $\mathfrak{c}$ its effective cardinality and
$\mathfrak{n}$ be any cardinal such that $\mathfrak{n}\geq \mathfrak{c}$
and $\sum_{s<\mathfrak{m}}\mathfrak{n}^s=\mathfrak{n}$.
Then there exists an atomic  $\B\in \PA_{\mathfrak{n}}$, such that
that $\C=\bf Nr_{\omega}\B$, cf. \cite[Theorem 3.10]{DM},
and the local degree of $\B$ is the same as that of $\C$.
Since $\B$ is a dilation of $\C$, which is a reduct of a $\PEA_{\omega}$, then one can define for all $i<j<\mathfrak{n}$, the diagonal element ${\sf d}_{ij}$ in $\B$, using
the diagonal elements in $\D$, as in
\cite[Theorem 5.4.17]{HMT2} satisfying the (abstract) axioms for $\sf PEA_{\mathfrak{n}}$.
Call the expanded structure $\B^*(\in \PEA_{\mathfrak{n}})$.
For a while we concentrate only on
$\B$;  we forget about diagonal elements. For $\tau\in {}^{\omega}\mathfrak{n}$, we write $\tau^+$ for $\tau\cup Id_{\mathfrak{n}\sim \omega}(\in {}^\mathfrak{n}\mathfrak{n}$).
Let $F$ be a Boolean principal ultrafilter in $\B$, that contains $a$ and preserves the following joins evaluated in $\B$,
where $p\in \B$, $\Gamma\subseteq \mathfrak{n}$ and
$\tau\in {}^{\omega}\mathfrak{n}$:
$${\sf c}_{(\Gamma)}p=\sum^{\B}\{{\sf s}_{\bar{\tau}}p: \tau\in {}^{\omega}\mathfrak{n},\ \  \tau\upharpoonright \alpha\sim\Gamma=Id\},$$ and (*):
$${\sf s}_{{\tau^+}}^{\B}X=1.$$
The first join exists \cite{au, DM}, and the second exists, because $\sum ^{\A}X=\sum ^{\D}X=\sum ^{\B}X=1$ and $\tau^+$ is completely additive, since
$\B^*\in \PEA_{\mathfrak{n}}$. The last equality of suprema follows from the fact that $\D=\sf Nr_{\omega}\B\subseteq_c \B$; this can
be proved exactly as above.
Such an  ultrafilter exists \cite{au}. In fact, any ultrafilter
generated by an atom below the non--zero $a$ will be as required. The underlying idea,  used also in \cite{au} 
is that the above joins give rise to nowhere dense sets in  the Stone topology $S$
of $\B$, any $F\in S$ preserves these joins $\iff\ F$ lies outside these sets, any principal ultrafilter lies outside nowhere dense sets, and finally
the principal ultrafilters are dense in $S$, since $\B$ is atomic.
For $i, j\in \mathfrak{n}$, set $iEj\iff {\sf d}_{ij}^{\B}\in F$.
Then by the equational properties of diagonal elements and properties of filters, it is easy to show that $E$ is an equivalence relation on $\mathfrak{n}$.
Define $f: \A\to \wp({}^n\mathfrak{n})$, via $x\mapsto \{\bar{t}\in {}^n(\mathfrak{n}/E): {\sf s}_{t\cup Id}^{\B}x\in F\},$
where $\bar{t}(i/E)=t(i)$ We show that $f$ is well--defined. Clearly  $f(a)\neq 0$. Showing that $f$ is an atomic
homomorphism (preserving diagonal elements, as well) is not hard \cite[Theorem 3.2.4]{Sayed}.
Let $V={}^{\mathfrak{n}}\mathfrak{n}^{(Id)}$.
To show that $f$ is well-defined, it suffices to show  that for all $\sigma, \tau\in V$,
if $(\tau(i), \sigma(i))\in E$ for all $i\in \mathfrak{n}$ and $a\in A$, then
${\sf s}_{\tau}^{\B}a\in F\iff {\sf s}_{\sigma}^{\B}a\in F.$  This can be proved by induction on
$|\{i\in \mathfrak{n}: \tau(i)\neq \sigma(i)\}| (<\omega)$.
If $J=\{i\in \mathfrak{n}: \tau(i)\neq \sigma(i)\}$ is empty, the result is obvious.
Otherwise assume that $k\in J$.
We introduce a helpful piece of notation.
For $\eta\in V (={}^{\mathfrak{n}}\mathfrak{n}^{(Id)})$, let
$\eta(k\mapsto l)$ stand for the $\eta'$ that is the same as $\eta$ except
that $\eta'(k)=l.$
Now take any
$\lambda\in  \{\eta\in \mathfrak{n}: (\sigma){^{-1}}\{\eta\}= (\tau){^{-1}}\{\eta\}=
\{\eta\}\}\smallsetminus \Delta a.$
Recall that $\Delta a=\{i\in \mathfrak{n}: {\sf c}_ix\neq x\}$ and that $\mathfrak{n}\setminus \Delta a$
is infinite because $\Delta a\subseteq n$, so such a $\lambda$ exists. Now
we freely use properties of substitutions for cylindric algebras.
We have by \cite[1.11.11(i)(iv)]{HMT2}
(a) ${\sf s}_{\sigma}x={\sf s}_{\sigma k}^{\lambda}{\sf s}_{\sigma(k\mapsto \lambda)}x,$
and (b)
${\sf s}_{\tau k}^{\lambda}({\sf d}_{\lambda, \sigma k}\cdot  {\sf s}_{\sigma} x)
={\sf d}_{\tau k, \sigma k} {\sf s}_{\sigma} x,$
and (c)
${\sf s}_{\tau k}^{\lambda}({\sf d}_{\lambda, \sigma k}\cdot {\sf s}_{\sigma(k\mapsto \lambda)}x)
= {\sf d}_{\tau k,  \sigma k}\cdot {\sf s}_{\sigma(k\mapsto \tau k)}x,$
and finally (d)
${\sf d}_{\lambda, \sigma k}\cdot {\sf s}_{\sigma k}^{\lambda}{\sf s}_{{\sigma}(k\mapsto \lambda)}x=
{\sf d}_{\lambda, \sigma k}\cdot {\sf s}_{{\sigma}(k\mapsto \lambda)}x.$
Then by (b), (a), (d) and (c), we get,
\begin{align*}
{\sf d}_{\tau k, \sigma k}\cdot {\sf s}_{\sigma} x
&=  {\sf s}_{\tau k}^{\lambda}({\sf d}_{\lambda,\sigma k}\cdot {\sf s}_{\sigma}x)\\
&={\sf s}_{\tau k}^{\lambda}({\sf d}_{\lambda, \sigma k}\cdot {\sf s}_{\sigma k}^{\lambda}
{\sf s}_{{\sigma}(k\mapsto \lambda)}x)\\
&={\sf s}_{\tau k}^{\lambda}({\sf d}_{\lambda, \sigma k}\cdot {\sf s}_{{\sigma}(k\mapsto \lambda)}x)\\
&= {\sf d}_{\tau k,  \sigma k}\cdot {\sf s}_{\sigma(k\mapsto \tau k)}x.
\end{align*}
By $F$ is a filter and $(\tau k, \sigma k)\in E,$ we conclude that
$${\sf s}_{\sigma}x\in F \iff{\sf s}_{\sigma(k\mapsto \tau k)}x\in F.$$
The conclusion follows from the induction hypothesis.

We check that $f$ is a homomorphism:
\begin{itemize}
\item Boolean join:  Since $F$ is maximal we have:
\begin{align*}  
\bar{\sigma}\in f(x+y)
&\iff {\sf s}_{\sigma^+}(x+y)\in F\\
&\iff {\sf s}_{\sigma^+}x+{\sf s}_{\sigma^+}y\in F\\
&\iff {\sf s}_{\sigma^+} x \text { or } {\sf s}_{\sigma^+} y\in F\\
&\iff  \bar{\sigma}\in f(x)\cup f(y).
\end{align*}

\item Complementation: $$\bar{\sigma} \in f(-x) \iff {\sf s}_{\sigma^+}(-x)\in F \iff-{\sf s}_{\sigma^+x}\in F 
\iff {\sf s}_{\sigma^+} x\notin F \iff \bar{\sigma}\notin f(x).$$

\item Diagonal elements: Let $k,l<n$. Then we have:   
$\sigma\in f{\sf d}_{kl}\iff {\sf s}_{\sigma^{+}}{\sf d}_{kl}\in F \iff
{\sf d}_{\sigma k, \sigma l}\in F
\iff (\sigma k , \sigma l)\in E \iff
\sigma k/E=\sigma l/E \iff 
\bar\sigma(k)=\bar\sigma(l)
\iff\bar{\sigma} \in {\sf d}_{kl}.$

\item Cylindrifications: Let $k<n$ and $a\in A$.  Let $\bar{\sigma}\in {\sf c}_kf(a)$. 
Then for some $\lambda\in \mathfrak{n}$, we have
$\bar{\sigma}(k \mapsto  \lambda/E)\in f(a)$ 
hence 
${\sf s}_{\sigma^+(k\mapsto \lambda)}a\in F$ 
It follows from  the inclusion $a\leq {\sf c}_ka$ that, 
${\sf s}_{\sigma^+(k\mapsto \lambda)}{\sf c}_ka \in F$
so ${\sf s}_{\sigma^+}{\sf c}_ka\in F.$
Thus ${\sf c}_kf(a)\subseteq f({\sf c}_ka.)$

We prove the other more difficult inclusion that uses the condition of eliminating cylindrifiers.
Let $a\in A$ and $k<n$. Let $\bar\sigma'\in f{\sf c}_ka$ and let $\sigma=\sigma'\cup Id_{\mathfrak{n}\sim n}$. Then 
${\sf s}_{\sigma}^{\D}{\sf c}_ka={\sf s}_{\sigma'}^{\D}{\sf c}_ka\in F.$
Let $\lambda\in \{\eta \in \mathfrak{n}:  \sigma^{-1}\{\eta\}=\{\eta\}\}\smallsetminus \Delta a$, such a $\lambda$ exists because $\Delta a$ is finite, and 
$|\{i\in \mathfrak{n}:\sigma(i)\neq i\}|<\omega.$
Let $\tau=\sigma\upharpoonright \mathfrak{n}\smallsetminus \{k,\lambda\}\cup \{
(k,\lambda), (\lambda,k)\}.$
Then in $\B$ we have
${\sf c}_{\lambda}{\sf s}_{\tau}a={\sf s}_{\tau}{\sf c}_ka={\sf s}_{\sigma}{\sf c}_ka\in F.$
By the construction of $F$, there is some $u(\notin \Delta({\sf s}_{\tau}^{\D}a))$  
such that ${\sf s}_{u}^{\lambda}{\sf s}_{\tau}a\in F,$ so ${\sf s}_{\sigma(k\mapsto u)}a\in F.$
Hence $\sigma(k\mapsto u)\in f(a),$ from which we 
get that  $\bar{\sigma}'\in {\sf c}_k f(a)$.

\item Substitutions: Direct since substitution operations are Boolean endomorphisms.

\end{itemize}

\end{proof}

\section{Positive results on $\sf OTT$ for $L_n$}

Unless otherwise specified, $n$ will denote a finite ordinal $>2$. 
Now we turn to proving omitting types theorems for certain (not all) $L_n$ theories. 
But first a definition:
\begin{definition}\label{definition} Let $\A\in \RCA_n$ and let $\lambda$ be a cardinal. 
\begin{enumerate} 
\item If $\bold X=(X_i: i<\lambda)$ is  family of subsets of $\A$, we say that {\it $\bold X$ is omitted in $\C\in {\sf Crs}_{n}$}, 
if there exists an isomorphism 
$f:\A\to \C$ such that $\bigcap f(X_i)=\emptyset$ for all $i<\lambda$.  When we want to stress the role of $f$, 
we say that $\bold X$ is omitted in $\C$ via $f$. 

\item If $X\subseteq \A$ and $\prod X=0$, 
then we refer to $X$ as a {\it non-principal type} of $\A$.
\end{enumerate}
\end{definition}

Observe that $\A\in \RCA_n$ is completely representable $\iff$ $\A$ is atomic, and the single non-principal type of co-atoms 
can be omitted in a ${\sf Gs}_n$.
We recall certain cardinals that play a key role in (positive) omitting types theorems for $L_{\omega, \omega}$.
Let $\sf covK$ be the cardinal used in \cite[Theorem 3.3.4]{Sayed}.
The cardinal $\mathfrak{p}$  satisfies $\omega<\mathfrak{p}\leq 2^{\omega}$
and has the following property:
If $\lambda<\mathfrak{p}$, and $(A_i: i<\lambda)$ is a family of meager subsets of a Polish space $X$ (of  which Stone spaces of countable Boolean algebras are examples)  
then $\bigcup_{i\in \lambda}A_i$ is meager. For the definition and required properties of $\mathfrak{p}$, witness \cite[pp. 3, pp. 44-45, corollary 22c]{Fre}. 
Both cardinals $\sf cov K$ and $\mathfrak{p}$  have an extensive literature.
It is consistent that $\omega<\mathfrak{p}<\sf cov K\leq 2^{\omega}$ \cite{Fre},
so that the two cardinals are generally different, but it is also consistent that they are equal; equality holds for example in the Cohen
real model of Solovay and Cohen.  Martin's axiom implies that  both cardinals are the continuum.
To prove the main result on positive omitting types theorems, we need the following lemma due to Shelah:
\begin{lemma} \label{sh} Assume that $\lambda$ is an infinite regular cardinal. 
Suppose that $T$ is a first order theory,
$|T|\leq \lambda$ and $\phi$ is a formula consistent with $T$,  then there exist models $\M_i: i<{}^{\lambda}2$, each of cardinality $\lambda$,
such that $\phi$ is satisfiable in each,  and if $i(1)\neq i(2)$, $\bar{a}_{i(l)}\in M_{i(l)}$, $l=1,2,$, $\tp(\bar{a}_{l(1)})=\tp(\bar{a}_{l(2)})$,
then there are $p_i\subseteq \tp(\bar{a}_{l(i)}),$ $|p_i|<\lambda$ and $p_i\vdash \tp(\bar{a}_ {l(i)})$ ($\tp(\bar{a})$ denotes the complete type realized by
the tuple $\bar{a}$)
\end{lemma}
\begin{proof} \cite[Theorem 5.16, Chapter IV]{Shelah}.
\end{proof}

In the next theorem $n<\omega$:
\begin{theorem}\label{i} Let $\A\in \bold S_c\Nr_n\CA_{\omega}$ be countable.  Let $\lambda< 2^{\omega}$ and let 
$\bold X=(X_i: i<\lambda)$ be a family of non-principal types  of $\A$.
Then the following hold:
\begin{enumerate}
\item If $\A\in \Nr_n\CA_{\omega}$ and the $X_i$s are maximal non--principal ultrafilters,  then $\bold X$ can be omitted in a ${\sf Gs}_n$.
Furthrmore, the condition of maximality cannot be dispensed with,
\item Every subfamily of $\bold X$ of cardinality $< \mathfrak{p}$ can be omitted in a ${\sf Gs}_n$; in particular, every countable 
subfamily of $\bold X$ can be omitted in a ${\sf Gs}_n$,
\item  If $\A$ is simple, then every subfamily 
of $\bold X$ of cardinlity $< \sf covK$ can be omitted in a ${\sf Cs}_n$, 
\item It is consistent, but not provable (in $\sf ZFC$), that $\bold X$ can be omitted in a ${\sf Gs}_n$,
\item If $\A\in \Nr_n\CA_{\omega}$ and $|\bold X|<\mathfrak{p}$, then $\bold X$ can be omitted $\iff$ every countable subfamily of $\bold X$ can be omitted.   
If $\A$ is simple, we can replace $\mathfrak{p}$ by $\sf covK$. 
\item If $\A$ is atomic, {\it not necessarily countable}, but have countably many atoms, 
then any family of non--principal types can be omitted in an atomic $\sf Gs_n$; in particular, 
$\bold X$ can be omitted in an atomic ${\sf Gs}_n$; if $\A$ is simple, we can replace $\sf Gs_n$ by 
$\sf Cs_n$.
\end{enumerate}
\end{theorem}
\begin{proof}

For the first item we prove the special case when $\kappa=\omega$. The general case follows from the fact that (*) below holds for any infinite regular cardinal.
We assume that $\A$ is simple (a condition that can be easily removed). We have $\prod ^{\B}X_i=0$ for all $i<\kappa$ because,
$\A$ is a complete subalgebra of $\B$. Since $\B$ is a locally finite,  we can assume 
that $\B=\Fm_T$ for some countable consistent theory $T$.
For each $i<\kappa$, let $\Gamma_i=\{\phi/T: \phi\in X_i\}$.
Let ${\bold F}=(\Gamma_j: j<\kappa)$ be the corresponding set of types in $T$.
Then each $\Gamma_j$ $(j<\kappa)$ is a non-principal and {\it complete $n$-type} in $T$, because each $X_j$ is a maximal filter in $\A=\mathfrak{Nr}_n\B$.
(*) Let $(\M_i: i<2^{\omega})$ be a set of countable
models for $T$ that overlap only on principal maximal  types; these exist by lemma \ref{sh}.
Asssume for contradiction that for all $i<2^{\omega}$, there exists
$\Gamma\in \bold F$, such that $\Gamma$ is realized in $\M_i$.
Let $\psi:{}2^{\omega}\to \wp(\bold F)$,
be defined by
$\psi(i)=\{F\in \bold F: F \text { is realized in  }\M_i\}$.  Then for all $i<2^{\omega}$,
$\psi(i)\neq \emptyset$.
Furthermore, for $i\neq j$, $\psi(i)\cap \psi(j)=\emptyset,$ for if $F\in \psi(i)\cap \psi(j)$, then it will be realized in
$\M_i$ and $\M_j$, and so it will be principal.
This implies that $|\bold F|=2^{\omega}$ which is impossible. Hence we obtain a model $\M\models T$ omitting $\bold X$
in which $\phi$ is satisfiable. The map $f$ defined from $\A=\Fm_T$ to ${\sf Cs}_n^{\M}$ (the set algebra based on $\M$ \cite[4.3.4]{HMT2})
via  $\phi_T\mapsto \phi^{\M},$ where the latter is the set of $n$--ary assignments in
$\M$ satisfying $\phi$, omits $\bold X$. Injectivity follows from the facts that $f$ 
is non--zero and $\A$ is simple. 
For the second part of (1), we construct an atomic $\B\in \Nr_n\CA_{\omega}$ with uncountably many atoms that is not completely representable. This implies that the maximality condition 
cannot be dispensed with; else the set  of co--atoms of $\B$ call it $X$ will be a non--principal type that cannot be omitted, 
because any ${\sf Gs}_n$ omitting $X$ yields a complete representation of 
$\B$, witness the last paragraph in \cite{Sayed}.  
The construction is taken from \cite{bsl}. 

For (2) and (3), we can assume that $\A\subseteq_c \Nr_n\B$, $\B\in \Lf_{\omega}$. 
We work in $\B$. Using the notation on \cite[p. 216 of proof of Theorem 3.3.4]{Sayed} replacing $\Fm_T$ by $\B$, we have $\bold H=\bigcup_{i\in \lambda}\bigcup_{\tau\in V}\bold H_{i,\tau}$
where $\lambda <\mathfrak{p}$, and $V$ is the weak space ${}^{\omega}\omega^{(Id)}$,  
can be written as a countable union of nowhere dense sets, and so can 
the countable union $\bold G=\bigcup_{j\in \omega}\bigcup_{x\in \B}\bold G_{j,x}$.  
So for any $a\neq 0$,  there is an ultrafilter $F\in N_a\cap (S\setminus \bold H\cup \bold G$)
by the Baire category theorem. This induces a homomorphism $f_a:\A\to \C_a$, $\C_a\in {\sf Cs}_n$ that omits the given types, such that
$f_a(a)\neq 0$. (First one defines $f$ with domain $\B$ as on p.216, then restricts $f$ to $\A$ obtaining $f_a$ the obvious way.) 
The map $g:\A\to \bold P_{a\in \A\setminus \{0\}}\C_a$ defined via $x\mapsto (g_a(x): a\in \A\setminus\{0\}) (x\in \A)$ is as required. 
In case $\A$ is simple, then by properties of $\sf covK$, $S\setminus (\bold H\cup \bold G)$ is non--empty,  so
if $F\in S\setminus (\bold H\cup \bold G)$, then $F$ induces a non--zero homomorphism $f$ with domain $\A$ into a $\Cs_n$ 
omitting the given types. By simplicity of $\A$, $f$ is injective.

To prove independence, it suffices to show that $\sf cov K$ many types may not be omitted because it is consistent that $\sf cov K<2^{\omega}$. 
Fix $2<n<\omega$. Let $T$ be a countable theory such that for this given $n$, in $S_n(T)$, the Stone space of $n$--types,  the isolated points are not dense.
It is not hard to find such theories. One such (simple) theory is the following: 
Let $(R_i:i\in \omega)$ be a countable family of unary relations and for each disjoint and finite subsets
 $J,I\subseteq \omega$, let $\phi_{I,J}$ be the formula expressing `there exists $v$ such that $R_i(v)$ holds for all $i\in I$ and $\neg R_j(v)$ holds for all
$j\in J$. Let $T$ be the following countable theory $\{\phi_{I, J}: I, J\text { as above }\}.$  
Using a simple compactness argument one can show that 
$T$ is consistent. Furthermore, for each $n\in \omega$, $S_n(T)$ does not have isolated types at all, 
hence of course the isolated types are not dense in $S_n(T)$ for all $n$. 
Algebraically, this means that if $\A=\Fm_T$, then for all $n\in \omega$, $\Nr_n\A$ is atomless.
(Another example, is the  theory of random graphs.)
This condition excludes the existence of a prime model for $T$ because $T$ has a prime model $\iff$ the isolated points in 
$S_n(T)$ are dense for all $n$. A prime model which in this context is an atomic model,  omits any family of non--principal types (see the proof of the last item).
We do not want this to happen.
Using exactly the same argument in \cite[Theorem 2.2(2)]{CF}, one can construct 
a family $P$ of non--principal $0$--types (having no free variable) of  $T$,
such that $|P|={\sf covK}$ and $P$ cannot be omitted. 
Let $\A=\Fm_T$ and for $p\in P$, let $X_p=\{\phi/T:\phi\in p\}$. Then $X_p\subseteq \Nrr_n\A$, and $\prod X_p=0$,
because $\Nrr_n\A$ is a complete subalgebra of $\A$.
Then we claim that for any $0\neq a$, there is no set algebra $\C$ with countable base
and $g:\A\to \C$ such that $g(a)\neq 0$ and $\bigcap_{x\in X_p}f(x)=\emptyset$.
To see why, let $\B=\Nrr_n\A$. Let $a\neq 0$. Assume for contradiction, that there exists
$f:\B\to \D'$, such that $f(a)\neq 0$ and $\bigcap_{x\in X_i} f(x)=\emptyset$. We can assume that
$B$ generates $\A$ and that $\D'=\Nrr_n\B'$, where $\B'\in \Lf_{\omega}$.
Let $g=\Sg^{\A\times \B'}f$.
We show that $g$ is a homomorphism with $\dom(g)=\A$, $g(a)\neq 0$, and $g$ omits $P$, and for this, 
it suffices to show by symmetry that $g$ is a function with domain $A$. It obviously has co--domain $B'$. 
Settling the domain is easy: $\dom g=\dom\Sg^{\A\times \B'}f=\Sg^{\A}\dom f=\Sg^{\A}\Nrr_{n}\A=\A.$ 
Let $\bold K=\{\A\in \CA_{\omega}: \A=\Sg^{\A}\Nrr_{n}\A\}(\subseteq \sf Lf_{\omega}$). 
We show that $\bold K$ is closed under finite direct products. Assume that $\C$, $\D\in \bold K$, then we have 
$\Sg^{\C\times \D}\Nrr_{n}(\C\times \D)= \Sg^{\C\times \D}(\Nrr_{n}\C\times \Nrr_{n}\D)=
\Sg^{\C}\Nrr_{n}\C\times  \Sg^{\D}\Nrr_{n}\D=\C\times \D$.
Observe that (*):
 $$(a,b)\in g\land \Delta[(a,b)]\subseteq n\implies f(a)=b.$$
(Here $\Delta[(a,b)]$ is the {\it dimension set} of $(a,b)$ defined via $\{i\in \omega: {\sf c}_i(a,b)\neq (a,b)\}$).
Indeed $(a,b)\in \Nrr_{n}\Sg^{\A\times \B}f=\Sg^{\Nrr_{n}(\A\times \B)}f=\Sg^{\Nrr_{\alpha}\A\times \Nrr_{n}\B}f=f.$
Now suppose that $(x,y), (x,z)\in g$. We need to show that $y=z$ proving that $g$ is a function.
Let $k\in \omega\setminus n.$ Let $\oplus$ denote `symmetric difference'. Then (1):
$$(0, {\sf c}_k(y\oplus z))=({\sf c}_k0, {\sf c}_k(y\oplus z))={\sf c}_k(0,y\oplus z)={\sf c}_k((x,y)\oplus(x,z))\in g.$$
Also (2),
$${\sf c}_k(0, {\sf c}_k(y\oplus z))=(0,{\sf c}_k(y\oplus z)).$$
From (2) by observing that $k$ is arbitrarly chosen in $\omega\sim n$, we get (3): 
$$\Delta[(0, {\sf c}_k(y\oplus z))]\subseteq n$$
From (1) and (3) and  (*),  we get  $f(0)={\sf c}_k(y\oplus z)$ for any  $k\in \omega\setminus n.$
Fix $k\in \omega\sim n$. Then by the above, upon observing that $f$ is a homomophism, so that in particular $f(0)=0$, we get 
${\sf c}_k(y\oplus z)=0$. But  $y\oplus z\leq {\sf c}_k(x\oplus z)$, so $y\oplus z=0$, thus $y=z$. 
We have shown that $g$ is a function, $g(a)\neq 0$,  $\dom g=\A$ and $g$ omits $P$.
This contradicts that  $P$, by its construction,  cannot be omitted.
Assuming Martin's axiom, 
we get $\sf cov K=\mathfrak{p}=2^{\omega}$. Together with the above arguments this proves (4).

We now prove (5). Let $\A=\Nr_n\D$, $\D\in {}\sf Lf_{\omega}$ is countable. Let $\lambda<\mathfrak{p}.$ Let $\bold X=(X_i: i<\lambda)$ be as in the hypothesis.
Let $T$ be the corresponding first order theory, so
that $\D\cong \Fm_T$. Let $\bold X'=(\Gamma_i: i<\lambda)$ be the family of non--principal types in $T$ corresponding to $\bold X$. 
If $\bold X'$ is not omitted, then there is a (countable) realizing tree
for $T$, hence there is a realizing tree for a countable subfamily of $\bold X'$ in the sense of \cite[Definition 3.1]{CF}, 
hence a countable subfamily of $\bold X'$ 
cannot be omitted. Let $\bold X_{\omega}\subseteq \bold X$ be the corresponding countable 
subset of $\bold X$. Assume that $\bold X_{\omega}$ can be omitted in a $\sf Gs_n$, via $f$ say. 
Then  by the same argument used in proving item (4)  $f$ can be lifted to $\Fm_T$ omitting $\bold X'$, 
which is a contradiction.  We leave the part when $\A$ is simple to the reader. 

For (6): If $\A\in {\bold S}_n\Nr_n\CA_{\omega}$, is atomic and has countably many atoms, 
then any complete representation of $\A$, equivalently, an atomic representation of $\A$, equivalently, a representation of $\A$ 
omitting the set of co--atoms is as required. 
If $\A$ is simple and completely representable, 
then it is completely represented  
by a ${\sf Cs}_n$, and we are done. 
\end{proof}
By observing that if $T$ is an $L_n$ theory that has quantifier elimination, then $\Fm_T\in \Nr_n\CA_{\omega}$, we conclude:
 \begin{corollary}\label{ii} If $T$ is a complete countable $L_n$ theory that has quantifier elimination, and $\bold X=(X_i: i< 2^{\omega})$ are non principal complete types. 
Then there is a countable model of $T$ that omits $\bold X$.
\end{corollary}
The condition of maximality (completeness of types) cannot be omitted due to the Theorem \ref{bsl} and its following consequence:
\begin{corollary} For any infinite cardinal $\kappa$, there exists an atomic complete theory $T$ with $|T|\leq 2^{\kappa}$, that is to say, $\Fm_T$ is n atomic algebra, 
such that the single   type consisting of co-atom of $\Fm_T$, namely, $\Gamma=\{\neg \phi: \phi_{\equiv_T}\in \At\Fm_T\}$, $\Gamma$ is a non principal type, but  $\Gamma$ cannot be omitted
in any model of $T$.
\end{corollary}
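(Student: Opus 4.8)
The plan is to read off this corollary directly from Theorem~\ref{bsl} together with the standard dictionary between countable-signature first order theories and locally finite cylindric algebras. Given an infinite cardinal $\kappa$, Theorem~\ref{bsl} furnishes an atomless $\C\in\CA_{\omega}$ such that $\B:=\mathfrak{Nr}_n\C$ is atomic with $|\At\B|=2^{\kappa}$, $\B\in{\sf LCA}_n$, and $\B$ has \emph{no} complete representation. First I would pass to a locally finite dilation: since $\B\in\bold S_c\Nr_n\CA_{\omega}$ (indeed $\B=\mathfrak{Nr}_n\C$) and $\B$ is atomic, a routine cardinality bookkeeping (the signature has size $|\At\B|=2^{\kappa}$, so work with $\CA_{\omega}$-algebras of that size) lets me realise $\B$ as $\mathfrak{Nr}_n\D$ with $\D\in\Lf_{\omega}$, $|\D|\le 2^{\kappa}$; by \cite[\S4.3]{HMT2} we may take $\D\cong\Fm_T$ for a complete consistent theory $T$ in a first order signature with $|T|\le 2^{\kappa}$. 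Because $\B=\Fm_T$ restricted to $\le n$-variable formulas is atomic, $T$ is an atomic $L_n$-theory in the sense used in the paper; one should note here that ``atomic'' refers to atomicity of the $n$-variable Tarski--Lindenbaum algebra, which is exactly what Theorem~\ref{bsl} guarantees, while $\Fm_T$ itself (the full $\omega$-dimensional algebra) is the atomless $\D$.

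Next I would identify the non-principal type. Let $\Gamma=\{\neg\phi:\phi_{\equiv_T}\in\At\B\}$, i.e. the set of negations of the atoms of $\B=\mathfrak{Nr}_n\Fm_T$, and let $X=\{\sim a:a\in\At\B\}\subseteq\B$. Since $\B$ is atomic, $\prod^{\B}X=0$, so $\Gamma$ is a non-principal (indeed non-isolated) $n$-type of $T$. The crux is then: \emph{any} countable model $\Mo$ of $T$ that omits $\Gamma$ would, via $\phi_T\mapsto\phi^{\Mo}$, yield a representation $f$ of $\Fm_T$ with $\bigcap_{a\in\At\B}f(\sim a)=\emptyset$ on its restriction to $\B$; equivalently $\bigcup_{a\in\At\B}f(a)=1^{\Mo}$, i.e. an atomic, hence complete, representation of $\B$. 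This is precisely what Theorem~\ref{bsl} forbids. Hence $\Gamma$ cannot be omitted in any model of $T$, and the corollary follows. I would phrase the last step using the fact recorded just before Definition~\ref{definition} (and in \cite{HH}): an atomic representation of an atomic algebra is the same thing as a complete representation, and a representation omitting the co-atom type is an atomic representation.

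The main obstacle I anticipate is purely bookkeeping rather than conceptual: Theorem~\ref{bsl} is stated for $\C\in\CA_{\omega}$, whereas the corollary speaks of models of a first order theory $T$, so I must be careful that (i) the neat reduct $\mathfrak{Nr}_n\C$ genuinely arises as the $n$-variable fragment of a \emph{consistent} theory $T$ with $|T|\le 2^{\kappa}$ (this needs $\C$ to be embeddable in a locally finite $\CA_{\omega}$ of the right cardinality, which holds because any $\CA_{\omega}$ generated by its $\le n$-dimensional elements can be taken locally finite after suitable expansion, cf. the use of $\Lf_{\omega}$ throughout \S6), and (ii) the equivalence ``$\Mo$ omits $\Gamma$'' $\iff$ ``$f_{\Mo}\!\restriction\!\B$ is atomic'' is exactly the contrapositive of Theorem~\ref{bsl}'s non-complete-representability clause, with no gap about countability of the base (a model omitting a type can always be taken countable by downward Löwenheim--Skolem only if $|T|\le\omega$, which need not hold here — so the statement must, and does, say ``in any model of $T$'', not ``in any countable model''). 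Once these alignments are spelled out, the argument is a one-line application of Theorem~\ref{bsl}.
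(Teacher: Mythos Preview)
Your core idea is exactly the intended one: invoke Theorem~\ref{bsl} and use the equivalence ``omitting the co-atom type'' $\Leftrightarrow$ ``atomic representation'' $\Leftrightarrow$ ``complete representation'' to derive a contradiction. The paper gives no explicit proof beyond pointing to Theorem~\ref{bsl}, so your write-up is more detailed than what appears there.

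However, the detour through a locally finite $\omega$-dilation $\D\in\Lf_\omega$ with $\D\cong\Fm_T$ is not just unnecessary --- it breaks the argument. In your setup $\Fm_T$ is the $\omega$-dimensional algebra $\D$, which you yourself note is atomless (inherited from the atomless $\C$ of Theorem~\ref{bsl}). But then $\At\Fm_T=\emptyset$, so the type $\Gamma=\{\neg\phi:\phi_{\equiv_T}\in\At\Fm_T\}$ is empty and is trivially omitted in every model. The corollary explicitly requires $\Fm_T$ to be atomic, and $\Gamma$ is defined using its atoms; your reading of ``atomic'' as referring only to the $n$-variable fragment is not what the statement says.

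The fix is to stay in dimension $n$ throughout, as the paper does elsewhere (e.g.\ Corollary~\ref{s}). Since $\B=\mathfrak{Nr}_n\C\in{\sf LCA}_n\subseteq\RCA_n$, by \cite[\S4.3]{HMT2} one realises $\B$ itself (after passing, if necessary, to a simple direct factor that is still not completely representable, to ensure $T$ is complete) as $\Fm_T$ for an $L_n$ theory $T$ with $|T|\le|\B|\le 2^{\kappa}$. Now $\Fm_T=\B$ is genuinely atomic, $\Gamma$ is the non-empty set of co-atoms with $\prod\Gamma=0$, and your final paragraph applies verbatim: a model of $T$ omitting $\Gamma$ yields an atomic, hence complete, representation of $\B$, contradicting Theorem~\ref{bsl}.
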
 
\subsection{$\sf OTT$ for algebtaizable versions of $L_{\omega,\omega}$; the finitary logics  with infinitary relation}

For $\alpha\geq \omega$, recall that$\sf Dc_{\alpha}$ denote the class of {\it dimension complemented $\CA_{\alpha}$s}, 
so that $\A\in {\sf Dc}_{\alpha}\iff\ \alpha\setminus \Delta x$ 
is infinite for every $x\in \A$. The cardinals $\sf covK$ and $\mathfrak{p}$ addressed next are defined above right before Theorem \ref{i}. 
Recall that we deviated from the stndard notation in \cite{HMT2} by denoting the class of generalized weak set algebras of dimension $\alpha$ by $\sf GCAws_{\alpha}$ rather than $\sf Gws_{\alpha}$; the last  is the notation used in \cite{HMT2}.

\begin{theorem} \label{ii} Let $\alpha$ be a countable infinite ordinal.
\begin{enumerate} 
\item There exists a countable 
atomic $\A\in \RCA_\alpha$ such that the non--principal types of co--atoms cannot be omitted in a $\sf Gs_{\alpha}$ (in the sense of definition \ref{definition} by considering infinite ordinals), 
We can replace $\sf Gs_{\alpha}$ by any set algebra in ${\sf RCA}_{\alpha}=\bold I\sf Gs_{\alpha}$. In particular, there exists a countable atomic $\B\in \sf RCA_{\alpha}$ such that the non-principal type consisting of co-atoms cannot be omtted in a $\sf GCAws_{\alpha}$.
\item If $\A\in \bold S_c\Nr_{\alpha}\CA_{\alpha+\omega}$ is countable, $\lambda$ a cardinal $<\mathfrak{p}$ 
and $\bold X=(X_i: i<\lambda)$ is a family of non--principal types, 
then $\bold X$ can be omited in a $\sf Gws_{\alpha}$ (in the same sense as in the previous item). 
If $\A$ is simple, and $\A\in \Nr_{\alpha}\CA_{\alpha+\omega}$, 
then we can replace $\mathfrak{p}$ by $\sf covK$.
\end{enumerate}
\end{theorem}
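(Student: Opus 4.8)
The plan is to handle the two parts by essentially separate arguments: part~(1) lifts the finite-dimensional non-complete-representability results of Theorems~\ref{can2} and~\ref{can} up to dimension $\alpha$, and part~(2) is the transfinite version of the positive omitting types theorem~\ref{i}, proved by a Baire category argument in the Stone space of an $(\alpha+\omega)$-dimensional dilation.

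For part~(1), I would first record the bookkeeping equivalence that for an atomic $\A\in\RCA_\alpha$ and an isomorphism $f$ onto a set algebra $\C$, the single non-principal type of co-atoms $\Gamma=\{-x:x\in\At\A\}$ is omitted by $f$ (i.e.\ $\bigcap_{x\in\At\A}f(-x)=\emptyset$) exactly when $\bigcup_{x\in\At\A}f(x)=1^{\C}$, which for atomic $\A$ is the same as $f$ being a \emph{complete} representation. So it suffices to produce a \emph{countable} atomic $\A\in\RCA_\alpha$ with no complete representation; the clauses about ``any set algebra in $\RCA_\alpha=\bold I\,\sf Gs_\alpha$'' and ``in particular a $\sf GCAws_\alpha$'' then come for free, since $\sf Gs_\alpha,\sf GCAws_\alpha\subseteq\RCA_\alpha$ and the absence of any complete representation excludes all of them. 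Such an $\A$ is obtained by lifting the blow-up-and-blur algebra of Theorem~\ref{can}(2) — equivalently the rainbow algebra of Theorem~\ref{can2} — to dimension $\alpha$: one builds a countable atomic $\A\in\RCA_\alpha$ whose $n$-dimensional cylindric reduct satisfies $\Rd_n\Cm\At\A=\Cm\At\Rd_n\A\notin\RCA_n$ (for some fixed $2<n<\alpha$). Since reducts of representable algebras are representable this forces $\Cm\At\A\notin\RCA_\alpha$, whereas a complete representation of $\A$ would induce one of $\Cm\At\A$ via $a\mapsto\bigcup_{x\in\At\A,\,x\le a}f(x)$ (Lemma~\ref{cr}), a contradiction; hence $\Gamma$ is not omittable in any representable set algebra. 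The real obstacle in part~(1) is keeping the lifted algebra \emph{countable and atomic}: the ultraproduct lift used in Theorem~\ref{can}(1) has cardinality $2^{\omega}$, so one must instead carry along a countable atom structure throughout, either via an $\omega$-dimensional cylindric basis over the blurred relation-algebra atom structure, or as a directed colimit of the finite-dimensional algebras ${\sf split}_l(\R,J,E)$ along the neat-reduct embeddings of Theorem~\ref{ANT}(1)(b); such a colimit is countable, atomic, representable and has the required $n$-dimensional reduct.

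For part~(2), given countable $\A\in\bold S_c\Nr_\alpha\CA_{\alpha+\omega}$, say $\A\subseteq_c\Nr_\alpha\B$ with $\B\in\CA_{\alpha+\omega}$, I would first replace $\B$ by $\Sg^{\B}A$; then every element is a term in finitely many generators (each of dimension set $\subseteq\alpha$) built from finitely many of the operations ${\sf c}_i,{\sf d}_{ij}$, so $(\alpha+\omega)\setminus\Delta b$ is infinite for all $b$, i.e.\ $\B\in\sf Dc_{\alpha+\omega}$, which is representable on a generalized weak space. Using the computation from the proof of Theorem~\ref{complete4} — for $d\in\B$ set $J=\Delta d\setminus\alpha$, which is \emph{finite}, and $t=-{\sf c}_{(J)}(-d)\in\Nr_\alpha\B$ — one checks $\Nr_\alpha\B\subseteq_c\B$, hence $\A\subseteq_c\B$ and so $\prod^{\B}X_i=0$ for each $i$. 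The rest is the Baire category argument of Theorem~\ref{i}, items (2) and (3), carried out in the Stone space $S$ of the countable algebra $\B$: for each $i<\alpha+\omega$ and $x\in\B$ the ultrafilters that fail to preserve the join witnessing ${\sf c}_i$ on $x$ form a nowhere dense set, and for each $i<\lambda$ and each $\tau$ in the countable weak space the ultrafilters realizing a $\tau$-shift of $X_i$ form a nowhere dense set; there are fewer than $\mathfrak{p}$ of these sets, so their union is meager and an ultrafilter $F$ avoiding all of them exists. Such an $F$ gives a weak-set-algebra representation of $\B$ omitting the (lifted) types; restricting to $\A$ and, to get injectivity, forming the subdirect product $\prod_{0\ne a\in\A}f_a$ of representations with $f_a(a)\ne0$ produces a $\sf Gws_\alpha$ representation of $\A$ omitting $\bold X$. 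When $\A$ is simple and $\A\in\Nr_\alpha\CA_{\alpha+\omega}$ (so the dilation may be taken simple) one uses instead that $\sf covK$-many meager sets cannot cover a Polish space, drops the product, and the resulting non-zero homomorphism is automatically an embedding by simplicity, which allows $\lambda<\sf covK$. The two possibly delicate points here, namely that $\B$ may be taken dimension-complemented and that $\Nr_\alpha\B\subseteq_c\B$, are the transfinite analogues of steps already present in the paper, so this part is largely routine once it is set up.
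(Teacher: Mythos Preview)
Your treatment of item~(2) is correct and essentially matches the paper: pass to a dimension-complemented dilation $\D=\Sg^{\B}A\in{\sf Dc}_{\alpha+\omega}$, observe $\A\subseteq_c\Nrr_\alpha\D\subseteq_c\D$ so that the infima survive, run Baire category in the Stone space of the countable $\D$, and relativize the resulting ${\sf Gws}_{\alpha+\omega}$ representation down to $\alpha$ via $s\mapsto s^+$. For the simple case the paper adds that simplicity transfers from $\A$ to $\D$ through the ideal-lattice bijection $I\mapsto I\cap A$ (with inverse $J\mapsto\Ig^{\D}J$), which you should record explicitly.

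For item~(1) the paper takes a completely different and far shorter route, avoiding any lifting of blow-up-and-blur. The argument is a cardinality obstruction: if $\C\in\CA_\omega$ has a complete (${\sf Gs}$-type) representation $h$ and $\C\models{\sf d}_{01}<1$, pick $s\in h(-{\sf d}_{01})$, set $x=s_0\neq s_1=y$, and for each $J\subseteq\omega$ containing $0$ form the sequence $a_J$ with value $x$ on $J$ and $y$ off $J$; each $a_J$ lies in $h(1^{\C})$ and distinct $J$'s are separated by some diagonal ${\sf d}_{0i}$, so they land under distinct atoms, forcing $|\At\C|=2^{\omega}$. Now let $\D$ be the full $\Cs_\omega$ with universe $\wp({}^{\omega}2)$ and take a countable elementary substructure $\B\preceq\D$: it is atomic (atomicity is first-order), lies in $\RCA_\omega$, satisfies ${\sf d}_{01}<1$, and by countability cannot be completely representable, so its co-atoms cannot be omitted.

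Your route for (1) has a genuine gap exactly where you flag it. Neither proposed fix is available within the paper's machinery: (a) an $\omega$-dimensional cylindric basis over the blurred atom structure is not supplied by Theorem~\ref{ANT}, which only produces ${\sf Mat}_l({\bf At})$ for finite $l$ governed by an $l$-blur; (b) the isomorphisms ${\sf split}_l(\R,J,E)\cong\Nrr_l{\sf split}_m(\R,J,E)$ of Theorem~\ref{ANT}(1)(b) hold only for $l\le m$ with a \emph{fixed} $\R$ admitting a strong $m$-blur, so a colimit to $\omega$ would require one finite $\R$ with strong $l$-blurs for all $l<\omega$, which the paper nowhere provides and which Theorem~\ref{ANT}(3) in fact obstructs as soon as $\R$ lacks some $m$-dimensional hyperbasis. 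The L\"owenheim--Skolem plus atom-counting trick sidesteps the whole issue.
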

\begin{proof}

(1) Using an argument in \cite{HH}, 
one shows that if $\C\in \CA_{\omega}$ is completely representable 
$\C\models {\sf d}_{01}<1$, then $\At\C=2^{\omega}$. 
The argument is as follows: 
Suppose that $\C\models {\sf d}_{01}<1$. Then there is $s\in h(-{\sf d}_{01})$ so that if $x=s_0$ and $y=s_1$, we have
$x\neq y$. Fix such $x$ and $y$. For any $J\subseteq \omega$ such that $0\in J$, set $a_J$ to be the sequence with
$i$th co-ordinate is $x$ if $i\in J$, and is $y$ if $i\in \omega\setminus J$.
By complete representability every $a_J$ is in $h(1^{\C})$ and so it is in
$h(x)$ for some unique atom $x$, since the representation is an atomic one.
Let $J, J'\subseteq \omega$ be distinct sets containing $0$.   Then there exists
$i<\omega$ such that $i\in J$ and $i\notin J'$. So $a_J\in h({\sf d}_{0i})$ and
$a_J'\in h (-{\sf d}_{0i})$, hence atoms corresponding to different $a_J$'s with $0\in J$ are distinct.
It now follows that $|\At\C|=|\{J\subseteq \omega: 0\in J\}|=2^{\omega}$.
Take $\D\in {\sf Cs}_{\omega}$ with universe $\wp({}^{\omega}2)$. 
Then $\D\models {\sf d}_{01}<1$ and plainly $\D$ is completely representable. 
Using the downward \ls--Tarski theorem, take a countable  elementary subalgebra $\B$ of $\D$.
(This is possible because the signature of $\CA_{\omega}$ is countable.)
Then in $\B$ we have $\B\models {\sf d}_{01}<1$ because $\B\equiv \C$. But $\B$ 
cannot be completely 
representable, because if it were then by the above argument, we get that $|\At\B|=2^{\omega}$,
which is impossible because $\B$ is countable. The last part folloes from Theorem \ref{infinitecan} above. 

(2) Now we prove the second item, which is a generalization of \cite[Theorem 3.2.4]{Sayed}. 
Though the generalization is strict, in the sense that $\sf Dc_{\omega}\subsetneq \bold S_c\Nr_{\omega}\CA_{\omega+\omega}$
\footnote{It is not hard to see that the full set algebra with universe $\wp({}^{\omega}\omega)$ 
is in $\Nr_{\omega}\CA_{\omega+\omega}\subseteq \bold S_c\Nr_{\omega}\CA_{\omega+\omega}$ but it is not
in $\sf Dc_{\omega}$ because for any $s\in {}^{\omega}U$, $\Delta\{s\}=\omega$.}
the proof is the same. 
Without loss, we can take $\alpha=\omega$. Let $\A\in \CA_{\omega}$ be as in the hypothesis. 
For brevity, let $\beta=\omega+\omega$.  By hypothesis, we have 
$\A\subseteq_c\mathfrak{Nr}_{\omega}\D$, with $\D\in \CA_{\beta}$.
We can also assume that $\D\in {\sf Dc}_{\beta}$ by replacing, if necessary, $\D$ 
by
$\Sg^{\D}\A$.   Since $\A$ is a complete sublgebra of $\mathfrak{Nr}_{\omega}\D$  which in turn is a complete subalgebra 
of $\D$, we have $\A\subseteq_c \D$. Thus given $< \mathfrak{p}$ non--principal types in $\A$ 
they stay non--principal in $\D$.  Next one proceeds like in \cite{Sayed}  since 
$\D\in {\sf Dc}_{\beta}$ is countable;  this way omitting any $\bold X$ consisting of $<\mathfrak{p}$ 
non--principal types.
For all non-zero $a\in \D$, there exists $\B\in \sf Ws_{\beta}$  and a homomorphism $f_a:\D\to \B$ (not necessarily injective) 
such that $f_a(a)\neq \emptyset$ and $f_a$ omits $\bold X$. 
Let $\C=\bold P_{a\in \D, a\neq 0}\B_a\in \sf Gws_{\beta}$. Define $g:\D\to  \C$ by 
$g(x)=(f_a(x): a\in \D\setminus \{0\})$, and then relativize $g$ to $\A$ as follows:
Let $W$ be the top element of $\C$. Then  $W=\bigcup_{i\in I} {}^{\beta}U_i^{(p_i)}$, where 
$p_i\in  {}^{\beta}U_i$ and $^{\beta}U_i^{(p_i)}\cap {}^{\beta}U_j^{(p_j)}=\emptyset$, for $i\neq j\in I$.
Let   $V=\bigcup _{i\in I}{}^{\alpha}U_i^{(p_i\upharpoonright \alpha)}$. 
For $s\in V$, $s\in {}^{\alpha}U_i^{(p_i\upharpoonright \alpha)}$ (for a unique $i$), let $s^+=s\cup p_i\upharpoonright \beta\setminus \alpha$. 
Now define $f:\A\to \wp(V)$, via $a\mapsto \{s\in V: s^+\in g(a)\}$.
Then $f$ is as required.
Assume now that $\A$ is simple, with $\A=\mathfrak{Nr}_{\omega}\D$ and $\D\in {\sf Dc}_{\beta}$.
It suffices to show that  $\D$ is simple, too. Consider the function $F(I)=I\cap \A$, $I$ an ideal in $\D$. 
It is straightforward to check that $F$ establishes an isomorphism between
the lattice of ideals in $\D$ and the lattice of ideals of $\A$ (the order here is  of course $\subseteq$),  
with inverse $G(J)= {\Ig}^{\D}(J)$,  $J$ an ideal in $\A$, cf. \cite[Theorem 2.6.71, Remark 2.6.72]{HMT2} 
where $\Ig^{\D}J$ denotes the ideal of $\D$  generated by $J$. 
Thus $\A$ is simple $\iff$ $\D$ is simple.
\end{proof}

\subsection{Other variants of $L_{\omega, \omega}$}

Now we prove an omitting types theorem 
for a countable version of the so--called $\omega$--dimensional 
cylindric polyadic algebras with equality, in symbols $\sf CPE_{\omega}$, 
as defined in \cite{Fer2}.Consider the semigroup $\sf T$ generated by the set of 
transformations 
$\{[i|j], [i,j], i, j\in \omega, \sf suc, \sf pred\}$ defined on $\omega$. Then $\T$  is a 
{\it strongly rich} subsemigroup of $(^\omega\omega, \circ)$ in the sense of \cite{AU},
where $\sf suc$ and $\sf pred$ are the successor and predecessor functions on $\omega$, respectively.
For a set $X$, let $\B(X)$ denote the Boolean set algebra $\langle \wp(X), \cup, \cap, \sim\rangle$. 
Let $\sf K_\T$ 
be the class of set algebras of the form 
$\langle\B(V), {\sf C}_i, {\sf S}_{\tau}\rangle_{i\in \omega, \tau\in \sf T},$
where $V\subseteq {}^{\omega}U,$  $V$ is a {\it compressed space}, that is $V=\bigcup_{i\in I}{}^{\alpha}U_i^{(p)}$ where for each $i, j\in I$, $U_i=U_j$ or $U_i\cap U_j=\emptyset$. 
Let $\Sigma_1$ be the set of equations defined
in \cite{AU} axiomatizing ${\sf K}_{\T}$; that is 
${\sf Mod}\Sigma_1=\K_\T$. Here we {\it do not} have diagonal elements in the signature; the corresponding logic is a conservative extension of $L_{\omega, \omega}$ {\it without} equality, and it is
a proper extension.
Let $\sf Gp_{\T}$ be the class of set algebras of the form 
$\langle\B(V), {\sf C}_i, {\sf D}_{ij}, {\sf S}_{\tau}\rangle_{i,j\in \omega, \tau\in \sf T},$
where $V\subseteq {}^{\omega}U,$  $V$ a non--empty union (not necessarily a disjoint one) 
of cartesian spaces.  Here we have  diagonal elements in the signature; the corresponding logic is a {\it variant} of $L_{\omega, \omega}$ where quantifiers do not 
necessarily commute, so $L_{\omega, \omega}$ does not `embed' in this logic its (square Tarskian) semantics are different.
Let $\Sigma_2$ be the set of equations defining $\sf CPE_{\omega}$ 
in \cite[Definition 6.3.7]{Fer2} 
restricted to the countable signature of ${\sf Gp}_{\T}$.
In the next theorem complete additivity is given explicitly in the second item only. 
Any algebra $\A$ satisifying $\Sigma_2$ is completely additive (due to the presence of diagonal elements), cf. \cite{Fer2}.
\begin{theorem}\label{three}
\begin{enumerate}
\item If $\A\models \Sigma_2$ is countable  
and $\bold X=(X_i: i<\lambda)$, $\lambda<\mathfrak{p}$ is a family of subsets of
$\A$, such that $\prod X_i=0$ for all $i<\lambda$, then there exists $\B\in \sf Gp_{\T}$ and an isomorphism $f:\A\to \B$ such that 
$\bigcap_{x\in X_i}f(x)=\emptyset$ for 
all $i<\lambda$.
\item  If $\A\models \Sigma_1$ is countable,  and completely additive 
and $\bold X=(X_i: i<\lambda)$, $\lambda<\mathfrak{p}$ is a family of subsets of
$\A$, such that $\prod X_i=0$ for all $i<\lambda$, then there exists $\B\in \sf K_{\T}$ and an isomorphism $f:\A\to \B$ such that 
$\bigcap_{x\in X_i}f(x)=\emptyset$ for 
all $i<\lambda$.
\item In particular, for both cases any countable atomic algebra is completely representable.
\end{enumerate}
\end{theorem}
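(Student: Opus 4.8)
All three items come out of one construction: a Baire--category argument on the Stone space of $\A$, of the kind already used for Theorem \ref{i} and its $\omega$--dimensional counterpart, with the strongly rich semigroup ${\sf T}$ now playing the role that the ambient $\CA_{\omega}$ (respectively the dimension--complemented dilation) played there --- richness of ${\sf T}$ is precisely what supplies the ``spare coordinates'' for the Henkin step, through the maps $\sf suc$ and $\sf pred$. Granting (1) and (2), item (3) is immediate: a countable atomic $\A$ (a model of $\Sigma_2$, or a completely additive model of $\Sigma_1$) has the single non--principal type $X_0=\{-x:x\in\At\A\}$ of co--atoms, with $\prod X_0=0$ since $\A$ is atomic; taking $\lambda=1<\mathfrak p$ in (1) or (2) yields an isomorphism $f$ onto a $\sf Gp_{\T}$ (resp.\ $\sf K_{\T}$) set algebra with $\bigcap_{x\in X_0}f(x)=\emptyset$, i.e.\ $\bigcup_{x\in\At\A}f(x)$ is the unit, which --- by the equivalence noted after Definition \ref{omit}, read in the present signature --- says exactly that $f$ is a complete representation.

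For item (1), fix a countable $\A\models\Sigma_2$ and a nonzero $a\in\A$, and let $S$ be the Stone space of $\A$, a Polish and hence Baire space since $\A$ is countable. The representation theory of $\Sigma_2$--algebras (\cite{AU}, cf.\ \cite{Fer2}) rests on the cylindrifier joins
$${\sf c}_i x=\sum^{\A}\{\,{\sf s}_{\sigma}x:\ \sigma\in{\sf T},\ \sigma\upharpoonright(\omega\setminus\{i\})=Id\,\},\qquad(i\in\omega,\ x\in\A),$$
valid because $\A$ satisfies $\Sigma_2$ and is completely additive (diagonal elements force complete additivity, cf.\ \cite{Fer2}). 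For $i\in\omega$ and $x\in\A$ set
$$\bold H_{i,x}=\{\,F\in S:\ {\sf c}_i x\in F\ \text{and}\ {\sf s}_{\sigma}x\notin F\ \text{for every admissible}\ \sigma\,\}.$$
By the displayed join each $\bold H_{i,x}$ is nowhere dense, so $\bold H=\bigcup_{i\in\omega,\,x\in\A}\bold H_{i,x}$ is meager. For $i<\lambda$ set $\bold G_i=\bigcap_{x\in X_i}\{F\in S:x\in F\}$; since $\prod X_i=0$ each $\bold G_i$ is closed nowhere dense, and since $\lambda<\mathfrak p$ the union $\bold G=\bigcup_{i<\lambda}\bold G_i$ is meager. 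Hence $\bold H\cup\bold G$ is meager, the nonempty clopen set $N_a=\{F\in S:a\in F\}$ is not contained in it, and we choose $F\in N_a\setminus(\bold H\cup\bold G)$.

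From such an $F$ the representation is read off exactly as in the proof of Theorem \ref{complete4}: put $iEj\iff{\sf d}_{ij}\in F$, an equivalence relation on $\omega$ by the equational properties of diagonals, and define $f_a:\A\to\wp({}^{\omega}(\omega/E))$ by $f_a(x)=\{\bar\tau:{\sf s}_{\tau}x\in F\}$. Well--definedness and the homomorphism properties are routine (substitution operations being Boolean endomorphisms), the point being that $F\notin\bold H$ forces preservation of the ${\sf c}_i$, so the range is a $\sf Gp_{\T}$ set algebra; $F\notin\bold G$ gives $\bigcap_{x\in X_i}f_a(x)=\emptyset$ for all $i<\lambda$; and $a\in F$ gives $f_a(a)\neq\emptyset$. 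Finally put $\B=\bold P_{a\in\A\setminus\{0\}}\B_a$, with the base sets of the $\B_a$ taken pairwise disjoint; then $\B\in\sf Gp_{\T}$, the map $f$ with $f(x)=(f_a(x):a\neq 0)$ is an injective homomorphism (it separates $0$ from every nonzero element), and $\bigcap_{x\in X_i}f(x)=\emptyset$ holds coordinatewise, hence absolutely. Item (2) is proved in the same way with $\A\models\Sigma_1$: there are no diagonals, so $E$ is trivial and no quotient is needed, while complete additivity of $\A$ must now be assumed outright, being exactly what validates the joins ${\sf c}_i x=\sum^{\A}\{{\sf s}_{\sigma}x:\dots\}$ and hence the nowhere density of the $\bold H_{i,x}$; the resulting bases are compressed spaces, so each $\B_a\in\sf K_{\T}$ and $\B=\bold P_a\B_a\in\sf K_{\T}$.

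\textbf{Main obstacle.} The delicate part is the first step in each item: extracting from $\Sigma_2$ --- respectively from $\Sigma_1$ together with complete additivity --- the cylindrifier joins above, and verifying that strong richness of ${\sf T}$ genuinely supplies the witnessing substitutions needed both for the Henkin step and for the sets $\bold H_{i,x}$ to be nowhere dense; i.e.\ reconstructing, in the present countable signature, the set--theoretic representation machinery of \cite{AU}. Once that is in place, the omitting--types layer --- the $\mathfrak p$--Baire argument, the product over nonzero elements, and the specialization of (1)/(2) to the co--atom type --- proceeds exactly as in Theorems \ref{i} and \ref{complete4}, with no new difficulty.
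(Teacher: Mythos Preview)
Your outline has the right global shape (Baire category on a Stone space, Henkin witnessing via substitutions, product over nonzero elements, and the specialization to co--atoms for item (3)), but there is a genuine gap in the type--omitting step, and one structural difference from the paper worth flagging.

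\textbf{The gap.} Your sets $\bold G_i=\bigcap_{x\in X_i}\{F:x\in F\}$ are not enough to force $\bigcap_{x\in X_i}f_a(x)=\emptyset$. With $f_a(x)=\{\bar\tau:{\sf s}_\tau x\in F\}$ one has $\bigcap_{x\in X_i}f_a(x)=\{\bar\tau:\forall x\in X_i,\ {\sf s}_\tau x\in F\}$, so emptiness requires that for \emph{every} relevant $\tau$ there is some $x\in X_i$ with ${\sf s}_\tau x\notin F$. Your condition $F\notin\bold G_i$ only secures this for $\tau=Id$. The paper (and this is exactly what it does) indexes the type--sets by substitutions as well: for each $i<\lambda$ and each $\tau$ it forms $X_{i,\tau}=\{{\sf s}_\tau x:x\in X_i\}$, uses complete additivity to get $\prod X_{i,\tau}=0$, and hence that each $\bold H_{i,\tau}=\bigcap_{x\in X_i}N_{{\sf s}_\tau x}$ is closed nowhere dense. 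Only by avoiding the whole family $\bigcup_{i<\lambda}\bigcup_\tau\bold H_{i,\tau}$ (still meager since the $\tau$'s range over a countable set and $\lambda<\mathfrak p$) does one obtain the omission for all sequences in the representation, not just the identity sequence.

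\textbf{The structural difference.} You run the Baire argument directly in the Stone space of $\A$; the paper first passes to an $(\omega+\omega)$--dimensional dilation $\B$ with $\A=\Nrr_\omega\B$ (this is where strong richness of $\sf T$ is actually used, via \cite{AU}), and works in the Stone space of $\B$. The cylindrifier joins ${\sf s}_\sigma{\sf c}_ip=\sum_{j\in\alpha}{\sf s}_\sigma{\sf s}_j^ip$ are taken in $\B$, and for item (1) the ``admissible'' substitutions are those with $\dom\subseteq\omega$ and $\rng\cap\omega=\emptyset$ --- i.e.\ range in the genuinely new dimensions --- which is what makes the Henkin witnessing go through cleanly and underlies the equivalence relation on $\Gamma\supsetneq\omega$ (not on $\omega$) used to build the base. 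Your assertion that the join ${\sf c}_ix=\sum\{{\sf s}_\sigma x:\sigma\upharpoonright(\omega\setminus\{i\})=Id\}$ already holds in $\A$ is precisely the point you flag as the ``main obstacle'', and the paper sidesteps it by dilating rather than arguing inside $\A$.
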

\begin{proof}
For brevity, throughout the proof of the first two items,  let $\alpha=\omega+\omega$.
By strong richness of $\T$, it can be proved that $\A=\Nrr_{\omega}\B$ where  $\B$ is an $\alpha$--dimensional
dilation with substitution operators coming from a countable subsemigroup 
$\sf S\subseteq ({}^{\alpha}\alpha, \circ)$ \cite{AU}. 
It suffices to show that for any non--zero $a\in \A$, there exist a countable $\D\in {\sf Gp}_{\T}$ and a
homomorphism (that is not necessarily injective)
$f:\A\to \D$,  such that $\bigcap_{x\in X_i} f(x)=\emptyset$ for all $i\in \omega$ 
and $f(a)\neq 0$. 
So fix non--zero $a\in \A$. For $\tau\in \sf S$, set $\dom(\tau)=\{i\in \alpha: \tau(i)\neq i\}$ and $\rng(\tau)=\{\tau(i): i\in \dom(\tau)\}$.  
Let $\sf adm$ be  the set of admissible substitutions in $\sf S$, where 
now $\tau\in \sf adm$ if $\dom\tau\subseteq \omega$ and $\rng\tau\cap \omega=\emptyset$.  Since $\sf S$ is countable, we have
$|\sf adm|\leq\omega$; in fact it can be easily shown that $|\sf adm|=\omega$. 
Then for all $i< \alpha$, $p\in \B$ and $\sigma\in \sf adm$,
$\s_{\sigma}{\sf c}_{i}p=\sum_{j\in \alpha} \s_{\sigma}{\sf s}_j^ip.$
By $\A=\Nrr_{\omega}\B$ we also have, for each $i<\omega$, $\prod^{\B}X_i=0$, since $\A$ is a complete subalgebra of $\B$.
Because substitutions are completely additive, for all $\tau\in \sf adm$ and all $i<\lambda$,
$\prod {\sf s}_{\tau}^{\B}X_i=0$.
For better readability, for each $\tau\in \sf adm$, for each $i\in \omega$, let
$X_{i,\tau}=\{{\sf s}_{\tau}x: x\in X_i\}.$
Then by complete additivity, we have:
$(\forall\tau\in {\sf adm})(\forall  i\in \lambda)\prod {}^{\B}X_{i,\tau}=0.$
Let $S$ be the Stone space of $\B$, whose underlying set consists of all Boolean ultrafilters of
$\B$ and for $b\in B$, let $N_b$ denote the clopen set consisting of all ultrafilters containing $b$. 
Then from the suprema obtained above, it follows that for $x\in \B,$ $j<\alpha,$ $i<\lambda$ and
$\tau\in \sf adm$, the sets
$\bold G_{\tau,j,x}=N_{{\sf s}_{\tau}{\sf c}_jx}\setminus \bigcup_{i} N_{{\sf s}_{\tau}{\sf s}_i^jx}
\text { and } \bold H_{i,\tau}=\bigcap_{x\in X_i} N_{{\sf s}_{\tau}x}$
are closed nowhere dense sets in $S$.
Also each $\bold H_{i,\tau}$ is closed and nowhere
dense. Like before, we can assume that $\B$ is countable by assuming that $\A$ generates $\B$ is the presence of $|\\alpha|=(|A|=\omega$) many operations.
Let $\bold G=\bigcup_{\tau\in \sf adm}\bigcup_{i\in \alpha}\bigcup_{x\in B}\bold G_{\tau, i,x}
\text { and }\bold H=\bigcup_{i\in \lambda}\bigcup_{\tau\in  \sf adm}\bold H_{i,\tau}.$
Then $\bold H$ is meager, that is it can be written as a countable union of nowghere dense sets. This follows from the properties 
of $\mathfrak{p}$  
By the Baire Category theorem  for compact Hausdorff spaces,
we get that $X=S\smallsetminus \bold H\cup \bold G$ is dense in $S$,
since $\bold H\cup \bold G$ is meager, because $\bold G$ is meager, too, since
$\sf adm$, $\alpha$ and $\B$ are all countable.
Accordingly, let $F$ be an ultrafilter in $N_a\cap X$, then 
by its construction $F$ is a {\it perfect ultrafilter} \cite[p.128]{Sayedneat}.
Let $\Gamma=\{i\in \alpha:\exists j\in \omega: {\sf c}_i{\sf d}_{ij}\in F\}$.
Since $\c_i{\sf d}_{ii}=1$, then $\omega\subseteq \Gamma$. Furthermore the inclusion is proper, 
because for every $i\in \omega$, there is a $j\in \alpha\setminus \omega$ such that ${\sf d}_{ij}\in F$. 
Define the relation $\sim$ on $\Gamma$ via  $m\sim n\iff {\sf d}_{mn}\in F.$ 
Then $\sim$ is an equivalence relation because for all $i, j, k\in \alpha$, ${\sf d}_{ii}=1\in F$, ${\sf d}_{ij}={\sf d}_{ji}$,
${\sf d}_{ik}\cdot {\sf d}_{kj}\leq {\sf d}_{lk}$ and filters are closed upwards.
Now we show that the required  representation will be a $\Gp_{\T}$ with base
$M=\Gamma/\sim$. One defines the  homomorphism $f$ using the hitherto obtained perfect ultrafilter $F$ as follows:
For $\tau\in {}^{\omega}\Gamma$, 
such that $\rng(\tau)\subseteq \Gamma\setminus \omega$ (the last set is non--empty, because $\omega\subsetneq \Gamma$),
let  $\bar{\tau}: \omega\to M$ be defined by $\bar{\tau}(i)=\tau(i)/\sim$  
and write $\tau^+$ for $\tau\cup Id_{\alpha\setminus \omega}$. 
Then $\tau^+\in \sf adm$, because $\tau^+\upharpoonright \omega=\tau$, $\rng(\tau)\cap \omega=\emptyset,$ 
and $\tau^+(i)=i$ for all $i\in \alpha\setminus \omega$. 
Let $V=\{\bar{\tau}\in {}^{\omega}M: \tau: \omega\to \Gamma,  \rng(\tau)\cap \omega=\emptyset\}.$
Then $V\subseteq {}^{\omega}M$ is non--empty (because $\omega\subsetneq \Gamma$).
Now define $f$ with domain $\A$ via: 
$a\mapsto \{\bar{\tau}\in V: \s_{\tau^+}^{\B}a\in F\}.$ 
Then $f$ is well defined, that is, whenever $\sigma, \tau\in {}^{\omega}\Gamma$ and 
$\tau(i)\setminus \sigma(i)$ for all $i\in \omega$, then for any $x\in \A$, 
$\s_{\tau^+}^{\B}x\in F\iff \s_{\sigma^+}^{\B}x\in F$.
Furthermore $f(a)\neq 0$, since ${\sf s}_{Id}a=a\in F$ 
and $Id$ is clearly admissable.
The congruence relation just defined
on $\Gamma$  guarantees
that the hitherto defined homomorphism respects the diagonal elements.
As before, for the other operations,
preservation of cylindrifiers
is guaranteed by the condition that
$F\notin G_{\tau, i, p}$ for all $\tau\in {\sf adm}, i\in \alpha$
and all $p\in A$. 
For omitting the given family 
of non--principal types, we use that 
$F$ is outside $\bold H$, too.  This means (by definition) that for each $i<\lambda$ and each  $\tau\in  \sf adm$
there exists $x\in X_i$, such that
${\sf s}_{\tau}^{\B}x\notin F$. Let $i<\lambda$. If $\bar{\tau}\in V\cap \bigcap _{x\in X_i}f(x)$, 
then $\s_{\tau^+}^{\B}x\in F$ which  is impossible because $\tau^+\in \sf adm$. 
We have shown that for each $i<\omega$, $\bigcap_{x\in X_i}f(x)=\emptyset.$

For the second required one deals with all substitutions in the semigroup $\sf S$ determining the signature of the dilation not just $\sf adm$, namely, the admissable ones as defined 
above.
More succinctly, now {\it all} substitutions in $\sf S$ are admissable.  Other than that, the idea is essentially the same appealing to the Baire category theorem.
Let $\T$ be as above. Assume that  $\A\models \Sigma_1$ is countable, and fix non--zero 
$a\in \A$. Similarly to the first part we will construct a set algebra $\C$ in $\sf K_{\T}$ and a homomorphism 
$f:\A\to \C$ omitting the given non--principal types and satisfying that $f(a)\neq 0$.
By \cite{AU}, there exists $\B$ such that  $\A=\Nrr_{\omega}\B$ and the signature of $\B$ has, besides all the Boolean operations,  
all cylindrifiers ${\sf c}_i: i\in \alpha$, and
the substitutions are determined by a semigroup defined from the rich semigroup $\sf T$. Substitutions
in the signature of $\B$ are indexed by transformations in 
${\sf S}$; which we explicitly describe. The semigroup $\sf S$ is the subsemigroup of  $^{\alpha}\alpha$
generated by the set $\{\bar{\tau}: \tau\in \T\}$ together with all replacements and transpositions on $\alpha$.  
Here  $\bar{\tau}$ is the transformation 
that agrees with $\tau$ on $\omega$ and otherwise is the identity.
For all $i< \alpha$, $p\in \B$, we have 
${\sf c}_{i}p=\sum_{j\in \alpha} {\sf s}_j^ip.$
By  $\A=\Nrr_{\omega}\B$ we also have, for each $i<\omega$, $\prod^{\B}X_i=0$, since $\A$ is a complete subalgebra of $\B$.
Let $V$ be the generalized $\omega$-dimensional 
weak space $\bigcup_{\tau\in S}{}^{\omega}\alpha^{(\tau)}$.
Recall that $^{\omega}\alpha^{(\tau)}=\{s\in {}^{\omega}\alpha: |\{i\in \omega: {\sf s}_i\neq \tau_i\}|<\omega\}$.
For each $\tau\in V$ and for each $i\in \lambda$, let
$X_{i,\tau}=\{{\sf s}_{\bar{\tau}}^{\B}x: x\in X_i\}.$
Here we are using that for any $\tau\in V$, $\bar{\tau}\in {\sf S}.$
By complete additivity {\it which is given as an assumption}, it follows that 
$(\forall\tau\in V)(\forall  i\in \kappa)\prod{}^{\B}X_{i,\tau}=0.$
Let $S$ denote the Stone space of the boolean part of $\B$.
Like before, for $p\in B$, let $N_p$ be the clopen set of $S$ consisting of all ultrafilters
of the boolean part of $\B$ containing $p$. 
Then for $x\in \B,$ $j<\alpha,$ $i<\lambda$, $\tau\in \sf S$ (using the suprema just established) , the sets
$\bold G_{j,x}=N_{{\sf c}_jx}\setminus \bigcup_{i} N_{{\sf s}_i^jx}
\text { and } \bold H_{i,\tau}=\bigcap_{x\in X_i} N_{{\sf s}_{\tau}x}$
are closed nowhere dense sets in $S$.
Also each $\bold H_{i,\tau}$ is closed and nowhere
dense.
Let $\bold G=\bigcup_{i\in \alpha}\bigcup_{x\in B}\bold G_{i,x}
\text { and }\bold H=\bigcup_{i\in \lambda}\bigcup_{\tau\in  \sf S}\bold H_{i,\tau}.$
Then $\bold H$ is meager, since it is a countable union of nowhere dense sets.  
Once more by the Baire Category theorem  for compact Hausdorff spaces,
we get that $X=S\smallsetminus \bold H\cup \bold G$ is dense in $S$,
Let $F$ be an ultrafilter in $N_a\cap X$. 
One builds the required represention from $F$ as follows \cite{AU}: 
Let $\wp(V)$ be the full boolean set algebra with unit $V$. 
Let $f$ be the function with domain $A$
such that $f(a)=\{\tau\in V: {\sf s}_{\bar{\tau}}^{B}a\in F\}.$ 
Then 
$f$ is the desired homorphism from $\A$ into 
the set algebra 
$\langle \wp(V), {\sf c}_i, {\sf s}_{\tau}\rangle_{i\in \omega,\tau\in \T}.$ 
In particular,  $f(a)\neq 0$, because $Id\in f(a)$. That $f$ omits the given 
non--principal types is exactly like the first part, modulo 
replacing $\sf adm$ by (the whole of the semigroup) $\sf S$.

For the last part, given $\A$ as in the hypothesis, the required 
follows by omitting the non--principal type consisting  of co-atoms obtaining a complete representation of $\A$.
\end{proof}

\section{Omitting types, atom-canonicity and non-finite axiomatizability}

(1) A Theorem of Vaught in basic model theory, says that a countable atomic $L_{\omega,\omega}$ theory 
$T$ has a unique atomic (equivalently in this context prime) model. This can be proved by a direct application of the clssical Orey-Henkin Omitting Types Theorem.
The unique atomic atomic model is the 'smallest' models of $T$, in the sense that it   
elementary embeds into other models of $T$. The last theorem says that Keisler's logics which allow formulas of infinite length and quantification on infinitely many variables, 
enjoys a form of Vaught's theorem. And in Keisler's logics there is the additional advantage that there is no restrictions on the cardinality of atomic theories (algebras)  considered. For 
$L_{\omega, \omega}$, Vaught's theorem is known to fail for theories having 
uncountable cadinality. 
If $T$ is an atomic theory in Keisler's logic, and the Tarski-Lindenbaum atomic quotient algebra $\Fm_T$ happens to be completely additve, 
then $T$ has an atomic model. In contrast, we actually showed that Vaught's theorem fails for $L_n$ when we substantially broaden the class of permissable models; it fails 
even for `$n+3$-square models.' 
From Theorem \ref{can}, for $2<n<\omega$, there is a countable atomic $L_n$ theory that lacks even an atomic $n+3$-square model (let alone an ordinary atomic model), 
i.e a complete $n+3$-square representation 
of the Tarski Lindenbaum quotient algebra $\Fm_T(\in \RCA_n)$.

(2)  Let $2<n<l\leq m\leq \omega$. 
Consider the statemet ${\sf notVT}(l, m)$: {\it There exists a countable, complete and  atomic $L_n$ first order theory $T$ in a signature $L$ such that  the type 
$\Gamma$ consisting of co-atoms in the  cylindric Tarski-Lindenbaum quotient algebra $\Fm_T$ is realizable in every {\it $m$--square} model, 
but $\Gamma$ cannot be isolated using $\leq l$ variables, where $n\leq l<m\leq  \omega$.}
An $m$-square model of $T$ is an $m$-square represenation of $\Fm_T$.
The statement  $\sf notVT(l, m)$, short for Vaught's Theorem ({\it $\sf VT$) fails at (the parameters) $l$ and $m$.}
Let ${\sf VT}(l, m)$ stand for {\it {\sf VT} holds at $l$ and $m$}, so that by definition $\notVT(l, m)\iff \neg {\sf VT}(l, m)$. We also include $l=\omega$ in the equation by 
defining ${\sf VT}(\omega, \omega)$ as {\sf VT} holds for $L_{\omega, \omega}$: Atomic countable first order theories have atomic countable models. 
For $2<n<l\leq m\leq \omega$ and $l=m=\omega$,  it is likely and plausible that {\it (***): $\VT(l, m)\iff l=m=\omega$}.  
In other words:  {\it Vaught's theorem holds only in the limiting  case when $l\to \infty$ and $m=\omega$ and not `before'.} 
We give  sufficient condition for (***)  to happen.
The following definition to be used in the sequel is taken from \cite{ANT}:

Let $\R$ be a relation algebra, with non--identity atoms $I$ and $2<n<\omega$. Assume that  
$J\subseteq \wp(I)$ and $E\subseteq {}^3\omega$.
Recall that $(J, E)$ is a {\it strong $n$--blur} for $\R$ if it $(J, E)$ is an $n$--blur of $R$ in the sense of \cite[Definition 3.1]{ANT},  that is to say $J$ is a complex $n$ blur and $E$ is an index blur 
such that the complex 
$n$--blur  satisfies:
$$(\forall V_1,\ldots V_n, W_2,\ldots W_n\in J)(\forall T\in J)(\forall 2\leq i\leq n)
{\sf safe}(V_i,W_i,T).$$ 
\begin{theorem}\label{fl} For $2<n<\omega$ and $n\leq l<\omega$, $\notVT(n, n+3)$ and $\notVT(l, \omega)$ hold.
Furthermore, if for each $n<m<\omega$, there exists a finite relation algebra $\R_m$ having $m-1$ strong blur and no
$m$-dimensional relational basis, then
(***) above for ${\sf VT}$ holds.
\end{theorem}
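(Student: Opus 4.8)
The plan is to harvest each instance of $\notVT$ from the non--atom--canonicity theorems already proved, using the dictionary between complete relativized representations and omitting the co--atom type. First I would record the general mechanism. Let $\A\in\RCA_n$ be countable and atomic and write $\A=\Fm_T$ for a countable complete atomic $L_n$ theory $T$ (\cite[\S 4.3]{HMT2}); the type $\Gamma$ of co--atoms of $T$ satisfies $\prod^{\A}\Gamma=-\sum^{\A}\At\A=0$, so it is non--principal. An $m$--square model $\M\models T$ omits $\Gamma$ exactly when the induced representation $f\colon\A\to\wp({\sf C}^m(\M))$ is atomic, i.e.\ a \emph{complete} $m$--square representation of $\A$; by Lemma \ref{cr} this forces $\Cm\At\A$ to have an $m$--square representation. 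Hence, if $\Cm\At\A$ has no $m$--square representation, then $\Gamma$ is realized in every $m$--square model of $T$. For the non--isolability half I would arrange $T$ so that its $l$--variable Lindenbaum algebra $\Fm_T^{(l)}$ is an \emph{atomic} $l$--dimensional dilation of $\A$ with $\A=\Nrr_n\Fm_T^{(l)}$; then each atom $M$ of $\Fm_T^{(l)}$ has its $n$--block $M\upharpoonright n$ an atom of $\A$ with $M\le M\upharpoonright n$, whence $\sum^{\Fm_T^{(l)}}\At\A=1$ and $\prod^{\Fm_T^{(l)}}\Gamma=0$. A formula of $\le l$ variables isolating $\Gamma$ would be a nonzero element of $\Fm_T^{(l)}$ below every member of $\Gamma$, hence below $\prod^{\Fm_T^{(l)}}\Gamma=0$ --- impossible. (When $l=n$ this is trivial: $\Fm_T^{(l)}=\A$ and $\prod^{\A}\Gamma=0$ by atomicity.)

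For $\notVT(n,n+3)$ I would take the countable atomic simple $\A\in\RQEA_n$ of Theorem \ref{can2}, for which $\Rd_{sc}\Cm\At\A$ --- and a fortiori its cylindric reduct $\Cm\At\A$ --- has no $n+3$--square representation; writing $\A=\Fm_T$, the mechanism above is exactly Corollary \ref{s}. For $\notVT(l,\omega)$ with $n\le l<\omega$ I would use a finite relation algebra $\R$ (a Maddux algebra $\mathfrak{E}_k(2,3)$ with $k$ recursively large, possibly after enlarging $l$ and restricting the blur via Theorem \ref{ANT}(1)(b)) that carries a strong $l$--blur $(J,E)$ and has no infinite representation, which exists by Theorem \ref{ANT}(2); then $\A:=\split_n(\R,J,E)\cong\Nrr_n\split_l(\R,J,E)\in\RCA_n$ is countable and atomic with atomic $l$--dilation $\Fm_T^{(l)}:=\split_l(\R,J,E)$. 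Since $\R\hookrightarrow\Cm\At\split(\R,J,E)\hookrightarrow\Ra\Cm\At\A$ (Theorem \ref{ANT}(1)(a)) and $\R$ has no infinite representation, $\Cm\At\A\notin\RCA_n$, i.e.\ has no ($\omega$--square $=$) ordinary representation, so the mechanism yields $\notVT(l,\omega)$; for $l=n$ one may simply take $\R$ with any $n$--blur and no infinite representation (Theorem \ref{can}(2)) since then $\Fm_T^{(n)}=\A$.

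Finally, for (***), fix $2<n\le l<m\le\omega$ not both equal to $\omega$. If $m=\omega$, then $l<\omega$ and $\notVT(l,\omega)$ is the case just treated. If $m<\omega$, let $\R_m$ be the hypothesised finite relation algebra with an $(m-1)$--strong blur $(J,E)$ and no $m$--dimensional relational basis. By Theorem \ref{ANT}(1)(b), $(J,E)$ is a strong $j$--blur for all $n\le j\le m-1$ and $\split_j(\R_m,J,E)\cong\Nrr_j\split_{m-1}(\R_m,J,E)$, all atomic; put $\A:=\split_n(\R_m,J,E)=\Fm_T$ and $\Fm_T^{(l)}:=\split_l(\R_m,J,E)$ (atomic, since $l\le m-1$, and $\A=\Nrr_n\Fm_T^{(l)}$). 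If $\Cm\At\A$ had an $m$--square representation, its base would be infinite (as $\Cm\At\A$ is infinite); transferring that representation to the relation--algebra reduct along $\Ra$ as in the proof of Theorem \ref{ANT}(3), the subalgebra $\R_m\hookrightarrow\Cm\At\split(\R_m,J,E)\hookrightarrow\Ra\Cm\At\A$ would acquire an infinite $m$--square representation, whence by \cite[Theorem 13.46]{HHbook} the finite $\R_m$ would possess an $m$--dimensional relational basis --- contradicting the hypothesis. Thus $\Gamma$ is realized in every $m$--square model of $T$ and, by the mechanism, not isolated by $\le l$ variables; so $\notVT(l,m)$. Together with the classical Orey--Henkin omitting types theorem, which gives $\VT(\omega,\omega)$, this yields $\VT(l,m)\iff l=m=\omega$, i.e.\ (***).

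The main obstacle I anticipate is the step from ``$\Cm\At\A\notin\bold S\Nr_n\CA_m$'' (the $m$--\emph{flat} obstruction that Theorem \ref{ANT}(4) supplies for free) to ``$\Cm\At\A$ has no $m$--\emph{square} representation'': this is precisely why one must assume that $\R_m$ has \emph{no $m$--dimensional relational basis} rather than merely $\R_m\notin\bold S\Ra\CA_m$, and cashing this in requires the $\Ra$/relativized--representation transfer machinery of \cite[Chapter 13]{HHbook}. A secondary point needing care is the bookkeeping that pins down the theory $T$ so that $\Fm_T^{(l)}$ is genuinely the intended atomic $l$--dilation of $\A$; this is what makes the non--isolability argument rule out \emph{all} $\le l$--variable witnesses and not just those arising from one distinguished dilation.
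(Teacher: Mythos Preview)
Your proposal is correct and follows essentially the same route as the paper. Both proofs build the witness algebra $\A=\split_n(\R_m,J,E)$, use the embedding $\R_m\hookrightarrow\Ra\Cm\At\A$ together with the absence of an $m$--dimensional relational basis for $\R_m$ to block any $m$--square representation of $\Cm\At\A$, and then invoke Lemma~\ref{cr} (complete $m$--square representation $\Leftrightarrow$ omitting the co--atom type) for the realizability half; the cases $\notVT(n,n+3)$ and $\notVT(l,\omega)$ are drawn from Theorem~\ref{can} and the ANT construction exactly as you do.

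The one place where your write--up and the paper diverge is the \emph{non--isolability} step. You argue syntactically: identify $\Fm_T^{(l)}$ with the atomic dilation $\split_l(\R_m,J,E)$, observe that every atom of the dilation lies below an atom of $\A$, hence $\prod^{\Fm_T^{(l)}}\Gamma=0$, so no nonzero $l$--variable formula can sit below $\Gamma$. The paper instead argues semantically: represent $\A$ as a simple set algebra with base $M$, note that $\phi^{\Mo}\in\A$ for any $(m{-}1)$--variable $\phi$ ``because $\A\in\Nr_n\CA_{m-1}$'', and derive the contradiction from atomicity of $\A$ directly. These are dual views of the same fact (that $\A$ is a \emph{full} neat reduct of the atomic $l$--dilation), and the ``bookkeeping'' you flag --- pinning $\Fm_T^{(l)}$ to $\split_l$ --- is exactly the point the paper glosses over with its one--line justification $\phi^{\Mo}\in\A$. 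A minor further difference: the paper only establishes $\notVT(m{-}1,m)$ explicitly and relies on the evident monotonicity in $l$ to get $\notVT(l,m)$ for $l<m{-}1$, whereas you instantiate each $l\le m{-}1$ separately via $\split_l$.
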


\begin{proof}
We start by the last part. Let $\R_m$ be as in the hypothesis with strong $m-1$--blur $(J, E)$ and $m$-dimensional relational basis. 
We `blow up and blur' $\R_m$ in place of the Maddux algebra 
$\mathfrak{E}_k(2, 3)$ blown up and blurred in
\cite[Lemma 5.1]{ANT}, 
where $k<\omega$ is the number of non--identity atoms 
and  $k$ depends recursively on $l$, giving the desired`strong' $l$--blurness, cf. \cite[Lemmata  4.2, 4.3]{ANT}. 
The relation algebra ${\mathfrak Bb}(\R_m, J, E)$, obtained by blowing up and blurring $\R_m$ with respect to $(J, E)$,  is $\Tm\bf At$ (the term algebra). For brevity call it $\cal R$.
Now take $\A=\mathfrak{Bb}_n(\R_m, J, E)$ as defined in \cite{ANT} to be the $\CA_n$ obtained after blowing up and blurring $\R_m$ to a weakly representable relation algebra atom structure, 
namely,  ${\bf At}=\At\cal R$. 
Here by \cite[Theorem 3.2 9(iii)]{ANT},  ${\sf Mat}_n\At\cal R$ (the set of $n$-basic matrices on $\At\cal R$) is a $\CA_n$ atom structure and 
$\A$ is an atomic  subalgebra of $\Cm{\sf Mat}_n(\At\cal R)$ containing $\Tm{\sf Mat}_n(\At\cal R)$, cf. \cite{ANT}.
In fact,  by \cite[item (3) pp.80]{ANT},  $\A\cong \mathfrak{Nr}_n{\mathfrak Bb}_l(\R_m, J, E)$.The last algebra ${\mathfrak Bb}_l(\R_m, J, E)$ is defined 
and the isomorphism holds because $\R_m$ has a strong $l$-blur.
The embedding $h:\Rd_n\mathfrak{Bb}_l(\R_m, J, E)\to \A$ defined via 
$x\mapsto \{M\upharpoonright n: M\in x\}$ restricted to   
$\mathfrak{Nr}_n{\mathfrak Bb}_l(\R_m, J, E)$ 
is an isomorphism onto $\A$ \cite[p.80]{ANT}. Surjectiveness uses the displayed condition in Definition \ref{strongblur} of {\it strong $l$-blurness}.  
Then  $\A\in \RCA_n\cap \Nr_n\CA_l$, but $\A$ has no complete $m$-square representation.
For if it did, then this induces an $m$--square representation of $\Cm\At\A$,   
But $\Cm\At\A$ does not have an $m$-square representation, because $\R$ does not have an $m$-dimensional relational basis,  
and $\R\subseteq \Ra\Cm\At\A$. So an $m$-square representation of $\Cm\At\A$ induces one of $\R$ which  that $\R$ has no $m$-dimensional relational basis, a contradiction.

We prove $\notVT(m-1, m)$, hence the required, namely, (***).
By \cite[\S 4.3]{HMT2}, we can (and will) assume that $\A= \Fm_T$ for a countable, simple and atomic theory $L_n$ theory $T$.  
Let $\Gamma$ be the $n$--type consisting of co--atoms of $T$. Then $\Gamma$ is realizable in every $m$--square model, for if $\Mo$ is an $m$--square model omitting 
$\Gamma$, then $\Mo$ would be the base of a complete  $m$--square  representation of $\A$, and so $\A\in \bold S_c\Nr_n{\sf D}_m$ which is impossible.
Suppose for contradiction that $\phi$ is an $m-1$ witness, so that $T\models \phi\to \alpha$, for
all $\alpha\in \Gamma$, where recall that $\Gamma$ is the set of coatoms.
Then since $\A$ is simple, we can assume
without loss  that $\A$ is a set algebra with 
base $M$ say.
Let $\Mo=(M,R_i)_{i\in \omega}$  be the corresponding model (in a relational signature)
to this set algebra in the sense of \cite[\S 4.3]{HMT2}. Let $\phi^{\Mo}$ denote the set of all assignments satisfying $\phi$ in $\Mo$.
We have  $\Mo\models T$ and $\phi^{\Mo}\in \A$, because $\A\in \Nr_n\CA_{m-1}$.
But $T\models \exists x\phi$, hence $\phi^{\Mo}\neq 0,$
from which it follows that  $\phi^{\Mo}$ must intersect an atom $\alpha\in \A$ (recall that the latter is atomic).
Let $\psi$ be the formula, such that $\psi^{\Mo}=\alpha$. Then it cannot
be the case
that $T\models \phi\to \neg \psi$,
hence $\phi$ is not a  witness,
contradiction and we are done.
Finally, $\notVT(n, n+3)$ and $\notVT(l,  \omega)$ $(n\leq l<\omega)$ follow from Theorem  \ref{can} and \cite{ANT}. 
 \end{proof}

(3)  Let $2<n<\omega$. 
For any $m>n$
there exists an $n$--variable formula that cannot be proved using $m-1$ variables, but can 
be proved using $m$ variables \cite[Theorem 15.17]{HHbook}, using any standard Hilbert style proof system 
\cite[\S 4.3]{HMT2}.
To prove this,  for each $m>n+1$  Hirsch and Hodkinson constructed a finite relation algebra, 
such that $\R_m$ has an $m-1$ dimensional hyperbasis, but no $m$--dimensional hyperbasis 
\cite[\S 15.2-15.4]{HHbook}.
To prove that $\sf VT$ fails everywhere, as defined above,  one needs to construct, for each $n+1<m<\omega$, a finite relation algebra $\R_m$ having a strong $m-1$ blur, but no $m$--dimensional basis. 
In this case blowing up and blurring $\R_m$ gives a(n infinite) relation algebra having an $m-1$ dimensional cylindric basis, 
{\it whose \de\ completion} has no $m$--dimensional basis. 

(4) Coming back full circle we reprove strong non-finite axiomatizibility results refining Monk's obtained by Maddux and Biro.
Let $2<n\leq l<m\leq \omega$. In ${\sf VT}(l, m)$, while the parameter $l$ measures how close we are to $L_{\omega, \omega}$,  $m$ measures the {\it `degree' of squareness} of permitted models. 
Using elementary calculas terminology one can view ${\sf lim}_{l\to \infty}{\sf VT}(l, \omega)={\sf VT}(\omega, \omega)$ 
algebraically using ultraproducts as follows. Fix $2<n<\omega$. For each $2<n\leq l<\omega$, 
let $\R_l$ be the finite Maddux algebra $\mathfrak{E}_{f(l)}(2, 3)$ with strong $l$--blur
$(J_l,E_l)$ and $f(l)\geq l$ as specified in \cite[Lemma 5.1]{ANT} (denoted by $k$ therein).  
 Let ${\cal R}_l={{\mathfrak{Bb}}}(\R_l, J_l, E_l)\in \sf RRA$ and let 
$\A_l=\mathfrak{Nr}_n{{\mathfrak{Bb}}}_l(\R_l, J_l, E_l)\in \RCA_n$. 
Then $(\At{\cal R}_l: l\in \omega\sim n)$, and $(\At\A_l: l\in \omega\sim n)$ are sequences of weakly representable atom structures 
that are not strongly representable with a completely representable 
ultraproduct.
\begin{corollary}\label{maddux}(Monk, Maddux, Biro, Hirsch and Hodkinson)
Let $2<n<\omega$.  Then the  set of equations using only one variable that holds in each of the varieties ${\sf RCA}_n$ and $\sf RRA$,  
together with any finite first order definable expansion of each, 
cannot be derived from any finite set of equations valid in 
the variety \cite{Biro, maddux}. Furthermore,  ${\sf LCA}_n$ 
is not finitely axiomatizable.
\end{corollary}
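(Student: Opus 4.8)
\textbf{Proof proposal for Corollary \ref{maddux}.}

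The plan is to derive all three non-finite-axiomatizability assertions from the ultraproduct construction assembled in the paragraph immediately preceding the corollary, using the standard Monk-style argument that a finitely axiomatizable variety cannot be a proper elementary subclass of a variety, together with the closure of elementary classes under ultraproducts. First I would fix $2<n<\omega$ and recall the sequence $({\cal R}_l: l\in\omega\setminus n)$ with ${\cal R}_l={\sf split}(\R_l,J_l,E_l)\in\sf RRA$ and the sequence $(\A_l: l\in\omega\setminus n)$ with $\A_l=\Nr_n{\sf split}_l(\R_l,J_l,E_l)\in\RCA_n$, where $\R_l=\mathfrak{E}_{f(l)}(2,3)$ carries the strong $l$-blur $(J_l,E_l)$ of \cite[Lemma 5.1]{ANT}. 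By Theorem \ref{ANT}(4) (with $n=l$ there replaced by the current parameter and $m$ chosen appropriately), for each fixed $m$ and all sufficiently large $l$ the algebra $\Cm\At{\cal R}_l$ lies outside $\bold S\Ra\CA_m$, so $\At{\cal R}_l$ is not strongly representable; similarly $\Cm\At\A_l\notin\bold S\Nr_n\CA_m$ for all large $l$. Hence none of the $\Cm\At{\cal R}_l$ is in $\sf RRA$ and none of the $\Cm\At\A_l$ is in $\RCA_n$, while each ${\cal R}_l\in\sf RRA$ and each $\A_l\in\RCA_n$. On the other hand, the ultraproduct $\Pi_{l/F}\At{\cal R}_l$ over a non-principal ultrafilter $F$ on $\omega\setminus n$ is a strongly representable atom structure: the block in each ${\cal R}_l$ that obstructs representability (the copy of $\R_l$ inside $\Cm\At{\cal R}_l$) grows without bound as $l\to\infty$, so in the ultraproduct \pe\ can win the relevant Lyndon / atomic game in $\omega$ rounds, yielding a complete representation of $\Cm$ of the ultraproduct atom structure. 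The same reasoning applies to $(\At\A_l)$, giving $\Cm\At(\Pi_{l/F}\A_l)\in\RCA_n$, indeed $\Pi_{l/F}\A_l\in\Nr_n\CA_\omega\cap{\sf CRCA}_n$ up to the Lyndon conditions.

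Next I would run the Monk argument. Suppose, for contradiction, that $\sf RRA$ were axiomatized over $\sf RA$ (or, for the one-variable fragment, over the variety of simple relation set algebras of the ambient type) by finitely many equations, hence by a single first order sentence $\sigma$. Each $\Cm\At{\cal R}_l$ satisfies all the $\sf RA$ axioms but, for large $l$, fails $\sigma$ since it is not representable; yet $\Cm\At{\cal R}_l\equiv_{\bf Up}$ converges, via \L os, to $\Cm\At(\Pi_{l/F}\At{\cal R}_l)$, which is representable and hence satisfies $\sigma$. This contradicts \L os' theorem. The identical bookkeeping, with $\A_l$ and $\RCA_n$ in place of ${\cal R}_l$ and $\sf RRA$, handles $\RCA_n$. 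To pass from ``not finitely axiomatizable'' to ``the one-variable equational fragment is not derivable from finitely many valid equations,'' I would invoke the fact, already used in the Maddux modification described in the introduction, that the $\R_l$ are generated by a single (binary) element, so the obstructing algebras are themselves generated by one element and therefore witness non-derivability already within the one-variable fragment; this is exactly the refinement of Monk's theorem attributed to Maddux and Biro in \cite{Biro,maddux}. For a finite first order definable expansion, the extra operations are term-definable from the base signature using first order formulas in the corresponding set algebras, so the whole argument is stable under adding them: the expanded $\Cm\At{\cal R}_l$ still fail representability and their ultraproduct still satisfies the (finitely many extra) representability-preserving equations, so no finite equational base over the expanded variety can exist either.

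For the last clause, that ${\sf LCA}_n=\bold{El}\,{\sf CRCA}_n$ is not finitely axiomatizable, I would argue as follows. The Lyndon conditions $\rho_m$ ($m\in\omega$) axiomatize ${\sf LCA}_n$, and $\A_l$ satisfies $\rho_m$ for all $m\le$ (some bound tending to $\infty$ with $l$) but, because $\Cm\At\A_l\notin\RCA_n$ for large $l$ and the $\A_l$ here are built so that \pa\ wins the $m$-round atomic game on $\At\A_l$ for $m$ larger than that bound, $\A_l\notin{\sf LCA}_n$ eventually. Meanwhile $\Pi_{l/F}\A_l$ satisfies every $\rho_m$ (by \L os, since for each fixed $m$ almost all $\A_l$ satisfy $\rho_m$), so it is in ${\sf LCA}_n$. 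Thus ${\sf LCA}_n$ is closed under ultraproducts (being elementary) and contains $\Pi_{l/F}\A_l$ while each $\A_l\notin{\sf LCA}_n$; if ${\sf LCA}_n$ were cut out by finitely many first order sentences, equivalently by a single sentence $\tau$, then almost all $\A_l$ would satisfy $\tau$ by \L os applied to the ultraproduct, contradicting that $\A_l\notin{\sf LCA}_n$ for all large $l$. This reproves the Hirsch--Hodkinson result \cite{HHbook}.

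\textbf{Main obstacle.} The genuinely substantive point, which I would not grind through here, is verifying that the ultraproduct atom structures $\Pi_{l/F}\At{\cal R}_l$ and $\Pi_{l/F}\At\A_l$ are strongly representable, i.e.\ that \pe\ acquires an $\omega$-round winning strategy in the limit even though she loses the $m_l$-round game on the $l$-th component. This is where the ``strong $l$-blur'' hypothesis and the recursive dependence $k=f(l)$ in \cite[Lemma 5.1]{ANT} do the real work: the number of blur-colours and the size of the relational/cylindric basis on ${\cal R}_l$ must grow fast enough with $l$ that, after taking the ultraproduct, the games \pa\ can force to a win on each finite stage recede to infinity, so no single finite $m$ witnesses non-representability of $\Cm$ of the ultraproduct. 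Pinning this down amounts to checking that the bound on $m$ for which $\Cm\At\A_l\in\bold S\Nr_n\CA_m$ (equivalently, for which \pe\ wins $G_m$ on $\At\A_l$) tends to $\infty$ with $l$, which follows from Theorem \ref{ANT} items (3)--(4) applied with $m$ as a parameter and the fact that $\A_l\cong\Nr_n{\sf split}_l(\R_l,J_l,E_l)$ has an atomic $l$-dilation; everything else is the routine \L os-plus-Monk bookkeeping sketched above.
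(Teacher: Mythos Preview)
Your overall approach is the paper's approach: both derive the corollary from the paragraph labelled (4) immediately preceding it, using the sequences $(\Cm\At{\cal R}_l)$ and $(\Cm\At\A_l)$ of non-representable algebras whose ultraproduct atom structures are completely representable, and then running the standard Monk--\L o\'s argument. The paper gives no further proof beyond that paragraph, so structurally you are on target.

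Two points need correcting. First, a minor one: you write that $\Cm\At{\cal R}_l$ ``converges via \L o\'s to $\Cm\At(\Pi_{l/F}\At{\cal R}_l)$''. This is not literally true; $\Pi_{l/F}\Cm\At{\cal R}_l$ is in general only a proper subalgebra of $\Cm(\Pi_{l/F}\At{\cal R}_l)$. What saves the argument is that they share the same atom structure $\Pi_{l/F}\At{\cal R}_l$, so once the latter is strongly representable, any atomic algebra on it---in particular the ultraproduct---is representable.

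Second, and more substantively, your handling of ${\sf LCA}_n$ conflates two different games. The Lyndon condition $\rho_m$ encodes a winning strategy for \pe\ in the \emph{$m$-rounded} game $G_m$ with unbounded nodes, whereas $\bold S_c\Nr_n\CA_m$ corresponds (via Lemma~\ref{n}) to the $\omega$-rounded game $\bold G^m$ with \emph{$m$ nodes}. Your parenthetical ``$\Cm\At\A_l\in\bold S\Nr_n\CA_m$ (equivalently, for which \pe\ wins $G_m$ on $\At\A_l$)'' is therefore false, and your \L o\'s route (``for each fixed $m$ almost all $\A_l$ satisfy $\rho_m$'') is not justified by the neat-embedding facts you invoke. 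The clean argument is direct: the paper asserts that the ultraproduct atom structure is completely representable, and complete representability implies all Lyndon conditions, so $\Pi_{l/F}\Cm\At\A_l\in{\sf LCA}_n$; since each $\Cm\At\A_l\notin\RCA_n\supseteq{\sf LCA}_n$, non-finite axiomatizability of ${\sf LCA}_n$ follows at once. Your claim that almost all $\A_l$ satisfy each fixed $\rho_m$ is then a \emph{consequence} of this via \L o\'s, not a way to establish it.
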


\end{document}